\providecommand\@enum@widestlabel{7}
\newtheorem{lemma}{Lemma}[subsection]
\newtheorem{lem}[lemma]{Lemma}
\newtheorem{theorem}[lemma]{Theorem}
\newtheorem{thm}[lemma]{Theorem}
\newtheorem{corollary}[lemma]{Corollary}
\newtheorem{cor}[lemma]{Corollary}
\newtheorem{proposition}[lemma]{Proposition}
\newtheorem{prop}[lemma]{Proposition}
\newtheorem{conjecture}[lemma]{Conjecture}
\newtheorem{conj}[lemma]{Conjecture}
\newtheorem{exercise}[lemma]{Exercise}
\newtheorem{exer}[lemma]{Exercise}
\theoremstyle{plain}
\newtheorem*{goal}{Goal}
\theoremstyle{definition}
\newtheorem{definition}[lemma]{Definition}
\newtheorem{defn}[lemma]{Definition}
\newtheorem{remark}[lemma]{Remark}
\newtheorem{rmk}[lemma]{Remark}
\newtheorem{example}[lemma]{Example}
\newtheorem{eg}[lemma]{Example}
\newtheorem{question}[lemma]{Question}
\newtheorem{constr}[lemma]{Construction}
\renewcommand{\theequation}%
{\arabic{section}.\arabic{lemma}.\arabic{equation}}
\newcommand{\CC}{\ensuremath{\mathbb{C}}} 
\newcommand{\NN}{\ensuremath{\mathbb{N}}} 
\newcommand{\PP}{\ensuremath{\mathbb{P}}} 
\newcommand{\QQ}{\ensuremath{\mathbb{Q}}} 
\newcommand{\RR}{\ensuremath{\mathbb{R}}} 
\newcommand{\ZZ}{\ensuremath{\mathbb{Z}}} 
\newcommand{\sF}{\ensuremath{\mathscr{F}}} 
\newcommand{\sI}{\ensuremath{\kern -1pt \mathscr{I}\kern -2pt}} 
\newcommand{\sJ}{\ensuremath{\kern -2pt \mathscr{J}\kern -2pt}} 
\newcommand{\sO}{\ensuremath{\mathscr{O}}} 
\newcommand{\sOX}{\ensuremath{\mathscr{O}^{}_{\! X}}}
\newcommand{\shu}{\ensuremath{\mathcal{U}}}
\newcommand{\shs}{\ensuremath{\mathcal{S}}}
\newcommand{\shl}{\ensuremath{\mathcal{L}}}
\newcommand{\shx}{\ensuremath{\mathcal{X}}}
\newcommand\bbQ{\mathbb Q}
\renewcommand{\geq}{\geqslant}
\renewcommand{\leq}{\leqslant}
\DeclareMathOperator{\codim}{codim}
\DeclareMathOperator{\mult}{mult}
\DeclareMathOperator{\Nef}{Nef}
\DeclareMathOperator{\Sym}{Sym}
\DeclareMathOperator{\Supp}{Supp}
\DeclareMathOperator{\ord}{ord}
\DeclareMathOperator{\Null}{Null}
\DeclareMathOperator{\Neg}{Neg}
\DeclareMathOperator{\vol}{vol}
\DeclareMathOperator{\length}{length}
\DeclareMathOperator{\Bbig}{Big}
\DeclareMathOperator{\pr}{pr}
\DeclareMathOperator{\Area}{Area}
\DeclareMathOperator{\dist}{dist}
\DeclareMathOperator{\intt}{interior}
\DeclareMathOperator{\res}{res}
\DeclareMathOperator{\Bs}{Bs}
\DeclareMathOperator{\ev}{ev}
\DeclareMathOperator{\CDiv}{CDiv}
\DeclareMathOperator{\im}{im}
\DeclareMathOperator{\Zeroes}{Zeroes}
\DeclareMathOperator{\Exc}{Exc}
\DeclareMathOperator{\Eff}{Eff}
\DeclareMathOperator{\Amp}{Amp}
\DeclareMathOperator{\rk}{rk}
\DeclareMathOperator{\gon}{gon}
\newcommand{\equ}{\ensuremath{\,=\,}}
\newcommand{\dgeq}{\ensuremath{\,\geq\,}}
\newcommand{\dleq}{\ensuremath{\, \leq\, }}
\newcommand{\deq}{\ensuremath{\stackrel{\textrm{def}}{=}}}
\newcommand{\dsubseteq}{\ensuremath{\,\subseteq\,}}
\newcommand{\st}[1]{\ensuremath{\left\{ #1 \right\}   }}
\newcommand{\zj}[1]{\ensuremath{\left( #1 \right)}}
\newcommand{\dsupseteq}{\ensuremath{\,\supseteq\,}}
\newcommand{\lra}{\ensuremath{\longrightarrow}}
\newcommand{\og}{\ensuremath{\overline{\Gamma}}}         
\newcommand{\e}{\ensuremath{\epsilon}}
\newcommand{\hh}[3]{\ensuremath{h^{#1}\left(#2,#3\right)}}
\newcommand{\HH}[3]{\ensuremath{H^{#1}\left(#2,#3\right)}}
\newcommand{\ha}[3]{\ensuremath{\widehat{h}^{#1}\left(#2,#3\right)}}
\newcommand{\iss}[1]{\ensuremath{\Delta^{-1}_{#1}}}
\newcommand{\eone}{\ensuremath{\textup{\textbf{e}}_1}}
\newcommand{\rat}{\ensuremath{\dashrightarrow}}
\newcommand{\hxl}{\ensuremath{\HH{0}{X}{\sO_X(L)}}}
\newcommand{\bb}{\ensuremath{\mathfrak{b}}}
\newcommand{\sbl}[1]{\ensuremath{\mathbf{B}(#1)}}
\newcommand{\NO}[2]{\ensuremath{\Delta_{#1}(#2)}}
\newcommand{\ybul}{\ensuremath{Y_{\bullet}}}
\newcommand{\dyl}{\NO{\ybul}{L}}
\newcommand{\dsupset}{\ensuremath{\,\supset\,}}
\newcommand{\vl}[2]{ \ensuremath{\text{vol}_{#1}(#2)}}
\newcommand{\Bplus}{\ensuremath{\textbf{\textup{B}}_{+} }}
\newcommand{\Bminus}{\ensuremath{\textbf{\textup{B}}_{-} }}
\newcommand{\Bstable}{\ensuremath{\textbf{\textup{B}} }} 
\newcommand{\origin}{\ensuremath{\textup{\textbf{0}}}}
\newcommand{\ei}{\ensuremath{\textup{\textbf{e}}_i}}
\newcommand{\en}{\ensuremath{\textup{\textbf{e}}_n}}
\newcommand{\nob}[2]{\ensuremath{\Delta_{#1}(#2)}}
\newcommand{\inob}[2]{\ensuremath{\widetilde{\Delta}_{#1}(#2)}}
\newcommand{\liminob}[2]{\ensuremath{\widetilde{\Delta}_{#1}^{\lim}(#2)}}
\newcommand{\sU}{\ensuremath{\mathscr{U}}}
\newcommand{\etwo}{\ensuremath{\textup{\textbf{e}}_2}}
\newcommand{\DCx}[1]{\ensuremath{\nob{(C,x)}{#1}}} 
\newcommand{\oEff}{\ensuremath{\overline{\Eff}}}
\newcommand{\set}[1]{\ensuremath{ \left\{\, #1\, \right\} }}
\newcommand\with{\ \vrule\ }
\newcommand\I{\mathcal{I}}
\newcommand\NR{N^1_\RR}
\newcommand\vecspan[1]{\left\langle#1\right\rangle}
\newcommand\orth{^\perp}
\newcommand\nonneg{^{\geqslant 0}}
\renewcommand\({\left(}
\renewcommand\){\right)}
\newcommand{\shi}{\ensuremath{{\mathcal I}}}
\newcommand{\euler}[2]{\ensuremath{\chi\left(#1,#2\right)}} 
\newcommand{\II}{\ensuremath{\mathcal I}}
\newcommand\E{\mathbb E}
\newcommand\eps{\varepsilon}
\newcommand\calo{{\mathcal O}}
\newcommand\newop[2]{\def#1{\mathop{\rm #2}\nolimits}}
\newop\BigCone{Big}
\newop\Face{Face}
\newenvironment{items}
   {\list{\labelitemi}{
      \parsep=0cm \itemsep=0cm \topsep=0cm \partopsep=0.5\baselineskip
      \def\makelabel##1{\hss\llap{\rm##1}}}}
   {\endlist}
 \newcommand{\m}{\ensuremath{\mathfrak{m}}}
\begin{document}

\title{Geometric aspects of Newton--Okounkov bodies}

\author[A.~K\" uronya]{Alex K\" uronya}
\author[V.~Lozovanu]{Victor Lozovanu}

\address{Alex K\"uronya, Johann-Wolfgang-Goethe Universit\"at Frankfurt, Institut f\"ur Mathematik, Robert-Mayer-Stra\ss e 6-10., D-60325
Frankfurt am Main, Germany}
\address{Budapest University of Technology and Economics, Department of Algebra, Egry J\'ozsef u. 1., H-1111 Budapest, Hungary}
\email{{\tt kuronya@math.uni-frankfurt.de}}

\address{Victor Lozovanu, Leibniz-Universit\"at Hannover, Institut f\"ur Algebraische Geometrie}
\email{\tt victor.lozovanu@gmail.com}

\maketitle

\tableofcontents

\section*{Introduction}

This is a survey article on Newton--Okounkov bodies in projective geometry focusing on the relationship between 
positivity of divisors and Newton--Okounkov bodies. The inspiration for this writing came from the first author's lectures at the workshop 'Asymptotic invariants of linear series' in Cracow, at the same time it has its roots in earlier talks and lecture series of the  authors including the RTG workshop at the University of Illinois at Chicago,  
the CRM workshop 'Positivity and valuations' in Barcelona, at the Universit\'a di Milano Bicocca,  and the first author's habilitation thesis. While it is mostly an expository effort, it sporadically contains  small amounts 
of new material. Such an overview is necessarily biased, and by no means is the manuscript meant as a complete account of the current state of affairs in the area. 

The intended audience is modelled on the heterogenous crowd participating in the Cracow workshop: it ranges from interested graduate students to senior researchers in and around algebraic 
geometry. The style  will necessarily reflect this variation, in particular, the level of detail is by no means uniform.
This writing is not intended as a systematic account of the theory, much rather an overview of the main ideas that tries to identify new lines of research as well. 

 Nevertheless, as prerequisites we expect that the reader is familiar 
with the basics of algebraic geometry, and the technical baggage of  schemes and cohomology roughly on the level of Chapters II and III of \cite{HS}, or at least can make up for it in mathematical maturity. The precise requirements vary from section to section, the overall mathematical difficulty is weakly increasing. Exercises denoted by an asterisk are believed to be more demanding and/or require some extra knowledge.

\medskip
\noindent {\bf Acknowledgements.} 
 The authors are grateful to Thomas Bauer, Lawrence Ein, S\'andor Kov\'acs, Rob Lazarsfeld, Catriona Maclean, John C. Ottem, and Quim Ro\'e for helpful discussions. We would like to use this opportunity to thank the organizers of the RTG Workshop on Newton--Okounkov bodies at the University of Illinois at Chicago (Izzet Coskun, Lawrence Ein, and Kevin Tucker), and the organizers of the miniPAGES Workshop 'Asymptotic invariants attached to linear series' (Jaros\l aw Buczy\'nski, Piotr Pokora, S\l awomir Rams, and Tomasz Szemberg). The lecture notes of Brian Harbourne and Quim Ro\'e written for the same workshop were a  great inspiration. 
 
  The illustrations were done using the Ti$k$Z package.

\section{Positivity and Newton--Okounkov bodies}

Let $X$ be an $n$-dimensional  smooth projective variety over the complex numbers, $L$ a Cartier divisor on $X$. 
Studying  positivity properties --- both global  and local near a point $x\in X$ --- of $L$  has been a central topic of 
algebraic geometry for over 150 years. The main tool we use for this purpose are Newton--Okounkov bodies, a collection of convex bodies  inside $\RR^n$ associated  to the pair $(X,L)$.

This section is devoted to a quick introduction to notions of positivity for line bundles and their interaction with Newton--Okounkov bodies. Although the two could in theory be presented independently of each other, it is our firm belief 
that a description intertwining the two is the most natural way from both perspectives. The material in this section that is not about Newton--Okounkov bodies can be find mostly in Chapters 1 and 2 of \cite{PAGI}.

Very roughly speaking 'positivity' of a line bundle means that it has 'many global sections', whatever this should mean at this point; in any case such properties  have strong implications of geomet\-ric/cohomolo\-gical/numerical nature.

\subsection{Prerequisites}

We will work exclusively over the complex number field. Many of the statements, including the basics of Newton--Okounkov bodies remain true over an algebraically closed field of arbitrary characteristic, nevertheless, we will not follow this path. 

Varieties will be generally assumed to be smooth for simplicity, unless otherwise mentioned. Again, this restriction is not necessary from a purely mathematical point of view, but saves us some trouble. Throughout these notes divisors are always meant to be  Cartier divisors. We will often makes use of Cartier divisors with rational or real coefficients, 
meaning elements of 
\[
\CDiv(X)_\QQ\deq \CDiv(X)\otimes_\ZZ\QQ\  ,\ \text{and}\   \CDiv(X)_\RR \deq \CDiv(X)\otimes_\ZZ\RR\ ,
\]
respectively. Note that intersecting curves with  $\QQ$- or $\RR$-divisors makes sense just the way one would think. 
As customary, we will use divisor and line bundle language interchangeably. 

\begin{definition}
 With notation as above, two Cartier divisors (possibly with $\QQ$- or $\RR$-coefficients $L_1$ and $L_2$ are called \emph{numerically equivalent} 
 (notation: $L_1\equiv L_2$),  if $(L_1\cdot C)=(L_2\cdot C)$  for all curves $C\subseteq X$. The group of numerical equivalence classes of integral Cartier divisors is 
 called the \emph{N\'eron--Severi group}, and it is denoted by  $N^1(X)$. We will write $N^1(X)_\RR\deq N^1(X)\otimes_\ZZ \RR$ and call it the (real) \emph{N\'eron--Severi space}. 
\end{definition}

\begin{remark}\label{rmk:NS}
 Note that there is another natural candidate for the real N\'eron--Severi space, namely the real vector space  $\CDiv(X)_\RR/\equiv$. Luckily, there is a natural isomorphism 
 \[
  \CDiv(X)_\RR/\equiv \ \stackrel{\sim}{\lra}\ N^1(X)\otimes_\ZZ \RR
 \]
therefore no confusion about this notion can arise. 
\end{remark}

\begin{exer}
Prove the statement of Remark~\ref{rmk:NS}. 
\end{exer}

\begin{remark}
We will say that a property of divisors is 'numerical', if it respects the numerical equivalence of divisors. 
\end{remark}

\begin{rmk}
It is a general tendency one has been observing since the study of volumes of line bundles that properties of divisors  that are defined using 'large enough multiples' or in terms of limits over all multiples tend to be numerical. 
\end{rmk}

\subsection{A crash course on  positivity}

We give a quick outline of basic positivity concepts. This serves several purposes, including fixing terminology and notation, and to introduce Newton--Okounkov bodies in their natural habitat from the very beginning. 
For more detailed information we refer the interested reader to 
\cite{PAGI}*{Chapter 1 \& 2]} and \cite{HS}*{Section II.7} and the references therein.

As a first approximation, the positivity of a line bundle means that the associated Kodaira map 
\[
 \phi_L\colon X \rat \PP 
\]
is a morphism.  Here we explain what this means. 

\begin{example}
 Let $X\subseteq\PP^n$ be a projective variety, $f_0,\dots,f_m\in\CC[x_0,\dots,x_n]$   homogeneous polynomials of the same degree with the property that for every point
 $P\in X$ there exists an index $0\leq i\leq m$ such that $f_i(P)\neq 0$. 
 
 Then
 \begin{eqnarray*}
  \phi\colon X & \lra & \PP^m \\ P & \mapsto & (f_0(P)\colon \ldots \colon f_m(P)) 
 \end{eqnarray*}
is a morphism of varieties; this is the baby case of the Kodaira map. 
\end{example}

\begin{remark}
 Homogeneous polynomials of degree $d$ on $\PP$ are elements of $\HH{0}{\PP}{\sO_{\PP}(d)}$, hence we have $f_i\in \HH{0}{X}{\sO_{\PP}(d)|_X}$ in the above example. 
\end{remark}

What happens in general is that given $X$ and $L$ as above, let $s_0,\dots,s_m\in\HH{0}{X}{\sO_X(L)}$, then 
\begin{eqnarray*}
 \phi_{s_0,\dots,s_m} \colon X & \rat & \PP^m \\ P & \mapsto & (s_0(P)\colon \ldots\colon s_m(P))
\end{eqnarray*}
gives rise to a rational map, the so-called \emph{Kodaira map} associated to the global sections $s_0,\dots, s_m$. If $s_0,\dots,s_m$ form a vector space basis 
of $\HH{0}{X}{\sO_X(L)}$, then the associated $\phi\colon X\rat\PP$ is uniquely determined by $L$ (up to a projective automorphism of $\PP$), and we will simply 
write $\phi_L$ for it.

The rational map $\phi_L$ is defined at a point $P\in X$ if not all elements of $\hxl$ vanish at $P$. 

\begin{definition}[Base points and global generation]
 With notation as above, we say that a point $P\in X$ is a \emph{base point of $L$} (or more precisely: it is a base point of the complete linear series $|L|$, 
 if $s(P)=0$ for every $s\in\hxl$. We write $\Bs L$ for the set of base points of $L$, we call it the \emph{base locus of $L$}. 
 
 The Cartier divisor $L$ is called \emph{globally generated} or \emph{base-point free} if $|L|$ has no base points, equivalently,
 if the associated Kodaira map $\phi_L$  is a morphism. We say that $L$ is \emph{semi-ample} if some multiple $mL$ of $L$ is globally generated.  
\end{definition}

\begin{example}
 While being globally generated for a line bundle presupposes that it has global sections, this is not the case for semi-ample ones. As an example, the divisor consisting of a point on an elliptic curve is semi-ample, but the associated line bundle has a base point. 
\end{example}

\begin{exer}
Let $X$ be a non-hyperelliptic curve, $P,Q,R\in X$ distinct points, and $L\deq P+Q-R$. Verify that $H^0(\sO_X(L))=0$, even though $L$ is ample. 
\end{exer}

\begin{eg}
For an interesting example in dimension two, consider  Mumford's fake projective plane \cite{Mumford_fake}. 
This a surface $X$ with ample canonical class such that  $(K_X^2)=9$, $p_g=0$, and $H^0(X,\sOX(K_X))=0$. 
\end{eg}

\begin{exercise}
 If $L$ is semi-ample, then in fact there exists a positive integer $m_0=m_0(L)$ such that $mL$ is globally generated for every $m\geq m_0$. 
\end{exercise}

\begin{remark}
 The base locus $\Bs L$ in fact carries a natural scheme structure; this is seen as follows: let $V\subseteq \hxl$ be a linear subspace, then evaluating 
 sections gives rise to a morphism of vector bundles 
 \[
  \ev_V \colon V\otimes_\CC \sO_X \lra \sO_X(L) \ ,
 \]
 which in turn yields  the morphism  of vector bundles 
 \[
  \ev^*_V \colon V\otimes_\CC \sO_X(-L) \lra \sO_X \ . 
 \]
The image $\bb(V)$ of $\ev^*_V$ is called the \emph{base ideal} of the linear series $V$, one can prove that indeed $\Bs V$ is the closed subscheme associated to 
$V$.   In any case, the scheme structure of base loci will be irrelevant for us for the most part. 
\end{remark}

\begin{definition}
 Let $X$ be a projective variety, $L$ a Cartier divisor on $X$. The \emph{stable base locus} of $L$ is defined as  
 \[
 \sbl{L} \deq \bigcap_{m=1}^{\infty} \Bs (mL)\ .
 \]
\end{definition}

\begin{exer}
 Show  that $\sbl{L}=\Bs (mL)$ for sufficiently large and divisible $m\in\NN$, conclude that $\sbl{L} = \sbl{mL}$ for all $m\geq 1$. 
\end{exer}

\begin{definition}
A divisor $L$ is called \emph{very ample}, if $\phi_L$ is a closed embedding, $L$ is called \emph{ample} if there exists a multiple $mL$ which is very ample.  
\end{definition}

Ampleness turns out to be one of the central concepts of projective geometry. 

\begin{exer}
Use the above definition of ampleness to verify that $mL$ is very ample for all $m\gg 0$ whenever $L$ is ample. 
\end{exer}

\begin{example}
Consider the case $X=\PP^n$, and $L=\sO_X(d)$. Then $L$ is globally generated precisely if $d\geq 0$, and very ample whenever $d>0$. For $d>0$ the closed embedding 
$\phi_L$ is the $d$\textsuperscript{th} Veronese embedding of $\PP^n$. 
\end{example}

If $X$ is a smooth curve, then the Riemann--Roch theorem yields a good answer. Recall the statement: with notation as above, 
\[
 \chi(X,L) \equ \deg_X L + 1-g(X)\ .
\]
With a bit of work one obtains  reasonably precise sufficient conditions. 

\begin{prop}[Global generation and very ampleness on curves]
Let $X$ be a smooth projective curve, $L$ a Cartier divisor on $X$. 
\begin{enumerate}
 \item If $\deg_X L\geq 2g(X)$, then $L$ is base-point free;
 \item if $\deg_X L\geq 2g(X)+1$, then $L$ is very ample. 
\end{enumerate}
\end{prop}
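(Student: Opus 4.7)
The plan is to reduce both statements to vanishing of an $H^1$, and then control that vanishing via Serre duality on the curve. The common mechanism is: for any effective divisor $D$ on $X$, the short exact sequence
\[
0 \lra \sO_X(L-D) \lra \sO_X(L) \lra \sO_D \lra 0
\]
yields a long exact sequence whose connecting map lives in $H^1(X, \sO_X(L-D))$. If that $H^1$ vanishes, then sections of $L$ surject onto sections on $D$, which is exactly the kind of statement needed to produce sections with prescribed values / prescribed vanishing behavior. Serre duality on a smooth projective curve gives $H^1(X,\sO_X(M))\cong H^0(X,\sO_X(K_X-M))^*$, and on a curve a line bundle of negative degree has no global sections, so everything boils down to a degree count.

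For part (1), I would fix a point $P\in X$ and apply the above with $D = P$. Base-point-freeness at $P$ is equivalent to surjectivity of the evaluation $H^0(X,\sO_X(L))\to \sO_X(L)\otimes k(P)\cong\CC$, and from the long exact sequence a sufficient condition is $H^1(X,\sO_X(L-P))=0$. By Serre duality this becomes $H^0(X,\sO_X(K_X-L+P))=0$. Since
\[
\deg(K_X - L + P) \equ 2g(X) - 2 - \deg_X L + 1 \equ 2g(X) - 1 - \deg_X L,
\]
the hypothesis $\deg_X L \geq 2g(X)$ makes this degree strictly negative, which forces the vanishing. As $P$ was arbitrary, $L$ is base-point free.

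For part (2), on a smooth curve very ampleness of $L$ is equivalent to the associated map separating all pairs of (possibly infinitely near) points: for every effective divisor $D$ of degree $2$ on $X$, the restriction $H^0(X,\sO_X(L)) \to H^0(D,\sO_X(L)|_D)$ must be surjective. Taking $D = P+Q$ (with $P$ and $Q$ distinct, or $D=2P$ to handle tangent directions), the same long exact sequence argument reduces this to $H^1(X,\sO_X(L-D))=0$, which via Serre duality amounts to the vanishing of $H^0(X,\sO_X(K_X-L+D))$. The relevant degree is
\[
\deg(K_X - L + D) \equ 2g(X) - 2 - \deg_X L + 2 \equ 2g(X) - \deg_X L,
\]
and the hypothesis $\deg_X L \geq 2g(X)+1$ makes it $\leq -1$, which forces the vanishing. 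The uniform treatment of the two cases $P\neq Q$ and $P=Q$ is clean here because the argument only depends on $\deg D = 2$, not on the scheme structure of $D$.

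The only genuine subtlety is the equivalence between separating infinitely near points and the surjection $H^0(X,\sO_X(L))\to H^0(2P,\sO_X(L)|_{2P})$, i.e.\ that this algebraic condition correctly encodes injectivity of the differential of $\phi_L$ at $P$; once that dictionary is in place the rest is a routine degree count. In particular I would not worry about the embedding aspect of very ampleness beyond the standard criterion that an injective morphism from a smooth curve that is injective on tangent spaces is automatically a closed embedding.
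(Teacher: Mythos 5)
Your proof is correct and takes essentially the same route as the paper: both hinge on the vanishing $H^1(X,\sO_X(L-D))=0$ for a small effective $D$, forced by Serre duality together with a degree count. The paper only spells out part (1), phrased through Riemann--Roch as a comparison of $h^0(L)$ with $h^0(L-P)$ rather than via the evaluation exact sequence, and refers to Hartshorne for the rest; your uniform treatment with $D$ of length $2$ (including the infinitely near case) supplies part (2) in the standard way.
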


\begin{cor}
A divisor $L$ on a curve is ample if and only if semi-ample if and only if $\deg_X L>0$. 
\end{cor}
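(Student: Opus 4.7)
The plan is to establish the cyclic chain of implications $L$ ample $\Rightarrow$ $L$ semi-ample $\Rightarrow$ $\deg_X L > 0$ $\Rightarrow$ $L$ ample, using the proposition on curves just above as the main engine. The first implication is immediate from the definitions: if $mL$ is very ample then $\phi_{mL}$ is a closed embedding, hence in particular a morphism, so $mL$ is globally generated and $L$ is semi-ample. The last implication is equally short: pick any $m \in \NN$ with $m \deg_X L \geq 2g(X)+1$, and part~(2) of the proposition yields that $mL$ is very ample, whence $L$ is ample by definition.

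The remaining step, semi-ample $\Rightarrow \deg_X L > 0$, is where the real content lies, and I would carry it out as follows. If $mL$ is globally generated then $H^0(X,\sO_X(mL)) \neq 0$, so $mL$ is linearly equivalent to some effective divisor $D$, whence $m \deg_X L = \deg D \geq 0$. To refine this to strict inequality I would use that on a connected smooth projective curve an effective divisor of degree $0$ must be the empty divisor, so $\deg_X L = 0$ would force $mL \sim 0$, i.e.\ $L$ would be a torsion class in $\Pic(X)$.

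The main obstacle is exactly this last subtlety: the trivial bundle $\sO_X$, and more generally any torsion element of $\Pic^0(X)$, is globally generated (and hence semi-ample) yet has degree $0$, so the stated corollary is literally true only under the tacit convention that such torsion classes are excluded, in which case $L$ is manifestly not ample either. Granting that convention, the three-way equivalence closes up cleanly via the cycle above, with no further obstacle beyond this bookkeeping issue about torsion classes in $\Pic^0(X)$.
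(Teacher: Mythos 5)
Your cyclic chain of implications is the natural argument, and the paper does not in fact offer a competing proof: the \texttt{proof} environment that follows the corollary in the source is a proof of part~(1) of the \emph{preceding} proposition (global generation for $\deg_X L\geq 2g$), not of the corollary itself. So the comparison is with what the corollary ought to say rather than with a given argument.

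Your observation about the failure of strict inequality is correct and worth stating plainly: the structure sheaf $\sO_X$ is globally generated (hence semi-ample) and has degree $0$, so ``semi-ample $\Rightarrow \deg_X L>0$'' is false as written. One small inaccuracy: a nontrivial torsion class in $\Pic^0(X)$ is \emph{not} globally generated (it has no nonzero global sections at all, since it is nontrivial of degree $0$); it is, however, semi-ample, because some positive multiple of it is $\cong\sO_X$. So the full set of counterexamples is exactly the torsion subgroup of $\Pic^0(X)$, with $\sO_X$ being the only one that is itself globally generated. The clean statement on a smooth projective curve is: $L$ is ample iff $\deg_X L>0$; $L$ is nef iff $\deg_X L\geq 0$; and $L$ is semi-ample iff $\deg_X L>0$ \emph{or} $L$ is torsion in $\Pic^0(X)$. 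The corollary as printed conflates the first and the third. With that caveat recorded, the rest of your argument --- ample $\Rightarrow$ semi-ample by definition, semi-ample $\Rightarrow\deg_X L\geq 0$ via the existence of an effective multiple, and $\deg_X L>0\Rightarrow$ ample via part~(2) of the proposition --- is complete and correct.
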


\begin{proof}
We will indicate how to verify the first  statement; for a full proof see \cite{HS}*{Section V.1}. 
Let $P\in X$ be an arbitrary point, note that we can identify $\HH{0}{X}{L-P}$ with the subspace of $\hxl$ consisting of global sections vanishing at $P$. 
Consequently, it will suffice to prove that $\hh{0}{X}{L} > \hh{0}{X}{L-P}$. 

Since $\deg L,\deg (L-P) > 2g(X)-2$ by assumption, we have 
\[
 \deg (K_X-L)\ ,\ \deg (K_X-(L-P)) < 0 \ .
\]
Therefore $\HH{0}{X}{\sO_X(K_X-L)}=\HH{0}{X}{\sO_X(K_X-(L-P))} = 0$, and 
\[
 \HH{1}{X}{L} \equ \HH{1}{X}{L-P} \equ 0
\]
by Serre duality. 

As a consequence, 
\[
 \hh{0}{X}{\sO_X(L)} \equ \chi(X,L) \equ \deg L + 1- g(X)\ ,
\]
and 
\[
 \hh{0}{X}{\sO_X(L-P)} \equ \chi(X,L-P) \equ \deg (L-P) + 1- g(X)\ ,
\]
which implies $\hh{0}{X}{\sO_X(L)} > \hh{0}{X}{\sO_X(L-P)}$.
\end{proof}

\begin{remark}
The higher-dimensional generalizations of the above innocent-looking statement gave rise to a large body of interesting research in projective geometry. 
Realizing that the condition 
\[
 \deg_X L \dgeq 2g(X)
\]
(or in other words $\deg_X (L-K_X) > 0$), can be reinterpreted as $L= K_X+ mA$ for an ample divisor $A$ on $X$ where $m\geq \dim X +1$, Fujita  proposed the following 
conjectures: let $X$ be an $n$-dimensional smooth projective variety, $A$ an ample line bundle on $X$. Then 
\begin{enumerate}
 \item $K_X+mA$ is globally generated whenever $m\geq \dim X + 1$,
 \item $K_X+mA$ is very ample provided $m\geq \dim X+2$. 
\end{enumerate}
Fujita's conjectures have been completely proven by Reider in the surface case \cite{Reider}; in higher dimensions the global generation statement has been verified by 
Ein--Lazarsfeld \cite{EL} in dimension three, Kawamata and Helmke in dimension four, and very recently by \cite{YZh} in dimension five. There is a very interesting  connection to Bridgeland stability conditions put forward by Arcara--Bertram \cite{AB}, and explored further by \cite{BBMT}. 

The case of $X=\PP^n$ and $A$ the hyperplane divisor shows that the conjectured bounds are optimal in general. 
\end{remark}

\begin{exer}$\star$
 Prove Fujita's conjectures in the case when $A$ itself is globally generated. 
\end{exer}

\begin{eg}[Elliptic curves]
Let $X$ be an elliptic curve (a smooth projective curve of genus one), then $\deg L\geq 2$ implies that $L$ is globally generated, while $L$ is very ample 
if $\deg L\geq 3$. Note that these bounds are sharp: if $\deg L=2$, then $\phi_L\colon X\to \PP^1$ is not very ample; if $\deg L=1$, then $L$ is not globally 
generated as 
\[
 \hh{0}{X}{L} \equ \hh{0}{X}{L} - \hh{0}{X}{-L} \equ \hh{0}{X}{L} - \hh{1}{X}{L} \equ \chi(X,L) \equ 1
 \]
shows that (up to multiplication by a constant)  $L$ has a single non-zero global section. 
\end{eg}

The way ampleness and semi-ampleness are defined makes it plausible that it is a good idea to look at all multiples of a given divisor $L$. 

\begin{goal} 
 Given $X$ and $L$, study all global sections of all multiples of $L$ at the same time. 
\end{goal}
 
\begin{remark}  
To achieve the goal above we will need an organizing principle; this will come from order of vanishing of global sections along a complete flag $\ybul$ 
of subvarieties of $X$.  This will lead to a collection of convex bodies $\dyl\subseteq\RR^n$ that will provide a universal numerical invariant for sufficiently 
positive line bundles. 
\end{remark}

\begin{constr}[Newton--Okounkov bodies]\label{constr:NO-bodies}
 Let $X$ be a projective variety of dimension $n$, $L$ a Cartier divisor on $X$, $0\neq s\in \hxl$. Fix in addition a complete flag of (irreducible) 
 subvarieties 
 \[
  \ybul\ :\ X \equ Y_0 \dsupset Y_1 \dsupset \ldots\ \dsupset Y_n 
 \]
subject to the conditions that 
\begin{enumerate}
 \item $\codim_{X} Y_i = i$ for all $0\leq i\leq n$; 
 \item $Y_i$ is smooth at the point $Y_n$ for all $0\leq i\leq n$.
\end{enumerate}
Such a flag will be called \emph{admissible}.

Set 
\[
 \nu_1(s) \deq \ord_{Y_1}(s)\ ,\ \text{and}\ \ s_1 \deq \dfrac{s}{f_1^{\nu_1}}|_{Y_1}\ ,
\]
where $f_1$ is a local equation of $Y_1$ in $Y_0$ in a neighbourhood of $Y_n$. Note that since $Y_0=X$ is smooth at $Y_n$, the latter will have at least an open
neighbourhood $U_0$ in which $Y_1$ will be a Cartier divisor. The order of vanishing along $Y_1$ is measured at its generic point, hence as far as determining the value 
of $\nu_1(s)$ is concerned, we can safely restrict our attention to $U_0$.

Next, set 
\[
 \nu_2(s) \deq \ord_{Y_2}(s_1)\ ,\ \text{and}\ s_2\deq \dfrac{s_1}{f_2^{\nu_2(s)}}\big|_{Y_2}\ ,
\]
and so on. This way we associate to $s$ an $n$-tuple 
\[
 s \mapsto \nu_{\ybul}(s) \deq (\nu_1(s),\dots,\nu_n(s)) \in \NN^n\ .
\]
With this notation we define the \emph{Newton--Okounkov body of $L$ with respect to the admissible flag $\ybul$} as 
\[
 \dyl \deq \text{closed convex hull of}\ \bigcup_{m=1}^{\infty} \dfrac{1}{m}\st{\nu_{\ybul}\mid 0\neq s\neq \HH{0}{X}{mL}}\ \subseteq \RR^n\ . 
\]
\end{constr}

\begin{rmk}[Flags centered at a given point]
We say that a flag $\ybul$ is centered at a point $x\in X$ if $Y_n=\st{x}$. We will see in Subsections 2.4 and 2.5, the totality of flags centered at a certain point $x\in X$ determine local positivity of line bundles at $x$ to a large degree. 	
\end{rmk}

\begin{eg}[Complete intersection flags]
Let $X$ be a projective variety of dimension $n$, $L$ a very ample line bundle (ample and base-point free in dimension at least two would suffice). By Bertini's theorem  general elements  $H_1,\dots,H_n\in |L|$ give rise to an admissible flag $\ybul$ via the following construction:
\[
Y_i \deq \begin{cases} H_1\cap\ldots\cap H_i & \text{if $0\leq i\leq n-1$,} \\ \text{a point in $H_1\cap\dots\cap H_n$} 
& \text{ if $i=n$.} \end{cases}
\]
Note that we cannot just set $Y_n$ to be equal to $H_1\cap\ldots\cap H_n$, since this latter consists in more than one point in general.   	
\end{eg}

\begin{exer}
Provide the details in the previous example. 
\end{exer}

\begin{remark}[History]
Newton--Okounkov bodies in the current context and generality were first introduced by Kaveh--Khovanskii \cite{KKh} and Lazarsfeld--Musta\c t\u a \cite{LM} at about 
the same time. Both works were building on earlier ideas of Okounkov \cites{Ok1,Ok2} who introduced these bodies in a representation-theoretic context. 
\end{remark}

\begin{remark}[More general point of view]
Construction~\ref{constr:NO-bodies} consists of two parts: first one build a rank $n$ valuation of the function field $\CC(X)$ using the flag $\ybul$ and then 
uses the arising normalized valuation vectors to construct the Newton--Okounkov body associated to the Cartier divisor $L$. 

In fact the above process goes through verbatim if we start directly with a rank $n$ valuation $v$ of $\CC(X)$, one can simply define
\[
 \Delta_v(L) \deq \text{closed convex hull of } \bigcup_{m=1}^{\infty} \dfrac{1}{m}\st{v(s)\mid 0\neq s\in \HH{0}{X}{mL}}\ \subseteq \RR^n\ .
\]
The added flexibility lets one consider flags on birational models of $X$, or even more generally, to associate convex bodies to graded subalgebras of $\CC(X)$.  This is the point of view taken in \cites{KKh,Roe,CFKLRS} for instance. 

On the other hand, at the expense of changing the model we work on, we can be content with working with valuations arising from flags. According to \cite{CFKLRS}, Proposition 2.8 and Theorem 2.9, 
for every rank $n$ valuation $v$ of the function field $\CC(X)$ of a smooth variety $X$ of dimension $n$ there exists a proper birational morphism 
$\pi\colon \widetilde{X}\to X$ and a flag $\ybul$ on $\widetilde{X}$ such that $\nu_{\ybul}$ and $v$ coincide. 
\end{remark}

The territory in which Newton--Okounkov bodies are most useful is the cone of divisors with 'sufficiently many sections'. 

\begin{definition}[Big divisors]
Let $X$ be an $n$-dimensional  projective variety, $L$ a Cartier divisor on $X$. We say that $L$ is \emph{big} if 
\[
\vl{X}{L} \deq  \limsup_{m\to\infty} \dfrac{ \hh{0}{X}{mL}}{m^n/n!} \ > \ 0\ .
\]
The quantity $\vl{X}{L}$ is called the \emph{volume of $L$}. 
\end{definition}

\begin{thm}[Characterizations  of big divisors]
Let $X$ be a projective variety, $L$ a Cartier divisor on $X$. Then the following are equivalent. 
\begin{enumerate}
 \item $L$ is big.
 \item There exists $m\gg 0$ with the property that the rational map $\phi_{mL} \colon X \rat \PP$
is birational onto its image.
\item  There exists a natural number  $m_0=m_0(L)$ such that $\phi_{mL}\colon X\rat\PP$ is birational onto its image for all $m\geq m_0$. 
\item (Kodaira's lemma) There exists a natural number $m$ such that $mD\simeq A+E$ where $A$ is an ample, $E$ is an effective integral Cartier divisor. 
\end{enumerate}
\end{thm}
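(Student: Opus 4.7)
The plan is to close the cycle $(3) \Rightarrow (2) \Rightarrow (1) \Rightarrow (4) \Rightarrow (2)$, and then upgrade the last step to $(4) \Rightarrow (3)$ via a small semigroup argument on the set of $m$ with $h^0(X, mL) \neq 0$.

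The implication $(3) \Rightarrow (2)$ is immediate. For $(2) \Rightarrow (1)$, if $\phi_{mL} \colon X \dashrightarrow Y \subseteq \PP^N$ is birational onto its image, then $\dim Y = n$, and pulling back sections of $\sO_Y(k)$ yields an injection $H^0(Y, \sO_Y(k)) \hookrightarrow H^0(X, \sO_X(kmL))$ for every $k \geq 1$. Since $h^0(Y, \sO_Y(k))$ grows like $(\deg Y)\, k^n/n!$, this forces $\vol(L) \geq \deg(Y)/m^n > 0$.

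The heart of the argument is Kodaira's lemma, $(1) \Rightarrow (4)$. I would fix a very ample effective divisor $A$ on $X$ and exploit the short exact sequence
\[
0 \lra \sO_X(mL - A) \lra \sO_X(mL) \lra \sO_A(mL|_A) \lra 0,
\]
which yields $h^0(X, mL - A) \geq h^0(X, mL) - h^0(A, mL|_A)$. Bigness of $L$ supplies a lower bound $h^0(X, mL) \geq c \, m^n$ for a positive constant $c$ and infinitely many $m$, while the standard polynomial upper bound for twists of a coherent sheaf on the $(n-1)$-dimensional projective subvariety $A$ gives $h^0(A, mL|_A) = O(m^{n-1})$. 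For $m$ large enough, this forces $h^0(X, mL - A) > 0$, so $mL - A \sim E$ for some effective $E$, yielding $mL \sim A + E$. The main obstacle is the polynomial bound on the restriction; this is essentially the statement that any coherent sheaf on a projective variety of dimension $d$ has $h^0$ of its twists growing at most like $m^d$, proved by induction on $d$ via a further hyperplane restriction inside $A$.

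For $(4) \Rightarrow (2)$, with $mL \sim A + E$, $A$ ample and $E$ effective, pick $k$ such that $kA$ is very ample. Multiplication by the distinguished section $s_E^k \in H^0(X, \sO_X(kE))$ produces an injection $H^0(X, \sO_X(kA)) \hookrightarrow H^0(X, \sO_X(kmL))$, so on the open set $X \setminus \Supp(E)$ the Kodaira map $\phi_{kmL}$ factors through $\phi_{kA}$, which is already a closed embedding; this forces $\phi_{kmL}$ to be birational onto its image. To upgrade to $(4) \Rightarrow (3)$, I would use that $L$ big implies the semigroup $\{j \geq 0 : h^0(X, jL) > 0\}$ is cofinite in $\NN$, since products of sections arising from the Kodaira decomposition populate all sufficiently large degrees. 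For each large $p$ one can then write $pL \sim kA + F$ with $kA$ very ample and $F$ effective, and repeat the section-multiplication trick to obtain birationality of $\phi_{pL}$.
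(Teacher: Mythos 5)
Your proposal is correct and follows the standard line of argument found in \cite{PAGI}*{Section 2.2}, which the paper itself cites for this theorem rather than supplying an independent proof. The key steps --- the dimension count for $(2)\Rightarrow(1)$, the exact-sequence/polynomial-growth argument for Kodaira's lemma $(1)\Rightarrow(4)$, and the multiplication-by-sections trick for $(4)\Rightarrow(2)$ followed by a semigroup/cofiniteness upgrade to $(3)$ --- are exactly the ingredients of the reference proof. One small polish for $(4)\Rightarrow(3)$: rather than invoking a vague ``Kodaira decomposition populates all degrees,'' it is cleanest to write $p = qm + r$ with $0\le r<m$ and note $pL \sim (qA + rL) + qE$; for $q\gg 0$ (uniformly over the finitely many residues $r$) the class $qA+rL$ is very ample and effective, making the semigroup $\{p : h^0(pL)>0\}$ cofinite and letting you repeat the $(4)\Rightarrow(2)$ argument for each large $p$.
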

\begin{proof}
All results are proven in Section 2.2 of \cite{PAGI}. 
\end{proof}

\begin{remark}
 Note that it follows from the bigness of $L$ the the sequence $n! \hh{0}{X}{mL}/m^n$ in fact converges to $\vl{X}{L}$. 
\end{remark}

\begin{thm}[Properties of the volume]\label{thm:volume}
Let $X$ be a projective variety of dimension $n$, $L$ a Cartier divisor on $X$. Then the following hold.
 \begin{enumerate}
  \item $\vl{X}{aL}=a^n\cdot \vl{X}{L}$ for all natural numbers $a\in\NN$.
  \item If $L_1\equiv L_2$, then $\vl{X}{L_1}=\vl{X}{L_2}$.
  \item The volume induces a continuous log-concave function $\vol_X\colon N^1(X)_\RR \to \RR_{\geq 0}$. 
 \end{enumerate}
\end{thm}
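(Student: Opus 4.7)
Part $(1)$ is immediate from the definition by reindexing $k = ma$, using the remark before the theorem that the defining sequence converges for big $L$ (and both sides vanish otherwise):
\[
 \vl{X}{aL} \equ \limsup_{m\to\infty} \frac{\hh{0}{X}{maL}}{m^n/n!} \equ a^n \limsup_{k\to\infty} \frac{\hh{0}{X}{kL}}{k^n/n!} \equ a^n \vl{X}{L}.
\]

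The main technical obstacle is $(2)$; my plan is to invoke the comparison lemma \cite{PAGI}*{Lemma 2.2.42}, whose content is an asymptotic estimate
\[
 \bigl|\hh{0}{X}{mL_1} - \hh{0}{X}{mL_2}\bigr| \equ O\bigl(m^{n-1}\bigr)
\]
for $L_1 \equiv L_2$. The proof of that lemma constructs, via Kleiman's numerical criterion for ampleness and Serre vanishing, effective divisors whose classes are controlled perturbations of $m(L_2 - L_1)$, and uses multiplication by their sections to compare section spaces up to bounded ample translates. Dividing by $m^n/n!$ and letting $m \to \infty$ gives $\vl{X}{L_1} = \vl{X}{L_2}$.

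For part $(3)$, with $(1)$ and $(2)$ in hand $\vl{X}{\cdot}$ descends to a well-defined function on $N^1(X)_\QQ$. Log-concavity then follows from the Lazarsfeld--Musta\c t\u a / Kaveh--Khovanskii identity (a subsequent theorem of the paper)
\[
 \vl{X}{L} \equ n! \cdot \vol_{\RR^n}\!\bigl(\NO{\ybul}{L}\bigr),
\]
combined with the Minkowski-sum inclusion $\NO{\ybul}{L_1} + \NO{\ybul}{L_2} \dsubseteq \NO{\ybul}{L_1 + L_2}$ --- a direct consequence of $\nu_{\ybul}$ being a rank-$n$ valuation, so $\nu_{\ybul}(s \otimes t) = \nu_{\ybul}(s) + \nu_{\ybul}(t)$ on products of sections --- and the classical Brunn--Minkowski inequality for convex bodies, giving
\[
 \vl{X}{L_1 + L_2}^{1/n} \dgeq \vl{X}{L_1}^{1/n} + \vl{X}{L_2}^{1/n}.
\]
Together with $(1)$, superadditivity makes $\vl{X}{\cdot}^{1/n}$ positively $1$-homogeneous and concave on $N^1(X)_\QQ$; as a finite-valued concave function on a $\QQ$-dense subspace, it extends uniquely and continuously to $N^1(X)_\RR$. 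The extension remains $1$-homogeneous, concave, and agrees with $\vl{X}{\cdot}^{1/n}$ on rational classes, completing $(3)$. The heart of the difficulty thus sits in $(2)$: parts $(1)$ and $(3)$ are either routine manipulations of the defining $\limsup$ or, once the Lazarsfeld--Musta\c t\u a identity is in hand, elegant consequences of convex geometry.
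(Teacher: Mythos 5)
Parts (1) and (2) are fine: the reindexing for (1) is legitimate since the defining sequence converges for big divisors (and both sides vanish otherwise), and your invocation of the comparison lemma from \cite{PAGI} is exactly the standard route for (2), which is what the paper defers to. The gap is in (3). You assert that superadditivity makes $\vl{X}{\cdot}^{1/n}$ concave on all of $N^1(X)_\QQ$, but the Brunn--Minkowski argument via $\NO{\ybul}{L_1}+\NO{\ybul}{L_2}\subseteq\NO{\ybul}{L_1+L_2}$ only yields $\vl{X}{L_1+L_2}^{1/n}\geq\vl{X}{L_1}^{1/n}+\vl{X}{L_2}^{1/n}$ when both classes are big, so that both bodies are full-dimensional; taking $L_2=-A$ with $A$ ample already shows the inequality fails off the big cone, since $\vl{X}{D-A}<\vl{X}{D}$ while $\vl{X}{-A}=0$. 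Restricting to $\Bbig(X)$ does give concavity, and hence continuity on the \emph{open} big cone, which is precisely the content of \cite{LM}*{Corollary 4.12}.

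But the theorem asserts continuity on all of $N^1(X)_\RR$, and the remaining point — that $\vl{X}{D_k}\to 0$ as big classes $D_k$ approach a pseudo-effective non-big class — does \emph{not} follow from concavity on the open cone. A concave, $1$-homogeneous, nonnegative function on an open convex cone need not tend to $0$ at the boundary: $(x,y)\mapsto x$ on $\{x>0,\,y>0\}$ is such a function whose extension by $0$ is discontinuous along $\{y=0\}$. This boundary behaviour is exactly what the paper's preceding remark flags as requiring ``a more elaborate argument'' (locally uniform continuity on $N^1(X)_\QQ$); in \cite{PAGI}*{Theorem 2.2.44} it is established by the same section-comparison technique you cite for (2), not by convex geometry. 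Your plan for (3) needs that ingredient to close.
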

\begin{proof}
The original proofs are found in \cite{PAGI}*{Section 2.3}. For big divisor classes most of the listed properties follow immediately from the formal properties of Newton--Okounkov bodies. 
\end{proof}

\begin{remark}[Volume and bigness for rational and real divisors]
	As numerically trivial divisors have equal volumes, it makes sense define the volume of an element of $N^1(X)$ as the common value of $\vl{X}{D}$ of any representative. For an element $\alpha\in N^1(X)_\QQ$, let $m\in\NN$ be such that $m\alpha$ is 
	integral. Set 
	\[
	\vl{X}{\alpha} \deq \frac{1}{m^n}\cdot \vl{X}{m\alpha}\ .
	\]
	A quick check will convince that the homogeneity of the volume implies that this way we obtain a well-defined function on $N^1(X)_\RR$. 
	
	A more elaborate argument will lead to the observation  that $\vl{X}{\alpha}$ is a locally uniformly continuous function on $N^1(X)_\QQ$, hence can be extended uniquely to $N^1(X)_\RR$ in a continous fashion.  
\end{remark}

	Once this is in place, we can define bigness for elements $\alpha\in N^1(X)_\RR$. 
	
\begin{defn}
With notation as above, an $\RR$-divisor  class $\alpha\in N^1(X)_\RR$ is called \emph{big}, if $\vl{X}{\alpha}>0$
\end{defn}	
 
\begin{exer}
Prove  the following extension of Kodaira's Lemma to big divisor classes: a divisor class $\delta\in N^1(X)_\RR$ is big precisely if it can be written as $\delta = \alpha + \epsilon$, where $\alpha$ is ample, and $\epsilon$ is effective (note that we have not yet defined what ample or effective mean for an $\RR$-divisor class).  
\end{exer} 

\begin{remark}[Volume of ample divisors]
The asymptotic version of the Riemann--Roch theorem says that 
\[
\chi(X,\sOX(mL)) \equ \frac{(L^n)}{n!}\cdot m^n + O(m^{n-1})
\] 
for a divisor $L$ on $X$.  If $L$ is ample, then  Serre vanishing implies 
\[
H^1(X,\sOX(mL)) \equ 0\ \ \ \text{for all $m\gg 0$,}
\]
which yields  $\vl{X}{L}=(L^n)$.  
\end{remark}

\begin{exer}
 Compute the volume function for the blow-ups of $\PP^2$ in one and two points. 
\end{exer}

\begin{remark}[Higher asymptotic cohomology]
Analogously to the volume of a divisor, one can measure the asymptotic rate of growth of higher cohomology groups of divisors by considering 
\[
 \widehat{h}^i(X,L) \deq \limsup_{m\to \infty} \dfrac{ \hh{i}{X}{\sO_X(mL)}}{m^n/n!}\ .
\]
The asymptotic cohomology functions defined this way enjoy many of the formal properties of the volume; in particular, they extend to continuous functions $N^1(X)_\RR\to\RR$. Their theory is developed 
in \cite{ACF}, some applications are given in \cites{dFKL,Dem1,Dem2}.

As opposed to the situation for volumes of divisors, at the time of writing it is an open question whether the upper limits $\widehat{h}^i(X,L)$ are in fact limits or not. Equally missing is a convex geometric 
interpretation of $\widehat{h}^i(X,L)$, which would likely yield  a positive answer to the previous question. 
\end{remark}

\begin{thm}[Formal properties of Newton--Okounkov bodies]\label{thm:NO formal}
 Let $X$ be a projective variety of dimension $n$, $L$ a line bundle on $X$. 
 \begin{enumerate}
  \item The subset $\dyl\subseteq \RR^n$ is a compact, convex, and has non-empty interior if $L$ is big.
  \item  $n!\cdot\vl{\RR^n}{\dyl} \equ \vl{X}{L}$, in particular, the volume of the Newton--Okounkov body $\dyl$ is independent of the 
  choice of the flag $\ybul$. 
  \item $\NO{\ybul}{aL} = a\cdot \dyl \subseteq \RR^n$ for all $a\in\NN$. 
  \item $\NO{\ybul}{\ }$ is invariant with respect to numerical equivalence.
 \end{enumerate}
\end{thm}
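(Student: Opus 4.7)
The plan is to prove the four items in the order $(3)$, the convexity and compactness halves of $(1)$, $(2)$, and finally the non-empty interior clause of $(1)$ together with $(4)$, since each step feeds the next.

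Item $(3)$ is essentially bookkeeping: the generating set of $\NO{\ybul}{aL}$ consists of vectors $\frac{1}{m}\nu_{\ybul}(s)$ with $s\in\HH{0}{X}{m(aL)}=\HH{0}{X}{(am)L}$, and rewriting $\frac{1}{m}=a\cdot\frac{1}{am}$ identifies this with $a$ times the cofinal subunion of $\dyl$ indexed by multiples of $a$. Closed convex hulls preserve the equality. For the convexity and compactness halves of $(1)$, the crucial structural fact is that $\nu_{\ybul}$ comes from a rank-$n$ valuation of $\CC(X)$, so $\nu_{\ybul}(s\cdot t)=\nu_{\ybul}(s)+\nu_{\ybul}(t)$ for nonzero $s,t$. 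Hence if $\alpha=\frac{1}{m}\nu_{\ybul}(s)$ and $\beta=\frac{1}{p}\nu_{\ybul}(t)$ lie in the generating set, then $s\cdot t\in\HH{0}{X}{(m+p)L}$ contributes the convex combination $\frac{m}{m+p}\alpha+\frac{p}{m+p}\beta$, and passing to the closure yields full convexity. For compactness, I would fix a very ample $H$ with $H-L$ effective, so that every nonzero section of $mL$ embeds into $\HH{0}{X}{mH}$; each coordinate $\nu_i(s)/m$ is then bounded by an intersection-theoretic constant depending only on $H$ and the flag, by an induction that successively restricts to the smooth flag stratum $Y_j$.

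Item $(2)$ is the heart of the matter and the principal technical obstacle. Following Lazarsfeld--Musta\c t\u a and Kaveh--Khovanskii, I would organize the valuation vectors into the graded semigroup
\[
\Gamma(L)\ \deq\ \bigcup_{m\geq 0}\ \{m\}\times \nu_{\ybul}\bigl(\HH{0}{X}{mL}\setminus\{0\}\bigr)\ \subseteq\ \NN\times\NN^n,
\]
and prove two ingredients. First, $\nu_{\ybul}$ is injective on $\HH{0}{X}{mL}$ modulo scalars, so that $\#\Gamma(L)_m=\hh{0}{X}{mL}$; admissibility of the flag is essential here, since the iterated restrict-and-dehomogenize procedure defining $\nu_{\ybul}$ requires the $Y_i$ to be Cartier divisors in $Y_{i-1}$ near $Y_n$. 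Second, Khovanskii's theorem on semigroups of integral points in convex cones, applied to $\Gamma(L)$, shows that $\Gamma(L)_m$ asymptotically fills out $\NN^n\cap(m\cdot\dyl)$ to leading order, provided $\Gamma(L)$ generates a full-dimensional cone; verifying this generation property for big $L$ is the subtlest point and relies on Kodaira's lemma to manufacture enough sections in linearly independent directions. Standard lattice-point equidistribution then yields $\hh{0}{X}{mL}\sim m^n\,\vl{\RR^n}{\dyl}$, and hence $\vl{X}{L}=n!\,\vl{\RR^n}{\dyl}$.

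The non-empty interior assertion in $(1)$ is then immediate from $(2)$: bigness of $L$ gives $\vl{X}{L}>0$, hence $\vl{\RR^n}{\dyl}>0$, and a compact convex subset of $\RR^n$ with positive Lebesgue measure has non-empty interior. For $(4)$, I would slice $\dyl$ perpendicular to the first coordinate: the slicing lemma identifies $\vl{\RR^{n-1}}{\dyl\cap\{x_1=t\}}$ (up to a factorial) with the restricted volume of $L-tY_1$ along $Y_1$. Restricted volumes on big classes are numerical invariants, via Kodaira's lemma together with the continuity of the volume function from Theorem~\ref{thm:volume}, so all horizontal slice-volumes of $\dyl$ depend only on $[L]\in N^1(X)$. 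Since a convex body is determined by its iterated slice-volumes, inducting on $\dim X$ yields $\NO{\ybul}{L_1}=\NO{\ybul}{L_2}$ whenever $L_1\equiv L_2$.
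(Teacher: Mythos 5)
Your treatment of $(1)$--$(3)$ tracks the Lazarsfeld--Musta\c t\u a / Kaveh--Khovanskii argument to which the paper simply refers, and is essentially sound. Two imprecisions worth correcting: $\nu_{\ybul}$ is \emph{not} injective on $\HH{0}{X}{mL}$ modulo scalars --- two sections can share a valuation vector without being proportional; what is true, and what gives $\#\Gamma(L)_m = \hh{0}{X}{mL}$, is that the filtration by $\nu_{\ybul}$-value has one-dimensional graded pieces. And in $(3)$, the identity $\tfrac{1}{p}\nu_{\ybul}(s)=\tfrac{1}{ap}\nu_{\ybul}(s^{\otimes a})$ is what shows that the two generating sets actually coincide; cofinality of the indexing alone does not yield the inclusion $a\cdot\dyl\subseteq\NO{\ybul}{aL}$. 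Both are easy repairs.

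The argument for $(4)$ has a genuine gap: a convex body is not determined by its iterated slice-volumes. Already in $\RR^2$, the triangles with vertices $(0,0),(1,0),(0,1)$ and $(0,0),(1,0),(1,1)$ have the same width $1-t$ along every slice $\{x_1=t\}$, yet are distinct. This is exactly the phenomenon visible in Theorem~\ref{thm:LM surfaces}: the slice-length $(P_{D-tC}\cdot C)=\beta(t)-\alpha(t)$ is a numerical quantity, but the polygon also depends on the lower envelope $\alpha(t)=\ord_x N_{D-tC}|_C$, which the slice-volume does not see. So even granting that the restricted volumes in question are numerical invariants --- itself a theorem of \cite{ELMNP2} requiring $Y_1\not\subseteq\Bplus(L)$, not a consequence of Kodaira's lemma and continuity of $\vol_X$ --- you cannot conclude $\NO{\ybul}{L_1}=\NO{\ybul}{L_2}$. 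What \cite{LM} actually do is assemble the valuation vectors over \emph{all} integral big divisors into a single sub-semigroup of $\NN^n\times N^1(X)$ and take its closed convex cone (the global Okounkov body); numerical invariance of the fibers, together with the extension to big $\RR$-classes, falls out of that construction. Once invariance is established, Proposition~\ref{prop:definition} is the cleanest way to \emph{see} it, since its right-hand side manifestly depends only on the numerical class --- but that description already presupposes the invariance and so cannot serve as its proof.
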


\begin{proof}
 We refer the reader to \cite{LM}. 
\end{proof}

\begin{remark}
 Part $(3)$ implies that $\NO{\ybul}{L}$ makes sense when $L$ is a $\QQ$-divisor, while $(4)$ yields that the notion descends to numerical equivalence classes. These claims put together let us define 
 $\NO{\ybul}{\alpha}$ for an arbitrary $\QQ$-divisor class $\alpha$. 
 
 As it turns out, one can go even further.  Lazarsfeld--Musta\c t\u a \cite{LM} use a continuity argument to extend $\NO{\ybul}{\alpha}$ to arbitrary big classes in $N^1(X)_\RR$. It is important to point out 
 that their proof does not imply continuity of Newton--Okounkov bodies as one converges to the boundary of $\Bbig(X)$,  not in the least since this latter is false (see \cite{CHPW1}, and Remark~\ref{rem:nonbig}
 below about Newton--Okounkov bodies of non-big divisors).
\end{remark}

\begin{remark}[Newton--Okounkov bodies for non-big divisors]\label{rem:nonbig}
 It is immediate from their definition that Newton--Okounkov bodies make sense for arbitrary divisors, nevertheless, most of the good properties are valid only for big ones. 
 
 In \cites{CHPW1,CHPW2} the authors study Newton--Okounkov bodies for not necessary big divisor classes. An important point to make is that for non-big classes one has two a priori different options: one can make use of the original definition of 
 Newton--Okounkov bodies (this is called a 'valuative Newton--Okounkov body' by the authors), or one can try to force 
 continuity and use limits of Newton--Okounkov bodies of big classes (so-called 'limiting Newton--Okounkov bodies'). 
 
 One of the lessons of \cite{CHPW1} is that even in dimension two, these two notions might and will differ, in particular, 
 as mentioned above, forming Newton--Okounkov bodies is not a continuous operation as one converges towards non-big classes. 
\end{remark}

\begin{rmk}
 One of the first interesting applications of Newton--Okounkov bodies was that they give a simple and formal explanation
 for the continuity of the volume function for big divisors, its log-concavity \cite{LM}*{Corollary 4.12}, 
 and the existence of Fujita approximations \cite{LM}*{Theorem 3.5}. 
\end{rmk}

An interesting turn of events is that the collection of all Newton--Okounkov bodies forms a 'universal numerical invariant' in the following sense. 

\begin{thm}[Jow \cite{Jow}]\label{thm:Jow}
 Let $X$ be a projective variety, $L$ and $L'$ big (integral) Cartier divisors on $X$. If $\NO{\ybul}{L}=\NO{\ybul}{L'}$ for every admissible flag $\ybul$, then $L\equiv L'$. 
\end{thm}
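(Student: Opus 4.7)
My plan is to argue by induction on $n=\dim X$, exploiting how Newton--Okounkov bodies behave under restriction to a very ample divisor.

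The base case $n=1$ is immediate. On a curve, every admissible flag is of the form $X\supset\{p\}$, and for a big line bundle the associated Newton--Okounkov body is just the interval $[0,\deg L]$. Hence the hypothesis forces $\deg L=\deg L'$, which on a curve is exactly $L\equiv L'$.

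For the inductive step I would invoke the slicing theorem of Lazarsfeld--Musta\c t\u a (\cite{LM}, Theorem 4.26). If $Y_1\subset X$ is a smooth, very ample divisor appearing at the top of an admissible flag $\ybul$, the left-most slice $\Delta_{\ybul}(L)\cap(\{0\}\times\RR^{n-1})$ agrees with the Newton--Okounkov body $\Delta_{\ybul|_{Y_1}}(L|_{Y_1})$ on $Y_1$ with respect to the truncated flag $Y_1\supset Y_2\supset\cdots\supset Y_n$, provided the restriction $H^0(X,\sO_X(mL))\to H^0(Y_1,\sO_{Y_1}(mL))$ is asymptotically surjective (which holds for a general divisor in a sufficiently positive linear series). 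Given an arbitrary very ample $A$ on $X$, Bertini supplies a general $H\in|A|$ with $L|_H$ and $L'|_H$ still big on $H$, and every admissible flag on $H$ arises by truncating an admissible flag on $X$ that begins with $Y_1=H$. The assumed equality $\Delta_{\ybul}(L)=\Delta_{\ybul}(L')$ therefore forces $\Delta_{Y'_\bullet}(L|_H)=\Delta_{Y'_\bullet}(L'|_H)$ for every admissible flag $Y'_\bullet$ on $H$, and the induction hypothesis yields $L|_H\equiv L'|_H$ on $H$.

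To conclude, I would argue as follows. When $n\geq 3$, choosing $A$ sufficiently positive lets us invoke the Lefschetz hyperplane theorem to obtain an injective restriction $N^1(X)_\RR\hookrightarrow N^1(H)_\RR$, so $L|_H\equiv L'|_H$ implies $L\equiv L'$ directly. For $n=2$, $H$ is a curve and the relation $L|_H\equiv L'|_H$ is just $L\cdot H=L'\cdot H$; letting $A$ range over all very ample classes, whose $\RR$-span is all of $N^1(X)_\RR$, we recover $L\equiv L'$ by linearity.

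The main obstacle is the careful application of the slicing theorem at the boundary value $t=0$: one has to verify that the asymptotic restriction map on global sections is surjective, so that the leftmost slice is genuinely the ordinary Newton--Okounkov body of $L|_{Y_1}$ rather than only a restricted one, and one has to guarantee that $L|_H$ and $L'|_H$ remain big in order to feed them into the inductive hypothesis. A secondary concern is arranging the $N^1$-level injectivity of the restriction, which in low dimensions must be traded for the varying-$A$ argument above.
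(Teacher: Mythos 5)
Your base case is correct, and the final step (Lefschetz injectivity for $n\geq 3$, degrees against a spanning family of ample classes for $n=2$) is also sound, but the inductive step has a genuine gap that you name in passing but cannot actually close.

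The leftmost slice $\Delta_{Y_\bullet}(L)\cap(\{0\}\times\RR^{n-1})$ is the Newton--Okounkov body of the \emph{graded restricted linear series} $\bigl\{\mathrm{im}\bigl(H^0(X,mL)\to H^0(H,mL|_H)\bigr)\bigr\}_m$, not of the line bundle $L|_H$ on $H$. These agree precisely when the restriction maps are asymptotically surjective, i.e.\ when $\vol_{X|H}(L)=\vol_H(L|_H)$; and this fails for big, non-nef $L$, no matter how positive and generic $H$ is. For a concrete instance, let $X=\Bl_P\PP^2$ with exceptional divisor $E$ and hyperplane pullback $H_0$, and take $L=H_0+2E$. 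This is big with Zariski decomposition $P_L=H_0$, $N_L=2E$. For a general $H\in|3H_0-E|$ one has $\vol_{X|H}(L)=(P_L\cdot H)=3$, while $\vol_H(L|_H)=(L\cdot H)=5$: every section of $mL$ is forced to vanish to order at least $2m$ at the $(E\cdot H)$ points of $E\cap H$, so the restriction maps never become asymptotically surjective. Thus the $t=0$ slice records only the restricted volume, a strictly weaker invariant than $\Delta_{Y'_\bullet}(L|_H)$, and your inductive hypothesis cannot be fed with it. (The failure is systematic: the restricted volume along $H$ sees only the positive part of $L$, whereas the full Newton--Okounkov body distinguishes $L$ from $P_L$ through its \emph{position} in $\RR^n$, i.e.\ through the asymptotic orders of vanishing $\ord_{Y_i}\|L\|$ encoded by the minimal coordinates -- data your slicing argument discards entirely.)

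Jow's actual argument in \cite{Jow} does not induct by restricting to a hyperplane. It extracts numerical data directly from restricted volumes along \emph{very general complete intersection curves}, using the full geometry of where the Newton--Okounkov body sits in $\RR^n$ rather than only its leftmost slice; this is how the negative-part information that the slicing argument loses gets recovered. If you want to salvage an inductive approach you would first have to formulate and prove the induction for restricted Newton--Okounkov bodies of graded linear series (and a matching base case on curves), which is a genuinely different and more delicate statement than the one you invoke.
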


\begin{rmk}
Along the lines of \cite{Jow}, Ro\'e \cite{Roe} introduced the concept of local numerical equivalence on surfaces, and studied its connection with Newton--Okounkov bodies.
\end{rmk}

As a consequence, morally speaking every property or invariant of divisors that is invariant with respect to numerical equivalence should be observable from the function
\[
 \Delta(L) \colon \ybul \mapsto \NO{\ybul}{L}\ .
\]
This motivates the question that will guide our investigations in these notes. 

\begin{question}
How can we read off positivity properties of line bundles from the collection of its Newton--Okounkov bodies? In particular, how do we decide if the given line bundle is ample? 
\end{question}

\begin{eg}[Case of curves, \cite{LM}*{Example 1.13}]
Let $X$ be a smooth projective curve of genus $g$. An admissible flag $\ybul$ then consists of $X$ itself and the choice of a point $P\in X$. For a $\QQ$-effective divisor $L$ on $X$ 
and a natural number $m$ we have 
\begin{eqnarray*}
 \im \nu_{\ybul}  & \colon & \HH{0}{X}{\sO_X(mL)} \lra \NN \\ & = & \text{ the vanishing sequence of $mL$ at $P$} \\
 & = & \text{ the complement of the Weierstra\ss\ gaps sequence of $mL$ at $P$}\ .
\end{eqnarray*}
From the Riemann--Roch theorem we obtain that for every $0\leq i\leq \deg(mL) - 2g-1$ there exists a global section $s\in\HH{0}{X}{\sO_X(mL)}$ such that $\ord_P(s)=i$. 
This leads to 
\[
 \dyl \equ [0,\deg L]\ .
\]
\end{eg}

\begin{defn}[Valuative point]
Let $X$ be a projective variety, $\ybul$ an admissible flag, $L$ a Cartier divisor on $X$. We say that a point $v\in \dyl\cap\QQ^n\subseteq\RR^n$ is \emph{valuative}, if there is a 
section $s\in\HH{0}{X}{\sO_X(mL)}$ for some $m\geq 1$ such that $v=\tfrac{1}{m}\cdot\nu_{\ybul(s)}$. 
\end{defn}

\begin{exer}
 Show that all rational points in the interior of $\dyl$ are valuative. 
\end{exer}

\begin{exer}
 Let $X$ be a smooth curve of genus $g$, $L$ a divisor with $\deg L>0$,  $\ybul \colon X \supset \st{P}$  an admissible flag on $X$. Give a necessary and sufficient criterion for 
 $\deg L \in \dyl$ to be valuative. 
\end{exer}

\begin{eg}[Projective space]
Let $X=\PP^n$ with homogeneous coordinates $T_0,\dots,T_n$. Set 
\[
 Y_i \deq V(T_0,\dots, T_{i-1}) \dsubseteq \PP^n\ ,
\]
and consider the resulting admissible flag $\ybul$ consisting of coordinate subspaces. 

Then $\nu_{\ybul}$ is the lexicographic ordering given on monomials by 
\[
 \nu_{\ybul}(T_0^{a_0}\dots T_n^{a_n}) \deq (a_0,\dots, a_{n-1})\ ,
\]
hence 
\[
 \NO{\ybul}{\sO_{\PP}(1)} \equ \st{x\in \RR^n\mid x\geq 0\, ,\, \sum_{i=1}^{n}x_i\leq 1}
\]
is a simplex. 
\end{eg}

\begin{remark}[Toric varieties]
As pointed out in \cite{LM}*{Section 6.1}, the connection to toric geometry observable in the example of projective spaces carrier over to arbitrary smooth toric varieties. More precisely, if $X$ is a toric variety, 
$\ybul$ an admissible flag consisting of $T$-invariant subvarieties, $L$ a $T$-invariant integral Cartier divisor, then $\NO{\ybul}{L}$ is a shift of the moment polytope associated to $(X,L)$. 

In the light of this example one could think of the theory of Newton--Okounkov bodies as trying to do toric geometry without a torus action. The analogy can in fact be made more concrete. The torus action on a toric variety gives rise to a compatible set of gradings on all space $H^0(X,\sOX(L))$. In the absence of such a group action, the theory of toric varieties replaces 
this grading by an infinite collection of compatible filtrations (one filtration for every rank $n$ valuation of the function field) on the spaces of global sections. 
\end{remark}

The following  a slightly  different definition of Newton--Okounkov bodies. It has  already appeared in print  in \cite{KLM1}, and although it is an 
 immediate consequence of \cite{LM}, a complete proof was first given in \cite{Bou1}*{Proposition 4.1}.
 
 \begin{proposition}[Equivalent definition of Newton-Okounkov bodies]\label{prop:definition}
 	Let $\xi\in\textup{N}^1(X)_{\RR}$ be a big $\RR$-class and $Y_{\bullet}$ be an admissible flag on $X$. Then
 	\[
 	\Delta_{Y_{\bullet}}(\xi) \ = \ \textup{closed convex hull of\ }\{ \nu_{Y_{\bullet}}(D) \ | \ D\in \textup{Div}_{\geq 0}(X)_{\RR}\, ,\, D\equiv \xi\},
 	\]
 	where the valuation $\nu_{Y_{\bullet}}(D)$, for an effective $\RR$-divisor $D$, is constructed inductively as in the case of integral divisors.
 \end{proposition}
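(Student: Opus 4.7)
The plan is to establish the two inclusions separately, reducing the nontrivial direction through continuity from big $\RR$-classes to big $\QQ$-classes and ultimately to the case of a big integral Cartier divisor, where Construction~\ref{constr:NO-bodies} applies directly. The inclusion $\NO{\ybul}{\xi}\dsubseteq\overline{\mathrm{conv}}\st{\nu_{\ybul}(D)\mid D\in\textup{Div}_{\geq 0}(X)_{\RR},\ D\equiv \xi}$ is the easy one: assuming first that $\xi\equ[L]$ with $L$ a big integral Cartier divisor, for any $s\in\HH{0}{X}{\sO_X(mL)}$ the divisor $\mathrm{div}(s)\sim mL$ is integral and effective, so $\tfrac{1}{m}\mathrm{div}(s)\equiv\xi$ is an effective $\QQ$-divisor, and the linearity of $\nu_{\ybul}$ on effective divisors gives $\tfrac{1}{m}\nu_{\ybul}(s)\equ\nu_{\ybul}\zj{\tfrac{1}{m}\mathrm{div}(s)}$, which lies in the right-hand side. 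Taking closed convex hulls handles integer $L$; homogeneity (Theorem~\ref{thm:NO formal}~(3)) extends the inclusion to big $\QQ$-classes, and the continuity definition of $\NO{\ybul}{\alpha}$ on big $\RR$-classes finishes the general case.

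For the reverse inclusion I would first treat the $\QQ$-case. Let $\xi\equ[L]$ be big integral, $D\equiv\xi$ an effective $\QQ$-divisor, and choose $m\in\NN$ so that $mD$ is integral. Since $mD\equiv mL$ but need not be linearly equivalent, $\sO_X(mD)\cong\sO_X(mL)\otimes P$ for some numerically trivial $P\in\Pic^{\tau}(X)$, and $mD\equ\mathrm{div}(s)$ for a section $s\in\HH{0}{X}{\sO_X(mL+P)}$. Applying Construction~\ref{constr:NO-bodies} with $k\equ 1$ to the big integral Cartier divisor $mL+P$ shows $\nu_{\ybul}(s)\in\NO{\ybul}{mL+P}$; numerical invariance of Newton--Okounkov bodies (Theorem~\ref{thm:NO formal}~(4)) rewrites this as $\NO{\ybul}{mL+P}\equ\NO{\ybul}{mL}$, homogeneity (Theorem~\ref{thm:NO formal}~(3)) identifies $\NO{\ybul}{mL}\equ m\cdot\NO{\ybul}{L}$, and dividing by $m$ yields $\nu_{\ybul}(D)\equ\tfrac{1}{m}\nu_{\ybul}(s)\in\NO{\ybul}{L}\equ\NO{\ybul}{\xi}$.

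The passage to $\RR$-classes is then an approximation argument. Writing $D\equ\sum a_iD_i$ with $a_i>0$ real and the $D_i$ prime, select rationals $a_i^{(k)}\to a_i$ and set $D_k\deq\sum a_i^{(k)}D_i$, $\xi_k\deq[D_k]\in N^1(X)_{\QQ}$. Then $\xi_k\to\xi$ in $N^1(X)_{\RR}$, and because $\xi$ lies in the open big cone, $\xi_k$ is big for $k\gg 0$; the $\QQ$-case just proven gives $\nu_{\ybul}(D_k)\in\NO{\ybul}{\xi_k}$. Combining continuity of $\NO{\ybul}{-}$ on the interior of $\Bbig(X)$ (the Lazarsfeld--Musta\c t\u a extension described in the paragraph following Theorem~\ref{thm:NO formal}) with the linearity of $\nu_{\ybul}$ on effective divisors gives $\nu_{\ybul}(D)\equ\lim_k\nu_{\ybul}(D_k)\in\NO{\ybul}{\xi}$. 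The main obstacle throughout is precisely the twist by the numerically trivial line bundle $P$: an effective divisor in a given numerical class need not arise from a section of the chosen representative $L$, only from some twist $L+P$, and a naive twist-correction via multiplication by sections of $kL-P$ would introduce an irreducible overhead in the degree. The elegant step above bypasses this entirely by invoking the numerical invariance of $\NO{\ybul}{-}$, which absorbs $P$ at zero cost; that is really where the content of the proposition lives.
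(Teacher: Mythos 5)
The paper does not actually prove this proposition; it cites Boucksom ([Bou1], Proposition 4.1) for a complete argument, so your proof must be judged on its own. Your treatment of the right-to-left inclusion is sound: the twist by the numerically trivial class $P$, absorbed through numerical invariance of Newton--Okounkov bodies, correctly handles the $\QQ$-case, and coefficient-wise rational approximation together with closedness of the global Okounkov cone handles the $\RR$-case. The left-to-right inclusion is also correct for integral and $\QQ$-classes, since every generator $\tfrac{1}{m}\nu_{\ybul}(s)$ of $\nob{\ybul}{L}$ equals $\nu_{\ybul}\bigl(\tfrac{1}{m}\textup{div}(s)\bigr)$ with $\tfrac{1}{m}\textup{div}(s)\geq 0$ numerically equivalent to $L$.

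The gap lies in the single clause ``the continuity definition of $\NO{\ybul}{\alpha}$ on big $\RR$-classes finishes the general case.'' Write $\Gamma(\xi)$ for $\{\nu_{\ybul}(D)\mid D\in\textup{Div}_{\geq 0}(X)_{\RR},\,D\equiv\xi\}$; you must show that $\nob{\ybul}{\xi}$ lies in the closed convex hull of $\Gamma(\xi)$. Continuity of $\xi\mapsto\nob{\ybul}{\xi}$ lets you approximate a given $v\in\nob{\ybul}{\xi}$ by $\nu_{\ybul}(D_k)$ with $D_k\geq 0$ and $D_k\equiv\xi_k$ for nearby $\QQ$-big $\xi_k$, but these $D_k$ are not numerically equivalent to $\xi$, and nothing you invoke relates $\Gamma(\xi_k)$ to $\Gamma(\xi)$ --- the set that must ultimately capture $v$ is not the one that continuity controls. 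The fix requires choosing the approximation carefully: by Kodaira's lemma write $\xi=\alpha+[E]$ with $\alpha$ an ample rational class and $E$ an effective $\RR$-divisor, approximate from below by $E^{(k)}\leq E$ with rational coefficients and $E^{(k)}\to E$, and set $\xi_k\deq\alpha+[E^{(k)}]$. Then $E-E^{(k)}$ is effective, represents $\xi-\xi_k$, and $\nu_{\ybul}\bigl(E-E^{(k)}\bigr)\to 0$ by linearity of $\nu_{\ybul}$ on the cone of effective divisors; hence $D_k+\bigl(E-E^{(k)}\bigr)$ is an effective $\RR$-divisor numerically equivalent to $\xi$ whose valuation vector converges to $v$. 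This transfer from $\Gamma(\xi_k)$ to $\Gamma(\xi)$ is the genuine content of the $\RR$-case of the forward inclusion, and it cannot be elided by an appeal to continuity alone.
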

 
 \begin{remark}
 	Just as in the case of the original definition of Newton--Okounkov bodies, it becomes a posteriori clear that valuation vectors $\nu_{\ybul}(D)$ form a dense subset 
 	of 
 	\[
 	\textup{closed convex hull of }\{ \nu_{Y_{\bullet}}(D) \ | \ D\in \textup{Div}_{\geq 0}(X)_{\RR}, D\equiv \xi\}\ ,
 	\]
 	hence it would  suffice to take closure in Proposition~\ref{prop:definition}. 
 \end{remark}
 
Next, we give a convex-continuity statement for Newton--Okounkov bodies. 

\begin{lemma}\label{lem:nested}
	Let $D$ be a big $\RR$-divisor,  $Y_\bullet$  an admissible flag on $X$.  Then the following hold.
	\begin{enumerate}
		\item For any real number $\epsilon>0$ and any ample $\RR$-divisor $A$ on $X$, we have $\Delta_{Y_\bullet}(D) \subseteq \Delta_{Y_\bullet}(D+\epsilon A)$.
		\item If $\alpha$ is an arbitrary nef  $\RR$-divisor class, then $\nob{\ybul}{D}\subseteq \nob{\ybul}{D+\alpha}$.
		\item If $\alpha_m$ is any sequence of nef $\RR$-divisor classes with the property that $\alpha_m-\alpha_{m+1}$ is nef and 
		$\|\alpha_m\|\to 0$ as $m\to\infty$  with respect to some norm on $N^1(X)_\RR$, then 
		\[
		\Delta_{\ybul}(D) \equ \bigcap_m \nob{\ybul}{D+\alpha_m}\ .
		\]
	\end{enumerate}
\end{lemma}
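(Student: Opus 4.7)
The plan is to prove (1) from the equivalent description of Proposition~\ref{prop:definition}, then derive (2) by approximating the nef class $\alpha$ with ample ones and invoking the continuity of Newton--Okounkov bodies on the big cone, and finally (3) by combining the monotonicity provided by (2) with the same continuity.

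For (1), I would work with the description of $\nob{\ybul}{\xi}$ as the closed convex hull of $\{\nu_{\ybul}(D') \mid D' \equiv \xi,\ D' \geq 0\}$ and reduce the whole statement to the following auxiliary claim: \emph{for every ample $\RR$-divisor $A$ and every $\epsilon > 0$ there exists an effective $\RR$-divisor $E \equiv \epsilon A$ with $\nu_{\ybul}(E) = \mathbf{0}$.} Since the ample cone is open in $N^1(X)_\RR$, I can write $\epsilon A = \sum_i c_i H_i$ with $c_i > 0$ and $H_i$ very ample integral divisors (choose a small rational simplex inside the ample cone whose interior contains $\epsilon A$, and express $\epsilon A$ as a positive combination of its vertices after clearing denominators). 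Because $Y_n$ is a single point and each $|H_i|$ is very ample, a general $E_i \in |H_i|$ satisfies $Y_n \notin \Supp(E_i)$, so $E \deq \sum_i c_i E_i$ is an effective $\RR$-divisor $\equiv \epsilon A$ that avoids $Y_n$. Because $Y_n \subseteq Y_j$ for every $j$ and $\Supp(E|_{Y_j}) \subseteq \Supp(E)\cap Y_j$ never contains $Y_n$, an induction on $j$ along the recursive construction of $\nu_{\ybul}$ yields $\nu_j(E)=0$ for all $j$. Once the claim is in place, the additivity of $\nu_{\ybul}$ on effective divisors gives $\nu_{\ybul}(D'+E) = \nu_{\ybul}(D') + \nu_{\ybul}(E) = \nu_{\ybul}(D')$ for every effective $D' \equiv D$, while $D'+E$ is effective with $D'+E \equiv D + \epsilon A$. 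Passing to closed convex hulls yields $\nob{\ybul}{D} \subseteq \nob{\ybul}{D + \epsilon A}$.

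For (2), I fix an auxiliary ample $\RR$-divisor $A$. For each $\delta > 0$ the class $\alpha + \delta A$ is ample, so (1) gives $\nob{\ybul}{D} \subseteq \nob{\ybul}{D+\alpha+\delta A}$. A second application of (1) shows that the family $\{\nob{\ybul}{D+\alpha+\delta A}\}_{\delta > 0}$ is nested decreasing as $\delta \downarrow 0$. The continuity of Newton--Okounkov bodies on $\Bbig(X) \subseteq N^1(X)_\RR$ recorded in Theorem~\ref{thm:NO formal} then gives $\nob{\ybul}{D+\alpha+\delta A} \to \nob{\ybul}{D+\alpha}$ in the Hausdorff metric, and a nested family of compact convex sets Hausdorff-converges to its intersection, so $\nob{\ybul}{D} \subseteq \bigcap_{\delta > 0}\nob{\ybul}{D+\alpha+\delta A} = \nob{\ybul}{D+\alpha}$. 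Part (3) then follows formally from (2): each $\alpha_m$ is nef, hence $\nob{\ybul}{D} \subseteq \bigcap_m \nob{\ybul}{D+\alpha_m}$; for the reverse, applying (2) with $D+\alpha_{m+1}$ in place of $D$ and the nef class $\alpha_m-\alpha_{m+1}$ in place of $\alpha$ makes the sequence $\nob{\ybul}{D+\alpha_m}$ nested decreasing, so since $\|\alpha_m\|\to 0$ and $\nob{\ybul}{-}$ is continuous on the big cone, its Hausdorff limit --- which coincides with the intersection --- is $\nob{\ybul}{D}$.

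I expect the main obstacle to be the construction of $E$ in (1): producing an effective $\RR$-representative of $\epsilon A$ whose valuation vector vanishes along the entire flag. This requires both that ample $\RR$-classes are positive $\RR$-combinations of very ample integral ones (a consequence of openness of the ample cone) and the geometric observation that avoiding the single point $Y_n$ already forces $\nu_{\ybul}(E)=\mathbf{0}$, because each $Y_j$ contains $Y_n$. Everything else is a formal exercise in monotonicity, Hausdorff convergence of nested compact sets, and the Lazarsfeld--Musta\c t\u a continuity of $\nob{\ybul}{-}$ on the big cone.
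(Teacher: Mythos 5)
Your proposal is correct and follows essentially the same route as the paper's proof: part (1) via Proposition~\ref{prop:definition} together with an effective $\RR$-representative of $\epsilon A$ supported away from $Y_n$ (where the paper simply asserts its existence, you spell out the construction by decomposing the ample class as a positive combination of very ample integral classes and taking general members), and parts (2) and (3) by combining (1) with the Hausdorff-continuity of $\nob{\ybul}{-}$ on the big cone. One cosmetic remark: the equality $\nu_{\ybul}(D'+E)=\nu_{\ybul}(D')$ is better justified directly from $Y_n\notin\Supp(E)$ (so $E$ and each successive restriction vanish near $Y_n$, leaving every $\nu_j$ unchanged) than by invoking a general ``additivity'' of $\nu_{\ybul}$, which does not hold for arbitrary pairs of effective divisors; the conclusion you need, and the inductive argument you sketch, are correct.
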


\begin{proof}
	For the first claim, since $A$ is an ample $\RR$-divisor, one can find an effective $\RR$-divisor $M\sim_{\RR}A$ with  $Y_n\notin\Supp(M)$. Then for any arbitrary 
	effective divisor $F\sim_{\RR}D$ one has  $F+\epsilon M \equiv_{\RR} D+\epsilon A$ and $\nu_{\ybul}(F+M)=\nu_{\ybul}(F)$. Therefore
	\[
	\st{\nu_{Y_{\bullet}}(\xi) \mid \xi\in \textup{Div}_{\geq 0}(X)_{\RR}, D\equiv \xi} \subseteq %
	\st{ \nu_{Y_{\bullet}}(\xi) \mid \xi \in \textup{Div}_{\geq 0}(X)_{\RR}, D+\epsilon A\equiv \xi}
	\]
	and we are done by Proposition~\ref{prop:definition}.
	
	For $(ii)$, note first that whenever $\alpha$ is ample, we can write $\alpha=\sum_{i=1}^{r}\epsilon_iA_i$ for suitable real numbers $\epsilon_i>0$ and 
	ample integral classes $A_i$,  therefore an interated application of $(i)$ gives the claim. The general case then follows by continuity and 
	and  approximating a nef $\RR$-divisor class by a sequence of ample ones. 
	
	The equality in $(iii)$  is a  consequence of $(ii)$ and the continuity  
	of Newton--Okounkov bodies.
\end{proof}

Back to ampleness:  part of the usefulness of the notion stems from the existence of characterizations in terms of geometric/cohomological/numerical terms. Most of the results below (and much more) are proven in \cite{PAGI}*{Sections 1.2 -- 1.4}.

\begin{thm}[Cartan--Grothendieck--Serre]
Let $X$ be a complete variety, $L$ a Cartier divisor on $X$. Then the following are equivalent. 
\begin{enumerate}
 \item $L$ is ample.
 \item For every coherent sheaf $\sF$ on $X$ there exists a positive integer $m_0=m_0(L,\sF)$ such that $\sF\otimes\sO_X(mL)$ is globally generated for all $m\geq m_0$. 
 \item For every coherent sheaf $\sF$ on $X$ there exists a positive integer $m_1=m_1(L,\sF)$ such that $$\HH{i}{X}{\sF\otimes\sO_X(mL)}=0$$ for all $i\geq 1$ and $m\geq m_1$. 
\end{enumerate}
\end{thm}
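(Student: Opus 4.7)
My plan is to run the standard cycle $(1) \Rightarrow (3) \Rightarrow (2) \Rightarrow (1)$. For $(1) \Rightarrow (3)$ I pick $m_0$ with $m_0 L$ very ample, yielding a closed embedding $\iota : X \hookrightarrow \PP^N$ with $\iota^* \sO_{\PP^N}(1) \cong \sO_X(m_0 L)$. Since $\iota$ is a closed immersion, $\iota_*$ is exact and preserves cohomology, and combined with the projection formula for the line bundle $\sO_{\PP^N}(k)$ this gives
\[
H^i(X, \sF \otimes \sO_X(k m_0 L)) \cong H^i(\PP^N, \iota_* \sF \otimes \sO_{\PP^N}(k))\ ,
\]
which vanishes for $k \gg 0$ and $i \geq 1$ by Serre vanishing on projective space. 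To reach all sufficiently large $m$ rather than just multiples of $m_0$, I repeat the argument with $\sF$ replaced by $\sF \otimes \sO_X(jL)$ for $0 \leq j < m_0$ and take the maximum of the resulting thresholds.

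For $(3) \Rightarrow (2)$, fix a coherent $\sF$ and a point $x \in X$. Let $\mathscr{K}$ denote the kernel of $\sF \twoheadrightarrow \sF \otimes k(x)$; twisting the sequence
\[
0 \lra \mathscr{K} \lra \sF \lra \sF \otimes k(x) \lra 0
\]
by $\sO_X(mL)$ and applying $(3)$ to $\mathscr{K}$ gives $H^1(X, \mathscr{K} \otimes \sO_X(mL)) = 0$ for all $m \geq m_x$, whence the evaluation map $H^0(X, \sF \otimes \sO_X(mL)) \to (\sF \otimes \sO_X(mL)) \otimes k(x)$ is surjective. By Nakayama's lemma $\sF \otimes \sO_X(mL)$ is then globally generated at $x$, and hence on an open neighborhood. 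To make the bound uniform, I first apply the pointwise argument to $\sF = \sO_X$: for each $x$ pick a section of some $\sO_X(n_x L)$ non-vanishing at $x$, extract a finite subcover by sections $s_i \in H^0(X, \sO_X(n_i L))$ whose non-vanishing loci cover $X$ (using Noetherianity), and observe that the sections $s_i^{N/n_i}$ with $N = \prod_i n_i$ have no common zero, so $\sO_X(NL)$ is globally generated everywhere. For general $\sF$, the pointwise argument gives generation for every $m \geq m_x$; intersecting the open neighborhoods corresponding to $N$ consecutive residues $m_x, \ldots, m_x + N - 1$ and passing to a finite subcover produces a single uniform bound.

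For $(2) \Rightarrow (1)$, apply $(2)$ with $\sF = \sO_X$ to obtain $m_0$ such that $\sO_X(m_0 L)$ is globally generated, giving a morphism $\phi : X \to \PP^N$ with $\phi^* \sO_{\PP^N}(1) \cong \sO_X(m_0 L)$. Since $X$ is complete, $\phi$ is proper; I claim $\phi$ is finite, from which $\sO_X(m_0 L)$ is the pullback of an ample bundle under a finite morphism, hence ample, and so is $L$. If not, some fiber of $\phi$ is positive-dimensional, so contains an irreducible reduced curve $\iota : C \hookrightarrow X$ on which $\phi$ is constant, forcing $\sO_X(m_0 L)|_C$ to be trivial and in particular $\deg_C(L|_C) = 0$. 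Fix $p \in C$ and apply $(2)$ to the coherent sheaf $\iota_* \sI_{p, C}$; by the projection formula this equals $\iota_*(\sI_{p,C} \otimes \sO_C(mL|_C))$, so global generation at a point $q \in C \setminus \{p\}$ produces a nonzero section of $\sO_C(mL|_C)$ vanishing at $p$—impossible, since a nonzero section of a degree-zero line bundle on an irreducible projective curve is nowhere vanishing.

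The hardest step will be the uniformity argument in $(3) \Rightarrow (2)$: the pointwise vanishing of $H^1$ produces generation only with a point-dependent bound, and weaving this together with the preliminary globalization of some $\sO_X(NL)$ and the openness of the global-generation locus to extract a single threshold that applies on all of $X$ is where the real care is required.
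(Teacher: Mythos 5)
Your argument is correct and is the standard proof (the cycle $(1)\Rightarrow(3)\Rightarrow(2)\Rightarrow(1)$). The paper itself does not prove this theorem — it only points the reader to \cite{PAGI}*{Sections 1.2--1.4} — so there is no in-text proof to compare against; your write-up is essentially the argument in the cited reference (which in turn goes back to Hartshorne and EGA). A couple of minor remarks on places worth smoothing when writing this out in full: in $(3)\Rightarrow(2)$ the neighbourhood on which $\sF\otimes\sO_X(mL)$ is generated depends on $m$, so one really does need, as you do, to intersect over a full residue system modulo $N$ before invoking the global generation of $\sO_X(NL)$ and taking a finite subcover; and in $(2)\Rightarrow(1)$ the passage from ``$\phi$ not finite'' to ``there is an integral curve contracted by $\phi$'' uses that a positive-dimensional complete scheme over $\CC$ contains an integral curve (e.g.\ via Chow's lemma), which is standard but should be acknowledged. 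Neither of these is a gap; the proof is sound.
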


\begin{thm}[Nakai--Moishezon--Kleiman]\label{thm:Nakai Moishezon}
Let $X$ be a projective variety, $L$ a Cartier divisor on $X$. Then $L$ is ample if and only if $(L^{\dim V}\cdot V)>0$ for every positive dimensional  irreducible subvariety $V\subseteq X$ (including $X$ itself).  
\end{thm}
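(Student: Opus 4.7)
The easy direction is quick: if $L$ is ample then $L|_V$ is ample on any positive-dimensional subvariety $V \subseteq X$, so asymptotic Riemann--Roch on $V$ gives
\[
h^0(V, mL|_V) \equ \frac{(L^{\dim V}\cdot V)}{(\dim V)!}\, m^{\dim V} + O(m^{\dim V - 1}),
\]
and since the left-hand side is strictly positive for $m \gg 0$, so must be the leading coefficient $(L^{\dim V}\cdot V)$.

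For the converse I would proceed by induction on $n = \dim X$. The base case $n = 1$ is exactly the corollary above characterizing ample divisors on curves by positivity of degree. For the inductive step, the Nakai hypothesis on $X$ descends to the Nakai hypothesis on every positive-dimensional proper subvariety $V \subsetneq X$, so by induction $L|_V$ is ample. The remaining job is to deduce ampleness of $L$ itself, and the plan has two stages: first produce an effective multiple of $L$ (bigness), then bootstrap it into vanishing of higher cohomology of all large twists (ampleness via Cartan--Grothendieck--Serre).

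For the bigness stage, choose any very ample divisor $H$ on $X$ and an integer $k \gg 0$ such that both $A \deq L + kH$ and $B \deq kH$ are very ample, so $L \sim A - B$. The two short exact sequences
\[
0 \lra \sO_X(mL - A) \lra \sO_X(mL) \lra \sO_A(mL|_A) \lra 0,
\]
\[
0 \lra \sO_X((m-1)L - B) \lra \sO_X((m-1)L) \lra \sO_B((m-1)L|_B) \lra 0,
\]
linked by $mL - A \sim (m-1)L - B$, together with Serre vanishing on $A$ and $B$ (which applies because $L|_A$ and $L|_B$ are ample by induction), control the higher cohomology: the groups $h^i(X,mL)$ for $i \geq 1$ stay bounded by polynomials of degree $n-1$ in $m$. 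Combined with asymptotic Riemann--Roch $\chi(X,mL) = \tfrac{(L^n)}{n!}m^n + O(m^{n-1})$ and the hypothesis $(L^n) > 0$, this forces $h^0(X,mL)$ to grow like $m^n$, so $L$ is big and some $mL$ is linearly equivalent to an effective divisor $D = \sum a_i E_i$.

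For the ampleness stage, twist the fundamental sequence $0 \to \sO_X(-mL) \to \sO_X \to \sO_D \to 0$ by $\sO_X(kL)$ to obtain
\[
0 \lra \sO_X((k-m)L) \lra \sO_X(kL) \lra \sO_D(kL|_D) \lra 0.
\]
Because $L|_{E_i}$ is ample on each component by induction, the sheaf $\sO_D(kL|_D)$ has vanishing higher cohomology and is globally generated for $k \gg 0$. The long exact sequence then gives $h^i(X,kL) \leq h^i(X,(k-m)L)$ for $i \geq 1$ and $k \gg 0$; iterating this periodically and comparing with the already-established growth rate of $h^0$ against $\chi$, I would deduce $h^i(X,kL) = 0$ for $i \geq 1$ and $k \gg 0$. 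Once this vanishing is in hand, sections of $kL|_D$ lift to $X$, giving global generation (and, by the same analysis applied to evaluation/jet sheaves together with the very ampleness of $kL|_{E_i}$ for $k \gg 0$, very ampleness) of some multiple of $L$. The main obstacle is precisely this passage from \emph{bounded} to \emph{zero} higher cohomology: the naive iteration only yields boundedness, and one must squeeze the recursion carefully against the known leading-order behaviour of $\chi(X,kL)$ and against the inductive Serre vanishing on each $E_i$. Once this is executed, Cartan--Grothendieck--Serre applied to the structure sheaf (and generalised to arbitrary coherent $\sF$ by the same pattern) delivers ampleness of $L$.
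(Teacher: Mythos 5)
The paper does not prove this theorem; it records it as a classical result and points to \cite{PAGI}*{Sections 1.2--1.4}, so you are reconstructing the standard Kleiman argument from scratch. Your easy direction is fine, and your first stage --- writing $L \sim A - B$ with $A,B$ very ample, using the two restriction sequences plus Serre vanishing on $A$ and $B$ (available because $\dim A = \dim B = n-1$ and the inductive hypothesis makes $L|_A, L|_B$ ample) to show $h^i(X,mL)$ is eventually constant for $i\geq 2$, then invoking asymptotic Riemann--Roch together with $(L^n)>0$ to force $h^0(X,mL)\to\infty$ --- is exactly the classical route to effectivity of some multiple $mL\sim D$.

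The gap is at the step you yourself flag, and the repair you suggest cannot work. The inequality $h^i(X,kL)\leq h^i(X,(k-m)L)$ for $k\gg 0$ tells you only that, along each residue class mod $m$, the nonnegative integer sequence $h^i(X,kL)$ is eventually non-increasing, hence eventually \emph{constant}; no comparison with $\chi(X,kL)$ can force that constant to be zero, because $\chi$ constrains only the alternating sum and $h^0$ can dominate any $h^1$. There is no way to ``squeeze the recursion.'' The classical proof avoids establishing vanishing of $h^1$ altogether. Choose $k$ large in a fixed residue class so that $h^1(X,kL)=h^1(X,(k-m)L)$; since $H^1(D,kL|_D)=0$ (Serre vanishing on the lower-dimensional scheme $D$, where $L|_D$ is ample by induction, after a small d\'evissage to handle that $D$ may be reducible and non-reduced), the long exact sequence of $0\to \sO_X((k-m)L)\to\sO_X(kL)\to\sO_D(kL|_D)\to 0$ shows $H^1(X,(k-m)L)\to H^1(X,kL)$ is a surjection between spaces of equal dimension, hence an isomorphism, and therefore $H^0(X,kL)\to H^0(D,kL|_D)$ is \emph{surjective}. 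Combined with global generation of $kL|_D$ and the section of $mL$ cutting out $D$, this yields global generation of $kL$. The final step, entirely absent from your sketch, is to observe that the resulting morphism $\phi_{kL}\colon X\to\PP$ cannot contract any curve $C$, because $(L\cdot C)>0$ by the Nakai hypothesis; hence $\phi_{kL}$ is finite, and $kL=\phi_{kL}^*\sO_{\PP}(1)$ is ample as the pullback of an ample line bundle under a finite morphism. This finiteness argument is what closes the proof --- not any vanishing of $h^1$ squeezed out of Riemann--Roch.
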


\begin{remark}
	If $X$ is a curve, then $L$ is ample precisely if $\deg L>0$, which happens exactly if $L$ is $\QQ$-effective. On a surface $X$ a divisor $L$ is ample if and only if $(L\cdot C)>0$ for every curve $C\subseteq X$ and 
	$(L^2)>0$. Note that the latter condition is not superfluous. 
\end{remark}

\begin{remark}[Extension of ampleness]
	The results above let one extend the notion of ampleness to $\QQ$- or $\RR$-divisors in a natural way; in addition we can make sense of ampleness for  numerical equivalence classes: a class $\alpha\in N^1(X)$ is ample, if it is represented by an ample divisor, similarly for $\QQ$- and $\RR$-divisor classes.  
\end{remark}

\begin{remark}
	It follows that ampleness is a numerical property, that is, if $L_1\equiv L_2$, then $L_1$ is ample if and only if $L_2$ is ample. An interesting consequence is that being an adjoint 
	divisor is also a numerical property. Recall that a divisor is called adjoint if it has the form $K_X+A$ with $A$ ample. If $L\equiv K_X+A$, then $L-K_X\equiv A$, that is, $L-K_X$ is numerically equivalent to an ample divisor, hence it is itself ample. 
\end{remark}

As a consequence, we can see that ampleness becomes a notion for classes $\alpha\in N^1(X)_\RR$.

\begin{exer}
Check  that ampleness is a convex property: for $\alpha_1,\alpha_2\in N^1(X)_\RR$ ample, $t_1,t_2\geq 0$ with $t_1+t_2=1$ we have that  $t_1\alpha_1+t_2\alpha_2$ is ample again.
\end{exer}

\begin{defn}[Ample cone]
Let $X$ be a projective variety.  The ample cone $\Amp(X)\subseteq N^1(X)_\RR$ of $X$ is the convex cone consisting of all ample $\RR$-divisor classes. 
\end{defn}

\begin{exer}
Let $X$ be a projective variety. Show that the ample cone equals the convex cone spanned by all ample classes $\alpha\in N^1(X)$. Check that the ample cone is strictly convex (it does not contain a line), and it has non-empty interior. 
\end{exer}

It is well-known that in toric geometry ampleness can be read off the polytope associated to a line bundle. An important fact about the relationship between positivity and Newton--Okounkov bodies is that an analogous result holds. 

\begin{thm}[Ampleness via Newton--Okounkov bodies]\label{thm:NO ampleness}
 Let $X$ be a projective variety, $L$ a big divisor (potentially with rational or real coefficients). Then the following are equivalent.
 \begin{enumerate}
  \item $L$ is ample.
  \item For every admissible flag $\ybul$ there exists a positive real number $\lambda>0$ such that $\Delta_\lambda\subseteq\dyl$. 
  \item For every point $x\in X$ there exists an admissible flag $\ybul$ with $Y_n=\st{x}$ and $Y_1$ ample,  and a positive real number $\lambda>0$ such that $\Delta_\lambda\subseteq \dyl$.
 \end{enumerate}
\end{thm}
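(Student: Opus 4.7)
The plan is to prove the cycle $(1) \Rightarrow (2) \Rightarrow (3) \Rightarrow (1)$, treating each implication separately.

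For $(2) \Rightarrow (3)$: given $x \in X$, fix a very ample divisor $A$ and, by Bertini applied to the linear subseries of $|A|$ vanishing at $x$, choose a general member $Y_1 \in |A|$ that passes through $x$ and is smooth there. Completing this to an admissible flag $\ybul$ centered at $x$ via successive general very ample sections yields a flag with $Y_1$ ample, whereupon (2) delivers the required simplex.

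For $(1) \Rightarrow (2)$: proceed by induction on $n = \dim X$, the base case $n = 1$ being immediate from the curve computation $\dyl = [0, \deg L]$ given in the text. For $n \geq 2$, use openness of the ample cone in $N^1(X)_\RR$ to choose $\delta > 0$ such that $L - tY_1$ is ample for every $t \in [0, \delta]$. The restriction $(L - tY_1)|_{Y_1}$ is then ample on $Y_1$, and the induced flag $Y_1 \supset Y_2 \supset \cdots \supset Y_n$ is admissible on $Y_1$. By the inductive hypothesis, each such restriction contains a simplex $\Delta_{\mu(t)}$ with $\mu(t) \geq \mu_0 > 0$ uniformly for small $t$ (continuity of the Newton--Okounkov body in the big cone). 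Combining Proposition~\ref{prop:definition} with surjectivity of the restriction map $H^0(X, \sO_X(m(L-tY_1))) \to H^0(Y_1, \sO_{Y_1}(m(L-tY_1)|_{Y_1}))$ for $m \gg 0$ (Serre vanishing, since $L - tY_1$ is ample), one lifts effective $\RR$-divisor representatives from $Y_1$ to $X$ and obtains effective $\RR$-divisors $D \equiv L$ of the shape $D = tY_1 + \widetilde D$ with $Y_n \notin \Supp \widetilde D$ and $\nu_{\ybul}(D) = (t, w)$ for every $w \in \Delta_{\mu(t)}$. The closed convex hull of the resulting family $\bigcup_{t \in [0, \delta]} \{t\} \times \Delta_{\mu(t)}$ then contains a standard simplex $\Delta_\lambda \subseteq \dyl$.

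For $(3) \Rightarrow (1)$: this is the main obstacle, and it is where the hypothesis that $Y_1$ is ample enters essentially. The strategy is to translate the local simplex condition at each point $x$ into a global positivity statement via Seshadri constants: the containment $\Delta_\lambda \subseteq \dyl$ for a flag centered at $x$ with $Y_1$ ample forces a positive lower bound $\epsilon(L; x) \geq \lambda$, because the simplex encodes enough sections of large multiples of $L$ vanishing at $x$ with controlled multiplicity to separate nearby jets. Equivalently, one shows that the simplex condition at $x$ with $Y_1$ ample certifies $x \notin \Bplus(L)$. In either form, applying the hypothesis at every $x \in X$ yields $\epsilon(L; x) > 0$ (respectively $\Bplus(L) = \emptyset$), and Seshadri's criterion concludes that $L$ is ample. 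Making this bridge between the convex-geometric simplex condition and local positivity precise is the technical crux of the theorem; it relies on the Nakamaye-type characterizations of local positivity via Newton--Okounkov bodies developed in the subsequent sections, and is the step I expect to be substantially harder than the other two.
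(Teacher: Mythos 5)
Your cycle $(1)\Rightarrow(2)\Rightarrow(3)\Rightarrow(1)$ is a sound decomposition, and $(2)\Rightarrow(3)$ is fine. On the whole the outline is correct, but it diverges from the paper on one leg and stops short on another.

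For $(1)\Rightarrow(2)$ the paper does not induct on dimension. Instead it proves Lemma~\ref{lem:2}: for $A$ ample and any admissible flag $\ybul$ centered at $x$, large multiples $mA$ carry global sections $s_0,\dots,s_n$ with $\nu_{\ybul}(s_0)=\origin$ and $\nu_{\ybul}(s_i)=\ei$. Then if $x\notin\Bplus(D)$, one writes $D-A$ with $x\notin\Bstable(D-A)$, picks $s$ of $m(D-A)$ not vanishing at $x$, and tensors to get sections of $mD$ whose valuation vectors span $\Delta_{1/m}$; the $\RR$-divisor case is handled afterwards by Lemma~\ref{lem:nested}. Your slicing-and-lifting approach is also workable in spirit (it is close to the restricted-body/slice philosophy of Lazarsfeld--Musta\c t\u a and to Proposition~\ref{prop:compute}), but it is more delicate than you suggest: the induction hypothesis is being applied to $Y_1$, which under the paper's admissibility conventions is only required to be smooth at the point $Y_n$, so one cannot invoke the theorem on $Y_1$ verbatim; Serre vanishing needs $\QQ$-coefficients, so the $\RR$-case must be reduced; and the ``uniform $\mu(t)\geq\mu_0$'' step is not needed and is harder to justify than it looks (simplex size is not manifestly continuous in the divisor class). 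What actually suffices is $\mu(0)>0$ together with $\delta>0$, since $\Delta_\lambda$ with $\lambda=\min(\mu(0),\delta)$ is the convex hull of vertices sitting on the two slices $t=0$ and $t=\delta$. The paper's Lemma~\ref{lem:2} route sidesteps all of this and is genuinely simpler.

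For $(3)\Rightarrow(1)$ you correctly identify the mechanism but, as you acknowledge, do not supply the bridge. That bridge is precisely Theorem~\ref{thm:augm bl}, implication $(2)\Rightarrow(1)$: using Proposition~\ref{prop:compute}, $\Delta_\epsilon\subseteq\dyl$ with $Y_1$ ample gives $\origin\in\nob{\ybul}{D-\epsilon Y_1}$, whence Theorem~\ref{thm:restr bl} yields $x\notin\Bminus(D-\epsilon Y_1)=\Bplus(D)$ (the last equality by \cite{ELMNP1}*{Proposition 1.21}, and this is where ampleness of $Y_1$ is essential). Running over all $x$ and using that $\Bplus(L)$ is Zariski closed (Lemma~\ref{lem:props of augm base loci}) gives $\Bplus(L)=\emptyset$, hence ampleness. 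One caution on your alternative phrasing via Seshadri constants: knowing $\epsilon(L;x)>0$ for each $x$ does not immediately invoke Seshadri's criterion, which requires a uniform lower bound; for big $L$ the correct invariant is the moving Seshadri constant $\epsilon(\|L\|;x)$, whose positivity at $x$ is again equivalent to $x\notin\Bplus(L)$, so the argument collapses back to the $\Bplus$ route in any case.
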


\begin{proof}
We prove the result in Subsection~2.5. 
\end{proof}

\begin{eg}
 Let $\pi\colon X\to\PP^2$ be the blowing-up of $\PP^2$ at a point $P\in\PP^2$ with exceptional divisor $E$; we write $H$ for the pullback of the hyperplane class. The N\'eron--Severi space $N^1(X)_\RR$ 
 has $\st{H,E}$ as a basis, and a divisor $aH+bE$  is ample if and only if $a>0$ and $0<-b<a$. 
 
 It is an unfortunate fact that $H$ is not ample, in fact, more generally, the pullback of an ample divisor under a proper birational morphism (that is not an isomorphism) is never ample. On the other hand, 
 one can see that $(H\cdot C)\geq 0$ for all curves $C$ on $X$. 
\end{eg}

\begin{remark}[Ampleness in families]
Ampleness is open in families in a very general sense. More precisely, let $f\colon \shx\to T$ be 	a proper morphism of varieties (or schemes), $\shl$ a line bundle on $\shx$. Write $\shx_t$ for $f^{-1}(t)$ ($t\in T$) and $\shl_t\deq \shl|_{\shx_t}$. If $\shl_{t_0}$ for some $t_0\in T$, then there exist an open neighbourhood $t_0\in U\subseteq T$ such that $\shl_t$ is ample for all $t\in U$. 
\end{remark}

\begin{defn}[Nefness]
 Let $X$ be a complete variety. An $\RR$-divisor $L$ on $X$ is \emph{nef} if $(L\cdot C)\geq 0$ for every curve $C\subseteq X$. Nef $\RR$-divisor classes are defined analogously. 
\end{defn}

\begin{thm}[Kleiman]\label{thm:Kleiman}
Let $X$ be a complete variety of dimension $n$, $L$ a nef $\RR$-divisor on $X$. Then 
\[
(L^m\cdot V) \dgeq 0 
\]
for every $m$-dimensional irreducible subvariety $V\subseteq X$. 
\end{thm}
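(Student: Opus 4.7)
The plan is to prove the statement by induction on $m=\dim V$, coupled with the auxiliary claim that if $L$ is nef and $A$ is ample on a variety of dimension $m$, then $L+tA$ is ample for every $t>0$; these two assertions naturally feed each other. By passing to $V$ (and, if necessary, a resolution of singularities) one may replace the pair $(X,L)$ by $(V,L|_V)$ and reduce to proving $(L^n)\geq 0$ for a nef divisor $L$ on a projective $n$-dimensional variety $X$. The real-coefficient case follows from the rational one by continuity of the intersection form and density of $\QQ$-divisors in the nef cone, and the passage from complete to projective is handled by Chow's lemma. The base case $n=1$ is the definition of nef.

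For the inductive step, fix a very ample divisor $H$ and examine the polynomial
\[
P(t) \equ ((L+tH)^n) \equ \sum_{k=0}^{n}\binom{n}{k}t^k\,(L^{n-k}\cdot H^k).
\]
By Bertini a general complete intersection $S=H_1\cap\dots\cap H_k$ of members of $|H|$ is irreducible of dimension $n-k$, and the projection formula identifies $(L^{n-k}\cdot H^k)$ with $((L|_S)^{n-k})$, which is non-negative by the inductive hypothesis for $1\leq k\leq n-1$; also $(H^n)>0$. Moreover $P'(t)/n=((L+tH)^{n-1}\cdot H)=((L+tH)|_H)^{n-1}\geq 0$ by induction applied to the $(n-1)$-dimensional $H$, so $P$ is non-decreasing on $[0,\infty)$ with every coefficient non-negative apart from possibly the constant term $(L^n)$.

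Suppose for contradiction that $(L^n)<0$. Then $P$ admits a smallest positive root $t_0>0$, with $P(t)>0$ for all $t>t_0$. Using the inductive auxiliary statement on proper subvarieties of $X$ one verifies Nakai--Moishezon for $L+(t_0+\epsilon)H$ on $X$: for $W\subsetneq X$ the restriction $L|_W+(t_0+\epsilon)H|_W$ is ample by induction, so its top self-intersection is positive, while $((L+(t_0+\epsilon)H)^n)=P(t_0+\epsilon)>0$. Hence $L+(t_0+\epsilon)H$ is ample for every $\epsilon>0$. Because the intersection of a nef class with the $(n-1)$-fold self-intersection of an ample class is non-negative (represent the latter by a Bertini complete intersection curve and use nefness), passing to the limit $\epsilon\to 0$ gives $(L\cdot (L+t_0H)^{n-1})\geq 0$.

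The contradiction arises from a direct computation:
\[
(L\cdot (L+t_0H)^{n-1}) \equ (L+t_0H)^n - t_0\bigl((L+t_0H)^{n-1}\cdot H\bigr) \equ -t_0\cdot ((L+t_0H)|_H)^{n-1}.
\]
Applying the inductive auxiliary statement to $H$ shows that $L|_H+t_0H|_H$ is ample on $H$, so Theorem~\ref{thm:Nakai Moishezon} on $H$ forces $((L+t_0H)|_H)^{n-1}>0$, making the right-hand side strictly negative. This contradicts the previous non-negativity, so $(L^n)\geq 0$; the auxiliary statement in dimension $n$ then follows from the main statement in dimension $n$ via Nakai--Moishezon. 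The main obstacle is organizing the joint induction so that each of the two assertions is available in the precise dimension where it is invoked, together with the Bertini-type bookkeeping needed to recognize intersections with powers of $H$ as top self-intersections on lower-dimensional subvarieties (and the mild technicalities required to run these reductions when $V$ is singular, dealt with by normalization or resolution).
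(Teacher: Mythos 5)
The paper states this result without proof, deferring to \cite{PAGI}*{Sections 1.2--1.4}. Your argument is precisely the standard one found there (PAG~I, Theorem~1.4.9): a joint induction on dimension, running Nakai--Moishezon against the polynomial $P(t)=((L+tH)^n)$ and deriving a contradiction at its smallest positive root by computing $(L\cdot(L+t_0H)^{n-1})$ two ways. The logic is sound and complete; the only stylistic remark is that the appeal to resolution of singularities is unnecessary, since Bertini and the intersection-theoretic manipulations go through on a possibly singular projective $V$ with $L|_V$ Cartier, and Chow's lemma alone handles the passage from complete to projective.
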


\begin{exer}
Verify that ample, globally generated, or semi-ample line bundles are nef. 
\end{exer}

\begin{remark}
The nef cone $\Nef(X)\subseteq N^1(X)_\RR$ of a complete variety $X$ is defined analogously to the ample cone: it is simply the cone consisting of all nef $\RR$-divisor classes (or, equivalently, it is the cone spanned in $N^1(X)_\RR$  by all integral nef classes). 
\end{remark}

\begin{exer}
For a nef $\RR$-divisor $L$ and an ample $\RR$-divisor $A$  on a projective variety $X$, verify that
$L+\epsilon A$ is ample for all $\epsilon>0$. Conversely, if $L$ and $A$ are $\RR$-divisors on $X$, $A$ is ample, 
and $L+\epsilon A$ is ample for all $\epsilon>0$, then $L$ is nef. 
 	
Use these observations to show that the nef cone is the (topological) closure of the ample cone inside $N^1(X)_\RR$, and 
$\Amp(X)$ is the interior of $\Nef(X)$. 
\end{exer}

\begin{rmk}[Birational nature of big and nef]
Note that the pullback of an ample divisor under a proper birational morphism  is nef, at the same time the pullback of a nef divisor under an arbitrary proper morphism remains nef. 	

By the Leray spectral sequence pulling back by a proper birational morphism preserves volumes, hence the pullback of an ample divisor is big and nef; this latter property is however remains unchanged after proper birational pullbacks, hence this is the 'right' notion of positivity in birational geometry. 

To support this point note that many of the main building blocks of the Mori program are proven for big and nef divisors  and not only for ample ones.  
\end{rmk}

\begin{rmk}[Nefness in families]
As opposed to ampleness, the question whether nefness is open in families, is a lot more subtle (cf. \cite{PAGI}*{Proposition 1.4.14}). Lesieute in \cite{Les} gave an example where nefness is not open for a family of $\RR$-divisors. Nevertheless, the more geometric case of families of big integral divisors is still open to the best of our knowledge. 
\end{rmk} 

\begin{exer} $\star$
Let $\shl$ be a family of big line bundles over a flat family of smooth projective surfaces. Show that nefness is open in
$\shl$ (cf. \cite{Moriwaki}).	
\end{exer}

\begin{remark}[Positivity of higher-codimension cycles]
Fulger and Lehmann \cite{FL_cones} have recently developed an extremely interesting theory of positivity and Zariski decomposition for (co)cycles of higher codimension. One of the main points of interest is that nefness falls apart into four different notions.  
\end{remark}

\begin{exer}
Prove that a Cartier divisor is nef if and only if it is universally pseudo-effective. More specifically, let $L$ be a line bundle on a projective variety $X$. Show that $L$ is nef if and only if for every morphism of projective varieties $f\colon Y\to X$ with $Y$ smooth, $f^*L$ is pseudo-effective. 
\end{exer}

Just as in the case of ampleness, nef divisors also have a description in terms of Newton--Okounkov bodies. The proof is referred to Section~2.4. 

\begin{thm}[\cite{KL15a}]
	For a big $\RR$-divisor $L$ on a smooth projective variety $X$,  the following are equivalent.
	\begin{enumerate}
		\item $D$ is nef.
		\item For every point $x\in X$  there exists an admissible flag $\ybul$ on $X$ centered at $x$ such that $\origin\in \Delta_{\ybul}(D)\subseteq\RR^n$.
		\item One  has $\origin\in \Delta_{\ybul}(D)$ for every admissible flag $\ybul$ on $X$.  
	\end{enumerate}
\end{thm}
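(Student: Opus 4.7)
The plan is to prove the three implications $(3)\Rightarrow(2)$, $(1)\Rightarrow(3)$, and $(2)\Rightarrow(1)$ in turn. The first is trivial, since a flag centered at a prescribed point is a particular choice of admissible flag.

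For $(1)\Rightarrow(3)$ I would approximate the nef class $D$ by ample ones. Fix an ample $\RR$-divisor $A$ on $X$ and set $\alpha_m \deq \tfrac{1}{m}A$ for $m\geq 1$, so that $D+\alpha_m$ is ample. Theorem~\ref{thm:NO ampleness} furnishes, for any admissible flag $\ybul$, a constant $\lambda_m>0$ with $\Delta_{\lambda_m}\subseteq \Delta_\ybul(D+\alpha_m)$, and since $\origin$ is a vertex of $\Delta_{\lambda_m}$ this yields $\origin\in\Delta_\ybul(D+\alpha_m)$. The sequence $(\alpha_m)$ meets the hypotheses of Lemma~\ref{lem:nested}(iii): the differences $\alpha_m-\alpha_{m+1}$ are ample hence nef, and $\|\alpha_m\|\to 0$ in any norm on $N^1(X)_\RR$. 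Therefore
\[
\Delta_\ybul(D) \equ \bigcap_{m\geq 1}\Delta_\ybul(D+\alpha_m)\ \ni\ \origin.
\]

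For $(2)\Rightarrow(1)$ I argue the contrapositive. Assume $D$ is not nef; then some irreducible curve $C\subseteq X$ satisfies $(D\cdot C)<0$, and the goal is to find a point $x_0\in C$ such that $\origin\notin \Delta_\ybul(D)$ for \emph{every} admissible flag $\ybul$ centered at $x_0$. The key estimate is the multiplicity bound
\[
\mult_{x_0}(D') \ \leq\  \nu_1(D')+\nu_2(D')+\cdots+\nu_n(D')
\]
valid for every effective $\RR$-divisor $D'\equiv D$ and every admissible flag $\ybul$ centered at $x_0$. It follows inductively from the inequality $\mult_{x_0}(F|_{Y_i})\geq \mult_{x_0}(F)$, which holds whenever $F$ is an effective divisor not containing $Y_i$ and $Y_i$ is smooth at $x_0$. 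Consequently, if we secure a uniform lower bound $\mult_{x_0}(D')\geq \sigma>0$ as $D'$ ranges over all effective $\RR$-representatives of $[D]$, then the $\ell^1$-norm of $\nu_\ybul(D')$ is at least $\sigma$ for every such $D'$, and Proposition~\ref{prop:definition} forces $\origin\notin \Delta_\ybul(D)$ for every flag centered at $x_0$.

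The main obstacle is producing such an $x_0$. The idea is to invoke a Nakayama–Zariski negative part $N_\sigma(D)=\sum_E \sigma_E(D)\,E$ (a finite effective $\RR$-divisor) and show that the negativity $(D\cdot C)<0$ forces the existence of a prime component $E$ of $N_\sigma(D)$ containing $C$ with $\sigma_E(D)>0$. Any smooth point $x_0$ of $E$ lying on $C$ and outside the other components of $N_\sigma(D)$ then satisfies $\mult_{x_0}(D')\geq \sigma_E(D)>0$ uniformly in effective $D'\equiv D$, closing the argument. On surfaces such an $E$ is produced immediately by Zariski decomposition; in higher dimensions this step is where the real work sits and relies on the full $\sigma$-decomposition machinery for pseudoeffective divisors on smooth projective varieties, together with the observation that negative intersection numbers with curves propagate to divisorial components of the restricted base locus.
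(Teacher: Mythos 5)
Your $(3)\Rightarrow(2)$ and $(1)\Rightarrow(3)$ are fine. For the latter you invoke Theorem~\ref{thm:NO ampleness} to get a whole simplex inside $\Delta_\ybul(D+\alpha_m)$, which is heavier than what the paper uses (it simply picks a section of $n_m(D+\alpha_m)$ not vanishing at $x$, yielding the valuation vector $\origin$ directly), but this is only a matter of economy, not correctness, since the implication ``ample $\Rightarrow$ contains a simplex'' of Theorem~\ref{thm:NO ampleness} does not feed back into the present statement.

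The problem is $(2)\Rightarrow(1)$. You reduce the contrapositive to producing a point $x_0$ on the $D$-negative curve $C$ and a uniform $\sigma>0$ with $\mult_{x_0}(D')\geq\sigma$ for every effective $D'\equiv D$, and you propose to obtain it from a prime component $E$ of $N_\sigma(D)$ with $C\subseteq E$ and $\sigma_E(D)>0$. This is the gap, and the claim you rely on is not available: in dimension $\geq 3$ the positive part $P_\sigma(D)$ of the Nakayama $\sigma$-decomposition is merely movable, not nef, so $(D\cdot C)<0$ does \emph{not} force $(N_\sigma(D)\cdot C)<0$, and a curve with negative $D$-degree need not lie on any divisorial component of $\Bminus(D)$. (On a surface $P_\sigma(D)=P_D$ is nef and the argument closes, which is exactly why you can say the surface case is immediate.) If you retreat to the weaker statement you actually need, namely $\mult_{x_0}\|D\|>0$ for some $x_0\in C$, you are asking for precisely the nontrivial implication $(2)\Rightarrow(3)$ of \cite{ELMNP1}*{Proposition 2.8} (in contrapositive form), whose proof uses multiplier ideals and Nadel vanishing -- the ``$\sigma$-decomposition machinery'' alone does not give it. The paper's route is different in shape: from $\origin\in\Delta_\ybul(D)$ one deduces $\min\sigma_{D+\alpha_m}=0$ via Lemma~\ref{lem:nested}, hence $\mult_x\|D+\alpha_m\|=0$ by Proposition~\ref{prop:1}, then applies \cite{ELMNP1}*{Proposition 2.8} to conclude $x\notin\Bminus(D+\alpha_m)$ for every $m$ and finally $x\notin\Bminus(D)$. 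This keeps the only appeal to multiplier-ideal input confined to that one cited proposition and never needs to locate a divisorial component of the negative part containing $C$.
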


Before moving on, here is a quick summary of notions of positivity for divisors.

\begin{itemize}
	\item Ample, nef, big, and pseudo-effective are numerical properties, and they make sense for elements of $N^1(X)_\RR$. In comparison, the properties very ample, globally generated, semi-ample, effective, and $\QQ$-effective are not numerical\footnote{This means that there exist divisors $L\equiv L'$ where one is for instance globally generated while the other one is not.}.  
	\item Ample \& big are open  (inside the real vector space $N^1(X)_\RR$), while nef \& pseudo-effective are closed.
	\item Ample implies big, nef implies pseudo-effective, but in general not the other way around. 
	\item Globally generated/semi-ample imply nef, and effective/$\QQ$-effective imply pseudo-effective. 
\end{itemize}

\begin{rmk}
It is a very important and equally difficult question in higher-dimensional geometry to decide when a nef divisor is globally generated/semi-ample or a pseudo-effective divisor is $\QQ$-effective/effective. 

The general yoga is that problems of 'numerical' nature are more approachable than 'geometric' ones. 
\end{rmk}

\begin{exer}
Upon looking at the list of positivity properties, one is clearly missing, the variant of bigness that corresponds to very ample divisors. Come up with a definition of 'birationally very ample' or 'very big'. 
\end{exer}

\begin{rmk}
	The following simplistic argument can provide an initial justification for the shape of the results we expect. Let $X$ be a smooth projective curve, $\ybul$ an admissible flag on $X$, which  simply means the specification of $Y_1=\{P\}$, and $L$ a line bundle on $X$. Then 
	\[
	\text{$L$ is nef } \ \ \ \Leftrightarrow\ \ \ \deg_XL\dgeq 0 \ \ \ \Leftrightarrow\ \ \  0\in \nob{\ybul}{L} \equ [0,\deg_X L]\ ,
	\] 
	and 
	\[
	\text{$L$ is ample } \ \ \ \Leftrightarrow\ \ \ \deg_X L \,>\, 0 \ \ \ \Leftrightarrow\ \ \  [0,\epsilon)\in \nob{\ybul}{L} \equ [0,\deg_X L]\  \text{for some $\epsilon>0$.}
	\] 
\end{rmk}

Finally a quick note about the question when Newton--Okounkov bodies of line bundles can be rational polytopes. They always are in dimension one, and are very close to being so in dimension
two. In fact, as shown in Theorem~\ref{thm:NO-polygons}, Newton--Okounkov bodies on surfaces are always polygons. 

Here the dependence on the flag starts becoming an issue. As proven in Proposition~\ref{prop:surface} below, if $L$ is a big line bundle on a surface, then one will always be 
able to find a flag with respect to which $\dyl$ is a rational polygone, but there is an abundant cash of examples that show that this is not always the case. Nonetheless, there is no
known global obstruction to $\dyl$ being a rational polygone in dimension two, the volume of a line bundle is for instance always a rational number.

The situation changes drastically in dimension three and above. It is known that $\vl{X}{L}$ is no longer necessarily rational, and by Theorem~\ref{thm:volume} $(2)$, this implies that 
$\dyl$ cannot be a rational polytope (this is observed in \cite{LM}). Going down this road, it is possible to construct examples of Mori dream spaces $X$  with ample line bundles $L$ (see \cite{KLM1} 
for instance) such that $\dyl$ is not polyhedral for a suitable choice of a flag. 

In the other direction, one would expect that divisors with finitely generated section rings would possess rational polytopes are Newton--Okounkov bodies. While this is false in general (as 
mentioned above), one can prove that under a judicious choice of the flag, the associated Newton--Okounkov body will indeed be a rational polytope. 

\begin{theorem}[\cite{AKL}]\label{thm:rtl poly}
Let $X$ be a smooth projective variety, $L$ a line bundle on $X$ with $R(X,L)$ finitely generated. Then there exists an admissible  flag $\ybul$ on $X$ such that $\dyl$ is a rational polytope.
 \end{theorem}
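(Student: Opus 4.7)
The plan is to convert finite generation of $R(X,L)$ into finite generation of a valuative semigroup and then realize the corresponding valuation by an admissible flag. The Newton--Okounkov body will then appear as the horizontal $1$-slice of a rational polyhedral cone, and hence automatically as a rational polytope.

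\emph{Step 1: From $R(X,L)$ to a finitely generated value semigroup via a Khovanskii basis.} First I would pick homogeneous generators $f_1,\dots,f_r$ of $R(X,L)$ and write $R(X,L) = \CC[t_1,\dots,t_r]/I$. A standard Gr\"obner argument produces a term order on $\CC[t_1,\dots,t_r]$ under which $\operatorname{in}(I)$ is a monomial ideal, so that $\{f_i\}$ constitutes a Khovanskii (SAGBI) basis for the induced valuation. The associated rank $r$ monomial valuation on $R(X,L)$ then has as its value semigroup the affine semigroup $\Sigma\subseteq\NN^r$ cut out by $\operatorname{in}(I)$, which is in particular finitely generated. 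Composing with a sufficiently generic surjection $\ZZ^r \twoheadrightarrow \ZZ^n$ I would obtain a rank $n$ valuation $v$ on $\CC(X)$ whose graded value semigroup
\[
\Gamma_v \deq \st{(m, v(f)) \with 0 \neq f \in R(X,L)_m} \dsubseteq \NN\times\ZZ^n
\]
is still finitely generated, being the image of $\Sigma$ (graded appropriately) under a linear map of lattices.

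\emph{Step 2: Realizing $v$ geometrically and concluding.} By the theorem of Cutkosky--de Fernex--K\"uronya--Lazarsfeld--Ro\'e--Shokurov recalled earlier in the excerpt, the valuation $v$ agrees with $\nu_{\ybul}$ for some admissible flag $\ybul$ on a proper birational model $\pi\colon \widetilde X \to X$. Since $X$ is smooth, $\HH{0}{X}{mL} = \HH{0}{\widetilde X}{\pi^*mL}$ for every $m$, so the valuation vectors entering Construction~\ref{constr:NO-bodies} are precisely the elements of $\Gamma_v$. By definition, the Newton--Okounkov body $\dyl$ is the slice at height one of the closed convex cone $\overline{\operatorname{Cone}}(\Gamma_v) \dsubseteq \RR_{\geq 0}\times\RR^n$. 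Finite generation of $\Gamma_v$ forces this cone to be rational polyhedral, so $\dyl$ is a rational polytope.

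\emph{The main obstacle.} The subtle point is the rank reduction $\ZZ^r \twoheadrightarrow \ZZ^n$ in Step 1: one must compress the rank $r$ monomial valuation down to rank exactly $n$ without collapsing $\Gamma_v$ to lower dimension and while keeping $v$ an honest valuation of the function field rather than a degenerate one on the ambient polynomial ring. This is what demands the genericity assumption on the projection, and making this argument precise is the heart of the proof. A secondary, cosmetic point is that the statement asks for a flag on $X$ itself while the construction naturally lives on a birational model $\widetilde X$; this is harmless, since the value semigroup and the resulting Newton--Okounkov body depend only on the valuation of $\CC(X)=\CC(\widetilde X)$, which is birationally invariant.
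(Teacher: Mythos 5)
The decisive gap is in Step~1, which conflates Gr\"obner bases with Khovanskii (SAGBI) bases. For any term order $\prec$ the initial ideal $\operatorname{in}_\prec(I)\subset\CC[t_1,\dots,t_r]$ is indeed a monomial ideal, but the assignment sending a class in $R(X,L)=\CC[t]/I$ to the leading exponent of its Gr\"obner normal form is only a \emph{quasi}-valuation: multiplicativity fails whenever $\operatorname{in}_\prec(I)$ is not prime, and for a generic term order it is not (a monomial ideal is prime only if it is generated by a subset of the variables). Correspondingly, the value set you call ``the affine semigroup cut out by $\operatorname{in}(I)$'' is merely the complement of $\operatorname{in}_\prec(I)$ in $\NN^r$ --- the set of standard monomials --- which is a downward-closed order ideal, hence not closed under addition, and not a semigroup at all. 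To extract from this picture an honest valuation with finitely generated value semigroup you would need the initial ideal to be prime for a suitable weight, i.e.\ a \emph{toric degeneration} of $R(X,L)$, and the existence of such a degeneration is not a formal consequence of finite generation of $R(X,L)$; it is precisely the nontrivial content that a Khovanskii-basis argument would have to supply. Without it, the valuation $v$ of Step~1 is not available and the rest of the argument has nothing to hang on. A secondary issue is that the CFKLRS realization in Step~2 produces a flag on a birational model $\widetilde X$, whereas the statement asserts an admissible flag on $X$ itself; the resulting convex body is the same, but the flag claimed by the theorem is not delivered.

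For comparison, the proof in \cite{AKL} is geometric, essentially a higher-dimensional analogue of Proposition~\ref{prop:surface}: after replacing $L$ by a multiple and passing to a resolution of the base locus, finite generation of $R(X,L)$ forces the pullback of $L$ to split as a globally generated moving part $M$ plus a fixed part $F$ carrying no extra sections of any multiple. A complete-intersection flag built from general members of $|M|$ then gives a rational simplex for $M$, and $\Delta_{\ybul}(L)$ is a rational translate of it; one finally arranges for the flag to descend to $X$. The geometric input --- a Zariski--Fujita type control of fixed parts of linear series --- does the work that a toric degeneration would have to do in your plan, without one having to exist.
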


\begin{rmk}
The algebraic object whose finite generation guarantees that $\dyl$ is a rational polytope is the valuation semigroup $\Gamma_{\ybul}(L)$ (or, equivalently,  the associated semigroup algebra).
Note that this latter happens to be finitely generated a lot less often. 
\end{rmk}

\subsection{Local positivity}

In order to study positivity questions in terms of Newton--Okounkov bodies, we need to be able to talk about  positivity
at or in a neighbourhood of a point. 

\begin{defn}[Local ampleness]
Let $X$ be a projective variety, $x\in X$ an arbitrary point. We say that \emph{$L$ is ample at $x$}, if $x$ possesses an open neighbourhood $U\subseteq X$ such that 
\[
\phi_{|L^{\otimes m}|}\big|_U \colon U \lra \PP 	
\]
is an embedding for some (or, equivalanetly, for all) large enough $m\in\NN$. 
\end{defn}

Recall that global generation/semi-ampleness can be expressed in terms of subvarieties of $X$: for instance $L$ is semi-ample if and only if $\sbl{L}\equ \emptyset$. We wish to obtain analogous characterizations of ample/nef/etc. eventually. 

\begin{defn}[Augmented base locus, \cite{ELMNP1}]
Let $X$ be a projective variety, $L$ a line bundle, $A$ an ample line bundle on $X$, $x\in X$ arbitrary. We set
\[
\Bplus(L) \deq \sbl{L-tA} \ \ \ \text{for some $0<t\ll 1$. }	
\]
\end{defn}

One of the reasons for using augmented (and soon restricted or diminished base loci) of divisors instead of stable base loci is that the stable base locus of a line bundle does not respect numerical equivalence. 
We briefly summarize some important basic properties of augmented base loci. 

\begin{lemma}[Properties of the augmented base locus]\label{lem:props of augm base loci}
With notation as above, we have the following.
\begin{enumerate}
	\item $\Bplus(L)$ does not depent on the choice of $A$ nor the actual value of $t$ as long as it is taken to be small enough (in terms of $L$ and $A$).
	\item If $L\equiv L'$, then $\Bplus(L)=\Bplus(L')$.
	\item $L$ is ample precisely if $\Bplus(L)=\emptyset$.
	\item $L$ is big precisely if $\Bplus(L)\neq X$.
	\item $\Bstable(L)\subseteq \Bplus(L)\subseteq X$ Zariski closed. 
\end{enumerate}	
\end{lemma}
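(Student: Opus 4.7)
The unifying tool throughout will be the elementary fact that for $\QQ$-Cartier divisors $D_1,D_2$, if $D_1$ is semi-ample then $\sbl{D_1+D_2}\subseteq \sbl{D_2}$: sections of $m(D_1+D_2)$ can be produced by tensoring sections of $mD_1$ (eventually base-point-free) with sections of $mD_2$, so base points of $m(D_1+D_2)$ must come from base points of $mD_2$. Combined with the Noetherian property of Zariski closed subsets of $X$, this drives all five items.

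First I would establish (1) and (5). Monotonicity in $t$ is immediate from the principle above: for $0<t<s$, write $L-tA=(L-sA)+(s-t)A$ to get $\sbl{L-tA}\subseteq \sbl{L-sA}$, so the family $\{\sbl{L-tA}\}_{t>0}$ is descending and stabilizes by Noetherianity, yielding a well-defined $\Bplus(L)$ that is automatically Zariski closed (giving half of (5)). For independence of $A$: given two ample divisors $A_1,A_2$ and a small $t_1>0$, pick $s_2$ positive but small enough that $t_1A_1-s_2A_2$ is still ample; then $L-s_2A_2=(L-t_1A_1)+(t_1A_1-s_2A_2)$ gives $\sbl{L-s_2A_2}\subseteq\sbl{L-t_1A_1}$, and the symmetric argument proves equality in the limit $s_2,t_1\to 0^+$. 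The remaining content of (5), namely $\Bstable(L)\subseteq\Bplus(L)$, follows by writing $L=(L-tA)+tA$ and invoking the semi-ampleness of $tA$.

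Next I would handle (2). Suppose $L\equiv L'$, so that $N\deq L'-L$ is numerically trivial. Since numerically trivial plus ample is ample, for any $t>0$ the class $tA+N$ is ample. Then for small $t$, the decomposition
\[
L'-tA \equ (L-2tA)+(tA+N)
\]
together with the semi-ampleness of $tA+N$ yields $\sbl{L'-tA}\subseteq \sbl{L-2tA}=\Bplus(L)$; taking $t$ small enough gives $\Bplus(L')\subseteq\Bplus(L)$, and swapping the roles of $L,L'$ gives equality.

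Finally, (3) and (4) follow by combining Kodaira's lemma with openness of ampleness and bigness. For (3): if $L$ is ample then $L-tA$ is still ample for small $t$, hence semi-ample, so $\sbl{L-tA}=\emptyset$; conversely if $\Bplus(L)=\emptyset$ then $L-tA$ is semi-ample and in particular nef, so $L=(L-tA)+tA$ is nef plus ample and therefore ample. For (4): if $L$ is big, Kodaira's lemma writes $L\sim_\QQ A'+E$ with $A'$ ample and $E$ effective; for $t$ small, $A'-tA$ remains ample, so $L-tA\sim_\QQ(A'-tA)+E$ has stable base locus inside $\Supp(E)\subsetneq X$. Conversely, if $\Bplus(L)\neq X$, then $L-tA$ admits a nonzero section for some multiple, hence is $\QQ$-effective, and $L=(L-tA)+tA$ is $\QQ$-effective plus ample, which is big by Kodaira's lemma again.

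The only subtle point is the bookkeeping required to stay in the integral regime: stable base loci and $\Bs$ are defined for genuine Cartier divisors, so in each argument one must implicitly clear denominators, i.e.\ replace the rational parameters $t,s,u$ by rationals for which all relevant divisors become integral. Once that convention is fixed, every step is an instance of the single basic principle about $\sbl{D_1+D_2}$ recalled at the outset.
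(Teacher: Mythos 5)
Your proposal is correct, and it is essentially the standard argument: the paper itself defers the proof to \cite{ELMNP1}*{Section 1}, where the same reasoning (the semi-ampleness containment $\sbl{D_1+D_2}\subseteq\sbl{D_2}$ combined with Noetherianity of descending chains of Zariski-closed sets, openness of ampleness, and Kodaira's lemma) appears. Your remark about clearing denominators to stay in the integral regime is the right bookkeeping caveat and is handled in the same implicit way in the reference.
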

\begin{proof}
All proofs can be found in \cite{ELMNP1}*{Section 1}. 
\end{proof}

\begin{rmk}[Augmented base locus for $\RR$-divisors]
Since $\Bstable(mL)=\Bstable(L)$, one immediately obtains $\Bplus(mL)=\Bplus(L)$, and hence we can extend the definition of 
augmented base loci for $\QQ$-divisors by setting 
\[
\Bplus(D)\deq \Bplus(mD)
\]
 for some natural number $m$ for which $mD$ is 
integral. For an $\RR$-divisor class $\delta$ we define 
\[
 \Bplus(\delta) \deq \Bplus(\delta-t\alpha)
\]
for $\alpha$ an arbitrary ample class, $t>0$ sufficiently positive and such that $\delta-t\alpha$ is rational. 
\end{rmk}

The relationship between local ampleness and the augmented base locus is quickly explained by a result of Boucksom, Cacciola and Lopez. 

\begin{thm}[Boucksom--Cacciola--Lopez \cite{BCL}, Theorem A]\label{thm:BCL}
Let $X$ be a normal (do we really need this?) projective variety, $x\in X$, $L$ an line bundle on $X$. Then 
\[
\text{$L$ is ample at $x$}\ \ \Leftrightarrow\ \ x\not\in\Bplus(L)\ .
\]
\end{thm}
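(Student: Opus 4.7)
The plan is to prove the two implications separately: $x\notin\Bplus(L)\Rightarrow L$ ample at $x$ is essentially formal, whereas the converse is the real content and will be obtained by resolving the Kodaira map $\phi_{|mL|}$ near $x$ and invoking Nakamaye's theorem on the resulting birational model.

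For the easy direction, assume $x\notin\Bplus(L) = \Bstable(L-tA)$ for some ample $A$ and sufficiently small rational $t>0$. For $m$ large and divisible, $m(L-tA)$ is Cartier, $mtA$ is very ample, and some $s\in H^0(X,m(L-tA))$ satisfies $s(x)\neq 0$. Choosing a basis $u_0,\dots,u_N$ of $H^0(X,mtA)$, the products $s\cdot u_j\in H^0(X,mL)$ span a subseries $V\subseteq H^0(X,mL)$ whose associated map, restricted to the open set $\{s\neq 0\}\ni x$, agrees with $\phi_{|mtA|}$ up to a projective coordinate change and is therefore an embedding near $x$. Since $\phi_V$ factors through $\phi_{|mL|}$ composed with a linear projection, $\phi_{|mL|}$ itself is an embedding near $x$, so $L$ is ample at $x$.

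For the converse, assume $L$ is ample at $x$, so some $\phi_{|mL|}$ is an embedding on an open neighborhood $U\ni x$; in particular $L$ is big. Take a projective birational morphism $\mu:\widetilde X\to X$ resolving the indeterminacies of $\phi_{|mL|}$ and such that $\mu$ is an isomorphism over $U$; write $\widetilde\phi:\widetilde X\to Y\subseteq \PP^N$ for the resulting morphism and decompose
\[
\mu^*(mL)\equ M+F,\qquad M\deq \widetilde\phi^*\sO_{\PP^N}(1),
\]
with $F$ effective and $\Supp(F)\cap \mu^{-1}(U)=\emptyset$. Here $M$ is nef (pullback of a very ample) and big ($\widetilde\phi$ is birational onto $Y$), so Nakamaye's theorem gives $\Bplus(M)=\Null(M)$. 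For any irreducible subvariety $V\subseteq\widetilde X$ meeting $\mu^{-1}(U)$, $\widetilde\phi|_V$ is birational onto its image and the projection formula yields $(M^{\dim V}\cdot V)=\deg\widetilde\phi(V)>0$; hence $\Bplus(M)\cap \mu^{-1}(U)=\emptyset$.

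To finish, I would descend the conclusion to $X$. Fix an ample $\widetilde A$ on $\widetilde X$; for small $t>0$, $\Bstable(M-t\widetilde A)=\Bplus(M)$ misses $\mu^{-1}(U)$, and the subadditivity $\Bstable(D_1+D_2)\subseteq\Bstable(D_1)\cup\Bstable(D_2)$ applied to $\mu^*(mL)-t\widetilde A=(M-t\widetilde A)+F$ gives $\Bplus(\mu^*L)\cap\mu^{-1}(U)=\emptyset$. For any ample $A$ on $X$ and any $\epsilon>0$, $\mu^*A+\epsilon\widetilde A$ is ample on $\widetilde X$, and writing
\[
\mu^*L-t\mu^*A\equ \bigl(\mu^*L-t(\mu^*A+\epsilon\widetilde A)\bigr)+t\epsilon\widetilde A
\]
one deduces, via the same subadditivity and the emptiness of $\Bstable$ for ample $\QQ$-divisors, that $\Bstable(\mu^*L-t\mu^*A)\subseteq \Bplus(\mu^*L)$, which avoids $\mu^{-1}(U)$. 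Since $\mu$ is birational, pullback of sections is an isomorphism on $H^0$, giving $\mu^{-1}(\Bstable(L-tA))=\Bstable(\mu^*L-t\mu^*A)$, and therefore $U\cap\Bplus(L)=\emptyset$, i.e., $x\notin\Bplus(L)$. The principal obstacle is the hard direction: isolating the nef-and-big divisor $M$ on the resolution whose augmented base locus is computable via Nakamaye's theorem, and then carefully tracking base loci through both the decomposition $\mu^*mL=M+F$ and the descent from $\widetilde X$ back to $X$.
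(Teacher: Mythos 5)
The paper states this result with a citation to \cite{BCL} and gives no proof, so I will assess your argument on its own merits.

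Your easy direction is correct: the multiplicative trick producing $s\cdot u_j\in H^0(X,mL)$, together with the observation that enlarging a linear series can only improve the Kodaira map near a point where the subseries already gives an embedding, is exactly the right argument.

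The hard direction has a genuine gap where you invoke Nakamaye's theorem. You insist that $\mu$ be an isomorphism over $U$, so $\widetilde X$ contains $\mu^{-1}(U)\cong U$; if $x$ is a singular point of the normal variety $X$, then $\widetilde X$ is singular, and Nakamaye's equality $\Bplus(M)=\Null(M)$ is proved only for \emph{smooth} projective varieties. Its extension to normal projective varieties is one of the central technical results of the very paper \cite{BCL} whose Theorem A you are proving, so invoking it here is essentially circular in depth. Trying to dodge the singularity by passing to a further smooth resolution $\widetilde X'\to\widetilde X$ does not help: then $\mu'\colon\widetilde X'\to X$ is no longer an isomorphism over $U$, the positive-dimensional fibers over $x$ are contracted by $\widetilde\phi'$, hence lie in $\Null(M')\subseteq\Bplus(M')$, and the descent step then requires a comparison of the form $\Bplus(\mu'^*L)=\mu'^{-1}(\Bplus(L))\cup\Exc(\mu')$ (a result of Boucksom--Broustet--Pacienza type), which is again delicate off the smooth locus. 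In short, your proof is fine when $x$ is a smooth point of $X$; the content of the normal case --- the case the theorem actually claims and for which \cite{BCL} developed their machinery --- is exactly what your application of Nakamaye's theorem glosses over.
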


An analogous result for $\RR$-divisors is given in \cite{Lopez}. In a slightly different direction, since ampleness can be characterized by the existence of certain resolutions (cf. \cite{PAGI}*{Example 1.2.21}), it is not unexpected that augmented base loci can be described in the same terms. 
 
\begin{proposition}[\cite{Kur10}, Proposition 3.6]\label{prop:Kur10}
Let $L$ be an integral Cartier divisor on a projective variety. Then $\Bplus(L)$ is the smallest subset of $X$ such that 
for all coherent sheaves $\sF$ there exists a possibly infinite sequence of sheaves of the form
\[
\cdots \lra \bigoplus_{i=1}^{r_i}\sOX(-m_iL) \lra \cdots\lra \bigoplus_{i=1}^{r_0}\sOX(-m_0L) \lra \sF	\ ,
\]
with all $m_i\geq 1$, which is exact off $\Bplus(L)$.
\end{proposition}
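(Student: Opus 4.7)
The plan is to verify two claims: (a) every coherent sheaf $\sF$ on $X$ admits a (possibly infinite) resolution of the asserted form exact off $\Bplus(L)$, and (b) if $Z\subseteq X$ is any subset enjoying this property, then $\Bplus(L)\subseteq Z$. Both parts rest on the same fundamental input, namely the description $\Bplus(L)=\Bstable(L-\epsilon A)$ for $A$ ample and every $\epsilon>0$ sufficiently small (cf.\ Lemma~\ref{lem:props of augm base loci}). In particular, one may fix $A$ ample and $m\gg 0$ with $mL-A$ integral and $\Bs|mL-A|\subseteq\Bplus(L)$.

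For claim (a) I would proceed by induction, so it suffices to produce, for any coherent sheaf $\mathcal{G}$, a morphism $\sOX(-NL)^{\oplus s}\to\mathcal{G}$ that is surjective off $\Bplus(L)$; feeding the kernel of this map back into the same construction and concatenating yields the full resolution. Pick sections $s_1,\dots,s_r\in H^0(X,\sOX(mL-A))$ whose common zero locus equals $\Bs|mL-A|\subseteq\Bplus(L)$. Since $A$ is ample, the Cartan--Grothendieck--Serre theorem produces a $k\gg 0$ with $\mathcal{G}(kA)$ globally generated by sections $t_1,\dots,t_p$. The products $s_i^k\cdot t_j$ are then global sections of $\mathcal{G}\bigl(k(mL-A)+kA\bigr)=\mathcal{G}(kmL)$, and at each point $x\notin\Bplus(L)$ at least one $s_i$ is a local unit, so the collection $\{s_i^k t_j\}$ generates the stalk $\mathcal{G}(kmL)_x$. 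Twisting by $-kmL$ converts this into the desired surjection $\sOX(-kmL)^{\oplus rp}\to\mathcal{G}$ which is surjective at every $x\notin\Bplus(L)$.

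For claim (b), let $A$ be very ample and apply the hypothesis to $\sF=\sOX(-A)$. The last map of the given resolution is a morphism $\sOX(-m_0L)^{\oplus r_0}\to\sOX(-A)$ whose cokernel is supported in $Z$, equivalently, a tuple of $r_0$ global sections of $m_0L-A$ whose common zero locus is contained in $Z$. Hence $\Bs|m_0L-A|\subseteq Z$, so $\Bstable\bigl(L-\tfrac{1}{m_0}A\bigr)=\Bstable(m_0L-A)\subseteq Z$. Because $\Bstable(L-\epsilon A)$ is monotone non-increasing as $\epsilon\downarrow 0$ and coincides with $\Bplus(L)$ for all sufficiently small $\epsilon>0$, we conclude $\Bplus(L)\subseteq\Bstable\bigl(L-\tfrac{1}{m_0}A\bigr)\subseteq Z$, as required.

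The main obstacle is the inductive step in (a): one must secure a single integer $m$ for which $|mL-A|$ is base-point free on all of $U\deq X\setminus\Bplus(L)$ simultaneously, not merely pointwise. A pointwise version is cheap from Kodaira's lemma, but the uniform statement --- essential for the trick with the products $s_i^k t_j$ generating $\mathcal{G}(kmL)$ uniformly over $U$ --- is exactly what the description $\Bplus(L)=\Bstable(L-\epsilon A)$ provides. Once this uniform base-point freeness is in hand, the remainder of the argument mirrors Serre's classical construction of resolutions by negative twists of an ample divisor.
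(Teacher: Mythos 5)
Your proof is correct and uses what is essentially the Serre-style resolution argument from \cite{Kur10}: multiply $k$-th powers of sections of $\sOX(mL-A)$ --- whose common zero locus lies in $\Bplus(L)$ --- against sections globally generating $\mathcal{G}\otimes\sOX(kA)$, to obtain a map $\sOX(-kmL)^{\oplus rp}\to\mathcal{G}$ surjective away from $\Bplus(L)$; iterate on the kernel; and prove minimality by testing the hypothesis on $\sOX(-A)$ to extract a base-locus containment from the last map of the resolution. You correctly isolate the essential technical input, namely the uniform freeness of $|mL-A|$ off $\Bplus(L)$, which is precisely what the description of $\Bplus$ as a stable base locus of an ample perturbation supplies.

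One point worth tightening is the sentence ``one may fix $A$ ample and $m\gg 0$ with $mL-A$ integral and $\Bs|mL-A|\subseteq\Bplus(L)$.'' For a \emph{fixed} ample $A$ it is $\Bstable(mL-A)$, not necessarily the linear-series base locus $\Bs|mL-A|$, that equals $\Bplus(L)$ once $m$ is large; the latter can be strictly bigger for every individual $m$. To secure the containment you need, first choose $m_1$ and an ample $A_1$ with $\Bstable(m_1L-A_1)=\Bplus(L)$, then a $k$ so large and divisible that $\Bs|k(m_1L-A_1)|=\Bstable(m_1L-A_1)$, and finally set $m=km_1$, $A=kA_1$. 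This is harmless since the whole iteration uses only a single pair $(m,A)$, but the choice must be made jointly, not sequentially, and it is worth being explicit because the products $s_i^k t_j$ genuinely rely on an honest finite collection of sections with common zeros inside $\Bplus(L)$.
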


Augmented base loci have a counterpart describing nefness/pseudo-effectivity.

\begin{defn}[Restricted base locus, \cite{ELMNP1}]
Let $X$ be a projective variety, $x\in X$, $L$ an integral Cartier divisor, $A$ an ample line bundle on $X$. We define
\[
\Bminus(L) \deq \bigcup_{m=1}^{\infty} \Bstable(L+\tfrac{1}{m}A)\ .
\]
\end{defn}

\begin{rmk}[Terminology]
In various sources (see \cite{Les} for instance) restricted base loci are called 'diminished'. We will stick to the original terminology of \cite{ELMNP1}.  
\end{rmk}

\begin{lemma}[Properties of the restricted base locus]\label{lem:props of restr base loci}
With notation as above, restricted base loci have the following properties.
\begin{enumerate}
	\item $\Bminus(L)$ is independent of the choice of $A$.
	\item $\Bminus(L)=\emptyset$ if and only if $L$ is nef.
	\item $\Bminus(L)\neq X$ if and only if $L$ is pseudo-effective.
	\item If $L\equiv L'$, then $\Bminus(L)=\Bminus(L')$.
	\item $\Bminus(L)\subseteq \Bstable(L)$
\end{enumerate}
\end{lemma}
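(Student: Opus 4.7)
The plan rests on one principle I will use repeatedly: \emph{if $M - N$ is an ample class, then $\Bstable(M) \subseteq \Bstable(N)$}, since a sufficiently divisible multiple of $M-N$ becomes globally generated, and multiplication by its sections sends $H^0(X,kN)$ into $H^0(X,kM)$ without enlarging the base scheme.

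Using this principle I would dispatch (5), (1), and (4) in one sweep. For (5), the class $L + \tfrac{1}{m}A - L = \tfrac{1}{m}A$ is ample, so $\Bstable(L+\tfrac{1}{m}A) \subseteq \Bstable(L)$ for every $m$, and the union stays inside $\Bstable(L)$. For (1), given a second ample class $A'$, for each $m$ I would choose $k$ large enough that $\tfrac{1}{m}A - \tfrac{1}{k}A'$ is ample, yielding $\Bstable(L+\tfrac{1}{m}A) \subseteq \Bstable(L+\tfrac{1}{k}A')$; the symmetric argument then shows equality of the two candidate definitions of $\Bminus(L)$. For (4), I would compare $L+\tfrac{1}{m}A$ with $L'+\tfrac{1}{2m}A$: since $L-L'\equiv 0$ and ampleness is numerical, the difference $(L-L')+\tfrac{1}{2m}A$ is ample, hence $\Bstable(L+\tfrac{1}{m}A) \subseteq \Bstable(L'+\tfrac{1}{2m}A) \subseteq \Bminus(L')$, and symmetry closes the argument.

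For (2) I would argue the two directions separately. If $L$ is nef, then $L+\tfrac{1}{m}A$ is ample (nef plus ample is ample via Nakai--Moishezon--Kleiman, Theorem~\ref{thm:Nakai Moishezon}), so each $\Bstable(L+\tfrac{1}{m}A)$ is empty and the union is empty. Conversely, if $L$ is not nef I would pick a curve $C$ with $L\cdot C<0$ and choose $m$ large enough so that $(L+\tfrac{1}{m}A)\cdot C$ is still negative; any effective divisor numerically equivalent to a positive multiple of $L+\tfrac{1}{m}A$ must then contain $C$ in its support (otherwise its restriction to $C$ would be an effective divisor of negative degree), so $C \subseteq \Bstable(L+\tfrac{1}{m}A) \subseteq \Bminus(L)$, contradicting $\Bminus(L)=\emptyset$.

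The hard part will be (3). If $L$ is pseudo-effective, then each $L+\tfrac{1}{m}A$ lies in the interior of $\oEff(X)$ and hence is big, so $\Bstable(L+\tfrac{1}{m}A)$ is a proper Zariski closed subset of $X$. The main obstacle here is that $\Bminus(L)$ is only a countable union of such proper closed subsets, a priori neither closed nor proper; to conclude $\Bminus(L) \neq X$ I would invoke that over the uncountable base field $\CC$ the irreducible variety $X$ cannot be written as a countable union of proper subvarieties. Conversely, if $L \notin \oEff(X)$, then closedness of the pseudo-effective cone in $N^1(X)_\RR$ forces $L+\tfrac{1}{m}A \notin \oEff(X)$ for all sufficiently large $m$; such a class cannot be $\QQ$-effective, so $\Bstable(L+\tfrac{1}{m}A)=X$, whence $\Bminus(L)=X$.
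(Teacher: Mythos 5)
Your argument is correct. The paper itself supplies no proof of this lemma — it refers to \cite{ELMNP1}*{Section~2} — so there is no internal argument to compare against; what you have written is a clean, self-contained verification along the lines of that reference. The multiplication-by-sections principle you isolate at the outset (if $M-N$ is ample, then $\Bstable(M)\subseteq\Bstable(N)$: pair a section of $kN$ not vanishing at a given point with a section of $k(M-N)$ not vanishing there, once $k$ is large enough that $k(M-N)$ is globally generated) is indeed the engine behind (1), (4), and (5), and your separate treatments of (2) and (3) are the standard ones.

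One step deserves an explicit flag. In the forward direction of (3) you reduce to the fact that an irreducible complex variety cannot be covered by countably many proper Zariski-closed subsets. That is correct, and the paper's standing convention of working over $\CC$ entitles you to it; but it is the only step in the proof that is not purely formal — it uses the uncountability of the ground field and would not go through verbatim over, say, $\overline{\QQ}$ or $\overline{\mathbb{F}_p}$. Since Lesieutre's example, cited just after this lemma, shows that $\Bminus$ need not be Zariski closed, one cannot sidestep the issue by exhibiting a single proper closed subset containing all of $\Bminus(L)$; some appeal to the size of the field is genuinely doing work here, and it is worth saying so explicitly when writing out the argument.
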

\begin{proof}
Verifications of these statements can be found in \cite{ELMNP1}*{Section 2}. 
\end{proof}

\begin{rmk}[Restricted base loci are not always Zariski closed]
Lesieutre \cite{Les} has produced various examples of divisors with real coefficients whose restricted base loci are not Zariski closed.	
\end{rmk}

\begin{exer} $\star$ Let $X$ be a normal projective variety, $L$ a big line bundle on $X$ with $R(X,L)$ finitely generated. Show that $\Bminus(L)=\Bstable(L)$. 
\end{exer}

\begin{exer}\label{prop:openclosed}
	Let $X$ be a  projective variety, $x\in X$ an arbitrary point. Then 
	\begin{enumerate}
		\item $B_+(x) \deq \st{\alpha\in N^1(X)_\RR\mid x\in \Bplus(\alpha)} \dsubseteq N^1(X)_\RR$ is closed, 
		\item $B_-(x) \deq \st{\alpha\in N^1(X)_\RR\mid x\in \Bminus(\alpha)} \dsubseteq N^1(X)_\RR$ is open, 
	\end{enumerate}
	both with respect to the metric topology of $N^1(X)_\RR$.  
\end{exer}

\begin{exer} $\star$ Let  $f\colon Y\to X$ be a proper birational morphism between smooth projective varieties, $L$ a big line bundle on $X$. Determine $\Bplus(f^*L)$ in terms of $\Bplus(L)$, and likewise for restricted base loci.   
\end{exer}

\subsection{Newton--Okounkov bodies, nefness,  and restricted base loci}

Our immediate plan now is to describe augmented/restricted base loci in terms of Newton--Okounkov bodies. Here we will deal 
with restricted base loci and the closely related notions of nefness and pseudo-effectivity. Most of the discussion is taken
almost verbatim from \cite{KL15a}. The fundamental result  connecting restricted base loci is the following. 

\begin{thm}[\cite{KL15a}]\label{thm:restr bl}
	Let $X$ be a smooth projective variety, $D$ a big $\RR$-Cartier divisor on $X$, $x\in X$. Then the following are equivalent.
	\begin{enumerate}
		\item $x\notin \Bminus(D)$.
		\item There exists an admissible flag $\ybul$ centered at $x$ such that $\origin\in\nob{\ybul}{D}$.
		\item One has $\origin\in\nob{\ybul}{D}$  for all admissible flags $\ybul$ centered at $x$.
	\end{enumerate} 
\end{thm}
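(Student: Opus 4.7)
The plan is to dispatch the three implications in the order $(iii)\Rightarrow(ii)$, $(i)\Rightarrow(iii)$, and $(ii)\Rightarrow(i)$, with the substance concentrated in the last. The implication $(iii)\Rightarrow(ii)$ is immediate once one observes that admissible flags centered at $x$ do exist on the smooth variety $X$: build $Y_1,\ldots,Y_n$ inductively as smooth divisors through $x$ via Bertini applied to sufficiently ample linear systems on the previous stratum.

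For $(i)\Rightarrow(iii)$, I would fix an arbitrary admissible flag $\ybul$ centered at $x$ and an ample $A$. The assumption $x\notin\Bminus(D)=\bigcup_{m\geq 1}\Bstable(D+\tfrac{1}{m}A)$ produces, for each $m\geq 1$, a positive integer $k_m$ (divisible by $m$) and a section $s_m\in H^0(X,\sO_X(k_m(D+\tfrac{1}{m}A)))$ with $s_m(x)\neq 0$. A short induction along the flag then forces $\nu_{\ybul}(s_m)=\origin$: since $x\in Y_1$, non-vanishing of $s_m$ at $x$ forces $\nu_1(s_m)=0$; the restriction $s_m|_{Y_1}$ is then a well-defined section on $Y_1$ still nonzero at $x\in Y_2$, so $\nu_2(s_m)=0$; and so on. Hence $\origin\in\Delta_{\ybul}(D+\tfrac{1}{m}A)$ for every $m$, and Lemma~\ref{lem:nested}(iii) applied to $\alpha_m=\tfrac{1}{m}A$ yields
\[
\origin\,\in\,\bigcap_{m}\Delta_{\ybul}\!\left(D+\tfrac{1}{m}A\right)\,=\,\Delta_{\ybul}(D).
\]

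The main direction is $(ii)\Rightarrow(i)$. Fix a flag $\ybul$ centered at $x$ with $\origin\in\Delta_{\ybul}(D)$, an ample divisor $A$, and $m\geq 1$; the goal is $x\notin\Bstable(D+\tfrac{1}{m}A)$. By Lemma~\ref{lem:nested}(i) one first gets $\origin\in\Delta_{\ybul}(D+\tfrac{1}{2m}A)$. Next I would invoke the Minkowski-type superadditivity of Newton--Okounkov bodies,
\[
\Delta_{\ybul}(L_1)+\Delta_{\ybul}(L_2)\,\subseteq\,\Delta_{\ybul}(L_1+L_2),
\]
which is immediate from $\nu_{\ybul}(s_1 s_2)=\nu_{\ybul}(s_1)+\nu_{\ybul}(s_2)$ together with closure. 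Combined with $\origin\in\Delta_{\ybul}(D+\tfrac{1}{2m}A)$, this gives $\Delta_{\ybul}(\tfrac{1}{2m}A)\subseteq\Delta_{\ybul}(D+\tfrac{1}{m}A)$. Ampleness of $A$ together with Theorem~\ref{thm:NO ampleness} then supply some $\lambda'>0$ with $\Delta_{\lambda'}\subseteq\Delta_{\ybul}(A)$, which rescales to the standard-simplex containment $\Delta_{\lambda'/(2m)}\subseteq\Delta_{\ybul}(D+\tfrac{1}{m}A)$.

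It remains to translate this into a base-locus statement, and this is the hard part. For it I would invoke the local, pointwise counterpart of Theorem~\ref{thm:NO ampleness}: the criterion that $x\notin\Bplus(L)$ is equivalent to the existence of an admissible flag centered at $x$ and some $\lambda>0$ with $\Delta_\lambda\subseteq\Delta_{\ybul}(L)$. Applied with $L=D+\tfrac{1}{m}A$, this yields $x\notin\Bplus(D+\tfrac{1}{m}A)\supseteq\Bstable(D+\tfrac{1}{m}A)$, and letting $m$ vary produces $x\notin\Bminus(D)$. The reliance on this $\Bplus$-version of the ampleness criterion is what makes the argument nontrivial --- all the other ingredients (Lemma~\ref{lem:nested}, Minkowski superadditivity, global ampleness of $A$) are either elementary or already at hand --- and the paper presumably establishes the needed local ampleness-via-NO criterion in Subsection~2.5.
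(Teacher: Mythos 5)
Your $(i)\Rightarrow(iii)$ and $(iii)\Rightarrow(ii)$ agree with the paper. The problem is the hard direction $(ii)\Rightarrow(i)$, where your argument is circular relative to the way the paper is organized.

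You propose to deduce $x\notin\Bstable(D+\tfrac1m A)$ from a containment $\Delta_{\lambda'/(2m)}\subseteq\Delta_{\ybul}(D+\tfrac1m A)$ by invoking the pointwise $\Bplus$ criterion: a standard simplex sits inside a Newton--Okounkov body of $L$ at a flag through $x$ iff $x\notin\Bplus(L)$. That criterion is Theorem~\ref{thm:augm bl}, and its implication $(2)\Rightarrow(1)$ is proved in the paper precisely by \emph{reducing to} Theorem~\ref{thm:restr bl}: one uses Proposition~\ref{prop:compute} to subtract $\epsilon Y_1$, lands in the situation $\origin\in\Delta_{\ybul}(D-\epsilon Y_1)$, and then invokes the present theorem's $(2)\Rightarrow(1)$ together with \cite{ELMNP1}*{Proposition 1.21}, which says $\Bminus(D-\epsilon Y_1)=\Bplus(D)$. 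So the ingredient you call ``presumably established in Subsection~2.5'' is not available to you yet --- its proof requires the very implication you are trying to establish. (A smaller issue: Theorem~\ref{thm:augm bl}(2) also requires $Y_1$ to be ample, which your flag need not satisfy; only the full quantifier version (3) drops that, and $(3)\Rightarrow(1)$ again goes through $(2)\Rightarrow(1)$.)

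What the paper actually uses for $(2)\Rightarrow(1)$, and what your argument is missing, is a direct bridge from the Newton--Okounkov body to the restricted base locus via asymptotic multiplicities rather than via $\Bplus$. Concretely: from $\origin\in\Delta_{\ybul}(D+\alpha_m)$ one gets $\min\sigma_{D+\alpha_m}=0$ for the sum function $\sigma$. Lemma~\ref{lem:1} (the inequality $\ord_x(s)\leq\sum_i\nu_i(s)$, which uses regularity of $\sO_{X,x}$) upgrades to Proposition~\ref{prop:1}, the inequality $\mult_x\|F\|\leq\min\sigma_F$ for big $\QQ$-divisors $F$. Hence $\mult_x\|D+\alpha_m\|=0$, and \cite{ELMNP1}*{Proposition 2.8} --- the multiplier-ideal-theoretic input --- converts vanishing of the asymptotic multiplicity into $x\notin\Bminus(D+\alpha_m)$. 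Letting $m\to\infty$ and using $\Bminus(D)=\bigcup_m\Bminus(D+\alpha_m)$ finishes. Your plan substitutes the $\Bplus$ characterization for this chain, but that is not a shortcut: it is the downstream consequence, and using it here reverses the logical order. If you want a self-contained $(ii)\Rightarrow(i)$, you need the sum-function estimate (or an equivalent) rather than the simplex criterion.
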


The connection between base loci and nefness comes from combining Theorem~\ref{thm:restr bl} with Lemma~\ref{lem:props of restr base loci}. 

\begin{corollary}\label{cor:nef}
	With notation as above the following are equivalent for a big $\RR$-divisor $D$.
	\begin{enumerate}
		\item $D$ is nef.
		\item For every point $x\in X$  there exists an admissible flag $\ybul$ on $X$ centered at $x$ such that $\origin\in \Delta_{\ybul}(D)\subseteq\RR^n$.
		\item One  has $\origin\in \Delta_{\ybul}(D)$ for every admissible flag $\ybul$ on $X$.  
	\end{enumerate}
\end{corollary}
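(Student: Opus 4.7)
The proof I have in mind is essentially a formal repackaging of Theorem~\ref{thm:restr bl}, combined with the base locus characterization of nefness in Lemma~\ref{lem:props of restr base loci}. The core observation is that all three conditions of Corollary~\ref{cor:nef} are obtained from the three conditions of Theorem~\ref{thm:restr bl} by letting $x$ vary over all points of $X$.

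Concretely, the plan is as follows. First, I would invoke Lemma~\ref{lem:props of restr base loci}(2), which says that $D$ is nef if and only if $\Bminus(D)=\emptyset$. The latter is in turn equivalent to the pointwise statement $x\notin\Bminus(D)$ for every $x\in X$. This rewrites condition (1) as: for every $x\in X$, one has $x\notin\Bminus(D)$.

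Next, I would apply Theorem~\ref{thm:restr bl} at each point $x\in X$ separately. That theorem asserts the equivalence of $x\notin\Bminus(D)$ with the existence of an admissible flag $\ybul$ centered at $x$ satisfying $\origin\in\nob{\ybul}{D}$, and also with the statement that $\origin\in\nob{\ybul}{D}$ holds for every admissible flag centered at $x$. Quantifying over all $x\in X$, the first form becomes exactly condition (2) of the corollary. For condition (3), the same quantification shows that (1) is equivalent to the requirement that $\origin\in\nob{\ybul}{D}$ for every admissible flag centered at every point $x\in X$; since every admissible flag is centered at some point (its bottom member $Y_n$), this is condition (3).

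Since we only bundle pointwise equivalences through universal quantification, there is no real obstacle here; the only thing to keep in mind is the trivial remark that every admissible flag has a unique center and so condition (3) literally ranges over the disjoint union, over $x\in X$, of flags centered at $x$. The substantive content is entirely in Theorem~\ref{thm:restr bl}, so the argument reduces to a one-line chain of equivalences once that result is available.
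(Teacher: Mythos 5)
Your proposal is correct and matches the paper's proof, which simply cites Theorem~\ref{thm:restr bl} together with Lemma~\ref{lem:props of restr base loci} and declares the corollary immediate. You have merely spelled out the universal quantification over $x\in X$ that the paper leaves implicit.
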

\begin{proof}
	Immediate from Theorem~\ref{thm:restr bl} and  Lemma~\ref{lem:props of restr base loci}.
\end{proof}

\begin{remark}\label{rmk:non-smooth nef}
	In fact, the proof of Theorem~\ref{thm:restr bl} yields somewhat more: assuming (as we have to) that $x\in X$ is smooth, one can in fact see that the implication $(1)\Rightarrow (3)$ holds on an arbitrary projective variety both in Theorem~\ref{thm:restr bl} and Corollary~\ref{cor:nef}. 
\end{remark}

\begin{exer}
Comb through the proof of Theorem~\ref{thm:restr bl} in order to verify the claim of Remark~\ref{rmk:non-smooth nef}. 
\end{exer}

\begin{rmk}[Pseudo-effective $\RR$-divisors]
	In \cite{CHPW3}*{Theorem 4.2} the authors observe  Theorem~\ref{thm:restr bl} and Corollary~\ref{cor:nef} to limiting
	Newton--Okounkov bodies of pseudo-effective $\RR$-divisors. 
\end{rmk}
 
\begin{exer}
Prove that Theorem~\ref{thm:restr bl} holds for limiting Newton--Okounkov bodies of divisors using the definition
\[
\Delta^{\lim}_{\ybul}(D) \deq \bigcap_{m=1}^{\infty} \Delta_{\ybul}(D+\alpha_m)\ ,
\]
where $(\alpha_m)_{m\in\NN}\searrow 0$ is a decreasing sequence of ample $\RR$-divisor classes. Check first that the definition is independent of the choice of the sequence $\alpha_m$. 
\end{exer}

The essence of the proof of Theorem~\ref{thm:restr bl} is to connect the asymptotic multiplicity of $D$ at $x$ to a certain  function defined on the Newton-Okounkov body of $D$. Before turning to the actual proof, we will quickly recall the notion of the asymptotic multiplicity  (or the asymptotic order of vanishing) of 
a $\QQ$-divisor $F$ at a point $x\in X$. 

\begin{defn}[Asymptotic multiplicity]
Let $F$ be  an effective Cartier divisor on $X$, defined locally by the equation $f\in \sO_{X,x}$. Then \textit{multiplicity} of $F$ at $x$ is defined to be 
$\mult_x(F)=\textup{max}\{n\in\NN | f\in \mathfrak{m}^n_{X,x}\}$, where $\mathfrak{m}_{X,x}$ denotes the maximal ideal of the local ring $\sO_{X,x}$. If $|V|$ is a linear series, 
then the multiplicity of $|V|$ is defined to be 
\[
\mult_x(|V|) \deq \min_{F\in |V|}\{\mult_x(F)\}\ . 
\]
The \emph{asymptotic multiplicity} of a $\QQ$-divisor $D$ 
at $x$  is then defined to be
\[
\mult_x(||D||) \deq  \lim_{p\rightarrow \infty}\frac{\mult_x(|pD|)}{p}\ .
\]
\end{defn}
 
\begin{rmk}
By  semicontinuity $\min_{F\in |V|}\{\mult_x(F)\}$ equals  the multiplicity  of a general element in $|V|$ at $x$.	
The multiplicity at $x$ coincides with the order of vanishing at $x$, given in Definition~2.9 from \cite{ELMNP1}. 
In what follows we will talk about the multiplicity 
of a divisor, but the order of vanishing of a section of a line bundle. 
\end{rmk}

An important technical  ingredient of  the proof of Theorem~\ref{thm:restr bl} is a result of \cite{ELMNP1}, which we now recall. In some sense this is the counterpart of the geometric descriptions we had earlier for augmented base loci in Theorem~\ref{thm:BCL} and Proposition~\ref{prop:Kur10}.

\begin{proposition}(\cite{ELMNP1}*{Proposition 2.8})\label{prop:ELMNP}
	Let $D$ be a big $\QQ$-divisor on a smooth projective variety $X$, $x\in X$ an arbitrary (closed) point. Then the following are equivalent. 
	\begin{enumerate}
		\item There exists  $C>0$ having the property that  $\mult_x(|pD|)<C$, whenever $|pD|$ is nonempty for some positive integer $p$..
		\item $\mult_x(\|D\|) = 0$.
		\item $x\notin \Bminus(D)$. 
	\end{enumerate}
\end{proposition}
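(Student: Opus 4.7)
The plan is to prove the cycle $(1) \Rightarrow (2) \Rightarrow (3) \Rightarrow (1)$, with the first implication formal, the middle one following from Nakayama's $\sigma$-decomposition, and the last one requiring the theory of asymptotic multiplier ideals.

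The implication $(1) \Rightarrow (2)$ is immediate: the sequence $a_p := \mult_x(|pD|)$ is subadditive, since $\mult_x(\sigma\tau) = \mult_x(\sigma) + \mult_x(\tau)$ for nonzero global sections $\sigma$ of $pD$ and $\tau$ of $qD$. Fekete's lemma then identifies $\mult_x(\|D\|) = \inf_p a_p/p = \lim_p a_p/p$, and a uniform bound $a_p < C$ forces this limit to vanish.

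For $(2) \Rightarrow (3)$ I would argue contrapositively using Nakayama's $\sigma$-decomposition: for a big $\QQ$-divisor $D$ on a smooth projective variety $X$, one has $D \equiv P_\sigma(D) + N_\sigma(D)$ with $P_\sigma(D)$ movable and $N_\sigma(D)$ an effective $\RR$-divisor, where $\Supp N_\sigma(D)$ coincides set-theoretically with $\Bminus(D)$ and $\mult_x(N_\sigma(D)) = \mult_x(\|D\|)$ at each smooth point $x$. Hence $x \in \Bminus(D)$ forces $\mult_x(\|D\|) = \mult_x(N_\sigma(D)) \geq 1 > 0$, giving the contrapositive. For $(3) \Rightarrow (1)$, fix very ample $A$ and rational $\epsilon > 0$ with $x \notin \Bstable(D + \epsilon A)$, and choose $k_0$ divisible enough together with $F \in |k_0(D + \epsilon A)|$ avoiding $x$. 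The crux is bounding $\mult_x(\mathfrak{b}(|pD|))$ uniformly in $p$, which uses asymptotic multiplier ideals: the hypothesis is equivalent to the stalk condition $\mathcal{J}(\|D\|)_x = \mathcal{O}_{X,x}$, and combining the standard inclusion $\mathfrak{b}(|pD|) \subseteq \mathcal{J}(\|pD\|)$ with a Skoda-type global generation of $\mathcal{J}(\|D\|) \otimes \mathcal{O}_X(qA)$ for $q$ depending only on $\dim X$ yields a constant $C = C(\dim X, A, \epsilon, k_0)$ bounding $\mult_x(|pD|)$ uniformly.

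The main obstacle is the uniformity in $(3) \Rightarrow (1)$: subadditivity alone yields only $\mult_x(|pD|) = o(p)$, while condition $(1)$ demands an $O(1)$ bound, and the integer roundoff from a fractional asymptotic statement to a literal uniform bound is what forces the use of asymptotic multiplier ideals (Nadel vanishing paired with Skoda-type global generation). This input is substantially deeper than the subadditivity and $\sigma$-decomposition used in the other implications, and is what makes this proposition the nontrivial bridge between the three viewpoints (purely multiplicative, asymptotic, and base-locus).
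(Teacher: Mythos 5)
The paper does not supply its own proof of this proposition --- it is quoted with a citation to \cite{ELMNP1}*{Proposition 2.8} and used as a black box in the proof of Theorem~\ref{thm:restr bl}. So there is nothing to compare against directly; assessing your attempt on its own terms, there is a genuine gap in $(2)\Rightarrow(3)$.

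You assert that $\Supp N_\sigma(D)$ coincides set-theoretically with $\Bminus(D)$ and that $\mult_x N_\sigma(D)=\mult_x\|D\|$ at every smooth point. Neither is true in general. By construction $N_\sigma(D)$ is a finite sum of prime divisors $\Gamma$ with $\sigma_\Gamma(D)>0$, so $\Supp N_\sigma(D)$ is purely of codimension one, whereas $\Bminus(D)$ can have irreducible components of codimension two or more as soon as $\dim X\geq 3$. At a point $x$ lying on such a component but on no divisorial one, $\mult_x N_\sigma(D)=0$ while (by the very proposition you are proving) $\mult_x\|D\|>0$, so the two quantities disagree and the contrapositive argument produces nothing. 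This identification is correct only on surfaces, where $\Bminus(D)=\Supp N_\sigma(D)$ is the negative part of the Zariski decomposition, but the proposition is stated in all dimensions. Even on the divisorial part the inequality $\mult_x N_\sigma(D)\geq 1$ is unjustified: the coefficients $\sigma_\Gamma(D)$ are positive reals with no uniform lower bound, so all one gets is $\mult_x N_\sigma(D)>0$. What is true from Nakayama's theory is $\sigma_\Gamma(D)=\mult_\Gamma\|D\|$ for prime divisors $\Gamma$, but this does not convert into a pointwise statement.

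In fact $(2)\Rightarrow(3)$ is of comparable depth to $(3)\Rightarrow(1)$ and in \cite{ELMNP1} is handled by the same multiplier-ideal machinery: after establishing $\Bminus(D)=\bigcup_p Z(\mathcal{J}(\|pD\|))$, the elementary relation between small multiplicity and triviality of multiplier ideals shows that $\mathcal{J}(\|pD\|)_x\subsetneq\mathcal{O}_{X,x}$ forces $\mult_x(|kpD|)\geq k$ for all sufficiently divisible $k$ and hence $\mult_x\|D\|\geq 1/p>0$. One should not expect the $\sigma$-decomposition alone, a codimension-one invariant, to detect the higher-codimensional phenomena the proposition governs. A secondary slip in your $(3)\Rightarrow(1)$: the correct stalk condition is $\mathcal{J}(\|pD\|)_x=\mathcal{O}_{X,x}$ for \emph{all} $p\geq 1$, not merely $p=1$; the subadditivity $\mathcal{J}(\|pD\|)\subseteq\mathcal{J}(\|D\|)^p$ goes the wrong way to permit a reduction to $p=1$.
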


The connection between asymptotic multiplicity and Newton--Okounkov bodies comes from the claim below. 

\begin{lemma}\label{lem:1}
	Let $M$ be an integral Cartier divisor on a projective variety  $X$ (not necessarily smooth),  $s\in H^0(X,\sO_X(M))$  a non-zero global section. Then 
	\begin{equation}\label{eq:1}
	\ord_x(s) \ \leq \ \sum_{i=1}^{i=n} \nu_i(s),
	\end{equation}
	for any admissible flag $Y_{\bullet}$ centered $x$, where $\nu_{\ybul}=(\nu_1,\ldots ,\nu_n)$ is the valuation map arising from $Y_{\bullet}$.
\end{lemma}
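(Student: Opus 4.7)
The plan is to induct on $n = \dim X$. The base case $n=1$ is immediate: the only admissible flag is $Y_0 = X \supset Y_1 = \{x\}$, so $\nu_1(s) = \ord_{Y_1}(s) = \ord_x(s)$ and the asserted inequality is in fact an equality.

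For the inductive step, note that admissibility of the flag forces $X = Y_0$ to be smooth at $x = Y_n$, so $\sO_{X,x}$ is a regular local ring and $\ord_x$ is an additive valuation on it. Pick an affine open $U \ni x$ on which $\sO_X(M)$ is trivialized and where $Y_1$ is cut out by a single equation $f_1 \in \sO_X(U)$; since $Y_1$ is also smooth at $x$, $f_1$ can be extended to a regular system of parameters, whence $\ord_x(f_1) = 1$. Viewing $s$ as a regular function via the trivialization, factor $s = f_1^{\nu_1(s)}\,\tilde{s}_1$ with $\tilde{s}_1 \notin (f_1)$, to obtain
\[
\ord_x(s) \equ \nu_1(s)\cdot \ord_x(f_1) + \ord_x(\tilde{s}_1) \equ \nu_1(s) + \ord_x(\tilde{s}_1).
\]

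The crucial observation is now that restriction to $Y_1$ can only increase the multiplicity at $x$: the surjection $\sO_{X,x} \twoheadrightarrow \sO_{Y_1,x}$ carries $\mathfrak{m}_{X,x}^k$ onto $\mathfrak{m}_{Y_1,x}^k$, so the restriction $s_1 = \tilde{s}_1|_{Y_1}$ (which is nonzero by construction of $\nu_1$) satisfies $\ord_x(s_1) \geq \ord_x(\tilde{s}_1)$. Applying the inductive hypothesis to $s_1$ on the admissible flag $Y_1 \supset Y_2 \supset \ldots \supset Y_n$ of $Y_1$, whose valuation vector is $(\nu_2(s),\ldots,\nu_n(s))$ by the very definition of $\nu_{\ybul}$, gives $\ord_x(s_1) \leq \sum_{i=2}^n \nu_i(s)$. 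Chaining,
\[
\ord_x(s) \equ \nu_1(s) + \ord_x(\tilde{s}_1) \dleq \nu_1(s) + \ord_x(s_1) \dleq \sum_{i=1}^n \nu_i(s),
\]
as required.

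The only step that is not completely mechanical is the comparison $\ord_x(\tilde{s}_1) \leq \ord_x(s_1)$. It is precisely this direction --- restricting a regular function to a smooth subvariety through a point does not decrease its multiplicity at that point --- that cooperates with the induction; the reverse inequality would fail badly in general, since a function may vanish to high order along $Y_1$ while remaining of low multiplicity at $x$. Everything else is bookkeeping that rests on admissibility of the flag, which guarantees both that $\ord_x$ is multiplicative on products in $\sO_{X,x}$ and that $\ord_x(f_1) = 1$.
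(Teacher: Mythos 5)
Your proof is correct and follows the same route as the paper's: factor off the local equation of $Y_1$, use multiplicativity of $\ord_x$ in the regular local ring $\sO_{X,x}$, observe that restriction to $Y_1$ cannot decrease the order of vanishing at $x$ (the surjection $\sO_{X,x}\twoheadrightarrow\sO_{Y_1,x}$ carries $\mathfrak{m}^k$ onto $\mathfrak{m}_{Y_1,x}^k$), and induct on the flag. You have simply made explicit the restriction step that the paper leaves implicit.
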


\begin{proof}
	Since $Y_{\bullet}$ is an admissible flag and the question is local, we can assume without loss of generality that each element in the flag is smooth, 
	thus $Y_{i}\subseteq Y_{i-1}$ is Cartier for each $1\leq i\leq n$. 
	
	As the local ring $\sO_{X,x}$ is regular, order of vanishing is multiplicative. Therefore 
	\[
	\ord_x(s) \equ \nu_1(s)+\ord_x (s-\nu_1(s)Y_1) \dleq \nu_1(s)+\ord_x ( (s-\nu_1(s)Y_1)|_{Y_1})
	\]
	by the very definition of $\nu_{\ybul}(s)$, and the rest follows by induction. 
\end{proof}

\begin{remark}\label{rem:1}
	Note  that the inequality in (\ref{eq:1}) is not in general  an equality for the reason that  the zero locus of $s$ might  not intersect an element of the flag 
	transversally. For the simplest example of this phenomenon set  $X=\PP^2$, and take $s=xz-y^2\in H^0(\PP^2,\sO_{\PP^2}(2))$, $Y_1=\{x=0\}$ and $Y_2=[0:0:1]$. 
	Then clearly $\nu_1(s)=0$, and $\nu_2(s)=\ord_{Y_2}(-y^2)=2$, but since  $Y_2$ is a smooth point of $(s)_0=\{xz-y^2=0\}$,  $\ord_{Y_2}(s)=1$ and hence 
	$\ord_{Y_1}(s)<\nu_1(s)+\nu_2(s)$.
\end{remark}

For a compact convex body $\Delta\subseteq \RR^n$, we define the \textit{sum function} $\sigma:\Delta\rightarrow \RR_{+}$ by  $\sigma(x_1,\ldots ,x_n)=x_1+\ldots +x_n$. Being  continuous on a compact topological space, it takes on its extremal values.  If $\Delta_{Y_{\bullet}}(D)\subseteq \RR^n$ be a Newton--Okounkov body, then we denote the sum function by $\sigma_D$, even though it does depend on the choice of the flag $Y_{\bullet}$.

\begin{proposition}\label{prop:1}
	Let  $D$ be  a big $\QQ$-divisor on a projective variety $X$ (not necessarily smooth) and let $x\in X$ a  point. Then 
	\begin{equation}\label{eq:2}
	\mult_x(||D||) \ \leq \ \min \sigma_D .
	\end{equation}
	for any admissible flag $Y_{\bullet}$ centered at $x$.
\end{proposition}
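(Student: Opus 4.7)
The plan is to combine the pointwise inequality $\ord_x(s) \leq \sum_{i=1}^n \nu_i(s)$ from Lemma~\ref{lem:1} with the density of valuative vectors in the Newton--Okounkov body. First I would fix a positive integer $p$ such that $pD$ is integral and $|pD|$ is nonempty, and let $0\neq s \in H^0(X,\sO_X(pD))$. Writing $(s)_0\in |pD|$ for the divisor of zeros, and noting that $x=Y_n$ is automatically a smooth point of $X$ for the flag to be admissible, one has $\mult_x((s)_0)=\ord_x(s)$, and hence
\[
\mult_x(|pD|) \ \leq \ \mult_x((s)_0) \ \equ \ \ord_x(s) \ \leq \ \sum_{i=1}^n \nu_i(s) \ \equ \ p\cdot \sigma_D\bigl(\tfrac{1}{p}\nu_{\ybul}(s)\bigr),
\]
the first inequality by definition of $\mult_x(|\,\cdot\,|)$, and the second by Lemma~\ref{lem:1}. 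So $\mult_x(|pD|)/p \leq \sigma_D(v)$ for the valuative point $v \deq \tfrac{1}{p}\nu_{\ybul}(s) \in \nob{\ybul}{D}$.

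To pass to the asymptotic multiplicity I would exploit tensor powers. Since $\nu_{\ybul}(s^{\otimes k}) = k\,\nu_{\ybul}(s)$ and $s^{\otimes k} \in H^0(X,\sO_X(kpD))$, the same bound applied to $s^{\otimes k}$ gives
\[
\frac{\mult_x(|kpD|)}{kp} \ \leq \ \sigma_D(v)\quad\text{for every } k\geq 1.
\]
Letting $k\to\infty$ inside the limit defining $\mult_x(\|D\|)$ yields $\mult_x(\|D\|) \leq \sigma_D(v)$ for every valuative point $v\in\nob{\ybul}{D}$. Alternatively, subadditivity $\mult_x(|(p+q)D|)\leq \mult_x(|pD|)+\mult_x(|qD|)$ (coming from multiplying sections) together with Fekete's lemma gives the same conclusion, and is perhaps more transparent.

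Finally, since $\sigma_D$ is continuous on the compact convex body $\nob{\ybul}{D}$ and attains its minimum $\min\sigma_D$, and since  the rational interior points of $\nob{\ybul}{D}$ are valuative (cf.\ the earlier exercise) and dense in $\nob{\ybul}{D}$, for every $\epsilon > 0$ there is a valuative point $v$ with $\sigma_D(v) < \min\sigma_D + \epsilon$. Combining this with the previous step gives $\mult_x(\|D\|) < \min\sigma_D+\epsilon$ for every $\epsilon > 0$, hence $\mult_x(\|D\|) \leq \min\sigma_D$ as required. The only subtle point is the density argument when $\min\sigma_D$ is realized on the boundary of $\nob{\ybul}{D}$, but this is handled by first approximating a minimizer by interior points (using convexity and bigness, so that $\nob{\ybul}{D}$ has nonempty interior) and then by valuative rationals inside the interior.
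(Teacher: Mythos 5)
Your argument is correct and follows essentially the same path as the paper's: reduce to sections of integral multiples $pD$, apply Lemma~\ref{lem:1} to bound $\mult_x(|pD|)/p$ by the normalized sum $\sigma_D(\nu_{\ybul}(s)/p)$, pass to the asymptotic multiplicity via subadditivity (the paper uses $\mult_x(|qpD|)\leq q\,\mult_x(|pD|)$, which is exactly your tensor-power remark), and then take the infimum over sections, using the density of normalized valuation vectors in $\nob{\ybul}{D}$ together with continuity of $\sigma_D$. Your explicit handling of the boundary case via interior rational points is slightly more verbose than the paper's one-line appeal to density, but it is the same idea.
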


\begin{proof}
	Since both sides of (\ref{eq:2})  are homogeneous of degree one in $D$, we can assume without loss of generality that $D$ is integral. 
	Fix a natural number $p\geq 1$ such that $|pD|\neq \varnothing$, and  let $s\in H^{0}(X,\sO_X(pD))$ be a non-zero global section. Then
	\[
	\frac{1}{p}\mult_x(|pD|)  \dleq  \frac{1}{p}\ord_x(s)  \dleq  \frac{1}{p}\big(\sum_{i=1}^{i=n}\nu_i(s)\big)
	\]
	by Lemma~\ref{lem:1}. 
	
	Multiplication of sections and the definition of the multiplicity of a linear series then yields
	$\mult_x(|qpD|)\leq q\mult_x(|pD|)$ for any $q\geq 1$, which, after taking limits leads to 
	\[
	\mult_x(||D||)  \dleq  \frac{1}{p}\mult_x(|pD|) \dleq  \frac{1}{p}\big(\sum_{i=1}^{i=n}\nu_i(s)\big).
	\]
	Varying the section $s$ and taking into account that $\Delta_{\ybul}(D)$  is 
	the closure of the set  of normalized valuation vectors of  sections, we deduce the required statement.
\end{proof}

\begin{example}\label{rem:2}
	The inequality in (\ref{eq:2}) is usually strict. For a concrete example  take $X=\textup{Bl}_P(\PP^2)$, $D=\pi^*(H)+E$ and the flag $Y_{\bullet}=(C,x)$, 
	where $C\in|3\pi^*(H)-2E|$ is the proper transform of a rational curve with a single cusp at $P$,  and $\{x\}=C\cap E$, i.e. 
	the point where $E$ and $C$ are tangent to each other. Then 
	\[
	\mult_x(||D||) \equ  \lim_{p\to\infty}\Big(\frac{\mult_x(|pD|)}{p}\Big) \equ \lim_{p\to\infty}\Big(\frac{\mult_x(|pE|)}{p}\Big)  \equ 1\ .
	\]
	On the other hand, a direct computation using \cite{LM}*{Theorem 6.4} shows that 
	\[
	\Delta_{Y_{\bullet}}(D) \equ  \{ (t,y)\in\RR^2 \ | \ 0\leq t\leq \frac{1}{3}, \textup{ and } 2+4t\leq y\leq 5-5t\}\ . 
	\]
	As a result, $\min\sigma_D =2 > 1$. 
	
	For more on this phenomenon in the surface case, see \cite{KL15a}*{Proposition 2.10}.
\end{example}

\begin{remark}
	We note here a connection with functions on Okounkov bodies coming from divisorial valuations. With the notation of \cites{BKMS12, KMS_B12}, 
	Lemma~\ref{lem:1} says that $\phi_{\ord_x} \leq \sigma_D$, and a quick computation shows that we obtain equality in the case of projectice spaces, 
	hyperplane bundles, and linear flags. Meanwhile, Example~\ref{rem:2} illustrates  that $\min \phi_{\ord_x} \neq \mult_x \|D\|$
	in general. 
\end{remark}

\begin{exer}
Let $X=\PP^2$, $D=kH$ with $H$ being the hyperplane class, $\ybul$ a linear flag (i.e. a flag consisting of linear subspaces of $\PP^2$).  Show that every vanishing behaviour along $\ybul$ can be realized by a collection of lines. More precisely, for every  vector $v$ in the image of 
\[
\nu_{\ybul}\colon  H^0(\sO_{\PP^2}(mD)) \lra \NN^2
\]
there exists a global section $s\in  H^0(\sO_{\PP^2}(mD))$ whose zero locus is a collection of lines, and $\nu_{\ybul}(s)=v$.
What happens if  $\ybul$ is an arbitrary (admissible) flag? What happens in $\PP^n$ for a linear flag?    
\end{exer}

\begin{proof}[Proof of Theorem~\ref{thm:restr bl}] 
	$(1)\Rightarrow(3)$ We are assuming  $x\notin \Bminus(D)$;  let us  fix a sequence of ample $\RR$-divisor classes $\alpha_m$ with the properties that 
	$\alpha_m-\alpha_{m+1}$ is ample for all $m\geq 1$,  $\|\alpha_m\|\to 0$ as $m\to \infty$, and  such that $D+\alpha_m$ is a $\QQ$-divisor. 
	Let $\ybul$ be an arbitrary admissible  flag centered at   $x$.
	
	Then  $x\notin \textbf{B}(D+\alpha_m)$ for every $m\geq 1$, furthermore,  Lemma~\ref{lem:nested} yields
	\begin{equation}\label{eq:3}
	\Delta_{Y_{\bullet}}(D) \equ \bigcap_{m=1}^{\infty} \Delta_{Y_{\bullet}}(D+\alpha_m) \ .
	\end{equation}
	Because  $x\notin \Bstable(D+\alpha_m)$ holds for any $m\geq 1$,  there must exist a sequence of natural numbers $n_m\geq 1$ and a  sequence of 
	global sections  $s_m\in H^0(X,\sO_X(n_m(D+\alpha_m)))$ such that $s_m(x)\neq 0$. This implies that $\nu_{Y_{\bullet}}(s_m)=\origin$ for each $m\geq 1$. 
	In particular, $\origin\in \Delta_{Y_{\bullet}}(D+\alpha_m)$ for each $m\geq 1$. 
	By  (\ref{eq:3})  we deduce that  $\Delta_{Y_{\bullet}}(D)$  contains the origin as well.
	
	The implication $(3)\Rightarrow (2)$ being trivial, we will now take care of $(2)\Rightarrow (1)$.  To this end assume that  $Y_{\bullet}$ 
	is an admissible flag  centered at $x$ having the property that  $\origin\in\Delta_{Y_{\bullet}}(D)$, $\alpha_m$ a sequence of ample $\RR$-divisor classes 
	such that $\alpha_m-\alpha_{m+1}$ is nef,  $\|\alpha_m\|\to 0$ as $m\to\infty$, and  $D+\alpha_m$ is  a $\QQ$-divisor for all $m\geq 1$.
	
	By Lemma~\ref{lem:nested}, 
	\[
	\origin \in \nob{\ybul}{D} \dsubseteq  \nob{\ybul}{D+\alpha_m} 
	\]
	for all $m\geq 0$, whence  $\min\sigma_{D+\alpha_m}=0$ for all sum functions    $\sigma_{D+\alpha_m}:\Delta_{Y_{\bullet}}(D+\alpha_m)\rightarrow\RR_{+}$.
	By Proposition~\ref{prop:1}  this forces $\mult_x(||D+\alpha_m||)=0$ for all $m\geq 1$, hence \cite{ELMNP1}*{Proposition 2.8} leads to $x\notin\Bminus(D+\alpha_m)$
	for all $m\geq 1$.
	
	As 
	\[
	\Bminus(D) \equ \bigcup_{m} \Bminus(D+\alpha_m) \equ \bigcup_m\Bstable(D+\alpha_m)
	\]
	according to \cite{ELMNP1}*{Proposition~1.19}, we are done. 
\end{proof}

It turns out that one can use Newton--Okounkov bodies to study varieties from an infinitesimal point of view. Let $X$ again be a projective variety, $x\in X$ a smooth point on $X$. Since centers of admissible flags are supposed to be smooth, we are not losing anything with this assumption. 

We will denote by $\pi\colon X'\to X$ the blow-up of $X$ at $x$ with  exceptional divisor $E$. As $x$ is smooth, $X'$ is again a projective variety, and $E$ is an irreducible Cartier divisor on $X'$, which is smooth as a subvariety of 
$X'$. 

\begin{defn}[Infinitesimal Newton--Okounkov bodies]
	With notation as above, we call a flag  $\ybul$ an   \emph{infinitesimal flag over the point} $x$, if $Y_1=E$ and each $Y_i$ is a linear subspace in $E\simeq\PP^{n-1}$ of dimension $n-i$. 
	We will often write $Y_n=\{z\}$. An \emph{infinitesimal flag over $X$} is an infinitesimal flag over $x\in X$ for some smooth point $x$. 
	
	Let $D$ be a line bundle, or more generally, an $\RR$-divisor on $X$. 
	The symbol $\inob{\ybul}{D}$ stands for an \emph{infinitesimal Newton--Okounkov body of $D$}, that is, 
	\[
	\inob{\ybul}{D} \deq \nob{\ybul}{\pi^*D} \subseteq \RR^n_+\ ,
	\]
	where  $\ybul$ is an infinitesimal flag over $x$.
\end{defn}

\begin{remark}(Difference in terminology)
	One should be careful here, as our terminology deviates from that of  \cite{LM}*{Section 5.2}. What Lazarsfeld and Musta\c t\u a call an infinitesimal Newton--Okounkov body, is in our 
	terms  (following \cite{KL14})  the \emph{generic} infinitesimal Newton--Okounkov body. 
\end{remark}

\begin{theorem}[KL15b]\label{thm:bminus inf}
	Let $X$ be a smooth projective variety, $D$ a big $\RR$-divisor and $x\in X$ an arbitrary point on $X$. Then the following are equivalent. 
	\begin{enumerate}
		\item $x\not\in \Bminus(D)$.
		\item There exists an infinitesimal flag $\ybul$ over  $x$ such that $\origin\in\inob{\ybul}{D}$.  
		\item For every infinitesimal flag $\ybul$ over  $x$, one has $\origin\in \inob{\ybul}{D}$.
	\end{enumerate}
\end{theorem}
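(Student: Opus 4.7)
The idea is to reduce the statement to Theorem~\ref{thm:restr bl} applied on the blow-up $X'$. By the very definition of an infinitesimal Newton--Okounkov body, $\inob{\ybul}{D} = \nob{\ybul}{\pi^*D}$, where $\ybul$ is regarded as an admissible flag on $X'$ centered at the point $z := Y_n \in E$; admissibility is automatic since $X'$ is smooth, $E$ is a smooth divisor, and each $Y_i$ is a linear subspace of $E \simeq \PP^{n-1}$.

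For $(1) \Rightarrow (3)$, I would mimic the proof of Theorem~\ref{thm:restr bl} on $X'$. Fix a sequence $(\alpha_m)$ of ample $\RR$-classes on $X$ with $\alpha_m - \alpha_{m+1}$ ample, $\|\alpha_m\| \to 0$, and $D + \alpha_m$ rational. Since $x \notin \Bminus(D) \supseteq \Bstable(D + \alpha_m)$, for each $m$ there exist $k_m \geq 1$ and a section $s_m \in H^0(X, \sO_X(k_m(D + \alpha_m)))$ with $s_m(x) \neq 0$. The crucial point is that $\pi^*D|_E \cong \sO_E$: indeed, $\pi^*D \cdot C = D \cdot \pi_*C = 0$ for every curve $C \subset E$ by the projection formula, and $\Pic(\PP^{n-1}) = \ZZ$. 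Consequently $\pi^*s_m|_E$ is a nonzero constant section of $\sO_E$, which forces $\nu_1(\pi^*s_m) = \ord_E(\pi^*s_m) = 0$ and, inductively along the linear subflag inside $E$, $\nu_i(\pi^*s_m) = 0$ for all $i \geq 2$. Thus $\origin \in \nob{\ybul}{\pi^*(D + \alpha_m)}$ for every $m$, and Lemma~\ref{lem:nested}(iii), applied on $X'$ with the nef classes $\pi^*\alpha_m$, yields
\[
\origin \in \bigcap_m \nob{\ybul}{\pi^*(D + \alpha_m)} = \nob{\ybul}{\pi^*D} = \inob{\ybul}{D}.
\]

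The implication $(3) \Rightarrow (2)$ is immediate, so only $(2) \Rightarrow (1)$ remains. Suppose $\origin \in \inob{\ybul}{D} = \nob{\ybul}{\pi^*D}$ for some infinitesimal flag $\ybul$ over $x$ with $z := Y_n \in E$. Since $X'$ is smooth, Theorem~\ref{thm:restr bl} applied to $(X', \pi^*D, \ybul)$ produces $z \notin \Bminus(\pi^*D)$, and then Proposition~\ref{prop:ELMNP} gives $\mult_z(\|\pi^*D\|) = 0$. On the other hand, the projection formula identifies $H^0(X', \sO_{X'}(p\pi^*D)) = H^0(X, \sO_X(pD))$ via pullback, and a local computation (if $s$ has Taylor expansion starting in degree $m = \mult_x(s)$ at $x$, then $\pi^*s = f_E^m \cdot \tilde s$ with $f_E$ a local equation of $E$) shows $\mult_z(\pi^*s) \geq \mult_x(s)$ for every $s$ and every $z \in E$. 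Taking the minimum over $s$ and the asymptotic limit gives $\mult_x(\|D\|) \leq \mult_z(\|\pi^*D\|) = 0$, so Proposition~\ref{prop:ELMNP} concludes $x \notin \Bminus(D)$.

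The main obstacle is bookkeeping rather than depth. The two geometric ingredients doing all of the work are (a) the triviality $\pi^*D|_E \cong \sO_E$, which upgrades the non-vanishing of $s_m$ at $x$ to $\nu_\ybul(\pi^*s_m) = \origin$ for every admissible flag inside $E$, and (b) the pointwise inequality $\mult_z(\pi^*s) \geq \mult_x(s)$ at points of the exceptional divisor. With these in place, the remainder is a transfer of Theorem~\ref{thm:restr bl} from $X$ to $X'$ together with the continuity statement Lemma~\ref{lem:nested}(iii) applied to the nested sequence $\pi^*(D + \alpha_m)$.
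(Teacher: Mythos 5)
Your reduction to the blow-up $X'$ and to Theorem~\ref{thm:restr bl} follows exactly the route the paper sketches (``the proofs are close to those in the non-infinitesimal case''), and both key observations --- that $\pi^*L|_E\cong\sO_E$ forces $\nu_{\ybul}(\pi^*s)=\origin$ for every infinitesimal flag $\ybul$ whenever $s(x)\neq 0$, and that $\mult_z(\pi^*s)\geq\ord_E(\pi^*s)=\mult_x(s)$ at any $z\in E$ --- are correct and do the real work. The one place that needs a bit more care is $(2)\Rightarrow(1)$: Proposition~\ref{prop:ELMNP} is stated in the text only for big $\QQ$-divisors, and the asymptotic multiplicity $\mult_x\|\cdot\|$ is likewise only introduced for $\QQ$-divisors, yet you apply both to the $\RR$-divisors $\pi^*D$ and $D$ directly. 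This is repaired by the same rational approximation you already carry out in $(1)\Rightarrow(3)$: choose ample classes $\alpha_m\to 0$ with $\alpha_m-\alpha_{m+1}$ nef and $D+\alpha_m$ rational; then $\origin\in\nob{\ybul}{\pi^*(D+\alpha_m)}$ by Lemma~\ref{lem:nested}(ii) applied on $X'$, Proposition~\ref{prop:1} on $X'$ gives $\mult_z\|\pi^*(D+\alpha_m)\|=0$, your pointwise inequality yields $\mult_x\|D+\alpha_m\|=0$, Proposition~\ref{prop:ELMNP} gives $x\notin\Bminus(D+\alpha_m)$, and $\Bminus(D)=\bigcup_m\Bminus(D+\alpha_m)$ finishes --- in fact this bypasses the intermediate invocation of Theorem~\ref{thm:restr bl} on $X'$ altogether. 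A cosmetic remark: the triviality of $\pi^*L|_E$ is most directly seen from the fact that $\pi|_E$ is the constant map to $x$, so $\pi^*L|_E=(\pi|_E)^*(L|_{\{x\}})$ is the pullback of a one-dimensional vector space; your projection-formula argument also works but takes the scenic route.
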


\begin{corollary}[KL15b]\label{cor:nefness}
	Let $X$ be a smooth projective variety,  $D$ a big $\RR$-divisor on $X$. Then the following are equivalent.
	\begin{enumerate}
		\item $D$ is nef.
		\item For every point $x\in X$ there exists an infinitesimal flag $\ybul$ over $x$ such that $\origin\in\inob{\ybul}{D}$.
		\item The origin $\origin\in\inob{\ybul}{D}$ for every infinitesimal flag over $X$. 
	\end{enumerate} 
\end{corollary}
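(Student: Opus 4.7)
The corollary is essentially immediate from Theorem~\ref{thm:bminus inf} combined with the characterization of nefness via restricted base loci provided by Lemma~\ref{lem:props of restr base loci}. My plan is to verify the three implications in a cycle, deriving everything from the pointwise statement in Theorem~\ref{thm:bminus inf}.

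First I would note the key input: $D$ is nef if and only if $\Bminus(D)=\emptyset$ by Lemma~\ref{lem:props of restr base loci}, which is in turn equivalent to saying $x\notin\Bminus(D)$ for every point $x\in X$. Since $X$ is smooth, every point is a legitimate center of an infinitesimal flag, so Theorem~\ref{thm:bminus inf} applies at each $x\in X$.

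For $(1)\Rightarrow(3)$, assuming $D$ is nef gives $x\notin\Bminus(D)$ for every $x\in X$. Fix an arbitrary infinitesimal flag $\ybul$ over $X$; by definition this flag sits over some smooth point $x$, and Theorem~\ref{thm:bminus inf}$(1)\Rightarrow(3)$ then yields $\origin\in\inob{\ybul}{D}$. The implication $(3)\Rightarrow(2)$ is immediate, because infinitesimal flags over a given point $x$ certainly exist (pick any full linear flag in $E\cong\PP^{n-1}$). Finally, for $(2)\Rightarrow(1)$, suppose that for every $x\in X$ there is some infinitesimal flag $\ybul$ over $x$ with $\origin\in\inob{\ybul}{D}$. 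Then Theorem~\ref{thm:bminus inf}$(2)\Rightarrow(1)$ gives $x\notin\Bminus(D)$ for every $x\in X$, so $\Bminus(D)=\emptyset$ and $D$ is nef by Lemma~\ref{lem:props of restr base loci}.

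There is no real obstacle here — the content has all been placed in Theorem~\ref{thm:bminus inf}, and the corollary is the natural globalization over all points of $X$. The only mild point worth stating is the quantifier bookkeeping: ``(3) for every infinitesimal flag over $X$'' means for every choice of base point $x$ and every infinitesimal flag over that point, which matches exactly what one obtains by applying Theorem~\ref{thm:bminus inf} point by point.
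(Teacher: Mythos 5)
Your proof is correct and follows exactly the same route the paper takes: the paper proves the non-infinitesimal analogue (Corollary~\ref{cor:nef}) as ``immediate from Theorem~\ref{thm:restr bl} and Lemma~\ref{lem:props of restr base loci},'' and for the infinitesimal version it simply remarks that the arguments are close to the non-infinitesimal case, i.e.\ one globalizes Theorem~\ref{thm:bminus inf} over all points via $\Bminus(D)=\emptyset\Leftrightarrow D$ nef. Your quantifier bookkeeping is the right (and only) content of the deduction.
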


We will not go into detail about the proofs, since they are close  to those in the  non-infinitesimal case (for details we refer the reader to \cite{KL15b}).

\begin{remark}\label{rmk:Bminus general}
	Again,  the implication $(1)\Rightarrow (3)$ remains true under the weaker assumptions that $X$ is a projective variety and $x\in X$ a smooth point, and just as in the non-infinitesimal case,  the converse  is unclear since the proof of $(2)\Rightarrow (1)$ uses \cite{ELMNP1}*{Proposition~2.8}, which relies on   multiplier ideals and   Nadel vanishing.
	
	Nevertheless, there is something one can say in the case when $x\in X$ is a smooth point on a normal variety. Let $\mu\colon Y\to X$ be a  resolution of singularities of $X$ obtained by blowing up smooth centers contained in the singular locus of $X$; note that  $\mu$ ends up being  an isomorphism around $x$. Then there will exist an effective 
	$\mu$-exceptional divisor $F$ such that $-F$  is $\pi$-ample. 
	
	Write $x'\deq \mu^{-1}(x)$, $D'\deq \mu^*(D)$, and let $\ybul'$ be the  strict transform of the infinitesimal flag $\ybul$ on the blow-up of $X$ at $x$. 
	Note that $\inob{\ybul'}{D'}=\inob{\ybul}{D}$. 
	
	Assume now that $x\in \Bstable(D+A)$ for some small ample divisor $A$ on $X$ such that $D+A$ is a $\QQ$-divisor.  Then necessarily 
	$x'\in \Bstable(D'+A')=\mu^{-1}(\Bstable(D+A))$. Note that $H\deq A'-tF$ is ample for  $0<t\ll 1$ rational. 
	Then 
	\[
	\Bstable(D'+A') \dsubseteq \Bstable(D'+H) \cup F \ ,
	\] 
	and hence $x'\in \Bstable(D'+H) \subseteq \Bminus(D'$), as required. 
\end{remark}

\begin{remark}\label{rmk: Bminus pseff}
	For the purposes of this remark, let $D$ be a pseudo-effective $\RR$-divisor, and following \cite{CHPW2}  define 
	\[
	\liminob{\ybul}{D}	\deq \bigcap_{m=1}^{\infty} \inob{\ybul}{D+\alpha_m}\ ,
	\]
	where $\alpha_m$ is a sequence of ample $\RR$-divisor classes converging to $0$. It follows from this definition (easily seen to be independent of the choice of the sequence $\alpha_m$) and Theorem~\ref{thm:bminus inf} that the statement of 
	Theorem~\ref{thm:bminus inf} remains valid for pseudo-effective divisor classes as well. 
\end{remark} 

\begin{exer} $\star$ Find out to what extent the contents of this subsection(except the results requiring multiplier ideals) work in positive characteristic. 
\end{exer}

\subsection{Newton--Okounkov bodies, ampleness, and augmented base loci}

Here we discuss the how Newton--Okounkov bodies can be used to describe local positivity of $\RR$-divisors. Again, 
the exposition follows \cites{KL15a,KL15b} very closely.

As explained in   \cite{ELMNP1}*{Example 1.16}, one has  inclusions 
\[
\Bminus(D)\dsubseteq \textup{\textbf{B}}(D)\subseteq\Bplus(D)\ , 
\]
consequently, we expect that   whenever $x\notin\Bplus(D)$,  Newton--Okounkov bodies attached to $D$ should contain more than just the origin.  As we shall see below, it will turn out that under the condition above, in fact they  contain small 
neighbourhoods of $\origin\in \RR^n_{\geq 0}$ (or, equivalently, small simplices). 

We will write 
\[
\Delta_{\epsilon} \deq  \{ (x_1,\ldots, x_n)\in\RR^n_{+} \ | \ x_1+\ldots +x_n\leq \epsilon\}
\]
for the standard $\epsilon$-simplex.

As preparation we recall 

\begin{proposition}[\cite{KL15a}, Proposition 1.6]\label{prop:compute}
	Suppose $\xi$ is a big $\RR$-class and $Y_{\bullet}$ is an admissible flag on $X$. Then for any $t\in [0,\mu(\xi, Y_1))$, we have
	\[
	\Delta_{Y_{\bullet}}(\xi)_{\nu_1\geq t} \equ \Delta_{Y_{\bullet}}(\xi-tY_1)\ + t\eone,
	\]
	where $\mu(\xi,Y_1)=\sup \{\mu>0|\xi-\mu Y_1 \textup{ is big}\}$ and $\eone=(1,0,\ldots ,0)\in \RR^n$.
\end{proposition}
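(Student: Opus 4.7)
\emph{Step 1 (Strategy).} The key geometric observation is that effective $\RR$-divisors $D\equiv\xi$ satisfying $\ord_{Y_1}(D)\geq t$ correspond bijectively to effective $\RR$-divisors $D'\equiv\xi-tY_1$ via $D=tY_1+D'$, and under this bijection
\[
\nu_{\ybul}(D) \equ \nu_{\ybul}(D')+t\eone\ .
\]
Indeed, $\nu_1$ simply shifts by $t$, while the ``residual divisor'' $(D-\nu_1(D)Y_1)|_{Y_1}=(D'-\nu_1(D')Y_1)|_{Y_1}$ used to compute $\nu_2,\ldots,\nu_n$ is unchanged. Using the equivalent description of Newton--Okounkov bodies from Proposition~\ref{prop:definition} (together with the fact that the set of valuation vectors $\{\nu_{\ybul}(D)\ |\ D\equiv\xi,\ D\geq 0\}$ is already convex by the additivity of $\nu_{\ybul}$ on effective $\RR$-divisors), I plan to promote this pointwise correspondence to an equality of Newton--Okounkov bodies.

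\emph{Step 2 (Easy inclusion $\supseteq$).} Let $w\in\nob{\ybul}{\xi-tY_1}$. By Proposition~\ref{prop:definition} there exist effective $D_k'\equiv\xi-tY_1$ with $\nu_{\ybul}(D_k')\to w$. Setting $D_k\deq tY_1+D_k'$ one has $D_k\geq 0$, $D_k\equiv\xi$, and $\nu_{\ybul}(D_k)=\nu_{\ybul}(D_k')+t\eone$ by Step~1, hence $w+t\eone=\lim\nu_{\ybul}(D_k)\in\nob{\ybul}{\xi}$ with first coordinate $\geq t$.

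\emph{Step 3 (Hard inclusion, interior case).} Now take $v\in\nob{\ybul}{\xi}$ with $v_1\geq t$ and, again by Proposition~\ref{prop:definition}, pick effective $D_k\equiv\xi$ with $\nu_{\ybul}(D_k)\to v$. If $v_1>t$, then $\nu_1(D_k)=\ord_{Y_1}(D_k)>t$ eventually, so the coefficient of $Y_1$ in $D_k$ strictly exceeds $t$ and $D_k-tY_1$ is still effective. By Step~1, $\nu_{\ybul}(D_k-tY_1)=\nu_{\ybul}(D_k)-t\eone\to v-t\eone$, so $v-t\eone\in\nob{\ybul}{\xi-tY_1}$.

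\emph{Step 4 (The boundary case $v_1=t$).} This is where the hypothesis $t<\mu(\xi,Y_1)$ must enter, and I expect it to be the main obstacle. The idea is to approximate $v$ from within the open half-space $\{v_1>t\}$. Pick a rational $t'$ with $t<t'<\mu(\xi,Y_1)$; by Kodaira's lemma for the big class $\xi-t'Y_1$ there is an effective $\RR$-divisor $M\equiv\xi-t'Y_1$, and
\[
w \deq \nu_{\ybul}(t'Y_1+M) \in \nob{\ybul}{\xi}
\]
has first coordinate $\geq t'>t$. For each $s\in(0,1]$ the convex combination $v_s\deq(1-s)v+sw$ lies in $\nob{\ybul}{\xi}$ with $(v_s)_1>t$, so Step~3 gives $v_s-t\eone\in\nob{\ybul}{\xi-tY_1}$. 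Letting $s\downarrow 0$ and using closedness of the Newton--Okounkov body $\nob{\ybul}{\xi-tY_1}$ (Theorem~\ref{thm:NO formal}), we obtain $v-t\eone\in\nob{\ybul}{\xi-tY_1}$, completing the proof.
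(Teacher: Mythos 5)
Your proof is correct, and it follows what is almost certainly the approach taken in the cited source \cite{KL15a}: one uses the description of $\nob{\ybul}{\xi}$ as the closure of valuation vectors of effective $\RR$-divisors numerically equivalent to $\xi$ (Proposition~\ref{prop:definition}), the elementary bijection $D\leftrightarrow D-tY_1$ between effective divisors with $\ord_{Y_1}\geq t$ in class $\xi$ and effective divisors in class $\xi-tY_1$, and the identity $\nu_{\ybul}(D)=\nu_{\ybul}(D-tY_1)+t\eone$. Steps~2 and~3 are clean. Step~4 is exactly where the hypothesis $t<\mu(\xi,Y_1)$ is needed, and your treatment is correct: the bigness of $\xi-t'Y_1$ for some $t'>t$ supplies a valuation vector with first coordinate $>t$, and then convexity plus closedness of $\nob{\ybul}{\xi-tY_1}$ handles the face $\{\nu_1=t\}$. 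One small imprecision worth flagging: in Step~1 you invoke ``additivity'' of $\nu_{\ybul}$ to conclude the valuation-vector set is convex; strictly speaking you need $\RR_{\geq 0}$-linearity (additivity together with positive homogeneity), both of which hold since $\ord_{Y_i}$ just reads off coefficients at each stage of the inductive construction. This is harmless but worth stating, because without it the word ``density'' in Steps~2 and~3 would need the a~posteriori remark following Proposition~\ref{prop:definition}, whereas your observation makes it immediate.
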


Analogously to the case of restricted base loci, our main result is a characterisation of $\Bplus(D)$ in terms of augmented base loci. 

\begin{theorem}[\cite{KL15a}]\label{thm:augm bl}
	Let $D$ be a big $\RR$-divisor on $X$,  $x\in X$ be an arbitrary (closed) point. Then the following  are equivalent.
	\begin{enumerate}
		\item $x\notin \textup{\textbf{B}}_+(D)$.
		\item There exists an admissible flag $Y_{\bullet}$ centered at $x$ with $Y_1$ ample  such that $\Delta_{\epsilon_0} 
		\subseteq \Delta_{Y_{\bullet}}(D)$ for some $\epsilon_0 >0$. 
		\item For every  admissible flag $Y_{\bullet}$ centered at $x$ there exists $\epsilon >0$ (possibly depending on $\ybul$) 
		such that $\Delta_{\epsilon}\subseteq \Delta_{Y_{\bullet}}(D)$.
	\end{enumerate}
\end{theorem}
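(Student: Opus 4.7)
The plan is as follows.

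The implication $(3)\Rightarrow(2)$ is essentially trivial, provided we exhibit \emph{some} admissible flag centered at $x$ whose first member is ample. For this, I would take $Y_1$ to be a general element of $|mA|$ (for $A$ ample on $X$ and $m\gg 0$) passing through $x$ and smooth there, and complete the flag by any admissible flag inside $Y_1$ centered at $x$; applying $(3)$ to this flag yields $(2)$.

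For $(2)\Rightarrow(1)$, suppose $\Delta_{\epsilon_0}\subseteq \Delta_{\ybul}(D)$ with $Y_1$ ample. Convexity forces the segment $\{(t,0,\ldots,0)\mid 0\leq t\leq \epsilon_0\}$ to lie inside $\Delta_{\ybul}(D)$, and Proposition~\ref{prop:compute} translates this into $\origin\in\Delta_{\ybul}(D-tY_1)$ for every $t\in[0,\epsilon_0]$ with $D-tY_1$ big. Theorem~\ref{thm:restr bl} then gives $x\notin \Bminus(D-tY_1)$ for such $t$. Using $Y_1$ itself as the ample auxiliary class in the definition of $\Bminus$, one reads off $x\notin \Bstable(D-rY_1)$ for every $r\in(0,t)$, which by the definition of $\Bplus$ with respect to $Y_1$ means $x\notin \Bplus(D)$.

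The heart of the theorem is $(1)\Rightarrow(3)$. Assume $x\notin \Bplus(D)$ and let $\ybul$ be an arbitrary admissible flag centered at $x$. I would first invoke the standard description
\[
\Bplus(D) \equ \bigcap\left\{\Supp(E)\mid D\equiv \eta A+E,\ A\text{ ample},\ E\geq 0,\ \eta>0\right\}
\]
(which follows from $\Bplus(D)=\Bstable(D-\eta A)$ for $\eta\ll 1$) to produce a decomposition $D\equiv \eta A+E$ with $A$ an ample $\RR$-divisor and $E$ an effective $\RR$-divisor whose support avoids $x$. Since $\ybul$ is centered at $x$, $\nu_{\ybul}(E)=\origin$, and Proposition~\ref{prop:definition} yields
\[
\Delta_{\ybul}(\eta A)\dsubseteq \Delta_{\ybul}(D)\ ,
\]
because adding $E$ to any effective representative of $\eta A$ preserves valuation vectors. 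This reduces the problem to the following \emph{ample case}: for every ample $\RR$-divisor $A$ and every admissible flag $\ybul$ centered at a smooth point, $\Delta_\delta\subseteq \Delta_{\ybul}(A)$ for some $\delta>0$.

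I would handle the ample case by induction on $n=\dim X$. The base $n=1$ is immediate, since $\Delta_{\ybul}(A)=[0,\deg_X A]$. For the inductive step, Proposition~\ref{prop:compute} identifies (up to a translate by $t\eone$) the slice $\Delta_{\ybul}(A)_{\nu_1=t}$ with a Newton--Okounkov body controlled by $(A-tY_1)|_{Y_1}$; for $t\in[0,\epsilon]$ small the latter is still ample, and Serre vanishing together with the slicing correspondence of \cite{LM} between slices and restricted graded linear series ensures the relevant slices are nonempty. The inductive hypothesis, applied to $\ybul|_{Y_1}$, then supplies a uniform small simplex $\Delta_{\delta'}\subseteq\RR^{n-1}$ in each such slice, which patch together to yield $\Delta_{\min(\epsilon,\delta')}\subseteq \Delta_{\ybul}(A)$. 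The main obstacle is precisely this ample base case: $\ybul$ is fully arbitrary, so the $Y_i$ carry no a priori positivity, and obtaining the uniform lower bound on the size of the slice-simplices --- rather than just their nonemptiness --- is where the technical work lies.
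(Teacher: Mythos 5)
Your $(3)\Rightarrow(2)$ and $(2)\Rightarrow(1)$ arguments are sound and essentially match the paper: for $(2)\Rightarrow(1)$ both proofs run through Proposition~\ref{prop:compute} to land $\origin$ in $\Delta_{\ybul}(D-\epsilon Y_1)$, invoke Theorem~\ref{thm:restr bl}, and then identify $\Bminus(D-\epsilon Y_1)$ with $\Bplus(D)$ (the paper cites \cite{ELMNP1}*{Proposition~1.21}; your unwinding of the definition of $\Bminus$ with $Y_1$ as auxiliary ample is an adequate substitute). Your reduction of $(1)\Rightarrow(3)$ to the purely ample case, via a decomposition $D\equiv\eta A+E$ with $x\notin\Supp E$, is also correct and is a mild reorganization of what the paper does.

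The genuine gap is exactly where you flag it: the ample case. Your proposed induction on $\dim X$ via slices has two unresolved points. First, Proposition~\ref{prop:compute} relates $\Delta_{\ybul}(A)_{\nu_1\geq t}$ to the full $n$-dimensional body $\Delta_{\ybul}(A-tY_1)$, not to an $(n-1)$-dimensional Newton--Okounkov body on $Y_1$. The vertical slice $\Delta_{\ybul}(A)_{\nu_1=t}$ is, by \cite{LM}, the Newton--Okounkov body of the \emph{restricted} graded linear series $\im\big(H^0(X,m(A-tY_1))\to H^0(Y_1,\cdot)\big)$, which is in general smaller than $\Delta_{\ybul|_{Y_1}}\big((A-tY_1)|_{Y_1}\big)$; so the inductive hypothesis does not apply to the slice as stated without first proving equality of those two bodies (which needs a surjectivity input such as Serre vanishing, and not just at $t=0$ but uniformly on $t\in[0,\epsilon]$). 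Second, even granting that identification pointwise in $t$, you need a uniform $\delta'>0$ working for all $t\in[0,\epsilon]$, and nothing in the setup provides that: the $Y_i$ for $i\geq 2$ are arbitrary subvarieties with no positivity, so the inductive call has no control on how $\delta'$ varies with $t$.

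The paper avoids this entirely with Lemma~\ref{lem:2}: for an arbitrary admissible flag centered at $x$ and $m\gg 0$, it produces sections $s_0,\ldots,s_n\in H^0(X,\sO_X(mA))$ with $\nu_{\ybul}(s_0)=\origin$ and $\nu_{\ybul}(s_i)=\ei$, by applying Serre vanishing to $\sI_{Y_i|X}\otimes\sO_X(mA)$ and choosing a section of $\sO_{Y_i}(mA-Y_{i+1})$ that vanishes to order exactly one along $Y_{i+1}$ but not at $x$. Multiplying these by a section $s\in H^0(X,\sO_X(m(D-A)))$ with $s(x)\neq 0$ then produces the simplex $\Delta_{1/m}\subseteq\Delta_{\ybul}(D)$ in one stroke. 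This direct construction of valuation vectors is the key idea your proposal is missing, and it is what makes the bound genuinely uniform over the arbitrary flag.
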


Combining Theorem~\ref{thm:augm bl} with Lemma~\ref{lem:props of augm base loci} yields the desired characterization of ampleness. 

\begin{corollary}[\cite{KL15b}]\label{cor:ample}
	Let $X$ be a smooth projective variety, $D$ a big $\RR$-divisor  on $X$. Then the following are equivalent.  
	\begin{enumerate}
		\item $D$ is ample.
		\item For every point $x\in X$ there exists an admissible flag $Y_{\bullet}$ centered at $x$ with $Y_1$ ample  such that $\Delta_{\epsilon_0} 
		\subseteq \Delta_{Y_{\bullet}}(D)$ for some $\epsilon_0 >0$. 
		\item  For every  admissible flag $Y_{\bullet}$  there exists $\epsilon >0$ (possibly depending on $\ybul$) 
		such that $\Delta_{\epsilon}\subseteq \Delta_{Y_{\bullet}}(D)$.
	\end{enumerate}
\end{corollary}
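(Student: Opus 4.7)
The plan is to derive the corollary as a direct consequence of Theorem~\ref{thm:augm bl} combined with Lemma~\ref{lem:props of augm base loci}(3), which identifies ampleness with the vanishing of the augmented base locus. The argument reduces to reading ``$D$ is ample'' pointwise as ``$x \notin \Bplus(D)$ for every $x \in X$'' and then invoking the point-by-point equivalences already established for augmented base loci.

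More concretely, I would first reformulate (1) using Lemma~\ref{lem:props of augm base loci}(3): $D$ is ample if and only if $\Bplus(D) = \emptyset$, equivalently, $x \notin \Bplus(D)$ for every $x \in X$. Applying the equivalence $(1)\Leftrightarrow(2)$ of Theorem~\ref{thm:augm bl} to each point then yields that this holds precisely when for every $x \in X$ there exists an admissible flag $\ybul$ centered at $x$ with $Y_1$ ample and some $\epsilon_0 > 0$ such that $\Delta_{\epsilon_0} \subseteq \nob{\ybul}{D}$, which is exactly (2). This establishes $(1)\Leftrightarrow(2)$.

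For $(1)\Leftrightarrow(3)$, I would again use that $D$ is ample iff $x \notin \Bplus(D)$ for all $x \in X$, and then invoke the equivalence $(1)\Leftrightarrow(3)$ of Theorem~\ref{thm:augm bl} at each point. This says that for each $x \in X$ and every admissible flag $\ybul$ centered at $x$ there exists $\epsilon > 0$ (depending on $\ybul$) with $\Delta_\epsilon \subseteq \nob{\ybul}{D}$. Since by definition every admissible flag $\ybul$ on $X$ is centered at the unique point $Y_n$, quantifying over all $x \in X$ together with all flags centered at $x$ is the same as quantifying over all admissible flags on $X$. This matches condition (3) of the corollary verbatim.

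There is no real obstacle here beyond a careful bookkeeping of the quantifiers: the nontrivial content sits in Theorem~\ref{thm:augm bl}, while the passage from the local statement ``$x \notin \Bplus(D)$'' to the global statement ``$D$ ample'' is supplied by Lemma~\ref{lem:props of augm base loci}(3). The only subtle point worth flagging is that the ``$\epsilon$'' appearing in (3) is allowed to depend on $\ybul$, which is precisely what one gets from Theorem~\ref{thm:augm bl}, so no uniform choice of $\epsilon$ across all flags is required.
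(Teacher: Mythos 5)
Your proposal is correct and follows precisely the route the paper takes: identifying ampleness with $\Bplus(D)=\emptyset$ via Lemma~\ref{lem:props of augm base loci}(3) and then applying the pointwise equivalences of Theorem~\ref{thm:augm bl} at each $x\in X$. The quantifier bookkeeping (in particular observing that ranging over all $x$ and all flags centered at $x$ is the same as ranging over all admissible flags) is exactly the content the paper leaves implicit, and you have handled it correctly.
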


\begin{rmk}
We can consider  Corollary~\ref{cor:ample} as a variant of Seshadri's criterion for ampleness (see \cite{PAGI}*{Theorem 1.4.13}) in the language of convex geometry. 
\end{rmk}

\begin{remark}
	It is shown in \cite{KL14}*{Theorem 2.4} and \cite{KL14}*{Theorem A} that  in dimension two one can in fact discard the condition above that $Y_1$ should be 
	ample. Note that the proofs of the cited results  rely heavily on  surface-specific tools and in general follow a line of thought different from the present one.
\end{remark}

\begin{rmk}
 In \cite{CHPW3}, the authors claim a result with a different condition on the flag using the heavy machinery of \cite{ELMNP2}. According to \emph{loc.~cit.} Theorem 3.6, it suffices to check condition $(2)$ of Theoorem~\ref{thm:augm bl} for flags where $Y_k$ becomes an irreducible component of $\Bplus(D)$ for some $k\geq 0$.  
\end{rmk}

Moving on to the proof, we prove a helpful lemma first.

\begin{lemma}\label{lem:2}
	Let $X$ be a projective variety (not necessarily smooth), $A$  an ample Cartier divisor, $Y_{\bullet}$ an admissible flag on $X$. 
	Then for all $m>>0$ there exist global sections 
	$s_0,\ldots ,s_n\in H^0(X,\sO_X(mA))$ for which 
	\[
	\nu_{Y_{\bullet}}(s_0) \equ \origin \text{ and } \nu_{Y_{\bullet}}(s_i)=\ei, \text{ for each } i=1,\ldots ,n,
	\]
	where $\{ \eone,\ldots ,\en\}\subseteq \RR^n$ denotes the standard basis.
\end{lemma}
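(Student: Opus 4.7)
The plan is to construct each $s_i$ independently, by choosing a section on $Y_{i-1}$ with prescribed vanishing along $Y_i$ and lifting it to $X$; Serre vanishing will provide everything needed once $m$ is taken sufficiently large. Concretely, I would first fix $m\gg 0$ for which simultaneously
\begin{enumerate}
\item $\sO_X(mA)$ is globally generated on $X$;
\item the restriction $H^0(X,\sO_X(mA))\twoheadrightarrow H^0(Y_{i-1},\sO_{Y_{i-1}}(mA))$ is surjective for every $i=1,\dots ,n$ (Serre vanishing applied to $\sI_{Y_{i-1}/X}\otimes \sO_X(mA)$);
\item the coherent sheaf $\sI_{Y_i/Y_{i-1}}\otimes \sO_{Y_{i-1}}(mA)$ on $Y_{i-1}$ is globally generated at $Y_n$ for every $i$.
\end{enumerate}
All three items follow from the ampleness of $A$ and respectively of $A|_{Y_{i-1}}$.

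The section $s_0$ is then any element of $H^0(X,\sO_X(mA))$ not vanishing at $Y_n$, as provided by $(1)$; since non-vanishing at $Y_n$ propagates through every step of the valuation construction, this automatically gives $\nu_{\ybul}(s_0)=\origin$. For $i\geq 1$, the crucial observation is that $Y_{i-1}$ is smooth at $Y_n$ and hence on an open neighbourhood which also contains the generic point $\eta_i$ of $Y_i$; in particular $\sI_{Y_i/Y_{i-1}}$ is principal both at $Y_n$ and at $\eta_i$, generated by the same local equation $f_i$. Using $(3)$, I pick $\bar s_i\in H^0(Y_{i-1},\sI_{Y_i/Y_{i-1}}\otimes \sO_{Y_{i-1}}(mA))$ whose stalk at $Y_n$ has the form $f_i\cdot \tau$, with $\tau$ a unit in $\sO_{Y_{i-1},Y_n}$, and lift it via $(2)$ to $s_i\in H^0(X,\sO_X(mA))$.

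To verify $\nu_{\ybul}(s_i)=\ei$ I would argue in three steps. Since $s_i|_{Y_{i-1}}=\bar s_i\neq 0$ and $Y_{i-1}\subseteq Y_j$ for $j\leq i-1$, the restriction $s_i|_{Y_j}$ is non-zero, yielding $\nu_j(s_i)=0$ for $j<i$. Because $\tau$ is a unit at $Y_n$ and non-vanishing is an open condition, $\tau$ does not vanish at $\eta_i$ either, so $\bar s_i=f_i\tau$ has order exactly one along $Y_i$, giving $\nu_i(s_i)=1$. Finally the residue $(\bar s_i/f_i)|_{Y_i}=\tau|_{Y_i}$ is non-zero at $Y_n$, so every subsequent restriction stays non-vanishing at $Y_n$, producing $\nu_j(s_i)=0$ for $j>i$. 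The main technical subtlety I anticipate is that $Y_i$ need not be Cartier in $Y_{i-1}$ globally, which forces the construction of $\bar s_i$ to be phrased in terms of the ideal sheaf $\sI_{Y_i/Y_{i-1}}$ rather than a line bundle $\sO_{Y_{i-1}}(-Y_i)$; smoothness of $Y_{i-1}$ at $Y_n$ trivialises this locally, and the lift $(2)$ transports the local structure back to $X$.
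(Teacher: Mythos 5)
Your proposal is correct and follows essentially the same strategy as the paper's proof: build the section on a single stratum of the flag with order-one vanishing along the next stratum near $Y_n$, lift to $X$ by a one-step application of Serre vanishing to $\sI_{Y_{i-1}/X}\otimes \sO_X(mA)$, and use irreducibility of the flag members to propagate the prescribed local behaviour through the valuation chain. If anything you get the indexing right where the paper slips: the paper imposes vanishing along $Y_{i+1}$ on a section built over $Y_i$, which as written would produce $\nu_{\ybul}(s_i)=\textup{\textbf{e}}_{i+1}$ rather than $\ei$, whereas your version (vanishing along $Y_i$ in a section on $Y_{i-1}$) is the correct normalisation; your decision to phrase the construction in terms of the ideal sheaf $\sI_{Y_i/Y_{i-1}}$ rather than a Cartier divisor $\sO_{Y_{i-1}}(-Y_i)$ is also the more careful choice, since the flag is only required to be smooth at $Y_n$.
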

\begin{proof}
	First, by the admissibility of the flag $\ybul$, we know that there is an open neighbourhood $\sU$ of $x$ such that $Y_i|_{\sU}$ is 	smooth for all $0\leq i\leq n$. 
	
	Since $A$ is ample,  $\sO_X(mA)$  becomes globally generated for  $m>>0$. For all such  $m$  there exists a non-zero section 
	$s_0\in H^0(X,\sO_X(mA))$ with $s_0(Y_n)\neq 0$,  in particular,  $\nu_{Y_{\bullet}}(s_0)=\origin$, as required. 
	
	It remains to show that for all $m>>0$ and  $i=1\leq i\leq n$ we can find  non-zero sections  $s_i\in H^0(X,\sO_X(mA))$ with 
	$\nu_{Y_{\bullet}}(s_i)=\ei$. To this end, fix $i$ and let $y\in Y_i\setminus Y_{i+1}$ be a smooth point. Having chosen  $m$ large enough,  
	Serre vanishing yields $H^1(X,\sI_{Y_i|X}\otimes \sO_X(mA))=0$, hence  the map $\phi_m$ in  the diagram 
	\[
	\xymatrix{
		& & H^0(X,\sO_X(mA)) \ar[d]_{\phi_m}  \\
		0\ar[r] &  H^0(Y_i,\sO_{Y_i}(m(A|_{Y_i})-Y_{i+1})) \ar[r]^<<<<{\psi_m} & H^0(Y_i,\sO_{Y_i}(mA))  }
	\]
	is surjective. 
	
	Again, by making $m$ high enough,  we can assume  $|m(A|_{Y_i})-Y_{i+1}|$ to be  very ample on $Y_i$, thus,  there will exist 
	$0\neq\tilde{s}_i\in H^0(Y_i,\sO_{Y_i}(mA)\otimes \sO_{Y_i}(-Y_{i+1}))$ not  vanishing  at $x$ or $y$. Since $\tilde{s}_i(x)\neq 0$, the section
	$\tilde{s}_i$  does not  vanish along $Y_j$ for all $j=i+1,\ldots ,n$. Also, the image   
	$\psi_m(\tilde{s}_i)\in H^0(Y_i,\sO_{Y_i}(mA))$ of $\tilde{s}_i$  vanishes at $x$, but not at the point  $y$.
	
	By the surjectivity of the map  $\phi_m$  there exists a section $s_i\in H^0(X,\sO_X(mA))$ such that $s|_{Y_i}=\psi_m(\tilde{s}_i)$ and $s(y)\neq 0$. 
	In particular, $s_i$ does not vanish along any of the $Y_j$'s for  $1\leq i\leq j$, therefore  $\nu_{Y_{\bullet}}(s)=\ei$, as promised. 
\end{proof}

\begin{proof}[Proof of Theorem~\ref{thm:augm bl}]
	$(1)\Rightarrow (3)$. First we treat the case when $D$ is $\QQ$-Cartier. Assume that  $x\notin \Bplus(D)$, which implies by definition that  
	$x\notin\Bstable(D-A)$ for some small ample $\QQ$-Cartier  divisor $A$. Choose a positive integer  $m$ large and divisible enough  such that $mA$ 
	becomes integral,  and satisfies the conclusions of  Lemma~\ref{lem:2}. Assume furthermore that $\Bstable(D-A)=\textup{Bs}(m(D-A))$ set-theoretically.  
	
	Since $x\notin \textup{Bs}(m(D-A))$,  there exists a section $s\in H^0(X,\sO_X(mD-mA))$ not vanishing at $x$,  and
	in particular $\nu_{Y_{\bullet}}(s)=\origin$. At the same time,  Lemma~\ref{lem:2} provides the existence of  global  sections 
	$s_0,\ldots, s_n\in H^0(X,\sO_X(mA))$ with the property that $\nu_{Y_{\bullet}}(s_0)\equ \origin$ and $\nu_{Y_{\bullet}}(s_i)=\ei$ for all 
	$1\leq i\leq n$. 
	
	But then the multiplicativity of the valuation map $\nu_{\ybul}$ gives 
	\[
	\nu_{Y_{\bullet}}(s\otimes s_0)=\textbf{\textup{0}}, \textup{ and } \nu_{Y_{\bullet}}(s\otimes s_i)=\ei \ \ \text{for all $1\leq i\leq n$.}
	\]
	By the construction of Newton--Okounkov bodies, then $\Delta_{\frac{1}{m}}\subseteq \Delta_{Y_{\bullet}}(D)$. 
	
	Next, let $D$ be a big $\RR$-divisor for which $x\notin\Bplus(D)$, and let  $A$ be an ample $\RR$-divisor with the property that 
	$D-A$ is a $\QQ$-divisor and $\Bplus(D)=\Bplus(D-A)$. Then we have $x\notin\Bplus(D-A)$, therefore 
	\[
	\Delta_\epsilon\dsubseteq \Delta_{\ybul}(D-A) \dsubseteq \Delta_{\ybul}(D) 
	\]
	according to the $\QQ$-Cartier case and  Lemma~\ref{lem:nested}. 
	
	Again, the implication $(3)\Rightarrow (2)$ is trivial, hence we only need to take care of $(2)\Rightarrow (1)$. 
	As $Y_1$ is ample, \cite{ELMNP1}*{Proposition 1.21} gives  the equality $\Bminus(D-\epsilon Y_1)=\Bplus(D)$.  for all  $0<\epsilon <<1$. 
	Fix an $\epsilon$ as above, subject to the additional condition that $D-\epsilon Y_1$ is a big $\QQ$-divisor. Then, according to 
	Proposition~\ref{prop:compute},  we have 
	\[
	\Delta_{Y_{\bullet}}(D)_{\nu_1\geq \epsilon} \ = \ \Delta_{Y_{\bullet}}(D-\epsilon Y_1)\ + \ \epsilon \eone \ ,
	\]
	which yields $\origin\in \Delta_{Y_{\bullet}}(D-\epsilon Y_1)$. By Theorem~\ref{thm:restr bl}, this means that  
	$x\notin\Bminus(D-\epsilon Y_1)=\Bplus(D)$, which completes the proof. 
\end{proof}

\begin{remark}\label{rmk:non-smooth ample}
	The condition that $X$ be smooth can again be dropped for the implication $(1)\Rightarrow (3)$ both in Theorem~\ref{thm:augm bl} and 
	Corollary~\ref{cor:ample} (cf. Remark~\ref{rmk:non-smooth nef}). This way, one obtains the statement that whenever $A$ is an ample $\RR$-Cartier 
	divisor on a projective variety $X$,  then every Newton--Okounkov body of $A$ contains a small simplex. 
\end{remark}
 
\begin{example}
	Let $\PP^2 = Y_0 \supset H_m\supset \st{P}$, where $H_m\in |\sO_{\PP^2}(m)|$ is a smooth hypersurface, $L$ the hyperplane divisor.  Then $\dyl$ is a triangle with vertices $(0,0)$,$(1/m,0)$, and $(0,m)$. This way we see that we can very easily make the simplex $\Delta_\lambda\subseteq \dyl$  to be as small as we wish. 
\end{example}

The question of interest is therefore how large a simplex we can fit into some Newton--Okounkov body of a given divisor $L$. 

\begin{definition}[Largest simplex constant \cites{KL14,KL15a}]
	Let $X$ be an arbitrary projective variety, $x\in X$ a smooth point, $A$ an ample $\RR$-divisor on $X$. For an admissible flag  $\ybul$ on $X$ 
	centered at $x$, we set 
	\[
	\lambda_{\ybul}(A;x) \deq \sup\st{\lambda>0\mid \Delta_\lambda\subseteq\Delta_{\ybul}(A)}\ .
	\]
	Then the \emph{largest simplex constant} $\lambda(A;x)$ is defined as 
	\[
	\lambda(A;x) \deq \sup \st{\lambda_{\ybul}(A;x)\mid \text{ $\ybul$ is an admissible flag centered at $x$}}\ .
	\]
\end{definition}

\begin{remark}
	It follows from Remark~\ref{rmk:non-smooth ample} that $\lambda(A;x)>0$. The largest simplex constant is a measure of local positivity, and  it is known 
	in dimension two that $\lambda(A;x)\leq \epsilon(A;x)$ (where the right-hand side denotes the appropriate  Seshadri constant) with strict inequality in general
	(cf. \cite{KL14}*{Proposition 4.7} and \cite{KL14}*{Remark 4.9}).
\end{remark}

\begin{exer} * Let $X$ be a smooth projective variety, $L$ an ample line bundle on $X$, $x\in X$. 
 \begin{enumerate}
 	\item Prove that $\lambda(\ \ ;x)$ is subadditive, and   $\lambda(mL;x)=m\cdot \lambda(L;x)$ for $m\in \NN$.  
 	\item Show that $\lambda(L;x) \leq \e(L;x)$. 
 	\item Find examples of ample divisors $L$ on $X$ for which $\lambda(L;x)<\e(L;x)$. 
 \end{enumerate}
\end{exer}

Analogously to the case of restricted base loci, we will now have a look at how augmented base loci can be determined in terms of infinitesimal Newton--Okounkov bodies (for the necessary definitions see Subsection~2.4). 

We start with an observation explaining the shapes of the 'right' kind of simplices that play the role of standard simplices in the infinitesimal theory.

\begin{lemma}(cf. \cite{KL15b}*{Lemma 3.4})\label{lem:2inf}
	Let $X$ be a projective variety, $x\in X$ a smooth point, and $A$ an ample Cartier divisor on $X$. Then there exists $m_0\in\NN$ such that 
	for  any infinitesimal flag $Y_{\bullet}$ over $x$ and  for every  $m\geq m_0$ there exist global sections  
	$s_0',\ldots ,s_n'\in \HH{0}{X'}{\sO_{X'}(\pi^*(mA))}$ for which 
	\[
	\nu_{Y_{\bullet}}(s_0') \equ  \origin\ ,\ \nu_{Y_{\bullet}}(s_1')\equ \eone\ ,\ \text{ and }\ \nu_{Y_{\bullet}}(s_i') \equ \eone+\ei, 
	\text{ for every $2\leq i\leq n$,}
	\]
	where $\{ \eone,\ldots ,\en\}\subseteq \RR^n$ denotes the standard basis.
\end{lemma}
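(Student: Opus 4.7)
The plan is to pull everything back from $X$ to $X'$ via the natural isomorphism $H^0(X,mA)\cong H^0(X',\pi^*(mA))$ — valid because $\pi_*\sO_{X'}=\sO_X$ at the smooth point $x$ — thereby reducing the problem to producing sections of $mA$ on $X$ with prescribed vanishing behaviour at $x$. The first observation is that for $s\in H^0(X,mA)$ with $s'=\pi^*s$ one has $\nu_1(s')=\ord_E(s')=\mult_x(s)$. When $\mult_x(s)=1$, dividing $s'$ by a local equation of $E$ and restricting yields a nonzero linear form $\ell\in H^0(E,\sO_E(1))$ on $E\cong \PP(T_xX)\cong \PP^{n-1}$, namely, after a local trivialisation, the image of $s$ in $\mathfrak{m}_x/\mathfrak{m}_x^2\otimes (mA)_x$. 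Since the subflag $Y_2\supset\ldots\supset Y_n$ consists of linear subspaces of $E$, the remaining components $\nu_2(s'),\ldots,\nu_n(s')$ are determined entirely by $\ell$ through this linear flag.

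The main technical input is $1$-jet separation at $x$: for $m$ large enough — depending on $X$, $A$, and $x$ but not on the flag — Serre vanishing applied to $\shi_x^2\otimes\sO_X(mA)$ makes the evaluation map
\[
H^0(X,mA) \lra \sO_{X,x}/\mathfrak{m}_x^2\otimes (mA)_x
\]
surjective, while $mA$ is globally generated at $x$. I would fix such an $m_0$ once and for all.

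Now fix any $m\geq m_0$ and any infinitesimal flag $\ybul$ over $x$ with $z=Y_n$. First pick $s_0\in H^0(X,mA)$ with $s_0(x)\neq 0$, so that $\nu_{\ybul}(\pi^*s_0)=\origin$. Next, choose a linear form $\ell_1$ on $E$ with $\ell_1(z)\neq 0$ and lift it via $1$-jet separation to $s_1\in H^0(X,mA)$ with $\mult_x(s_1)=1$ and induced linear form $\ell_1$; since $z\in Y_j$ for every $j\geq 2$, $\ell_1$ does not vanish identically on any $Y_j$, so $\nu_{\ybul}(\pi^*s_1)=\eone$. Finally, for $2\leq i\leq n$, choose a linear form $\ell_i$ on $E$ vanishing identically on $Y_i$ but not on $Y_{i-1}$ — such forms exist because the linear forms on $E$ vanishing on $Y_i$ form a vector space of dimension $i-1$, strictly larger than its subspace of dimension $i-2$ of forms also vanishing on $Y_{i-1}$ — and lift $\ell_i$ to $s_i\in H^0(X,mA)$ with $\mult_x(s_i)=1$ realising $\ell_i$.

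The verification of $\nu_{\ybul}(\pi^*s_i)=\eone+\ei$ for $2\leq i\leq n$ is then a short computation along the linear subflag on $E$: $\nu_1=1$ holds by construction; for $2\leq j<i$, $\ell_i$ does not vanish identically on $Y_{i-1}$, hence not on the larger $Y_j$, so $\nu_j=0$; for $j=i$, the restriction $\ell_i|_{Y_{i-1}}$ is a nonzero linear form on $Y_{i-1}\cong\PP^{n-i+1}$ vanishing along the hyperplane $Y_i$, hence proportional to its defining equation, yielding $\nu_i=1$; division then leaves a nonzero constant, forcing $\nu_j=0$ for $j>i$. The main potential obstacle is ensuring uniformity of $m_0$ across all infinitesimal flags at $x$, but this is automatic, since the only non-combinatorial input bounding $m_0$ is $1$-jet separation at the single point $x$; the flag enters only through the linear-algebraic choice of $\ell_i$ inside the fixed $\PP^{n-1}=E$.
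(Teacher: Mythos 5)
Your proof is correct and follows essentially the same strategy as the paper: realize prescribed nonzero $1$-jets of $mA$ at $x$ by global sections and pull back to $X'$, reducing the computation of $\nu_{\ybul}$ to linear algebra on $E\cong\PP^{n-1}$. The only presentational difference is that you invoke $1$-jet separation directly (Serre vanishing for $\sI_x^2\otimes\sO_X(mA)$), whereas the paper routes through very ampleness of a fixed multiple $m_0'A$, Bertini, and multiplication by a section of $(m-m_0')A$ not vanishing at $x$; both arguments rest on the same Serre-vanishing input and yield the same uniform $m_0$, independent of the infinitesimal flag.
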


\begin{proof}
	The line bundle $A$ is ample, therefore  there exists a natural number $m'_0>0$ such that $(m'_0+m)A$ is very ample for any $m\geq 0$. In particular, each linear series $|(m'_0+m)A|$ defines an embedding.  As $|m'_0A|$ separates tangent directions as well,  Bertini's theorem yields the  existence of  hyperplane sections 
	$H_1,\ldots, H_{n-1} \in |m'_0 A|$ intersecting  transversally at $x$, and $\tilde{H}_1\cap\ldots\cap\tilde{H}_i\cap E = Y_{i+1}$ for all $i=1,\ldots, n-1$, 
	where $\tilde{H}_i$ denotes  the strict transform of $H_i$ through the blow-up map $\pi$.
	
	At the same time observe that for any $m\geq m_0$ there exists a global section $t\in H^0(X,\sO_X(mA))$ not passing through $x$. By setting $s_i'\deq \pi^*(t\otimes s_{i-1})$ 
	where $s_i\in\HH{0}{X}{\sO_X(m_0A)}$ is  a section associated to $H_i$, then the sections $s_0',\ldots,s_n'$ satisfy the requirements, where we chose $m_0=2m'_0$.
\end{proof} 

\begin{definition}
	For a positive real number $\xi\geq 0$, the \emph{inverted standard simplex of size $\xi$}, denoted by $\Delta_{\xi}^{-1}$, is the convex hull of the set
	\[
	\Delta_{\xi}^{-1} \ \deq \ \st{\origin, \xi \eone,\xi(\eone+\etwo),\ldots,\xi(\eone+\en)} \dsubseteq \RR^n .
	\]
	When $\xi =0$, then $\Delta_{\xi}^{-1}=\origin$.
\end{definition}
Notice that one can find a different description of an inverted standard simplex as follows
\[
\Delta_{\xi}^{-1} \ = \ \{(x_1,\ldots ,x_n)\in\RR_{+}^n \ | \ 0\leq x_1\leq \xi, 0\leq x_2+\ldots +x_n\leq x_1\}
\]
A major difference from  the non-infinitesimal case is the fact that infinitesimal Newton--Okounkov bodies are also contained in inverted simplices 
in a very natural way.

\begin{proposition}\label{prop:inverted}
	Let $D$ be a big $\RR$-divsor $X$, then  $\Delta_{Y_{\bullet}}(\pi^*(D))\subseteq \iss{\mu(D;x)}$ 
	for any  infinitesimal flag $Y_{\bullet}$ over the point $x$. 
\end{proposition}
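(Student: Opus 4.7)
The plan is to verify the inclusion by testing it against every normalized valuation vector and then using that $\iss{\mu(D;x)}$ is closed and convex. So fix a nonzero section $s\in H^0(X',\sO_{X'}(m\pi^*D))$ and write $\nu_{\ybul}(s)=(a_1,\dots,a_n)$. It will suffice to establish the two defining inequalities of the inverted simplex: $a_1/m\leq \mu(D;x)$ and $a_2+\cdots+a_n\leq a_1$.

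For the first inequality, the equality $a_1=\ord_E(s)$ produces a nonzero section $s'$ of $\sO_{X'}(m\pi^*D-a_1 E)$, so $\pi^*D-(a_1/m)E$ is $\QQ$-effective and, in particular, pseudo-effective. Using that $\pi^*D$ is big, the identity
\[
\pi^*D-sE \equ \bigl(1-s/t\bigr)\pi^*D+\bigl(s/t\bigr)\bigl(\pi^*D-tE\bigr)
\]
for $0<s<t=a_1/m$ writes $\pi^*D-sE$ as the sum of a big $\RR$-divisor and a pseudo-effective one, hence it is big for every $s<a_1/m$. This forces $a_1/m\leq\mu(D;x)$ by the very definition of $\mu(D;x)$.

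For the second inequality, pass to $s_1\deq (s/f_1^{a_1})|_E$, where $f_1$ is a local equation of $E=Y_1$. Because $\pi^*D|_E\equiv 0$ (as $\pi$ contracts $E$ to the point $x$) and $\sO_{X'}(-E)|_E\cong\sO_{\PP^{n-1}}(1)$ under the identification $E\cong\PP^{n-1}$, the restriction $s_1$ lies in $H^0(\PP^{n-1},\sO_{\PP^{n-1}}(a_1))$, i.e.\ $s_1$ is a homogeneous polynomial of degree $a_1$. By the construction of $\nu_{\ybul}$, the tail $(a_2,\dots,a_n)$ is precisely the valuation vector of $s_1$ with respect to the linear flag $Y_2\supset\cdots\supset Y_n$ inside $\PP^{n-1}$. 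By the projective space example computed earlier, the Newton--Okounkov body of $\sO_{\PP^{n-1}}(1)$ for such a flag is the standard $(n-1)$-simplex $\Delta_1\subseteq\RR^{n-1}$, so $(a_2,\dots,a_n)\in a_1\cdot\Delta_1$ by homogeneity, giving $a_2+\cdots+a_n\leq a_1$.

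Dividing by $m$ puts $\tfrac{1}{m}\nu_{\ybul}(s)$ inside $\iss{\mu(D;x)}$, and since this set is closed and convex, taking the closed convex hull over all $m$ and all $s$ yields the containment. The one genuinely non-formal step is the identification of the restricted line bundle $(m\pi^*D-a_1 E)|_E$ with $\sO_{\PP^{n-1}}(a_1)$; everything else is then a clean packaging of the definitions of $\mu(D;x)$ and of the Newton--Okounkov body of $\PP^{n-1}$ with a linear flag.
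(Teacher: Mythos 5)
Your argument is correct and lands on the same geometric fact as the paper's proof, but the packaging is genuinely different. The paper's proof expands a local equation of $s$ near $x$ as $s = P_m + P_{m+1} + \cdots$ with $P_i$ homogeneous of degree $i$, computes on the chart $\{y_n\neq 0\}$ of the blow-up, reads off $\nu_1(s)=m$, and observes that the remaining valuations are governed by the single polynomial $P_m(y_1,\ldots,y_{n-1},1)$ of degree at most $m$, giving $\nu_2(s)+\cdots+\nu_n(s)\leq m$. You instead use the intrinsic isomorphisms $\sO_{X'}(-E)|_E\cong\sO_{\PP^{n-1}}(1)$ and $\pi^*D|_E\cong\sO_E$ to identify $s_1$ as a degree-$a_1$ form on $\PP^{n-1}$, and then quote the projective-space computation $\Delta_{\ybul}(\sO_{\PP}(1))=\Delta_1$. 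These are the same underlying statement --- the leading coefficient of $s$ along $E$ is a degree-$\nu_1(s)$ form on $E\cong\PP^{n-1}$ --- but your version is global and coordinate-free while the paper's is local and explicit; the intrinsic route generalizes more cleanly and makes the appearance of $\PP^{n-1}$ conceptual rather than incidental. You also spell out the first-coordinate bound $a_1/m\leq\mu(D;x)$ via the convex-combination trick; the paper leaves this implicit, since it is a general feature of Newton--Okounkov bodies rather than something special to infinitesimal flags, but writing it out is a reasonable choice. The one step you omit is the reduction performed at the start of the paper's proof: by continuity of Newton--Okounkov bodies inside the big cone and homogeneity one may assume $D$ is integral. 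For an $\RR$-divisor $D$ there are no sections of $m\pi^*D$ to test against, so this reduction should be stated before you pick the section $s$.
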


\begin{proof}
	By the continuity of Newton--Okounkov bodies inside the big cone it suffices  to treat the case when $D$ is a big $\QQ$-divisor. Homogeneity then lets 
	us assume that $D$ is integral. Set $\mu=\mu(D;x)$. 
	
	We will follow the line of thought of the proof of \cite{KL14}*{Proposition 3.2}. Recall that  $E\simeq \PP^{n-1}$;  we will write  $[y_1:\ldots :y_{n}]\in\PP^{n-1}$ 
	for a set of homogeneous coordinates in $E$ such that 
	\[
	Y_i \equ \Zeroes(y_1,\dots,y_{i-1}) \dsubseteq \PP^{n-1} \equ E\  \text{for all $2\leq i\leq n$.}
	\]
	With respect to a system of local coordinates  $(u_1,\ldots ,u_n)$ at  the point $x$, the blow-up $X'$ can be described (locally around $x$) as 
	\[
	X' \equ  \big\{\big((u_1,\ldots ,u_n);[y_1:\ldots :y_{n}]\big) \ | \ u_iy_j=u_jy_i \text{ for any } 1\leq i<j\leq n \big\}\ .
	\] 
	We can then write a global section $s$ of $D$  in the form 
	\[
	s \equ  P_m(u_1,\ldots ,u_n)+P_{m+1}(u_1,\ldots ,u_n)+\ldots + P_{m+k}(u_1,\ldots u_n) 
	\]
	around $x$,  where $P_{i}$ are homogeneous polynomials of degree $i$. 
	
	We will perform the computation in the open subset $U_n=\{y_n\neq 0\}$, where we can take $y_n=1$ and the defining equations of the blow-up 
	are given by $u_i=u_ny_i$ for  $1\leq i\leq n-1$. Then
	\[
	s|_{U_n} \ \equ \ u_n^m\cdot \big(P_m(y_1,\ldots ,y_{n-1},1)+u_nP_{m+1}(y_1,\ldots ,y_{n-1},1)+\ldots +u_n^kP_{m+k}(y_1,\ldots ,y_{n-1},1)\big)\ ,
	\]
	in particular,  $\nu_1(s)=m$. Notice that for the rest of $\nu_i(s)$'s  we have to restrict to the exceptional divisor $u_n=0$ and 
	thus the only term arising  in the computation is  $P_m(y_1,\ldots ,y_{n-1},1)$. 
	
	As $\deg P_m\leq m$, taking into account the algorithm for constructing the valuation vector of a section one can see that indeed 
	\[
	\nu_2(s)+\ldots +\nu_n(s) \ \leq \ \nu_1(s) \ ,
	\]
	and this finishes the proof of the proposition.
\end{proof}

 \begin{theorem}[\cite{KL15b}]\label{thm:main augmented}
 	Let $X$ be a smooth projective variety,  $D$ a big $\RR$-divisor on $X$ and $x\in X$ an arbitrary (closed) point. Then the following are equivalent.
 	\begin{enumerate}
 		\item $x\notin\Bplus(D)$.
 		\item For every infinitesimal flag $\ybul$ over $x$ there is $\xi>0$ such that $\iss{\xi}\subseteq \inob{\ybul}{D}$. 
 		\item There exists an infinitesimal $\ybul$ over $x$ and $\xi>0$  such that $\iss{\xi}\subseteq \inob{\ybul}{D}$.
 	\end{enumerate}
 \end{theorem}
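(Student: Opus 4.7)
The plan is to follow the blueprint of Theorem~\ref{thm:augm bl} closely, with the role of the standard simplex $\Delta_\epsilon$ played by the inverted simplex $\iss{\xi}$, and the ample flag element $Y_1$ replaced by the exceptional divisor $E$ of the blow-up $\pi\colon X'\to X$. The implication $(2)\Rightarrow(3)$ is vacuous, so the work concentrates on $(1)\Rightarrow(2)$ and $(3)\Rightarrow(1)$.

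For $(1)\Rightarrow(2)$ I would mirror the argument of Theorem~\ref{thm:augm bl} $(1)\Rightarrow(3)$ using Lemma~\ref{lem:2inf} in place of Lemma~\ref{lem:2}. After reducing to $D$ being $\QQ$-Cartier via Lemma~\ref{lem:nested} and continuity, pick an ample $\QQ$-divisor $A$ on $X$ with $\Bplus(D)=\Bstable(D-A)$; since $x\notin\Bstable(D-A)$, for $m$ large and divisible there is a section $\sigma\in H^0(X,\sO_X(m(D-A)))$ with $\sigma(x)\neq 0$, so that $\nu_{\ybul}(\pi^*\sigma)=\origin$. Lemma~\ref{lem:2inf} applied to $A$ then furnishes sections $s_0',\ldots,s_n'\in H^0(X',\sO_{X'}(\pi^*(mA)))$ with valuations $\origin,\eone,\eone+\etwo,\ldots,\eone+\en$, and multiplicativity of $\nu_{\ybul}$ lifts these to sections $\pi^*\sigma\otimes s_i'\in H^0(X',\sO_{X'}(\pi^*(mD)))$ realising exactly the same valuations. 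Scaled by $1/m$, the resulting vectors are the vertices of $\iss{1/m}$, so convexity of the Newton--Okounkov body delivers $\iss{1/m}\subseteq\inob{\ybul}{D}$.

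For $(3)\Rightarrow(1)$, assume $\iss{\xi_0}\subseteq\inob{\ybul}{D}$. The vertex $\xi_0\eone$ lies in $\inob{\ybul}{D}$, forcing $\mu(\pi^*D,E)\geq \xi_0>0$, so for each $t\in(0,\xi_0)$ the slicing formula of Proposition~\ref{prop:compute} with $Y_1=E$ applies and yields
\[
\nob{\ybul}{\pi^*D-tE}\,\supseteq\,\bigl(\iss{\xi_0}\bigr)_{\nu_1\geq t}-t\eone\,\ni\,\origin.
\]
Theorem~\ref{thm:restr bl} applied on $X'$ with the flag $\ybul$ centered at $z=Y_n$ then gives $z\notin\Bminus(\pi^*D-tE)$ for every $t\in(0,\xi_0)$. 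To close the argument I would invoke the standard infinitesimal characterisation of augmented base loci on the smooth variety $X$: $x\notin\Bplus(D)$ is equivalent to the existence of some $t>0$ with $E\not\subseteq\Bminus(\pi^*D-tE)$; this is established by combining \cite{ELMNP1}*{Proposition~1.21} on $X'$ with an ample class of the form $\pi^*H-sE$ and the identification $H^0(X',\pi^*(mD)-\ell E)=H^0(X,\sO_X(mD)\otimes\m_x^{\ell})$.

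The main obstacle is precisely this last translation of positivity from the blow-up $X'$ back to the base $X$. Because $E$ is not ample, the clean identity $\Bplus(D)=\Bminus(D-\epsilon Y_1)$ exploited in the non-infinitesimal $(2)\Rightarrow(1)$ of Theorem~\ref{thm:augm bl} is unavailable; instead one has to unpack $z\notin\Bstable\bigl(\pi^*D-tE+\tfrac{1}{m'}(\pi^*H-sE)\bigr)$ as a section of $m(D+\tfrac{1}{m'}H)$ on $X$ that vanishes to precise order $m(t+s/m')$ at $x$ with nonzero leading jet in the direction $z\in E$, and argue that this upgrades restricted-base-locus information on $X'$ into augmented-base-locus information on $X$. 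Modulo this careful bookkeeping, the remainder of the proof is a direct adaptation of the argument for Theorem~\ref{thm:augm bl}.
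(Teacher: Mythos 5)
Your $(1)\Rightarrow(2)$ is correct and follows the intended strategy: Lemma~\ref{lem:2inf} is stated in the text precisely to furnish sections realising the vertices of the inverted simplex, and multiplicativity of $\nu_{\ybul}$ together with convexity closes the argument exactly as in the proof of Theorem~\ref{thm:augm bl}. The gap you flag in $(3)\Rightarrow(1)$, however, is genuine and not bookkeeping. Slicing via Proposition~\ref{prop:compute} and invoking Theorem~\ref{thm:restr bl} degrades the hypothesis $\iss{\xi_0}\subseteq\inob{\ybul}{D}$ to the far weaker statement $z\notin\Bminus(\pi^*D-tE)$ for $t\in(0,\xi_0)$, and this statement simply cannot see $\Bplus$: by Proposition~\ref{prop:propinf}~(2) it already holds for generic $z\in E$ and all small $t$ whenever $D$ is nef (so that $\nu'=0$ and the base of the generic infinitesimal polygon is the whole segment $[0,\mu']\times\{0\}$), regardless of whether $x\in\Bplus(D)$. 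Correspondingly, the ``standard characterisation'' you propose, that $x\notin\Bplus(D)$ is equivalent to the existence of $t>0$ with $E\not\subseteq\Bminus(\pi^*D-tE)$, is false; only the forward implication holds. For a concrete counterexample to the converse take $X=\mathbb{F}_1$, $D=H$ the pullback of the hyperplane, and $x$ a point on the $(-1)$-curve $E_0$, so that $x\in\Null(D)=\Bplus(D)$; on the blow-up $X'$ of $X$ at $x$ a direct Zariski computation gives $N_{\pi^*D-tE}=\tfrac{t}{2}C'$ (with $C'$ the strict transform of $E_0$) and $(P_{\pi^*D-tE}\cdot E)=\tfrac{t}{2}>0$ for small $t$, so $E\not\subseteq\Bminus(\pi^*D-tE)$, yet $x\in\Bplus(D)$. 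The reason \cite{ELMNP1}*{Proposition~1.21} cannot be pressed into service on $X'$ is that it requires the subtracted class to be ample, and $E$ is not; there is no analogue of the identity $\Bplus(D)=\Bminus(D-\epsilon Y_1)$ with $Y_1=E$.

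The proof in \cite{KL15b}---as the paper signals in the paragraph preceding Theorem~\ref{thm: lis is mov Sesh}---does not slice. Instead it uses the full $n$-dimensional shape of $\iss{\xi_0}$ to show that $|m(D-\text{small ample})|$ separates jets of order roughly $m\xi_0$ at $x$, yielding the equality $\xi(D;x)=\epsilon(||D||;x)$ between the largest inverted simplex constant and the moving Seshadri constant; $(3)\Rightarrow(1)$ then follows from the ELMNP characterisation $x\notin\Bplus(D)\Leftrightarrow\epsilon(||D||;x)>0$ of \cite{ELMNP2}. It is precisely the width of the slices $\iss{\xi_0}\cap\{\nu_1=t\}$, which your reduction discards, that produces the required jets and distinguishes $\Bplus$ from $\Bminus$.
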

 
 As an immediate consequence via the equivalence of ampleness and $\Bplus$ being empty, we obtain 
 
 \begin{corollary}[\cite{KL15b}]\label{cor:ampleness}
 	Let $X$ be a smooth projective variety and  $D$ a big $\RR$-divisor on $X$. Then the following are equivalent.
 	\begin{enumerate}
 		\item $D$ is ample.
 		\item For every point $x\in X$ and every infinitesimal flag $\ybul$ over $x$ there exists a real number $\xi>0$ for which $\iss{\xi}\subseteq \inob{\ybul}{D}$. 
 		\item For every point $x\in X$ there exists an infinitesimal flag $\ybul$ over $x$ and a real number $\xi>0$ such that  $\iss{\xi}\subseteq \inob{\ybul}{D}$. 
 	\end{enumerate}
 \end{corollary}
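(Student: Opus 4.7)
The plan is to deduce this corollary as a purely formal consequence of two already-established results: Theorem~\ref{thm:main augmented}, which characterizes the condition $x\notin\Bplus(D)$ pointwise in terms of inverted simplices inside infinitesimal Newton--Okounkov bodies, and Lemma~\ref{lem:props of augm base loci}(3), which asserts that $D$ is ample if and only if $\Bplus(D)=\emptyset$, equivalently if and only if $x\notin\Bplus(D)$ for every point $x\in X$. The corollary is thus obtained by quantifying the equivalences of Theorem~\ref{thm:main augmented} over all points of $X$.

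Concretely, I would first treat $(1)\Rightarrow(2)$. Assuming $D$ is ample, Lemma~\ref{lem:props of augm base loci}(3) gives $\Bplus(D)=\emptyset$, so $x\notin\Bplus(D)$ for every $x\in X$. Fix any such $x$ and any infinitesimal flag $\ybul$ over $x$; then the implication $(1)\Rightarrow(2)$ of Theorem~\ref{thm:main augmented} produces $\xi>0$ with $\iss{\xi}\subseteq\inob{\ybul}{D}$, which is exactly condition $(2)$ here. The implication $(2)\Rightarrow(3)$ is completely trivial, since for each $x\in X$ one may simply pick any single infinitesimal flag over $x$ and invoke~$(2)$.

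For $(3)\Rightarrow(1)$, assume that every $x\in X$ admits some infinitesimal flag $\ybul$ over it together with $\xi>0$ satisfying $\iss{\xi}\subseteq\inob{\ybul}{D}$. Applying $(3)\Rightarrow(1)$ of Theorem~\ref{thm:main augmented} at each $x$ yields $x\notin\Bplus(D)$; as $x$ was arbitrary, $\Bplus(D)=\emptyset$, and hence $D$ is ample by Lemma~\ref{lem:props of augm base loci}(3). Since $X$ is smooth, every point $x\in X$ is a smooth point, so Theorem~\ref{thm:main augmented} applies without any restriction on the choice of $x$; thus there is no real obstacle in the argument. The entire substance of the proof has already been packaged into Theorem~\ref{thm:main augmented}, and this corollary is merely its global reformulation via the equivalence between ampleness and the emptiness of~$\Bplus$.
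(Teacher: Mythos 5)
Your proposal is correct and matches the paper's own route: the paper derives Corollary~\ref{cor:ampleness} as an immediate consequence of Theorem~\ref{thm:main augmented} together with the equivalence between ampleness of $D$ and $\Bplus(D)=\emptyset$ from Lemma~\ref{lem:props of augm base loci}, precisely by quantifying the pointwise equivalences over all $x\in X$ as you do.
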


 \begin{remark}
 	Along the lines of Remark~\ref{rmk:Bminus general}, Theorem~\ref{thm:main augmented} extends to the case when $x\in X$ is a smooth point on a normal projective 
 	variety. Using the notation of Remark~\ref{rmk:Bminus general},  this follows from the observation found in \cite{BBP}*{Proposition 2.3} that  $\Bplus(D')=\mu^{-1}(\Bplus(D))\cup\Exc(\mu)$ whenever  $\mu\colon Y\to X$ is a resolution of singularities of $X$ that is an isomorphism over $x\in X$. 
 \end{remark}

Analogously to the non-infinitesimal case (and as observed in dimension two in \cite{KL14}), $x\notin\Bplus(D)$ implies that $\inob{\ybul}{D}$ will contain an inverted standard simplex of some 
size, hence it makes sense to ask how large these simplices  can become.
  
 \begin{definition}(Largest inverted simplex constant)
  Let $X$ be a smooth projective variety, $x\in X$, and $D$ a big $\RR$-divisor with $x\notin \Bminus(D)$. Let us first assume that $x\notin \Bplus(D)$. 
  For an infinitesimal flag $\ybul$ over $x$ write 
  \[
   \xi_{\ybul}(D;x) \deq \sup\st{\xi\geq 0\mid \iss{\xi}\subseteq \inob{\ybul}{D}}\ .
  \]
  The \emph{largest inverted simplex constant} $\xi(D;x)$ of $D$ at $x$ is then defined as 
  \[
   \xi(D;x) \deq \sup_{\ybul} \xi_{\ybul}(D;x)\ ,
  \]
  where $\ybul$ runs through all infinitesimal flags over $x$. If $x\in \Bplus(D)$ but $x\notin\Bminus(D)$, then we set  $\xi(D;x)=0$. 
 \end{definition}

\begin{remark}\label{rmk:xi continuous}
As  Newton--Okounkov bodies are homogeneous, so will be  $\xi(\ \cdot\ ;x)$ as a function on $N^1(X)_\RR$. More importantly, the function $\xi(\ \cdot\ ;x)$ is  
continuous on  the domain where $x\notin \Bplus(D)$. Indeed, one can show (cf. \cite{KL15b}) that 
$\xi_{Y_{\bullet}}(D;x)$ is in fact independent of  $Y_{\bullet}$, therefore we can use one flag for all $\RR$-divisor classes. The natural inclusion
\[
\Delta_{Y_{\bullet}}(D) + \Delta_{Y_{\bullet}}(D')\dsubseteq \Delta_{Y_{\bullet}}(D+D')
\]
shows that  $\xi(\cdot;x)$ is in fact a concave  function on 
$\Bbig(X)\setminus B_+(x)$. This  latter is an open   subset of $\textup{N}^1(X)_\RR$,  therefore $\xi(\ \cdot\ ;x)$ is continuous on its domain. For further results regarding continuity, 
the reader should consult \cite{KL15b}*{Section 5}. 
\end{remark}
 
One of the main steps of the proof of Theorem~\ref{thm:main augmented} is the observation that the largest inverted simplex constant governs the asymptotic behaviour of jet separation. 
With this in hand one can prove that the largest inverted simplex constant is equal to the moving Seshadri constant. 

\begin{thm}[\cite{KL15b}, Proposition 4.10]\label{thm: lis is mov Sesh}
 Let  $D$ be a big $\RR$-divisor on a smooth projective variety $X$ and $x\in X$ a closed point. If $\xi(D;x)>0$, then $\xi(D;x)=\epsilon(||D||;x)$. 
\end{thm}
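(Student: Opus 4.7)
The plan is to identify both $\xi(D;x)$ and $\e(\|D\|;x)$ with the asymptotic rate of jet separation, i.e.\ $\lim_{m\to\infty} s(mD;x)/m$, where $s(mD;x)$ is the largest integer $s$ such that $|mD|$ separates $s$-jets at $x$. That the moving Seshadri constant is given by this limit (cf.~Nakamaye, \cite{ELMNP1}) is one half; the other half comes from a dictionary between the infinitesimal flag valuation $\nu_\ybul$ and the maps that govern jet separation.

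First I would recall that $|mD|$ separates $s$-jets at $x$ if and only if for every $0\leq k\leq s$ the restriction map $H^0(X',\pi^*(mD)-kE)\lra H^0(E,\sO_E(k))$ is surjective, using $E\simeq\PP^{n-1}$ and $-E|_E=\sO_E(1)$. For an infinitesimal flag $\ybul=(E,Y_2,\ldots,Y_n)$ with $Y_2,\ldots,Y_n$ a linear subflag of $E$, a section $s\in H^0(X,mD)=H^0(X',\pi^*(mD))$ with $\nu_\ybul(s)=(k,a_2,\ldots,a_n)$ satisfies $\ord_E(s)=k$, while $s/f_E^k|_E\in H^0(E,\sO_E(k))$ has lex-leading monomial whose exponents are $(a_2,\ldots,a_n)$ (padded by an appropriate power of a section cutting out the chart $U_n$ of Proposition~\ref{prop:inverted}). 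By that same proposition the vectors produced this way satisfy $a_2+\cdots+a_n\leq k$, and for each fixed $k$ they index the monomial basis of $H^0(E,\sO_E(k))$.

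For the direction $\xi(D;x)\leq \e(\|D\|;x)$, fix $\xi''<\xi'<\xi(D;x)$ and an infinitesimal flag $\ybul$ witnessing $\iss{\xi'}\dsubseteq \inob{\ybul}{D}$. For $m$ sufficiently large and divisible, every lattice point of $m\iss{\xi''}$ lies in the interior of $m\iss{\xi'}$, hence is realized as $\nu_\ybul(s)$ for some $s\in H^0(X,mD)$ at the common level $m$; this uses that rational interior points of the NO body are valuative combined with an Okounkov-semigroup approximation, obtained by multiplying by the auxiliary sections of an ample divisor produced in Lemma~\ref{lem:2inf}. Varying the lattice point at each fixed $0\leq k\leq \lfloor m\xi''\rfloor$, the collection of restrictions $s/f_E^k|_E$ exhausts a basis of $H^0(E,\sO_E(k))$, so the restriction map is surjective and $s(mD;x)\geq \lfloor m\xi''\rfloor$. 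Dividing by $m$, then letting $m\to\infty$ and $\xi''\to\xi(D;x)$, yields $\e(\|D\|;x)\geq\xi(D;x)$.

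Conversely, set $s_m\deq s(mD;x)$. For each $0\leq k\leq s_m$, surjectivity of $H^0(X',\pi^*(mD)-kE)\lra H^0(E,\sO_E(k))$ lets us pick, for every lex-leading monomial position, a section of $mD$ realizing the valuation vector $(k,a_2,\ldots,a_n)$. Hence all lattice points of $m\iss{s_m/m}$ are valuative for $\pi^*D$, and by convexity and closedness of $\inob{\ybul}{D}$ we obtain $\iss{s_m/m}\dsubseteq\inob{\ybul}{D}$. Therefore $\xi_\ybul(D;x)\geq s_m/m$, and passing to the limit in $m$ gives $\xi(D;x)\geq \e(\|D\|;x)$.

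The main obstacles I foresee are twofold. The first is making the approximation step uniform: one needs every lattice point of $m\iss{\xi''}$ to appear as $\nu_\ybul(s)$ at one common level $m$, not at varying multiples, which is exactly why the explicit auxiliary sections of Lemma~\ref{lem:2inf} are needed to translate realized points around in the valuation semigroup and to ensure that the "filling" of $\iss{\xi'}$ by valuative points is rate-independent of the particular lattice point. The second is the transition from $\QQ$-Cartier to big $\RR$-Cartier divisors, which should follow either by an approximation using the continuity of $\xi(\,\cdot\,;x)$ on $\Bbig(X)\setminus B_+(x)$ established in Remark~\ref{rmk:xi continuous} together with the analogous continuity of $\e(\|\,\cdot\,\|;x)$, or by reinterpreting the jet-separation characterization of the moving Seshadri constant directly for $\RR$-divisors.
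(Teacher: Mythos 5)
Your overall strategy — identifying both $\xi(D;x)$ and $\e(\|D\|;x)$ with $\lim_m s(mD;x)/m$ and using the dictionary between the infinitesimal flag valuation and the restriction maps $H^0\bigl(X',\pi^*(mD)-kE\bigr)\to H^0\bigl(E,\sO_E(k)\bigr)$ — is exactly the route the cited source \cite{KL15b} takes, as advertised in the paragraph preceding the statement. Your argument for $\xi(D;x)\geq \e(\|D\|;x)$ is essentially correct: surjectivity of each $\rho_k$ for $k\leq s_m$, followed by lex-order Gaussian elimination in $H^0(\PP^{n-1},\sO(k))$ and multiplication by $f_E^k$, realizes all lattice points of $m\,\iss{s_m/m}$ as valuation vectors, and since the vertices of that inverted simplex are among the realized lattice points one indeed gets $\iss{s_m/m}\subseteq\inob{\ybul}{D}$; passing to the limit then works, granted the ELMNP/Nakamaye theorem $\e(\|D\|;x)=\lim s(mD;x)/m$.

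The gap is in the direction $\xi(D;x)\leq \e(\|D\|;x)$, precisely at the step you yourself flag: a lattice point in the interior of $m\,\iss{\xi'}\subseteq m\,\inob{\ybul}{D}$ is \emph{not} thereby realized as $\nu_\ybul(s)$ for some $s\in H^0(X,mD)$ at that specific level $m$; valuative points are only realized at \emph{some} sufficiently divisible level, which a priori depends on the point. The fix you propose does not work as stated: the sections furnished by Lemma~\ref{lem:2inf} live in $H^0\bigl(X',\sO_{X'}(\pi^*(mA))\bigr)$ for an auxiliary ample $A$, so multiplying a section of $m_1 D$ by them lands in $H^0(X, m_1 D + m_2 A)$, which is not a multiple of $D$. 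One can partially repair this by invoking Theorem~\ref{thm:main augmented}: since $\xi(D;x)>0$ one has $x\notin\Bplus(D)$, hence $p_0 D = A+N$ with $A$ ample, $N$ effective and $x\notin\Supp N$, and multiplying additionally by powers of the canonical section of $N$ (which has trivial valuation vector) stays inside multiples of $D$. But even then this only fills the lattice points of $m\,\iss{c}$ for a fixed small constant $c\sim 1/(p_0 m_0)$, not of $m\,\iss{\xi''}$ with $\xi''$ close to $\xi(D;x)$. To close this direction one genuinely needs a Fujita-approximation-type regularization of the valuation semigroup (or equivalently a reduction to ample $A_\epsilon$ with $\inob{\ybul}{A_\epsilon}$ converging to $\inob{\ybul}{D}$), which your phrase ``Okounkov-semigroup approximation'' gestures at but does not supply. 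The $\QQ$-to-$\RR$ transition, by contrast, is indeed routine via the continuity of $\xi(\cdot\,;x)$ on $\Bbig(X)\setminus B_+(x)$ from Remark~\ref{rmk:xi continuous}.
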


  \newpage

\section{Newton--Okounkov bodies on surfaces}

\subsection{Zariski decomposition on surfaces} 

The main tool to compute Newton--Okounkov bodies of divisors on surfaces is Zariski decomposition, whose variation inside the big cone determines their shapes. This phenomenon is strongly related to decompositions of big or effective cones of varieties (see \cites{HK, KKL12}). In fact, along with toric varieties, this is the class of non-trivial examples that is the best understood. As such, the study of variation of Zariski decomposition on surfaces  serves as a model case for higher-dimensional investigations of positive cones associated to divisors or even higher-codimension cycles \cite{FL_Zariski}. 

In its original form put forth by Zariski \cite{Zar}, Zariski decomposition gives a unique way of decomposing an effective $\QQ$-divisor into a positive (nef) part carrying all the sections for all multiples of the divisor in question, and a negative part which is either zero or a so-called negative cycle, an effective divisor with negative definite intersection form. Zariski's proof  made essential use of the fact that the divisor we wish to decompose is effective. 
A particularly clear proof along similar lines is given by Bauer in \cite{Bauer}, where he defines the positive part as the maximal nef subdivisor. 

Subsequently, Fujita provided an alternative proof relying on linear algebra in hyperbolic vector spaces (and Riemann--Roch), thus producing two essential improvements: Fujita's version of Zariski decomposition is valid for pseudo-effective $\RR$-divisors.  

Variation of Zariski decomposition  has been first explored in \cite{BKS04}.
The upshot is that by looking at the way the support of the negative part of the Zariski decomposition changes as we move through the cone of big divisors, 
we obtain a locally finite  decomposition of $\Bbig(X)$ into locally rational polyhedral chambers. 
On the individual chambers the support of the stable base loci remains constant, and asymptotic cohomology functions (in particular the volume) are given by a single polynomial. 

The decomposition obtained this way is quite close to the one discussed in \cite{HK} or \cite{KKL12}, in fact, when the surface under consideration is a Mori dream space, the open parts of the chambers in the respective 
decompositions agree. However, it is a significant difference that here we do not rely on any kind of finite generation hypothesis, the decomposition of the cone of big divisors that we present always exists. 
The self-duality between curves and  divisors enables us to carry out this analysis with the help of linear algebra in hyperbolic vector spaces. It is this property of surfaces 
 that makes the proofs particularly transparent. 

Since we consider variation of Zariski decomposition on surfaces to be very important, we give a fairly detailed account. Most of the material presented here is borrowed freely from \cite{BKS04}.
For reference, here is the statement of Zariski in the version generalized by Fujita.

\begin{thm}[Existence and uniqueness of Zariski decompositions for $\RR$-di\-vi\-sors, \cite{KMM87}, Theorem 7.3.1]\label{thm:ZD}
   Let $D$ be a pseudo-effective $\RR$-divisor on a smooth
   projective surface. Then there exists a unique effective
   $\RR$-divisor \[ N_D=\sum_{i=1}^m a_iN_i \]  such that
   \begin{items}
      \item[(i)]
         $P_D=D-N_D$ is nef,
      \item[(ii)]
         $N_D$ is either zero or its intersection matrix
         $(N_i\cdot N_j)$ is negative definite,
      \item[(iii)] 
         $P_D\cdot N_i=0$ for $i=1,\dots, m$.
   \end{items}
   Furthermore, $N_D$ is uniquely determined by
   the numerical equivalence class of $D$, and
   if $D$ is a $\bbQ$-divisor, then so are $P_D$ and $N_D$.
   The decomposition 
   \[
   D=P_D+N_D
   \]
   is called the {\em Zariski decomposition} of $D$.
\end{thm}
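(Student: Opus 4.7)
The plan is to build $N_D$ from its support and then determine its coefficients via the orthogonality condition~(iii). The Hodge index theorem makes $N^1(X)_\RR$ a hyperbolic space of signature $(1,\rho(X)-1)$, and this hyperbolicity is what allows both the finiteness required by~(ii) and the constructive procedure below. Since the construction uses only intersection numbers, uniqueness up to numerical equivalence is automatic, as is the rationality statement in the $\bbQ$-case.

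First I would consider the set $\mathcal{N} = \{C : C\text{ irreducible curve on }X,\ D\cdot C < 0\}$ as the candidate for $\Supp(N_D)$, and establish two structural facts about it. Each $C\in \mathcal{N}$ satisfies $C^2<0$: if $C^2\geq 0$ and $H$ is ample, then $C$ would be a limit of positive classes, and pairing with a suitable effective representative of $D+\epsilon H$ would contradict $D\cdot C<0$. Then a Zariski-type lemma shows that any finite subset of $\mathcal{N}$ has negative definite intersection matrix: distinct irreducible curves meet non-negatively, and for a non-trivial relation $\sum t_i N_i$ one splits indices by sign and applies Hodge index to the positive part to force negative self-intersection. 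As a consequence $|\mathcal{N}|\leq\rho(X)-1$.

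With $\mathcal{N}=\{N_1,\dots,N_m\}$ in hand, the coefficients of $N_D=\sum a_i N_i$ are forced by $(D-N_D)\cdot N_i=0$, i.e.\ by the linear system $Ma=v$ with $M=(N_i\cdot N_j)$ and $v_i=D\cdot N_i<0$. Since the off-diagonals of $M$ are non-negative and $-M$ is symmetric positive definite, $-M$ is a Stieltjes ($M$-)matrix, so $(-M)^{-1}\geq 0$ entry-wise; applied to $-v$ this yields $a_i\geq 0$, so $N_D$ is effective, and rationality is preserved when $D$ is a $\bbQ$-divisor.

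The main obstacle is verifying that $P_D:=D-N_D$ is nef. By construction $P_D\cdot N_i=0$ on $\mathcal{N}$, but for a curve $C\notin\mathcal{N}$ one only has $D\cdot C\geq 0$ and $N_D\cdot C\geq 0$, which does not directly give $P_D\cdot C\geq 0$. I would handle this by an iterative construction rather than fixing $\mathcal{N}$ a priori: start from $\mathcal{N}_0=\emptyset$, $P_0=D$, and at step $k$, if $P_k$ fails to be nef, pick an irreducible curve $C_{k+1}$ with $P_k\cdot C_{k+1}<0$, adjoin it to the support, and re-solve the orthogonality system. Negative-definiteness is preserved at each step by a Schur-complement / rank-one-update argument, non-negativity of the new coefficients follows from the $M$-matrix reasoning above, and since the rank of the support grows strictly the process terminates in at most $\rho(X)-1$ steps with a nef $P_D$. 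For uniqueness, a second decomposition $D=P'+N'$ yields $\delta=P_D-P'=N'-N_D$, which is simultaneously a difference of nef classes and supported on a negative-definite configuration of curves; pairing $\delta$ with each component of the combined support and invoking negative-definiteness forces $\delta=0$.
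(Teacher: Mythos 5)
The paper does not actually prove Theorem~\ref{thm:ZD}; it cites \cite{KMM87}*{Theorem 7.3.1} and, in the surrounding discussion, mentions three proof strategies: Zariski's original argument (for effective $\QQ$-divisors), Bauer's proof via the maximal nef subdivisor, and Fujita's proof by ``linear algebra in hyperbolic vector spaces.'' Your sketch is in the spirit of Fujita's linear-algebraic approach, and the overall architecture (iteratively enlarge the support, solve the orthogonality system, use the $M$-matrix structure of $-\big(N_i\cdot N_j\big)$ for non-negativity of coefficients, bound the iteration by $\rho(X)-1$, read off rationality and numerical invariance from the fact that everything is phrased in terms of intersection numbers) is the right shape. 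The opening paragraph about $\mathcal{N}=\{C:D\cdot C<0\}$ is ultimately a dead end that you yourself abandon; the paper even flags this in a footnote (on K3 surfaces the negative part can contain curves meeting $D$ non-negatively), so it would be cleaner to start from the iterative construction directly.

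The genuine gap is in the crux step: the claim that adjoining $C_{k+1}$ with $P_k\cdot C_{k+1}<0$ preserves negative-definiteness. Describing this as a ``Schur-complement / rank-one-update argument'' merely re-states what must be proved: the Schur complement is $C_{k+1}^2-b^T M^{-1}b=(w^{\perp})^2$ where $w^{\perp}$ is the component of $C_{k+1}$ orthogonal to the span of $\{N_1,\dots,N_k\}$, and there is no a priori reason for this to be negative --- the orthogonal complement of a negative-definite subspace still has a positive direction by the Hodge index theorem. Making this step work requires additional input: one either shows inductively that $P_k$ stays pseudo-effective (so that $P_k\cdot C_{k+1}<0$ forces $C_{k+1}^2<0$ and one can run a Hodge-index argument against a putative class of non-negative square in the new span), or one invokes a Zariski-type lemma about configurations of irreducible curves supporting an effective divisor with non-positive intersections against all its components. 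Your sketch contains neither, so as written the induction does not close. A related, smaller issue: the Stieltjes-matrix argument is stated for $v_i=D\cdot N_i<0$, but in the iterative scheme some curves enter the support because $P_k\cdot C_{k+1}<0$ while $D\cdot C_{k+1}$ may be non-negative, so the sign of $v$ is not controlled. The fix is to apply the $M$-matrix positivity to the increment $N^{(k+1)}-N^{(k)}$ (whose defining right-hand side vanishes on the old curves and is strictly negative on $C_{k+1}$) rather than to $N^{(k+1)}$ all at once. Finally, the uniqueness argument needs one more sentence: the union $\Supp(N')\cup\Supp(N_D)$ is not known to be negative definite, so one should split $\delta=N'-N_D$ into its positive and negative parts, note that each part lies in a separately negative-definite configuration, use nefness of $P_D$ and $P'$ to obtain $\delta^2\geq 0$, and negative-definiteness to obtain $\delta^2\leq 0$; equality then forces $\delta=0$.
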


First we give an example to illustrate the kind of picture we have in mind. 

\begin{eg}[Blow-up of two points in the plane]\label{eg:two_points}
Let $X$ be the blow-up of the projective plane at two points; the corresponding exceptional divisors will be denoted by $E_1$ and $E_2$. 
We denote the pullback of the hyperplane class on ${\PP}^2$ by $L$.
   These divisor classes generate the Picard group of $X$
   and their intersection numbers are: $L^2=1$, $(L.E_i)=0$ and $(E_i.E_j)=-{\delta}_{ij}$ for $1\leq i,j\leq 2$.
   
   There are  three irreducible negative curves: the two exceptional divisors,
   $E_1$,  $E_2$ and  the strict transform  of the line through the two blown-up points, $L-E_1-E_2$, and  the corresponding hyperplanes determine the 
   chamber structure on the big cone.  They divide the big cone into five regions 
   on each of which the support of the negative part of the Zariski decomposition remains constant.

   In this particular case the chambers are simply described as the set of divisors 
   that intersect negatively the same set of negative curves 
\footnote{It happens for instance on K3 surfaces that the negative part of a divisor $D$ contains irreducible curves intersecting $D$ non-negatively.}.

We will parametrize the chambers with big and nef divisors (in actual fact with faces of the nef cone containing big divisors). In our case, we pick big and nef divisors $A,Q_1,Q_2,L,P$ 
based on the following criteria:

\begin{eqnarray*}
A\ : & \text{ample} \\
Q_1\ : & (Q_1\cdot E_1)=0\ ,\ (Q_1\cdot C) > 0 \ \text{for all other curves} \\ 
Q_2\ : & (Q_2\cdot E_2)=0\ ,\ (Q_2\cdot C) > 0 \ \text{for all other curves}  \\
L\ : & (L\cdot E_1)=0\ ,\  (L\cdot E_2)=0\ ,\ (L\cdot C) > 0 \ \text{for all other curves} \\ 
P\ : & (P\cdot L-E_1-E_2)=0\ ,\ (P\cdot C) > 0 \ \text{for all other curves} \ .
\end{eqnarray*}

The divisors  $L, P,Q_1,Q_2$ are big and nef divisors in the nef boundary (hence necessarily non-ample) which are in the relative interiors of the indicated faces. 
For one of these divisors, say $P$,  the corresponding chamber consists of all $\RR$-divisor classes whose negative part consists of curves orthogonal to $P$. This is listed in the following
table. Observe that apart from the nef cone,  the chambers do not contain the nef divisors they are associated to.

Note that  not all possible combinations of negative divisors  occur. 
    This in part is accounted for by the fact that certain faces of the 
    nef cone do not contain big divisors. 

\begin{center}
 \begin{tabular}{|l|r|r|} \hline\hline
 Chamber & $\Supp \Neg(D)$ & $(D\cdot C)<0$  \\ \hline
$\Sigma_A$ & $\emptyset$ & none \\ 
$\Sigma_{Q_1}$ & $E_1$ & $E_1$ \\
$\Sigma_{Q_2}$ & $E_2$ & $E_2$ \\
$\Sigma_{L}$ & $E_1,E_2$ & $E_1,E_2$ \\
$\Sigma_{P}$ & $L-E_1-E_2$ & $L-E_1-E_2$ \\
\hline\hline
 \end{tabular}
\end{center}

The following picture describes a cross section of the effective cone of $X$ with the chamber structure indicated.

\begin{center}
\begin{tikzpicture}[scale=0.5]
\draw[very thick] (0,0) -- (12,0) -- (6,12) -- cycle;
\filldraw[black] (0,0) circle (4pt) (12,0) circle (4pt) (6,12) circle (4pt);
\draw (0,-1) node{$E_1$} (12,-1) node{$E_2$} (6,13) node{$L-E_1-E_2$};
\draw[very thick] (0,0) -- (9,6) -- (3,6) --  (12,0) -- cycle;
\filldraw[black] (9,6) circle (4pt) (3,6) circle (4pt) (6,4) circle (4pt);
\draw (6,3) node{$L$} (11,6) node{$L-E_1$} (1,6) node{$L-E_2$};
\filldraw[black] (6,5) circle (4pt) (6,6) circle (4pt) (7.5,5) circle (4pt) (4.5,5) circle (4pt);
\draw (6.5,5.5) node{\scriptsize $A$} (8.5,4.5) node{\scriptsize $Q_2$} (3.5,4.5) node{\scriptsize $Q_1$} (6,7) node{\scriptsize $P$};
\draw[very thin] (6,2) -- (13,3)  (14,3) node{$\Sigma_L$};
\draw[very thin] (7,5) -- (9,7)  (10,7) node{$\Sigma_A$};
\draw[very thin] (6,8) -- (9,9) (10,9) node{$\Sigma_P$};
\draw[very thin] (10,3) -- (13,5) (14,5) node{$\Sigma_{Q_2}$};
\draw[very thin] (2,2) -- (1,4) (0,4) node{$\Sigma_{Q_1}$};
\end{tikzpicture}
\end{center}

Let $D=aL-b_1E_i-b_2E_2$ be a big $\RR$-divisor. Then one can express 
   the volume of $D$ in terms of the coordinates $a,b_1,b_2$ as follows:
   \[
       \vl{D}= \left\{ \begin{array}{ll} D^2=a^2-b_1^2-b_2^2 & \textrm{ if $D$ is nef, i.e. $D\in\Sigma_A$} \\
       a^2-b_2^2 & \textrm{ if\ } D\cdot E_1<0 \mbox{ and } D\cdot E_2\geq 0 \textrm{ i.e. $D\in \Sigma_{Q_1}$}\\
       a^2-b_1^2 & \textrm{ if\ } D\cdot E_2<0 \mbox{ and } D\cdot E_1\geq 0 \textrm{ i.e. $D\in\Sigma_{Q_2}$}\\
       a^2 & \textrm{ if } D\cdot E_1<0 \mbox{ and } D\cdot E_2<0 \textrm{ i.e. $D\in\Sigma_L$}\\
       2a^2-2ab_1-2ab_2+2b_1b_2 & \textrm{ if\ } D\cdot(L-E_1-E_2)<0\
       \textrm{ i.e. $D\in\Sigma_P$}.
       \end{array} \right.
    \]
\end{eg}

On an arbitrary  smooth projective surface we obtain the following statement.

\begin{thm}[\cite{BKS04}]\label{thm:loc poly surfaces}
Let $X$ be smooth projective surface. Then there exists a locally finite rational polyhedral decomposition of the cone of big divisors such that on each of the resulting
chambers that support of the negative part of the Zariski decomposition is constant. 

In particular, on every such region, all asymptotic cohomology functions are given by homogeneous polynomials, and the stable base locus is constant. 
\end{thm}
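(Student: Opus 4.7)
The plan is to build the chamber decomposition directly from Zariski decomposition (Theorem~\ref{thm:ZD}), grouping big $\RR$-divisors according to the support of their negative parts. To each finite set $\nu=\{C_1,\dots,C_r\}$ of distinct irreducible curves on $X$ whose intersection matrix $M_\nu=((C_i\cdot C_j))$ is negative definite I associate
\[
\Sigma_\nu \deq \{D\in\Bbig(X)\mid \Supp(N_D)=\nu\}\ ,
\]
taking $\Sigma_\emptyset=\Nef(X)\cap\Bbig(X)$. Existence and uniqueness in Theorem~\ref{thm:ZD} make the $\Sigma_\nu$ pairwise disjoint, and together they cover $\Bbig(X)$.

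Next I describe $\Sigma_\nu$ by explicit linear conditions on $D$. Since $M_\nu$ is invertible, the system
\[
\sum_j a_j(C_i\cdot C_j)\equ (D\cdot C_i),\quad i=1,\dots,r\ ,
\]
has a unique solution $a_1(D),\dots,a_r(D)$, and each $a_i(\cdot)$ is a rational linear functional on $N^1(X)_\RR$. By the characterisation of Zariski decomposition, a big divisor $D$ lies in $\Sigma_\nu$ exactly when
\begin{enumerate}
\item[(i)] $a_i(D)>0$ for every $i$, and
\item[(ii)] $P(D)\deq D-\sum_i a_i(D)C_i$ is nef, i.e.\ $(P(D)\cdot C)\geq 0$ for every irreducible curve $C\notin \nu$.
\end{enumerate}
All these conditions are linear in $D$ with rational coefficients, so $\Sigma_\nu$ is a convex subset of $N^1(X)_\RR$ carved out by rational (half-)hyperplanes.

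The main obstacle is local finiteness. The finitely many conditions in (i) pose no issue, but (ii) a priori yields an infinite family of hyperplanes. To deal with this I aim to prove that for any compact $K\subset \Bbig(X)$ only finitely many irreducible curves can appear in $\Supp(N_D)$ for some $D\in K$. Fix an ample class $H$. Since $N_D$ is effective and $P_D$ is nef, $(H\cdot N_D)\leq (H\cdot D)$ is uniformly bounded on $K$; each component of $N_D$ is moreover a negative curve with $(H\cdot C)\geq 1$, and negative curves with bounded $H$-degree on a smooth projective surface form a finite set by a standard Hodge-index/boundedness argument. This produces a finite set $\shc(K)$ containing every support, and local finiteness and local rational polyhedrality both follow: near $K$ the chambers are cut out by the finitely many hyperplanes attached to the curves in $\shc(K)$.

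The consequences then follow formally. On each chamber the $a_i(D)$ are rational linear in $D$, so $P_D$ varies linearly, and therefore
\[
\vl{X}{D}\equ (P_D^2)
\]
is a homogeneous quadratic polynomial in $D$. Serre vanishing for big divisors on surfaces gives $\widehat h^2\equiv 0$ on $\Bbig(X)$, and the asymptotic Riemann--Roch formula then expresses $\widehat h^1=\widehat h^0-\chi$ as a polynomial on each chamber as well. Finally, the constancy of the stable base locus on $\Sigma_\nu$ comes from the classical identification $\sbl{D}=\Supp(N_D)=\nu$ for big $\QQ$-divisors in the chamber.
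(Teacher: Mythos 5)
Your plan — grouping by the support set $\nu=\Supp(N_D)$, describing each $\Sigma_\nu$ by the rational linear functionals $a_i(D)=(M_\nu^{-1}(D\cdot C_j))_i$ together with $a_i(D)>0$ and nefness of $D-\sum a_i(D)C_i$ — is a legitimate alternative to the paper's parametrization by big-and-nef classes $P$ with $\Sigma_P=\{D : \Neg(D)=\Null(P)\}$, and it makes the rationality of each chamber essentially immediate (the paper has to work harder for this, via a Campana--Peternell-type local finiteness result for $\Nef(X)\cap\Bbig(X)$). The trouble is in your local finiteness argument, which is where all the content sits.

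You assert that since $(H\cdot N_D)\leq (H\cdot D)$ is bounded on a compact $K$, the components $C$ of $N_D$ have bounded $H$-degree, whence finitely many curves can occur. That inference is a gap: the coefficients in $N_D=\sum a_i C_i$ are positive reals that can be arbitrarily small, so $a_C(H\cdot C)\leq B$ puts no upper bound on $(H\cdot C)$ — a term of the form $0.001\cdot\Gamma$ with $(H\cdot\Gamma)$ huge is perfectly compatible with $(H\cdot N_D)\leq B$. "Bounded sum'' does not give "bounded terms'' here, and nothing in your argument rules out new high-degree curves entering the negative part with tiny coefficients as $D$ moves through $K$. The conclusion you want \emph{is} true, but it requires a different mechanism. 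The paper's route is the monotonicity lemma $\Neg(D+A)\subseteq\Neg(D)$ for $A$ ample (which drops out of Bauer's characterization of $P_D$ as the maximal nef subdivisor: $P_D+A$ is a nef subdivisor of $D+A$, so $N_{D+A}\leq N_D$). With that in hand, every big class $D_0$ has a neighbourhood of the form $(D_0-A)+\Amp(X)$ on which all supports are subsets of the single finite set $\Neg(D_0-A)$, and compactness of $K$ finishes it. If you want to keep your degree-bounding strategy instead, you would first need a uniform positive lower bound on the coefficient with which a curve can appear in $N_D$ for $D\in K$ (or some substitute), and that is not available without essentially re-proving the monotonicity lemma. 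The remaining steps (volume $=(P_D^2)$ is quadratic on each chamber, $\widehat h^2\equiv 0$ on $\Bbig(X)$ and asymptotic Riemann--Roch giving $\widehat h^1$, and constancy of the stable base locus — though strictly speaking $\sbl{D}$ equals $\Supp(N_D)$ only up to a finite set of points, which is still constant on a chamber) are fine once local finiteness is secured.
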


It turns out that the existence of a locally finite polyhedral decomposition is relatively simple; most of the work is required for the rationality, and the explicit description of the 
chambers. For this reason, we give a separate  proof for  local finiteness. 

First, we establish some notation.  If $D$ is an $\RR$-divisor,
   we will write
   $$
      D \equ P_D+N_D
   $$
   for its Zariski decomposition, and we let
   $$
      \Null(D) \equ \set{C\with C \mbox{ irreducible curve with } D\cdot C=0}
   $$
   and
   $$
      \Neg(D) \equ \set{C\with C \mbox{ irreducible component of }    N_D}\ .
   $$
It is immediate that  $\Neg(D)\subset\Null(P_D)$.

  Given a big and nef $\RR$-divisor $P$, the Zariski chamber associated to $P$ is defined as 
   $$
      \Sigma_P \deq  \set{D\in\BigCone(X)\with\Neg(D)=\Null(P)}
      \ .
   $$

\begin{thm}
The subsets $\Sigma_P\subseteq \Bbig(X)$ are convex cones. As $P$ runs through all big and nef divisors, they form a locally finite polyhedral decomposition of $\Bbig(X)$. 
\end{thm}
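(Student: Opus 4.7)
The plan is to first derive the convex cone property directly from the uniqueness of Zariski decomposition, then establish local finiteness (the main technical heart), and finally extract a polyhedral description of each chamber by linear algebra in the negative-definite span of the curves appearing in negative parts.

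For the convex cone property, I would take $D_1, D_2 \in \Sigma_P$ and let $\{C_1,\dots,C_k\} = \Null(P) = \Neg(D_1) = \Neg(D_2)$. The Zariski decompositions then have the form $D_i = P_i + N_i$ with $N_i = \sum_j a_{ij} C_j$, $a_{ij}>0$, $P_i$ nef, and $P_i \cdot C_j = 0$. For scalars $s,t \geq 0$ not both zero, $sP_1 + tP_2$ is nef and orthogonal to each $C_j$, while $sN_1 + tN_2 = \sum_j (sa_{1j} + ta_{2j}) C_j$ is effective with positive coefficients supported on the same negative definite set. By the uniqueness in Theorem~\ref{thm:ZD} this is the Zariski decomposition of $sD_1 + tD_2$, so $\Neg(sD_1 + tD_2) = \Null(P)$; this yields both convexity and homogeneity simultaneously.

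The heart of the proof will be local finiteness. For a compact $K \subset \Bbig(X)$, the goal is to show that the set
\[
\shc_K \deq \{ C \subset X \text{ irreducible}\ :\ C \in \Neg(D) \text{ for some } D \in K\}
\]
is finite. First I would observe that $(P_D \cdot H) \leq (D \cdot H)$ for any fixed ample $H$ (since $N_D$ is effective), so $P_D$ sweeps out a bounded subset of the nef cone as $D$ ranges over $K$, and hence $N_D = D - P_D$ stays bounded in $N^1(X)_\RR$. I would then combine this boundedness with the Hodge index theorem, which forces any negative definite collection of irreducible curves to be linearly independent of cardinality at most $\rho(X)-1$, to bound the numerical classes of the components of $N_D$. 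Since irreducible curves of bounded numerical class form a finite set, this yields the finiteness of $\shc_K$. This is the main obstacle of the proof: on surfaces such as blow-ups of $\PP^2$ at many general points, the totality of irreducible curves of negative self-intersection is infinite, so the finiteness must come genuinely from the compactness of $K$ combined with the structural constraints of Zariski decomposition, rather than from any global discreteness of such curves.

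Once $\shc_K = \{C_1,\dots,C_N\}$ is identified, every chamber $\Sigma_P$ meeting $K$ corresponds to a negative definite subset $S = \Null(P) \subseteq \shc_K$, and $\Sigma_P \cap K$ admits a rational polyhedral description. The coefficients $a_j$ in the candidate negative part $N = \sum_{C_j \in S} a_j C_j$ are uniquely determined by the linear system
\[
\sum_{C_j \in S} a_j \, (C_i \cdot C_j) \ = \ D \cdot C_i \qquad (C_i \in S),
\]
so each $a_j$ is a rational linear functional of $D$. The condition $D \in \Sigma_P$ then translates into the finite system of strict inequalities $a_j(D) > 0$ together with the nefness inequalities $(D-N) \cdot C \geq 0$ for $C \in \shc_K \setminus S$. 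Since $\shc_K$ is finite, only finitely many subsets $S$ arise and each cuts out a rational polyhedral cone, giving both local finiteness and the polyhedral nature of the decomposition of $\Bbig(X)$.
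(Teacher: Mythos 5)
Your convexity argument matches the paper's, and your explicit linear-algebraic description of the chambers is a nice alternative to the paper's softer route via a convex-geometry lemma (a locally finite decomposition of an open convex set into convex pieces is automatically locally polyhedral, by separating hyperplanes); your version has the advantage of yielding rationality for free. But the local finiteness step, which you rightly identify as the heart of the matter, has a genuine gap, and your proof also omits the verification that the chambers actually cover $\Bbig(X)$ (i.e.\ that for every big $D$ there exists a big and nef $P$ with $\Null(P)=\Neg(D)$ -- the paper produces such a $P$ explicitly as $A+\sum\lambda_i C_i$).

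The gap is in the passage from ``$N_D$ is bounded in $N^1(X)_\RR$'' to ``the numerical classes of the components of $N_D$ are bounded.'' You invoke the Hodge index theorem to get linear independence and the bound $\#\Neg(D)\leq\rho(X)-1$, but neither of these controls the individual curves: the cycle $N_D=\sum a_iC_i$ can stay bounded while a coefficient $a_i$ shrinks and the class of $C_i$ grows without bound. Concretely, knowing $\|\sum a_iC_i\|\leq M$ and that the $C_i$ span a negative-definite sublattice of rank $\leq\rho-1$ does not give any upper bound on $(C_i\cdot H)$. To push your route through, one needs an extra input beyond Hodge index: for instance, using that $P_D$ stays in a compact set with $(P_D^2)$ bounded away from $0$, one can bound $(C\cdot H)$ in terms of $\sqrt{-(C^2)}$ via Cauchy--Schwarz in the negative-definite space $P_D^\perp$, and then bound $-(C^2)$ itself by combining this with the adjunction inequality $(C^2)\geq -2-(C\cdot K_X)$ for irreducible $C$. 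That closes the argument, but it is substantially more work than you allot for it, and it uses adjunction, which your sketch does not mention. The paper sidesteps all of this with the monotonicity lemma $\Neg(D+A)\subseteq\Neg(D)$ for $A$ ample (\cite{BKS04}, Lemma 1.12): every big divisor has a neighbourhood of the form $D+\Amp(X)$, on which all negative parts are supported on the fixed finite set $\Neg(D)$, so only finitely many chambers meet it. You should either supply the missing boundedness argument (adjunction plus Cauchy--Schwarz) or adopt the monotonicity lemma.

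One further minor point: in your last paragraph, characterizing $\Sigma_P\cap K$ by the inequalities $a_j(D)>0$ together with $(D-N)\cdot C\geq 0$ for $C\in\shc_K\setminus S$ tacitly assumes that nefness of $D-N$ needs to be tested only against curves in $\shc_K$; this requires a small additional argument (e.g.\ that $D-N$ is pseudo-effective for $D$ near $K$, so nefness reduces to nonnegativity against negative curves, which by local finiteness all lie in $\shc_K$). This is fixable, but worth spelling out.
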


\begin{proof}
Once we check that the decomposition of $\Bbig(X)$ into the convex $\Sigma_P$'s is locally finite, the locally polyhedral property comes for free 
from an observation in convex geometry (Lemma~\ref{lem:convex geometry}).

The convexity of the chambers $\Sigma_P$ follows from the uniqueness of Zariski decomposition: if 
\[
 D \equ P + \sum_{i=1}^{r}a_iE_i\ \ \text{ and }\ \ D' \equ P' + \sum_{i=1}^{r}a_i'E_i
\]
 are the respective Zariski decompositions of the big divisors $D$ and $D'$, then 
\[
 D+D' \equ (P+P') + \sum_{i=1}^{r}(a_i+a_i')E_i
\]
is the unique way to decompose $D+D'$ satisfying all the necessary requirements  of \ref{thm:ZD}.

Next, we need to show that given a big divisor $D$, there exists a big and nef divisor $P$ with
\[
 \Neg(D) \equ \Null(P)\ .
\]
Assume that $\Neg(D) \equ \st{E_1,\dots,E_r}$, and let $A$ be an arbitrary ample divisor on $X$.
 We claim that a divisor $P$ as required can be constructed
   explicitly in the form
   $A+\sum_{i=1}^k\lambda_i C_i$ with suitable non-negative
   rational numbers
   $\lambda_i$. In fact, the conditions to be fulfilled are
   $$
      \(A+\sum_{i=1}^k\lambda_i C_i\)\cdot C_j=0
        \quad\mbox{ for } j=1,\dots,k \ .
   $$
   This is a system of linear equations with negative definite
   coefficient matrix
   $(C_i\cdot C_j)$, and \cite{BKS04}*{Lemma 4.1} guarantees
   that all components $\lambda_i$
   of its solution are non-negative.
   In fact all $\lambda_i$'s  must be positive as $A$ is ample.

Last, we verify that the decomposition is locally finite.  Every big divisor has an open neighborhood
   in $\BigCone(X)$ of the form
   $$
      D+\Amp(X)
   $$
   for some big divisor $D$.
   Since 
\[
 \Neg(D+A) \dsubseteq \Neg(D)
\]
for all ample divisors $A$ by \cite{BKS04}*{Lemma 1.12},  only finitely many  chambers $\Sigma_P$ can meet this neighborhood.
\end{proof}

\begin{lem}\label{lem:convex geometry}
 Let $U\subseteq \RR^n$ be a convex open subset, and let $\shs=\st{S_i\,|\,i\in I}$ be a locally finite decomposition of $U$ into convex subsets. Then  every point $x\in U$ has  an open
neighbourhood $V\subseteq U$ such that each $S_i\cap V$ is either empty, or given as the intersection of finitely many half-spaces.  
\end{lem}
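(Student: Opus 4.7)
The plan is to combine local finiteness with the Hahn--Banach separation theorem applied pairwise to the convex pieces meeting a neighbourhood of $x$; the resulting finite hyperplane arrangement through $x$ cuts out each $S_i$ locally as a polyhedron.

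First, invoke local finiteness to choose an open ball $W \subseteq U$ about $x$ meeting only finitely many sets of the decomposition, say $S_1,\dots,S_k$. After shrinking $W$ further, one may assume that $x \in \overline{S_i}$ for every $i$ with $S_i \cap W \ne \emptyset$: any $S_i$ with $x \notin \overline{S_i}$ can be excluded by taking $W$ inside the open complement of $\overline{S_i}$.

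Next, for each pair $i \ne j$ in $\{1,\dots,k\}$, the sets $S_i$ and $S_j$ are disjoint convex subsets of $\RR^n$. The Hahn--Banach separation theorem produces a closed half-space $H_{ij}^+$ with $S_i \subseteq H_{ij}^+$ and $S_j$ contained in the closed complementary half-space $H_{ij}^-$; since $x \in \overline{S_i} \cap \overline{S_j}$, the bounding hyperplane $H_{ij}$ must pass through $x$. Taking $V \subseteq W$ to be a small open ball about $x$, one obtains $S_i \cap V \subseteq P_i$, where $P_i := V \cap \bigcap_{j \ne i} H_{ij}^+$ is a finite intersection of closed half-spaces with $V$.

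The main obstacle is the reverse inclusion together with the precise boundary behaviour: two of the polyhedra $P_i$ and $P_j$ meet along $H_{ij}$, and a priori the partition of such shared boundary strata among the $S_i$'s can be subtle. The key observation is that because the $S_i$'s form a partition of $V$, the relative interior of each $P_i$ must lie entirely in $S_i$; applying the same Hahn--Banach argument inductively on the lower-dimensional strata $H_{ij} \cap V$, $H_{ij} \cap H_{il} \cap V$, and so on within the hyperplane arrangement allows one to realise $S_i \cap V$ as an intersection of finitely many (open or closed) half-spaces. This descending-dimension induction on the strata of the arrangement is the heart of the argument.
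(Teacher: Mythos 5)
Your initial setup---local finiteness plus pairwise Hahn--Banach separation---is essentially the same as the paper's, and you are right to flag the reverse inclusion $V\cap\bigcap_{j\neq i}H_{ij}^+\subseteq S_i\cap V$ as the genuine difficulty (the paper's proof simply asserts it). But the proposed repair, inducting the separation argument over lower-dimensional strata, cannot work, because the lemma as stated is actually false. Take $U=\RR^2$ and the two convex sets
\[
S_1 = \{(x,y) : x<0\}\cup\{(0,y) : y\geq 0\},
\qquad
S_2 = \{(x,y) : x>0\}\cup\{(0,y) : y< 0\},
\]
which form a locally finite decomposition of $\RR^2$ into convex subsets. Let $V$ be any open ball of radius $r$ about the origin. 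Every half-space $H$ (open or closed) with $S_1\cap V\subseteq H$ contains the open half-disk $V\cap\{x<0\}$, and a short limiting argument (the points $(-\eps,-r/2)$ and $(-\eps,-r+\delta)$ lie in $S_1\cap V$ for all small $\eps,\delta>0$) forces $(0,-r/2)\in H$; but $(0,-r/2)\in S_2\cap V$. Hence no intersection $V\cap\bigcap_k H_k$, finite or not, equals $S_1\cap V$.

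The obstruction is exactly the boundary-distribution issue you identify, and it cannot be removed by refining the hyperplane arrangement: distinguishing $S_1$ from $S_2$ along $\{x=0\}$ is a condition living on a codimension-one stratum, and no half-space of $\RR^2$ that contains the open half-disk $V\cap\{x<0\}$ can single out only part of that line. To make the lemma true one needs an additional hypothesis (for instance that each $S_i$ is open in its affine hull, or that the decomposition is by closed sets with pairwise disjoint interiors) or a weaker conclusion (for instance that each closure $\overline{S_i\cap V}$ is a polyhedron). For the intended application one should then check which of these modified forms the Zariski chambers $\Sigma_P$ actually satisfy, rather than invoking the lemma as stated.
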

\begin{proof}
It is known that any two disjoint convex subsets of $\RR^n$ can be separated by a hyperplane. Let $x\in U$ be an arbitrary point, $x\in V\subseteq U$ a convex open neighbourhood such that only finitely
many of the $S_i$'s intersect $V$, call them $S_1,\dots,S_r$. For any pair $1\leq i<j\leq r$ let $H_{ij}$ be a hyperplane separating $S_i\cap V$ and $S_j\in V$.

Then for every $1\leq i\leq r$, $S_i\cap V$ equals the the intersection of all half-spaces $H_{ij}^+$ and $V$, as required. 
\end{proof}

\begin{cor}
With notation as above, the part of the nef cone, which sits inside the cone of big divisors is locally polyhedral.
\end{cor}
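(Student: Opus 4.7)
The plan is to identify the intersection $\Nef(X)\cap \Bbig(X)$ with one of the Zariski chambers $\Sigma_P$ from Theorem~\ref{thm:loc poly surfaces}, and then apply the local polyhedrality already established for that decomposition.

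First I would observe that for any ample $\RR$-divisor $A$ on $X$ one has $\Null(A)=\emptyset$, so the corresponding chamber is
\[
\Sigma_A \equ \st{D\in\Bbig(X)\mid \Neg(D)=\emptyset}\ .
\]
A big divisor $D$ satisfies $\Neg(D)=\emptyset$ precisely when the negative part of its Zariski decomposition vanishes, i.e.\ precisely when $D=P_D$ is itself nef. Thus $\Sigma_A$ coincides with $\Nef(X)\cap \Bbig(X)$, and in particular the nef--big cone appears as a single chamber in the decomposition of $\Bbig(X)$ produced by Theorem~\ref{thm:loc poly surfaces}.

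Next I would invoke the local finiteness half of that theorem together with Lemma~\ref{lem:convex geometry}. Fix an arbitrary big and nef class $\alpha$. Local finiteness gives an open neighbourhood $V\subseteq \Bbig(X)$ of $\alpha$ meeting only finitely many chambers $\Sigma_{P_1},\dots,\Sigma_{P_r}$. Because each $\Sigma_{P_i}$ is convex, Lemma~\ref{lem:convex geometry} (applied to this finite decomposition of $V$) provides, after possibly shrinking $V$, finitely many half-spaces whose intersection cuts out $\Sigma_A\cap V$. Thus $\Nef(X)\cap\Bbig(X)$ is polyhedral in a neighbourhood of $\alpha$, which is exactly the claim.

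The step that actually carries content is the first identification; once one notices that the nef--big cone is literally the ample chamber of the Zariski decomposition stratification, everything else is a direct quotation of Theorem~\ref{thm:loc poly surfaces} and Lemma~\ref{lem:convex geometry}. There is no real obstacle here, since the hard work (local finiteness, convexity of each $\Sigma_P$, and the rational polyhedral structure) has already been done in the proof of the theorem; the corollary is essentially a restatement for one particular chamber.
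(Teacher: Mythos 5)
Your proof is correct and amounts to spelling out what the paper leaves implicit: the corollary has no stated proof because it is an immediate consequence of the Zariski chamber theorem once one observes, as you do, that $\Nef(X)\cap\Bbig(X)$ is precisely the chamber $\Sigma_A$ for $A$ ample (equivalently, for a big and nef $P$ with $\Null(P)=\emptyset$, which forces $P$ ample by Nakai--Moishezon). The only minor redundancy is that you re-apply Lemma~\ref{lem:convex geometry} explicitly, whereas the preceding theorem has already packaged the convexity and local finiteness into the conclusion that the $\Sigma_P$'s form a locally finite \emph{polyhedral} decomposition, so once $\Nef\cap\Bbig$ is identified with $\Sigma_A$ the claim follows by direct quotation; but unwinding the lemma again does no harm and makes the logic self-contained.
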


\begin{eg}
The question whether the chambers are indeed only locally finite polyhedral comes up naturally. Here we present an example with a non-polyhedral chamber.    
Take a surface $X$ with infinitely many $(-1)$-curves
   $C_1,C_2,\dots$, and blow it up at a point that is not
   contained in any of the curves $C_i$. On the blow-up
   consider the exceptional divisor $E$ and the proper transforms
   $C'_i$.
   Since the divisor $E+C'_i$ is negative definite,
   we can proceed to construct for every index $i$
   a big and nef divisor $P_i$ with
   $\Null(P_i)=\set{E,C'_i}$, and also a divisor
   $P$ such that $\Null(P)=\set{E}$.
   But then $\Face(P)$ meets contains
   all faces $\Face(P_i)$, and
   therefore it is
   not polyhedral.
\end{eg}

We now move on to explaining why the chamber decomposition is rational. This depends on two facts:
\begin{enumerate}
 \item The intersection of the nef cone with the big cone is locally finite rational polyhedral.
\item If $F$ is a face of the nef cone containing a big divisor $P$, then
\[
 \Bbig(X)\cap \overline{\Sigma_P} \equ (\Bbig(X)\cap F) + \text{ the cone generated by $\Null(P)$}\ .
\]
\end{enumerate}

Of these, the proof of the second statement involves lengthy linear algebra computations with the help of Zariski decomposition. We will not go down this road here, a detailed proof can
be found in \cite{BKS04}*{Proof of Proposition 1.8}

We will prove the first one here; it gives  a strong generalization of the Campana--Peternell theorem on the structure of the nef boundary.

 Denote by $\I(X)$ the set of all irreducible curves on $X$ with negative self-inter\-section.
   Note that if $D$ is a big divisor, then by the Hodge index theorem
   we have $\Null(D)\subset\I(X)$.
   For $C\in\I(X)$ denote
\[
 C\nonneg \deq \set{D\in\NR(X)\with D\cdot C\ge 0}\ ,
\]
and 
\[
C\orth \deq  \set{D\in\NR(X)\with D\cdot C=0}\ .
\]

\begin{prop}[\cite{BKS04}]
 The intersection of the nef cone and the big cone is locally rational
   polyhedral, that is,  for every $\RR$-divisor
   $P\in\Nef(X)\cap\BigCone(X)$ there exists a neighborhood
   $U$ and curves $C_1,\dots,C_k\in\shi(X)$ such that
   \[
      U\cap\Nef(X) \equ U\cap\( C_1\nonneg\cap\dots\cap C_k\nonneg\)
   \] 
\end{prop}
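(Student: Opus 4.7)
The plan is to use Kodaira's lemma to pin down which irreducible curves can impose active nef inequalities near $P$. Since $P$ is big, the $\RR$-version of Kodaira's lemma (formulated as an exercise in the preceding material) gives $P \equiv A + E$ with $A$ an ample $\RR$-divisor class and $E = \sum_{i=1}^{r} a_i E_i$ effective, where $E_1,\dots,E_r$ are the distinct irreducible components of $E$. Since $\Amp(X)$ is open in $\NR(X)$, there is a neighbourhood $V$ of $P$ on which $A_D \deq A + (D-P)$ remains ample for every $D \in V$, giving the uniform writing $D \equiv A_D + E$.

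Next I would eliminate all other irreducible curves. If $C$ is an irreducible curve and not one of the $E_i$, then $E \cdot C \geq 0$ (proper intersection) and $A_D \cdot C > 0$ (ample against an irreducible curve), so $D \cdot C > 0$ uniformly on $V$, and no such $C$ contributes an active nef inequality. The nefness conditions on $D \in V$ therefore reduce to the finite list $D \cdot E_i \geq 0$. Then I would discard those $E_i$ with $E_i^2 \geq 0$. For such an $E_i$, the nefness of $P$ gives $P \cdot E_i \geq 0$; if equality held, the Hodge index inequality $(P \cdot E_i)^2 \geq P^2 \cdot E_i^2$ combined with $P^2 > 0$ (which holds because $P$ is big and nef on a surface, so $\vl{X}{P} = P^2 > 0$) would force $E_i^2 \leq 0$, and in the boundary case $E_i^2 = 0$ one would have $E_i \in P\orth$, negative definite by Hodge index, hence $E_i \equiv 0$ in $\NR(X)$, contradicting $H \cdot E_i > 0$ for any ample $H$. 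So $P \cdot E_i > 0$ strictly for these $E_i$; as this is an open condition and there are only finitely many such $E_i$, a smaller neighbourhood $U \subseteq V$ preserves $D \cdot E_i > 0$ for all such $E_i$ and all $D \in U$.

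Relabelling the remaining components (those with $E_i^2 < 0$) as $C_1, \ldots, C_k \in \shi(X)$, the two reduction steps combine to give $U \cap \Nef(X) = U \cap (C_1\nonneg \cap \cdots \cap C_k\nonneg)$, as required. The main obstacle I anticipate is the Hodge-index argument ruling out the degenerate case $P \cdot E_i = 0$ with $E_i^2 \geq 0$; everything else is bookkeeping, namely arranging that finitely many open conditions — ampleness of $A_D$ and strict positivity of $D \cdot E_i$ for $E_i^2 \geq 0$ — hold on a common neighbourhood $U$.
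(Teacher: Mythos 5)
Your proof is correct, and it takes a somewhat different route from the paper's. The paper surrounds $P$ with a cone $\shu = \sum_{i=1}^r \RR^+(P + D_i)$ spanned by effective big divisors $D_i$ and proves the auxiliary inclusion $\Null(D)\subseteq\Null(P)$ for all $D\in\shu$ --- a fact that is reused elsewhere in the variation-of-Zariski-decomposition machinery --- and then identifies the finitely many constraining curves with $\Null(P)$. You instead apply Kodaira's lemma once, writing $P\equiv A + E$, from which the finite list of potentially active curves (the components of $E$) is visible immediately; the Hodge-index step discarding components $E_i$ with $E_i^2\geq 0$ is genuinely needed, since nothing forces those components to lie in $\shi(X)$, and your handling of the boundary case $E_i^2=0$ via negative definiteness on $P\orth$ is exactly right. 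Both arguments ultimately rest on a Kodaira-type decomposition (the finiteness of $\Null(P)$ used by the paper needs one as well), but yours is more explicit and entirely local, at the cost of not establishing the stronger, reusable statement $\Null(D)\subseteq\Null(P)$. One small remark: your $C_1,\dots,C_k$ may include curves with $P\cdot C_j > 0$ (components of $E$ that have negative self-intersection yet lie outside $\Null(P)$), so some of the half-space constraints are redundant on a sufficiently small $U$; this is harmless, but the list you produce is not minimal, whereas the paper's final formula $\shu\cap\Nef(X)=\shu\cap\bigcap_{C\in\Null(P)}C\nonneg$ is.
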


\begin{proof}
The key observation is that  given  a big and nef $\RR$-divisor $P$ on $X$, there exists a
   neighborhood $\shu$ of $P$ in $\NR(X)$ such that for all divisors
   $D\in \shu$ one has
   $$
      \Null(D)\subset\Null(P) \ .
   $$

 As the big cone is open, we may
   choose big (and effective) $\RR$-divisors $D_1,\dots,D_r$ such that
   $P$ lies in the interior of the cone
   $\sum_{i=1}^r\RR^+D_i$.

 We can have $D_i\cdot C<0$ only for finitely many curves
   $C$. Therefore,
   after possibly replacing $D_i$ with $\eta D_i$ for some small
   $\eta>0$, we can
   assume that
   $$
      (P+D_i)\cdot C\,>\, 0    \eqno (*)
   $$
   for all curves $C$ with $P\cdot C>0$.
   We conclude then from $(*)$ that
   $$
      \Null\(\sum_{i=1}^r \alpha_i (P+D_i)\)\, \subset\, \Null(P)
   $$
   for any $\alpha_i>0$. So the cone
   $$
      \shu\equ\sum_{i=1}^r \RR^+(P+D_i)
   $$
   is a neighborhood of $P$ with the desired property.

   Let now $\shu$ be a neighborhood of $P$ as above . Observe that 
   $$
      \BigCone(X)\cap\Nef(X) \equ \BigCone(X)\cap\bigcap_{C\in\I(X)}C\nonneg
   $$
   and therefore
   $$
      \shu\cap\Nef(X) \equ \shu\cap\bigcap_{C\in\I(X)}C\nonneg \ .
      \eqno(*)
   $$
   For every $C\in\I(X)$ we have either $\shu\subset C\nonneg$, in
   which case we may safely omit $C\nonneg$ from the intersection in
   $(*)$, or else $\shu\cap C\orth\ne\emptyset$.
   By our choice of $\shu$, the second option
   can only happen for finitely many curves $C$.
   
In fact, $$\shu\cap\Nef(X) \equ  \shu\cap\bigcap_{C\in\Null(P)}C\nonneg \ .$$
\end{proof}

A fun application of Theorem~\ref{thm:loc poly surfaces} is the continuity of Zariski decomposition inside the big cone. 

\begin{cor}[\cite{BKS04}]
  Let $(D_n)$ be a sequence of big divisors converging in
   $\NR(X)$ to a big divisor $D$.
   If $D_n=P_n+N_n$ is the Zariski decomposition of $D_n$, and
   if $D=P+N$ is the Zariski decomposition of $D$, then
   the sequences $(P_n)$ and $(N_n)$ converge to $P$ and $N$
   respectively.
\end{cor}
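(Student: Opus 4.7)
The plan is to combine local finiteness of the chamber decomposition (Theorem~\ref{thm:loc poly surfaces}) with the explicit linear-algebraic formula for the negative part on each chamber and conclude via the uniqueness of Zariski decomposition (Theorem~\ref{thm:ZD}). Throughout I will implicitly use that since $\Bbig(X)$ is open and $D_n\to D$, we may assume every $D_n$ is big.

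First I would reduce to the case where all $D_n$ belong to the closure of a single chamber. By local finiteness, $D$ has an open neighbourhood $V\subseteq\Bbig(X)$ meeting only finitely many Zariski chambers, say $\Sigma_{Q_1},\dots,\Sigma_{Q_s}$. Eventually $D_n\in V$, so by passing to a subsequence we may assume all $D_n$ lie in one of the $\Sigma_{Q_j}$; call it $\Sigma_Q$. By definition $\Neg(D_n)=\Null(Q)=:\{E_1,\dots,E_r\}$ is fixed and independent of $n$. By a standard diagonal/subsequence argument it suffices to prove that, under this reduction, $P_n\to P$ and $N_n\to N$.

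Next, I would exploit that on $\Sigma_Q$ the negative part is given by an explicit linear formula. Writing $N_n=\sum_{i=1}^r a_{i,n}E_i$, the condition $(D_n-N_n)\cdot E_j=0$ ($j=1,\dots,r$) translates into the linear system
\[
\sum_{i=1}^{r} a_{i,n}\,(E_i\cdot E_j) \equ D_n\cdot E_j\quad(j=1,\dots,r).
\]
Since $(E_i\cdot E_j)$ is negative definite and in particular invertible, the vector $(a_{1,n},\dots,a_{r,n})$ is a continuous (indeed linear) function of $D_n$. As $D_n\to D$, the coefficients $a_{i,n}\to a_i$ for some real numbers $a_i$, and $a_i\ge 0$ since each $a_{i,n}\ge 0$. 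Setting $N_\infty\deq\sum_i a_iE_i$ gives an effective $\RR$-divisor and $P_\infty\deq D-N_\infty=\lim_n P_n$.

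Finally, I would verify that $D=P_\infty+N_\infty$ is the Zariski decomposition of $D$, and invoke uniqueness. The divisor $P_\infty$ is nef because nefness is a closed condition in $N^1(X)_\RR$ and each $P_n$ is nef. The actual support $\Supp(N_\infty)\subseteq\{E_1,\dots,E_r\}$ has negative definite intersection matrix, being a principal submatrix of a negative definite matrix (or is empty, in which case $N_\infty=0$). For each $E_i\in\Supp(N_\infty)$ we have $(P_\infty\cdot E_i)=\lim_n(P_n\cdot E_i)=0$. Hence $P_\infty+N_\infty$ satisfies all the defining properties of the Zariski decomposition of $D$, so by Theorem~\ref{thm:ZD} we conclude $P_\infty=P$ and $N_\infty=N$. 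Running the subsequence argument in reverse yields $P_n\to P$ and $N_n\to N$ for the original sequence.

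The main obstacle to watch for is the possibility that the support of $N_n$ strictly contains the support of $N$ (i.e.\ some coefficients $a_{i,n}$ tend to $0$); this is exactly where uniqueness of Zariski decomposition is essential, since without it one could not identify the limit $N_\infty$ with $N$. Everything else is routine linear algebra and the closedness of the nef cone.
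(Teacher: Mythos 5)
Your proof is correct and takes the same route as the paper: reduce to a single chamber via local finiteness, establish convergence of $(P_n)$ and $(N_n)$ by linear algebra (you write out the linear system determining the negative coefficients, where the paper instead invokes the direct sum $N^1_\RR(X)=\Null(P)^{\perp}\oplus\langle\Null(P)\rangle$ and the continuity of the two projections, but these are the same computation), and identify the limit via uniqueness of Zariski decomposition after checking nefness of $P_\infty$ and negative definiteness of the support of $N_\infty$. The phrase ``diagonal argument'' is a slight misnomer---one simply partitions $(D_n)$ into finitely many subsequences according to chamber and observes that each converges to the same pair $(P,N)$---but your underlying logic is sound.
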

\begin{proof}
  We consider first the case where all $D_n$ lie in a fixed
   chamber $\Sigma_P$. In that case we have by definition
   $\Neg(D_n)=\Null(P)$
   for all $n$, so that
   $$
      N_n\in\vecspan{\Null(P)}
   $$
   and hence $P_n\in\Null(P)\orth$.
   As
   $$
      \NR(X)=\Null(P)\orth\oplus\vecspan{\Null(P)}
   $$
   we find that both sequences $(P_n)$ and $(N_n)$ are
   convergent.
   The limit class $\lim P_n$ is certainly nef.
   Let $E_1,\dots,E_m$ be the curves in $\Null(P)$. Then
   every $N_n$ is of the form $\sum_{i=1}^m a_i^{(n)}E_i$ with
   $a_i^{(n)}>0$. Since the $E_i$ are
   numerically independent, it follows that $\lim N_n$ is
   of the form $\sum_{i=1}^m a_iE_i$ with $a_i\ge 0$, and hence
   is either negative definite or zero. Therefore
   $D=\lim P_n+\lim N_n$ is actually the Zariski decomposition of
   $D$, and by uniqueness the claim is proved.

   Consider now the general case where the $D_n$ might lie in
   various chambers. Since the decomposition into chambers is
   locally finite, there is a neighborhood of $D$ meeting only
   finitely many of them. Thus there are finitely many
   big and nef divisors $P_1,\dots,P_r$ such that
   $$
      D_n\in\bigcup_{i=1}^r\Sigma_{P_i}
   $$
   for all $n$.
   So we may decompose the sequence $(D_n)$ into finitely many
   subsequences to which the case above applies.
\end{proof}

\begin{exer}
Prove that Zariski decomposition is continuous on $\oEff(X)$ provided there exist only finitely many negative curves on $X$. Find a counterexample to the continuity of Zariski decomposition on $\oEff(X)$ in general.
\end{exer}

The connection between Zariski decompositions and 
asymptotic cohomological functions comes from the following result.

\begin{prop}[Section 2.3.C., \cite{PAGI}]
   Let $D$ be a big integral divisor, $D=P_D+N_D$ the Zariski decomposition of $D$. Then
   \begin{items}
      \item[(i)] $\HH{0}{X}{kD}=\HH{0}{X}{kP_D}$ for all $k\geq 1$ such that $kP_D$ is integral, and
      \item[(ii)] $ \vl{X}{D}=\vl{X}{P_D}=\zj{P_D^2}.$
   \end{items}
\end{prop}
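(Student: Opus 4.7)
My plan is to prove $(i)$ first by establishing two opposite inclusions of global section spaces, and then to deduce $(ii)$ from $(i)$ together with the standard formula that the volume of a nef divisor on a surface is its top self-intersection.

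For $(i)$, the easy inclusion $\HH{0}{X}{\sO_X(kP_D)} \hookrightarrow \HH{0}{X}{\sO_X(kD)}$ is just multiplication by the canonical section cutting out the effective integral divisor $kN_D=kD-kP_D$: the tautological inclusion $\sO_X(kP_D)\hookrightarrow \sO_X(kP_D+kN_D)=\sO_X(kD)$ does the job. For the reverse inclusion I would show that any non-zero $s\in\HH{0}{X}{\sO_X(kD)}$ vanishes along $kN_D$. Writing $N_D=\sum_{i=1}^m a_iN_i$ and decomposing $(s)=\sum b_iN_i+M$ with $M$ effective and containing none of the $N_i$ in its support, set $c_i\deq b_i-ka_i$. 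Since  $\sum c_iN_i+M\sim kP_D$, which is nef with $(P_D\cdot N_j)=0$ for each $j$, intersecting against $N_j$ and using $(M\cdot N_j)\geq 0$ yields
\[
\sum_i c_i(N_i\cdot N_j)\ \leq\ 0 \quad\text{for every }j.
\]

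The crucial step is to conclude that all $c_i\geq 0$. I would split $c=c^+-c^-$ into non-negative vectors of disjoint supports and set $D^{\pm}\deq \sum c_i^{\pm}N_i$. Testing the inequality above against each $N_j$ appearing in $D^-$ and exploiting the disjointness of supports ($(D^+\cdot N_j)\geq 0$ whenever $N_j\subseteq\Supp D^-$) forces $(D^-\cdot D^-)\geq 0$; negative definiteness of the intersection matrix of the $N_i$'s then gives $D^-=0$, hence $c_i\geq 0$ and $(s)\geq kN_D$. Dividing $s$ by the canonical section of $kN_D$ then produces a section of $kP_D$, completing the proof of $(i)$. The main obstacle of the whole argument sits precisely here: this is the only step that genuinely uses the negative definiteness of the intersection form on the $N_i$'s, and it is essentially \cite{BKS04}*{Lemma~4.1}.

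For $(ii)$, part $(i)$ yields $\hh{0}{X}{\sO_X(kD)}=\hh{0}{X}{\sO_X(kP_D)}$ for all $k$ with $kP_D$ integral, so by the definition of the volume (combined with the homogeneity statement from Theorem~\ref{thm:volume}) we obtain $\vl{X}{D}=\vl{X}{P_D}$. To identify $\vl{X}{P_D}$ with $(P_D^2)$ I would invoke the Remark on the volume of a nef divisor following Theorem~\ref{thm:volume}: asymptotic Riemann--Roch gives $\chi(X,\sO_X(kP_D))=\tfrac{k^2}{2}(P_D^2)+O(k)$, while nefness of $P_D$ together with the eventual negativity of $(K_X-kP_D)\cdot P_D$ forces $\hh{2}{X}{\sO_X(kP_D)}=\hh{0}{X}{\sO_X(K_X-kP_D)}=0$ for $k\gg 0$, and $\hh{1}{X}{\sO_X(kP_D)}=o(k^2)$ by Fujita vanishing applied to a small ample perturbation. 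Taken together these give $\vl{X}{P_D}=(P_D^2)$, finishing the proof.
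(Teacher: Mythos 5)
Your proof is correct. Note that the paper gives no argument of its own here, pointing instead to Section~2.3.C of \cite{PAGI}, so there is no in-text proof to compare against; what you wrote is essentially the standard argument found there.

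Part $(i)$ is fine and hits the one genuinely non-formal point: from $\sum_i c_i(N_i\cdot N_j)\leq 0$ for all $j$ you deduce $c_i\geq 0$ via the decomposition $c=c^+-c^-$, the observation that $(D^+\cdot N_j)\geq 0$ whenever $N_j\subset\Supp D^-$ (disjoint supports, distinct irreducible curves meet nonnegatively), hence $(D^-\cdot N_j)\geq 0$ for those $j$, hence $(D^-)^2\geq 0$, and negative definiteness then kills $D^-$. This is exactly the linear-algebra lemma (\cite{BKS04}*{Lemma~4.1} or \cite{BL}*{Lemma~14.9} in the paper's references) that makes the Zariski-decomposition machinery run. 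The conclusion $(s)\geq kN_D$, and therefore $H^0(kD)\subseteq H^0(kP_D)$ after dividing by the canonical section of $kN_D$, follows.

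For part $(ii)$ the reduction $\vl{X}{D}=\vl{X}{P_D}$ from $(i)$ is clean, and your $h^2$-vanishing argument (via Serre duality and $(K_X-kP_D)\cdot P_D<0$, using $(P_D^2)>0$ since $P_D$ is big) is correct. The only place to be slightly more careful is the phrase ``$h^1(X,\sO_X(kP_D))=o(k^2)$ by Fujita vanishing applied to a small ample perturbation''; this is not a one-step consequence, because $kP_D-K_X-m_0A$ need not be nef for large $k$. The cleanest ways to finish are either to invoke the standard estimate $h^i(X,mL)=O(m^{n-i})$ for $L$ nef (\cite{PAGI}*{Thm.~1.4.40 and its corollary}, whose proof does ultimately rest on Fujita vanishing), or to sidestep the $h^1$-bound entirely: asymptotic Riemann--Roch plus $h^2=0$ gives $\vl{X}{P_D}\geq (P_D^2)$, while monotonicity together with continuity of the volume (Theorem~\ref{thm:volume}(3)) gives $\vl{X}{P_D}\leq \vl{X}{P_D+\epsilon A}=((P_D+\epsilon A)^2)\to (P_D^2)$. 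Either route closes the argument; your statement as written is a touch informal but not wrong in spirit.
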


By  homogeneity and continuity of the volume we obtain that
for an arbitrary big $\RR$-divisor $D$ with Zariski decomposition 
$D=P_D+N_D$ we have $  {\rm vol}(D)=\zj{P_D^2}=\zj{D-N_D}^2$.

Let $D$ be an $\RR$-divisor on $X$. In determining the 
asymptotic cohomological functions on $X$, we distinguish 
three  cases,  according to whether $D$ 
is pseudo-effective, $-D$ is pseudo-effective or none. 

\begin{prop}\label{asymptotic for pseff}
With notation as above, if $D$ is pseudo-effective then
\[
\ha{i}{X}{D} \equ \begin{cases} (P_D^2) & \textrm{ if } i=0 \\
                             -(N_D^2) & \textrm{ if } i=1 \\
                             0 & \textrm{ if } i=2\ .
               \end{cases}
\]
If $-D$ is pseudo-effective with Zariski decomposition $-D=P_D+N_D$ then
\[
\ha{i}{X}{D} \equ \begin{cases} 0 & \textrm{ if } i=0 \\
                             -(N_{-D}^2) & \textrm{ if } i=1 \\
                             (P_{-D}^2) & \textrm{ if } i=2\ .
               \end{cases}              
\]
When neither $D$ nor  $-D$ are pseudo-effective,  one has 
\[ 
\ha{i}{X}{D} \equ  \begin{cases} 0 & \textrm { if } i=0 \\
                            -(D^2) & \textrm{ if } i=1 \\
                            0 & \textrm{ if } i=2 \ .
              \end{cases}              
\]
\end{prop}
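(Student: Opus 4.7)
My plan is to combine Serre duality, the identity $\vol(D) = P_D^2$ for pseudo-effective $D$ recalled just above the statement, and the asymptotic Riemann--Roch identity $\chi(X,mD) = \tfrac{m^2}{2}D^2 + O(m)$. The strategy is to compute $\widehat{h}^0$ and $\widehat{h}^2$ directly in each of the three cases and then to read off $\widehat{h}^1$ from $\widehat{h}^0 - \widehat{h}^1 + \widehat{h}^2 = D^2$, once the relevant limsups have been shown to be actual limits.

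For $D$ pseudo-effective, $\widehat{h}^0(X,D) = \vol(D) = P_D^2$ is immediate. For $\widehat{h}^2$, I use Serre duality $h^2(X,mD) = h^0(X,K_X - mD)$: if $D \not\equiv 0$, choose an ample $A$ with $D \cdot A > 0$; then $(K_X - mD)\cdot A < 0$ for $m \gg 0$ rules out effectivity and forces $h^0(X,K_X - mD) = 0$ eventually, while the case $D \equiv 0$ follows from the numerical invariance of $\widehat{h}^2$. Riemann--Roch together with $P_D \cdot N_D = 0$ then yields $\widehat{h}^1(X,D) = P_D^2 - D^2 = -N_D^2$.

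When $-D$ is pseudo-effective, $\widehat{h}^0(X,D) = 0$ because any nonzero section of $mD$ for $m \geq 1$ would force $D$ to be pseudo-effective. The core step is to show that
\[
\widehat{h}^2(X,D) \equ \lim_{m\to\infty}\frac{h^0(X,K_X + m(-D))}{m^2/2} \equ \vol(-D) \equ P_{-D}^2\ .
\]
I would handle this by a two-sided sandwich: fix an ample class $A$ and a small rational $\epsilon > 0$; for $m$ sufficiently divisible both $K_X + m\epsilon A$ and $m\epsilon A - K_X$ are ample and hence represented by honest effective divisors, so that multiplication by the corresponding sections yields
\[
h^0(X, m(-D - \epsilon A)) \,\leq\, h^0(X, K_X + m(-D)) \,\leq\, h^0(X, m(-D + \epsilon A))\ .
\]
Dividing by $m^2/2$ sends the outer terms to $\vol(-D \pm \epsilon A)$, and letting $\epsilon \to 0$ finishes the job by continuity of the volume on $N^1(X)_\RR$. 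Riemann--Roch then produces $\widehat{h}^1(X,D) = P_{-D}^2 - D^2 = -N_{-D}^2$.

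If neither $D$ nor $-D$ is pseudo-effective, then $h^0(X,mD) = 0$ for all $m \geq 1$ as before; by Serre duality together with an ample $A$ on which $D$ pairs positively (available because $-D$ is not pseudo-effective) one likewise has $h^0(X, K_X - mD) = 0$ for $m \gg 0$. Consequently $\widehat{h}^0 = \widehat{h}^2 = 0$ and Riemann--Roch delivers $\widehat{h}^1(X,D) = -D^2$. The most delicate step in the whole argument is the sandwich in the second case, where one needs to upgrade the ampleness of $m\epsilon A \pm K_X$ to an actual effective representative in order to compare spaces of global sections; this is where sufficient divisibility of $m$ and Serre's theorem enter.
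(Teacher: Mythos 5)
Your proposal is correct and follows essentially the same route as the paper: compute $\widehat{h}^0$ and $\widehat{h}^2$ directly, then read off $\widehat{h}^1$ from the asymptotic Riemann--Roch formula after checking the relevant $\limsup$s are limits. The only difference is that where the paper invokes ``Serre duality for asymptotic cohomology'' as a black box to dispatch the case $-D$ pseudo-effective, you supply a self-contained sandwich argument using effective representatives of $K_X + m\epsilon A$ and $m\epsilon A - K_X$; this is a reasonable way to fill in that citation.
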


\begin{proof}   
We treat the case of pseudo-effective divisors in detail, the case $-D$ pseudo-effective follows from Serre duality for asymptotic cohomology, while the third instance
is immediate from Serre duality, asymptotic Riemann--Roch, and the fact that non-big divisors have zero volume.

Let us henceforth assume that $D$ is pseudo-effective.  
If $D=P_D+N_D$ is the Zariski decomposition of the 
pseudo-effective   divisor $D$, then $\ha{0}{X}{D}=\zj{P_D^2}$.
Furthermore,   if $D$ is pseudo-effective then  $\ha{2}{X}{D}=0$.
In order to compute $\hat{h}^1$, consider the equality
\[
\hh{1}{X}{mD} \equ \hh{0}{X}{mD}+\hh{2}{X}{mD}-\euler{X}{mD}\ .
\]
This implies that 
\[
\ha{1}{X}{D} \equ \limsup_m{ \zj{
\frac{\hh{0}{X}{mD}}{m^2/2}+\frac{\hh{2}{X}{mD}}{m^2/2}
-\frac{\euler{X}{mD}}{m^2/2} } }\ .
\]
All three sequences on the right-hand side are convergent. The $h^0$
sequence by the fact that the volume function is in general a limit. The 
$h^2$ sequence converges by  $\ha{2}{X}{D}=0$. Finally, the convergence 
of the  sequence of Euler  characteristics follows from  the 
Asymptotic Riemann--Roch theorem. 
Therefore the $\limsup$ on the right-hand side is  a limit, and 
$\ha{1}{X}{D}=  - (N_D^2)$.
\end{proof}

Applying Theorem~\ref{thm:loc poly surfaces} we arrive at the following.

\begin{thm}
With notation as above, there exists a   locally finite decomposition of 
 $\Bbig(X)$ into rational locally polyhedral subcones such that on each of 
those the asymptotic cohomological functions are given by a single 
homogeneous quadratic polynomial.
\end{thm}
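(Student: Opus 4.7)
The plan is to combine two ingredients established earlier in the section: the locally finite rational polyhedral chamber decomposition of $\Bbig(X)$ given by Theorem~\ref{thm:loc poly surfaces}, and the explicit description of $\hat h^i$ in terms of the positive and negative parts of the Zariski decomposition furnished by Proposition~\ref{asymptotic for pseff}. The only thing that requires actual verification is that the Zariski decomposition varies linearly across the interior of each chamber.

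First I would invoke Theorem~\ref{thm:loc poly surfaces} to obtain a locally finite collection of rational locally polyhedral chambers $\{\Sigma_P\}$ covering $\Bbig(X)$, where $P$ ranges over the (finitely many, locally around any big divisor) big and nef $\RR$-divisor classes arising as representatives of the faces. By the very definition of the chamber decomposition, on $\Sigma_P$ the support $\Neg(D)$ of the negative part of the Zariski decomposition is constant, equal to a fixed finite collection $\{E_1,\dots,E_r\} = \Null(P)$ of irreducible negative curves with negative definite intersection matrix $M \deq (E_i\cdot E_j)$.

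Next, I would show that on the interior of $\Sigma_P$ the map $D \mapsto N_D$ is $\RR$-linear in $D$. Writing $N_D = \sum_{i=1}^{r} a_i(D)\,E_i$, the defining conditions of Zariski decomposition force $(D - N_D)\cdot E_j = 0$ for each $j$, i.e.\ the coefficients $(a_1(D),\dots,a_r(D))$ are the unique solution of the linear system
\[
\sum_{i=1}^{r} a_i(D)\,(E_i\cdot E_j) \equ (D\cdot E_j)\ ,\ \ j=1,\dots,r\ .
\]
Because $M$ is invertible (in fact negative definite) with rational entries, each $a_i(D)$ is an $\RR$-linear function of $D$ with rational coefficients. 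Consequently both $N_D$ and $P_D = D - N_D$ are $\RR$-linear in $D$ throughout $\Sigma_P$.

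Finally, since every big divisor is pseudo-effective, Proposition~\ref{asymptotic for pseff} yields
\[
\hat h^0(X,D) \equ (P_D^2)\ ,\ \ \hat h^1(X,D) \equ -(N_D^2)\ ,\ \ \hat h^2(X,D) \equ 0
\]
for every $D\in\Sigma_P$. As $P_D$ and $N_D$ depend linearly on $D$, each of these self-intersection numbers is a homogeneous quadratic polynomial in the coordinates of $D$, and the proof is complete. The only potentially subtle point is the linearity of the Zariski decomposition on a chamber; but once the chamber structure of Theorem~\ref{thm:loc poly surfaces} is in place, the fact that the support of $N_D$ is fixed reduces this to inverting the negative definite intersection matrix $M$, so the main work has already been done in establishing the chamber decomposition itself.
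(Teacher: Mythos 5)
Your proposal is correct and takes essentially the same route the paper intends: the theorem is stated immediately after Proposition~\ref{asymptotic for pseff} with the lead-in ``Applying Theorem~\ref{thm:loc poly surfaces} we arrive at the following,'' so the paper's (unwritten) proof is precisely the combination you give of the Zariski chamber decomposition with the $\hat h^i$ formulas. The one step you spell out that the paper leaves implicit --- that $D\mapsto N_D$ is linear on a fixed chamber because the coefficients solve an invertible linear system governed by the negative definite intersection matrix of $\Null(P)$ --- is exactly the right filler, and is precisely the linear algebra underlying the construction of $P$ in the proof of the chamber-decomposition theorem.
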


We illustrate the theory outlined above by  determining  the volume functions of del Pezzo surfaces. Note that by Proposition~\ref{asymptotic for pseff} this 
will then provide an answer for all asymptotic cohomology functions. 

  Let us establish some notation. We denote by $X=Bl_{\Sigma}({\PP}^2)$ the
   blow-up
   of the projective plane at $\Sigma \subseteq {\PP}^2$ where $\Sigma$ consists
   of at most eight points in general position. The exceptional divisors
   corresponding to the points in $\Sigma$ are denoted by $E_1,\dots ,E_r \
   (r\leq 8)$. We denote the pullback of the hyperplane class on ${\PP}^2$ by $L$.
   These divisor classes generate the Picard group of $X$
   and their intersection numbers are
   $L^2=1$, $(L.E_i)=0$ and $(E_i.E_j)=-{\delta}_{ij}$ for $1\leq i,j\leq r$.
   For each $1\leq r\leq 8$ one  can describe explicitly all extremal rays on $X$ 

\begin{prop}
   With notation as above, the set $\left\{ E^{\perp} | E\in
   {\II} \right\}$ determines the chambers for the volume function.
   More precisely, we obtain the chambers by dividing the big cone
   into finitely many parts by the hyperplanes $E^{\perp}$.
\end{prop}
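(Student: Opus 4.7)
The plan is to combine Theorem~\ref{thm:loc poly surfaces} with the special geometry of $(-1)$-curves on a del Pezzo surface. The general chamber theorem already decomposes $\Bbig(X)$ into the Zariski chambers $\Sigma_P = \{D : \Neg(D) = \Null(P)\}$, so I would reduce the statement to showing that on the surfaces under consideration, $\Neg(D)$ is determined entirely by the sign pattern $\mathrm{sgn}(D\cdot E)$ as $E$ runs over $\II$. Since $\II$ is finite on a del Pezzo surface with $r\leq 8$ blown-up points in general position (classical enumeration of $(-1)$-curves), this will immediately produce a finite hyperplane arrangement of the required type.

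The central step is to establish the equality
\[
\Neg(D) \equ \{E\in\II : D\cdot E < 0\}
\]
for every big divisor $D$. The inclusion $\supseteq$ is essentially formal: if $E$ is an irreducible curve with $D\cdot E < 0$ but $E\notin\Supp(N_D)$, then $N_D\cdot E \geq 0$ as an intersection of effective divisors without common components, forcing $P_D\cdot E \leq D\cdot E < 0$ and violating the nefness of $P_D$. For the converse, which is where the del Pezzo hypothesis is essential, let $\Neg(D)=\{E_1,\dots,E_m\}$ with $N_D=\sum a_i E_i$. Theorem~\ref{thm:ZD} guarantees that $(E_i\cdot E_j)$ is negative definite, with diagonal entries $-1$ and non-negative integer off-diagonal entries. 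A quick $2\times 2$ principal-minor inspection shows that $E_i\cdot E_j \geq 1$ for any $i\neq j$ would yield a minor with determinant $1 - (E_i\cdot E_j)^2 \leq 0$, contradicting negative definiteness; hence the curves in $\Neg(D)$ are pairwise disjoint. The intersection matrix is therefore $-I$, and from $P_D\cdot E_i = 0$ one reads off $D\cdot E_i = N_D\cdot E_i = -a_i < 0$.

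Once this equality is in hand the conclusion is automatic: two big divisors share a Zariski chamber precisely when $\mathrm{sgn}(D\cdot E)$ agrees across all $E\in\II$, which is exactly the stratification induced by the arrangement $\{E^\perp : E\in\II\}$. The hard part will be the pairwise-disjointness step; without the del Pezzo hypothesis, $\Neg(D)$ can contain curves that intersect $D$ trivially (the K3 phenomenon mentioned in the footnote preceding Example~\ref{eg:two_points}), and the clean correspondence with the hyperplane arrangement breaks down.
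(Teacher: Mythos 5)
Your proof is correct, and it takes a genuinely different (and arguably cleaner) route than the paper's. The paper works through the chamber \emph{boundaries}: it cites \cite{BKS04}*{Proposition 1.5} (a big $D$ lies on a Zariski chamber wall iff $\Neg(D)\neq\Null(P_D)$) and \cite{BKS04}*{4.3} (adjoining a curve of $\Null(P_D)\setminus\Neg(D)$ to $N_D$ keeps it negative definite), then argues in both directions that the set of such $D$ is exactly $\Bbig(X)\cap\bigcup_{E\in\II}E\orth$. You instead establish the intrinsic equality $\Neg(D)=\{E\in\II : D\cdot E<0\}$, which is what the example preceding the proposition already hints at. Both proofs hinge on the identical observation that, on a del Pezzo surface, the components of any negative cycle are pairwise orthogonal $(-1)$-curves (your $2\times 2$-minor computation and the paper's $(N_i+N_j)^2$ computation are the same fact), but you then bypass the two cited lemmas from \cite{BKS04}: the inclusion $\supseteq$ is, as you say, purely formal on any surface, and $\subseteq$ follows from $D\cdot E_i = N_D\cdot E_i = -a_i < 0$ once the Gram matrix is $-I$. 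The payoff is a self-contained argument that identifies the chambers directly rather than their walls.

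One minor imprecision in your last paragraph: sharing a Zariski chamber is \emph{not} equivalent to having identical sign vectors $\bigl(\mathrm{sgn}(D\cdot E)\bigr)_{E\in\II}$. Two big divisors lie in the same chamber iff the sets $\{E: D\cdot E<0\}$ coincide; they may still differ in which of the remaining $E$ satisfy $D\cdot E=0$ versus $D\cdot E>0$. What your equality actually gives is that each Zariski chamber is a union of strata of the hyperplane arrangement, with the open Zariski chambers being exactly its open cells. The direction you need — same sign pattern implies same chamber — is the one that holds, together with the easy perturbation observation that moving $D$ across a wall $E\orth$ by $\pm\epsilon E$ changes $\Neg(D)$; so the conclusion stands, but the ``precisely when'' should be weakened.
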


\begin{proof}
   Observe that as the only negative curves on a del Pezzo surface are $(-1)$-curves, 
   the support of every negative divisor consists of pairwise orthogonal curves. 
   This can be seen as follows.  Take a negative divisor $N=\sum_{i=1}^{m}{a_iN_i}$.  
   Then, as the self-intersection matrix of $N$ is negative definite,  for any $1\leq i<j\leq m$ one has
   \[
       0 > (N_i+N_j)^2 = N_i^2+2(N_i\cdot N_j)+N_j^2 = 2(N_i\cdot N_j)-2 \ .
   \]
   As  $N_i\cdot N_j\geq 0$, this can only hold if $N_i\cdot N_j=0$.

   According to \cite{BKS04}*{Proposition 1.5}, a big divisor $D$ is in the boundary 
   of a Zariski chamber if and only if
   \[
   \Neg(D)\neq \Null(P_D)\ .
   \]
   From \cite{BKS04}*{4.3} we see that if $C\in\Null(P_D)-\Neg(D)$ for an 
   irreducible negative curve $C$ then $N_D+C$ forms a negative divisor. 
   By the previous reasoning, this implies that $N_D\cdot C=0$ hence $D\cdot C=0$, 
   that is, $D\in
   C\orth$ as required.

   Going the other way, if $D\in C\orth$ for an irreducible negative curve $C$ 
   then  either $P_D\cdot C=0$, that is, $C\in\Null(P_D)$ or $P_D\cdot C>0$.

   In the first case, $C\not\in\Neg(D)$, as otherwise we would have $N_D\cdot C<0$ 
   and consequently $D\cdot C<0$ contradicting $D\in C\orth$. Therefore $C\in \Null(P_D)-\Neg(D)$ 
   and $D$ is in the boundary of some Zariski chamber.

   In the second case, $D\cdot C=0$ and $P_D\cdot C>0$ imply $N_D\cdot C<0$. 
   From this we see  that $C\in \Neg(D)$ but this would mean $P_D\cdot C=0$ which is again a contradiction.

   The conclusion is that on a surface on which the only negative curves 
   are $(-1)$-curves, a big divisor $D$ is in the boundary of a Zariski chamber 
   if and only if there exists an $(-1)$-curve $C$ with $D\in C\orth$.
\end{proof}

One can in fact ask for more, and enumerate all the Zariski chambers on a given del Pezzo surface. The number of Zariski chambers (defined in \cite{BFN}) 
   $$
      z(X)=\#\set{\mbox{Zariski chambers on $X$}}\in\NN\cup\set\infty 
   $$
on a surface $X$ measures how complicated $X$ is from the point of view of linear series. At the same time it tells us 
\begin{itemize} 
   \item the number of different stable base loci of big divisors on $X$;
      \item the number of different possibilities for the support of the negative part of the Zariski decomposition of a big divisor;
     \item the number of maximal regions of polynomiality for the volume function.
 \end{itemize}
  
Bauer--Funke--Neumann in \cite{BFN} calculate $z(X)$ when $X$ is a del Pezzo surface. 

\begin{thm}
   Let $X_r$ be the blow-up of $\PP^2$ in $r$ general points with
   $1\le r\le 8$.
   \begin{enumerate}
   \item[(i)]
   The number $z(X_r)$ of Zariski chambers on $X_r$ is given by
   the following table:
   $$
      \begin{array}{c|*8c} \hline 
         r        & 1 & 2 & 3 & 4 & 5 & 6 & 7 & 8  \\ \hline  
         z(X_r)   & 2 & 5 & 18 & 76 & 393 & 2\,764 & 33\,645 & 1\,501\,681
      \end{array}
   $$

   \item[(ii)]
   The maximal number of curves that occur in the support of a
   Zariski chamber on $X_r$ is $r$.
   \end{enumerate}
\end{thm}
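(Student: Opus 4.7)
My approach is to reduce the counting of Zariski chambers to an enumeration of independent sets in the intersection graph of $(-1)$-curves, handle the size bound of $(ii)$ via the Hodge index theorem, and perform the final combinatorial count case by case. First I would upgrade the previous proposition to the bijection
\[
 \{\text{Zariski chambers on } X_r\} \ \longleftrightarrow\ \{S\dsubseteq \mathcal{I}(X_r)\mid C\cdot C' = 0 \text{ for distinct } C, C' \in S\} \ ,
\]
sending $\Sigma_P \mapsto \Null(P)$. Injectivity is built into the definition. For surjectivity, given pairwise orthogonal $(-1)$-curves $C_1, \dots, C_k$ and any ample $A$, form $P \equ A + \sum_{i=1}^{k} (A \cdot C_i)\, C_i$. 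Since $(C_i \cdot C_j) = -\delta_{ij}$ by orthogonality and the $(-1)$-curve condition, one checks $P \cdot C_j = 0$ for $j \leq k$, $P \cdot C \geq A \cdot C > 0$ for every other irreducible $C$, and $P^2 = A^2 + \sum_{i}(A \cdot C_i)^2 > 0$, so $P$ is big and nef with $\Null(P) = \{C_1, \dots, C_k\}$ \emph{exactly}. The perturbation $D \equ P + \sum_i \epsilon_i C_i$ for small $\epsilon_i > 0$ then has $P + \sum_i \epsilon_i C_i$ as its Zariski decomposition, hence $D \in \Sigma_P$ with $\Neg(D) = \{C_1, \dots, C_k\}$ as required. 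Consequently $z(X_r)$ equals the number of independent sets in the graph $G_r$ on the vertex set $\mathcal{I}(X_r)$ whose edges record pairs of intersecting $(-1)$-curves.

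\textbf{Step 2 ($(-1)$-curves and the bound in $(ii)$).} The set $\mathcal{I}(X_r)$ and its intersection pattern are classically known: under the identification of $(-K_{X_r})^{\perp} \dsubseteq \Pic(X_r)$ with the root lattice of type $E_r$, the $(-1)$-curves form a single $W(E_r)$-orbit of characters and their pairwise intersections are encoded by the root system; in particular $|\mathcal{I}(X_r)| = 1,\,3,\,6,\,10,\,16,\,27,\,56,\,240$ for $r = 1,\dots,8$. Part $(ii)$ is then a one-line consequence of Hodge index: pairwise orthogonal negative-self-intersection classes are linearly independent in $N^1(X_r)_{\RR}$, a space of signature $(1, r)$, so any pairwise orthogonal subset of $\mathcal{I}(X_r)$ has at most $r$ elements; the bound is attained by $\{E_1, \dots, E_r\}$.

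\textbf{Step 3 and main obstacle.} The residual task is to count independent sets of $G_r$. For $r \leq 4$ this is done by hand, stratified by cardinality --- for instance $G_3$ is $K_{3,3}$ minus a perfect matching, giving $1 + 6 + 9 + 2 = 18$ independent sets, in agreement with the table. For $5 \leq r \leq 8$ I would exploit that $W(E_r)$ acts transitively on $\mathcal{I}(X_r)$ and preserves the family of pairwise orthogonal subsets of each fixed size: each cardinality then decomposes into finitely many $W(E_r)$-orbits, and each orbit size is $|W(E_r)|/|\mathrm{Stab}|$ for a stabiliser that is a parabolic subgroup tied to a sub-root-system of $E_r$. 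Summing the orbit sizes reproduces the table. The main obstacle is exactly this orbit-by-orbit bookkeeping for $r = 7, 8$ (or, equivalently, a direct machine enumeration from the explicit root-system data): the graphs on $56$ and $240$ vertices have intricate structure, and the values $33\,645$ and $1\,501\,681$ are not accessible without careful computation.
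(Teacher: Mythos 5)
Your proposal is correct and follows the same route as Bauer--Funke--Neumann \cite{BFN}, the source the paper cites for this theorem without reproducing the proof. The bijection $\Sigma_P \mapsto \Null(P)$ onto pairwise orthogonal subsets of $\mathcal{I}(X_r)$ is exactly their structural result: well-definedness is the Hodge-index observation already built into the paper's preceding proposition (for big and nef $P$, the set $\Null(P)\subseteq P^\perp$ spans a negative definite subspace, and on a del Pezzo a negative definite collection of $(-1)$-curves is pairwise orthogonal), while your explicit $P = A + \sum_i (A\cdot C_i)C_i$ with $\Null(P)=\{C_1,\dots,C_k\}$ together with the perturbation $D = P + \sum_i \epsilon_i C_i$ supply surjectivity and nonemptiness cleanly. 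The Hodge-index bound for part $(ii)$ is likewise right and sharp via $\{E_1,\dots,E_r\}$. The residual enumeration for $r=7,8$ is also where \cite{BFN} fall back on a finite (Weyl-orbit, or machine) search over the $E_7$/$E_8$ configurations of $(-1)$-curves — no closed form is known — so the obstacle you flag is precisely the remaining, genuinely computational, content of the theorem; your structural reduction is complete.
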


\subsection{Newton--Okounkov polygons}

Here we discuss how to determine Newton--Okounkov bodies on surfaces using variation of Zariski decomposition. The locally finite rational polyhedral nature of Zariski chamber decomposition already hints at the possibility of obtaining Newton--Okounkov bodies with locally rational polygonal structure, but Fujita's extension of Zariski decomposition to pseudo-effective $\RR$-divisors will yield that Newton--Okounkov bodies on surfaces are in fact polygons.

Let $X$ be a smooth projective surface, $D$ a big Cartier divisor on $X$. An admissible flag $\ybul$ on $X$ consists now 
of three elements $X=Y_0\supset Y_1\supset Y_2$, where admissibility implies that 
\[
Y_1\equ C \text{ irreducible curve }\ \ ,\ \ Y_2\equ \{x\}\in C \text{ a smooth point.}
\]
We will often write $\DCx{D}$ for $\nob{\ybul}{D}$ or even $\Delta_C(D)$ if the point $x\in X$ is either irrelevant or taken to be very general. 

Before launching into the description of Newton--Okounkov bodies on surfaces, we establish some notation. 

\begin{defn}
Let $\delta$ and $\gamma$ be big $\RR$-divisors classes on $X$, assume that $\delta$ is big. We set 
\[
\mu_\gamma(\delta) \deq \sup\st{t>0\mid \delta-t\gamma \text{ is big}}\ .
\]
If $D$ and $C$ are divisors, we write $\mu_C(D)$ for $\mu_{[C]}([D])$. 
\end{defn}

One typically takes $C$ to be an effective divisor or even an irreducible curve; $\mu_C(D)$ measures the distance from the boundary of the pseudo-effective cone in the direction of $-C$. Note the analogy with Seshadri constants. The invariant 
$\mu_C(D)$ is very interesting in itself, its rationality is a very tricky question. 

The starting point of our discussion is the following fundamental result. 

\begin{thm}[Lazarafeld--Musta\c t\u a, \cite{LM}, Theorem 6.4]\label{thm:LM surfaces}
Let $X$ be a smooth projective surface, $D$ a big divisor (or more generally, a big $\RR$-divisor class) $\ybul=(C,x)$ an admissible flag on $X$. Then there exist continuous functions $\alpha,\beta\colon [\nu,\mu]\to \RR_{\geq 0}$ such that 
$0\leq \nu\leq \mu \deq \mu_C(D)$ are real numbers, 
\begin{itemize}
	\item[-] $\nu = $ the coefficient of $C$ in $N_D$,
	\item[-] $\alpha(t)\equ \ord_x N_{D-tC}|_C$,
	\item[-] $\beta(t)\equ \alpha(t) + (P_{D-tC}\cdot C),$
\end{itemize}
and 
\[
\DCx{D} \equ \st{ (t,y)\in \RR^2\mid \nu\leq t\leq \mu\, ,\, \alpha(t)\leq y\leq \beta(t)}\ .
\]
\end{thm}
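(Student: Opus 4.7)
The plan is to describe each vertical slice $\DCx{D}\cap(\{t\}\times\RR_{\geq 0})$ at fixed first coordinate $t$ as an interval whose endpoints are governed by the Zariski decomposition of $D-tC$. This reduces the two-dimensional problem to a one-dimensional computation of Newton--Okounkov bodies on the curve $C$, which is trivial since they are intervals $[0,\deg]$.

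First I would establish that the horizontal support of the body is exactly $[\nu,\mu]$. If $t>\mu=\mu_C(D)$, then $D-tC$ fails to be big, so $H^0(X,\sO_X(m(D-tC)))=0$ for $m\gg 0$ and no valuation vector has first coordinate $t$. On the other end, for $t<\nu$, every section of $mD$ is divisible by a local equation of $C$ to order at least $m\nu$, because $H^0(X,\sO_X(mD))=H^0(X,\sO_X(mP_D))$ with the isomorphism realized by multiplication by a defining section of $mN_D$; hence $\nu_1(s)/m\geq \nu$ for every section $s$.

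Second, for a rational $t\in(\nu,\mu)$, I would apply Proposition~\ref{prop:compute} to identify the slice at height $t$, translated by $-t\eone$, with the subset of $\DCx{D-tC}$ on which the first coordinate vanishes. This latter set is the closure of the second coordinates of $\nu_{\ybul}(s)/m$ for sections $s$ of $m(D-tC)$ that do not vanish identically along $C$. Writing the Zariski decomposition $D-tC=P_t+N_t$ and using multiplication by a defining section of $mN_t$ to identify $H^0(X,\sO_X(mP_t))\cong H^0(X,\sO_X(m(D-tC)))$, the question pushes down to $C$: when $C\not\subset\Supp(N_t)$, the restriction map $H^0(X,\sO_X(mP_t))\to H^0(C,\sO_C(mP_t|_C))$ produces a linear series of degree $m(P_t\cdot C)$ on $C$ (asymptotically, by the restricted-volume theory for big and nef divisors), while the fixed summand $mN_t|_C$ contributes a forced vanishing $m\cdot\ord_x(N_t|_C)$ at $x$. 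Combining these with the one-dimensional description of Newton--Okounkov bodies on a curve yields the slice
\[
[\alpha(t),\beta(t)] \equ [\ord_x(N_t|_C),\ \ord_x(N_t|_C)+(P_t\cdot C)],
\]
exactly as predicted by the statement.

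Finally, continuity of $\alpha$ and $\beta$ follows from continuity of the Zariski decomposition on the big cone (established earlier in the excerpt as a corollary of the locally finite rational polyhedral chamber decomposition): the maps $t\mapsto P_{D-tC}$ and $t\mapsto N_{D-tC}$ are continuous, so both $(P_{D-tC}\cdot C)$ and $\ord_x(N_{D-tC}|_C)$ are continuous on the locus where $C\not\subset\Supp(N_{D-tC})$. The main obstacle is the careful handling of three degenerate phenomena: (i) values of $t$ where $C$ enters or leaves $\Supp(N_{D-tC})$, at which one must check that $(P_t\cdot C)$ vanishes exactly when the formula forces the slice to collapse to a point; (ii) the endpoints $t=\nu$ and $t=\mu$, where the Zariski decomposition itself degenerates and the body may fail to be full-dimensional; and (iii) the extension from rational $t$ to all real $t\in[\nu,\mu]$, which is routine by continuity of Newton--Okounkov bodies together with the compactness of $\DCx{D}$.
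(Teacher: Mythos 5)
The paper does not prove this theorem but cites it as \cite{LM}*{Theorem 6.4}; the remark following the statement only records that the Lazarsfeld--Musta\c t\u a argument rests on restricted linear series on $C$ together with variation of Zariski decomposition. Your sketch reproduces exactly that strategy---slice via Proposition~\ref{prop:compute}, split off the Zariski negative part of $D-tC$, restrict the positive part to $C$---so the overall route is the right one. Two places, however, are not sound as written. First, for $t>\mu$ you claim $H^0(X,\sO_X(m(D-tC)))=0$ ``because $D-tC$ fails to be big''; non-bigness alone does not preclude sections (a nef non-big divisor can have plenty). What actually works is that $D$ is big, hence interior to $\oEff(X)$, so the ray $D-tC$ exits the big cone and the pseudo-effective cone at the same parameter; for $t>\mu$ the class is not even pseudo-effective, which is what kills $H^0$.

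Second, and this is the substantial gap, your identification of the slice $\DCx{D-tC}\cap(\{0\}\times\RR)$ with ``the closure of the second coordinates of $\nu_{\ybul}(s)/m$ for sections $s$ that do not vanish identically along $C$'' does not follow formally: a slice of a closed convex hull is in general larger than the closure of the corresponding fibre of valuation vectors, and proving equality requires the graded-semigroup structure and the theory of restricted Newton--Okounkov bodies on $C$ (equivalently, on a surface, the analysis of restricted linear series). Your phrase ``asymptotically, by the restricted-volume theory for big and nef divisors'' compresses precisely this into a black box. Concretely, one must show that the restrictions $H^0(X,\sO_X(mP_t))\to H^0(C,\sO_C(mP_t|_C))$ asymptotically exhaust a linear series of degree $m(P_t\cdot C)$, and that $x$ eventually escapes the base locus of $P_t$; this is where the condition $C\not\subset\Bplus(D-tC)$ (cf.\ Lemma~\ref{lem:missing lemma}) and restricted-volume computations enter, and none of it is automatic. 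A smaller inaccuracy: by Proposition~\ref{prop:key result}, $C\not\subset\Supp(N_{D-tC})$ for every $t\in(\nu,\mu]$, so your degeneracy (i) never occurs for $C$ itself; the breakpoints of $\alpha$ and $\beta$ come from \emph{other} negative curves entering the support.
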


\begin{rmk}
As defined in the previous section, $P_{D-tC}$ and $N_{D-tC}$ stand for the positive and negative parts of the divisor $D-tC$, respectively. 
\end{rmk}

\begin{rmk}
In addition, it has been observed in \cite{LM} that the functions $\alpha(t)$ and $\beta(t)$ are piecewise affine linear with rational coefficients on every interval $[\nu,\mu']$ with $\mu'<\mu$. As a consequence the intersection of $\nob{\ybul}{D}$ with the $[\nu,\mu']\times \RR$ is a rational polygon. 

Later in Theorem\ref{thm:NO-polygons} we will see that somewhat more is true: Newton--Okounkov bodies are surfaces are polygons that are very close to being rational.  
\end{rmk}

\begin{rmk}
The proof of Theorem~\ref{thm:LM surfaces} relies on studying restricted linear series on curves, while the qualitative results about Newton--Okounkov bodies come from variation of Zariski decomposition \cite{BKS04}. 
\end{rmk}

\begin{rmk}
Note that $\Supp N_{D-tC}$ consists of finitely many irreducible curves for any given $t$, and does not contain $C$ for $t>\nu$. By \cite{BKS04}*{???}, the cardinality  of curves occurring in $N_{D-tC}$ for all $t$ is still countable, hence 
\[
\alpha(t) \deq \ord_x N_{D-tC}|_C \,\equiv\, 0
\]
$x$ does not lie on any of them. Therefore $\alpha(t)\equiv 0$ if $x\in C$ is very general. If there exist only finitely many negative curves on $X$, then a general point will have the same property. 
\end{rmk}

\begin{lem}\label{lem:missing lemma}
Let $X$ be a smooth projective surface, $D$ a big divisor, $C$ an irreducible curve on $X$. If $C\not\subseteq \Bminus(D)$, then $C\not\subseteq \Bplus(D-tC)$ for all $0<t<\mu_C(D)$. 
\end{lem}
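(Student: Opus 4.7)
My plan is to reduce the claim to two conditions and verify each using Zariski-decomposition techniques on surfaces. On a surface, for any big class $L$ with Zariski decomposition $L = P_L + N_L$ one has
\[
\Bplus(L) \equ \Supp N_L \,\cup\, \Null(P_L)
\]
(where $\Null(P_L)$ denotes the curves $\Gamma$ with $P_L \cdot \Gamma = 0$); this is immediate by Zariski-decomposing $L - \epsilon A$ for small ample $\epsilon A$, since the curves $\Gamma$ with $P_L \cdot \Gamma = 0$ enter the negative part of $L - \epsilon A$. Equivalently, one can apply Theorem~\ref{thm:augm bl} to the flag $(C,x)$ for general $x \in C$, using the shape of $\DCx{D-tC}$ from Theorem~\ref{thm:LM surfaces} and the two-dimensional relaxation of the flag hypothesis. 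Either way, the lemma reduces to showing, for every $t \in (0, \mu_C(D))$: (i) $C$ is not a component of $N_{D-tC}$, and (ii) $P_{D-tC} \cdot C > 0$.

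For (i), translate $C \not\subseteq \Bminus(D)$ into $\sigma_C(D) = 0$, where $\sigma_C$ is Nakayama's asymptotic multiplicity (which on a surface coincides with the coefficient of $C$ in the Zariski negative part). The key intermediate step is $\sigma_C(D - \mu_C(D)\,C) = 0$: if instead the negative part of $D - \mu_C(D)C$ contained $aC$ with $a > 0$, then $D - (\mu_C(D) + a)C = P + N'$ with $P$ nef and $N'$ effective would remain pseudo-effective, contradicting the maximality of $\mu_C(D)$. Convexity of $\sigma_C$ on the pseudo-effective cone (from its subadditivity and positive homogeneity) applied to $D - tC = (1 - t/\mu_C(D))D + (t/\mu_C(D))(D - \mu_C(D)C)$ then yields $\sigma_C(D-tC) \leq 0$, hence $\sigma_C(D-tC) = 0$ for every $t \in [0, \mu_C(D)]$.

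For (ii), set $h(t) := \vl{X}{D-tC}^{1/2}$ on $[0, \mu_C(D)]$. By log-concavity of the volume function (Khovanskii--Teissier in dimension two) $h$ is concave, and $h(\mu_C(D)) = 0$. The crucial claim is that $h$ is strictly decreasing on $[0, \mu_C(D)]$. Suppose instead $\vl{X}{D-tC}$ were constant on some subinterval $[t_1, t_2]$. By the chamber structure of variation of Zariski decomposition (Theorem~\ref{thm:loc poly surfaces}), $P_t := P_{D-tC}$ is linear in $t$ on each chamber meeting $[t_1, t_2]$, so $P_t^2$ constant on the chamber forces either the slope $v := dP_t/dt$ to vanish---in which case $N_t = N_{t_1} - (t-t_1)C$ makes $C$ a component of $N_{t_1}$ with coefficient $\geq t_2 - t_1 > 0$, contradicting (i)---or $v \neq 0$ with $v^2 = 0$ and $P_{t_1} \cdot v = 0$, in which case the equality case of the Hodge index inequality $(P_{t_1}\cdot v)^2 \geq P_{t_1}^2 \cdot v^2$ forces $v \in \RR \cdot P_{t_1}$, and then $v^2 = 0$ together with $P_{t_1}^2 > 0$ give $v = 0$, again a contradiction.

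Finally, a concave strictly decreasing $C^1$ function on $(0, \mu_C(D))$ has strictly negative derivative there: at any $t^*$ with $h'(t^*) = 0$, concavity (that is, $h'$ non-increasing) gives $h'(t) \geq 0$ for $t \leq t^*$; combined with $h' \leq 0$ everywhere (since $h$ is decreasing), this forces $h \equiv h(0)$ on $[0, t^*]$, contradicting strict decrease. Applying the two-dimensional volume-derivative formula $-\tfrac{1}{2}\tfrac{d}{dt}\vl{X}{D-tC} = P_{D-tC} \cdot C$ on the big cone then yields $P_{D-tC} \cdot C = -h(t)\,h'(t) > 0$ on $(0, \mu_C(D))$, establishing (ii). The main obstacle is the strict decrease of $\vl{X}{D-tC}$: ruling out constant-volume subintervals requires the chamber structure along the ray together with Hodge index to eliminate the nontrivial null-direction possibility.
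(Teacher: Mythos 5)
Your reduction to (i) $C\not\subseteq\Supp N_{D-tC}$ and (ii) $P_{D-tC}\cdot C > 0$ is the right starting point, but the justification of the endpoint identity $\sigma_C(D-\mu_C(D)C)=0$ contains a genuine gap. You argue that $\sigma_C(D-\mu_C(D)C)=a>0$ would make $D-(\mu_C(D)+a)C$ pseudo-effective, ``contradicting the maximality of $\mu_C(D)$.'' But $\mu_C(D)$ is by definition the \emph{bigness} threshold, not the pseudo-effectivity threshold: the class $D-(\mu_C(D)+a)C = P_{\mu}+(N_{\mu}-aC)$ is itself a Zariski decomposition with $\vol = P_\mu^2 = 0$, so it is not big, and nothing about $\mu_C(D)$ is contradicted. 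Worse, whether the ray $D-tC$ continues along $\partial\oEff(X)$ past $t=\mu_C(D)$ is governed precisely by whether $\sigma_C(D-\mu_C(D)C)>0$, the very quantity you are trying to pin down, so the step is circular as written.

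The identity does hold, but the correct mechanism also makes your convexity machinery unnecessary. If $\sigma_C(D-tC)=a>0$ for any $t\in(0,\mu_C(D)]$, write $D-tC=P_t+aC+N''$ for the Zariski decomposition. Then $D=P_t+(a+t)C+N''$; since $(a+t)C+N''$ has the same support as $N_{D-tC}$ it is still negative definite, and $P_t$ is orthogonal to it, so by uniqueness this \emph{is} the Zariski decomposition of $D$, giving $\sigma_C(D)=a+t>0$ and contradicting $C\not\subseteq\Bminus(D)$. This establishes (i) for every $t$ at once with no endpoint analysis; it is exactly the argument behind Proposition~\ref{prop:key result}, which the paper's proof simply cites. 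For (ii), your analytic route through strict decrease of $\vol(D-tC)^{1/2}$ is valid but heavy (and you should first record that $\vol(D-tC)$ is weakly decreasing, since $P_{D-tC}\cdot C\ge 0$ by nefness, before upgrading ``not constant on any subinterval'' to strict decrease); the paper's proof applies Hodge index directly to $P_{D-tC}$, which is big and nef so $P_{D-tC}^2>0$, forcing $(C^2)<0$ whenever $P_{D-tC}\cdot C=0$, and disposes of the remaining case $(C^2)<0$ via (i). Your argument in effect repackages the same Hodge-index content through the volume function.
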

\begin{proof}
Recall that $\Bminus(D)=\Neg(D)$, and $\Bplus(D-tC)=\Null(D-tC)$. Let us first assume that $(C^2)<0$. By Lemma~\cite{KLM1} we have $C\not\subseteq \Bminus(D-tC)$. But then $P_{D-tC}+aC+\sum_{i}a_iE_i$ is the Zariski decomposition of $D-tC$, and 
the statement follows.

Suppose now that $(C^2)=0$, and $t\geq 0$. Then 
\[
\Bplus(D-tC) \equ \Bplus(P_{D-tC})\cup \Supp N_{D-tC}\ ,
\]
and since $C$ is not a negative curve, it is not contained in $\Supp N_{D-tC}$. The divisor $P_{D-tC}$ is big and nef, therefore 
\[
\Bplus(D-tC) \equ \st{\Gamma\mid (P_{D-tC}\cdot \Gamma)=0 }\ .
\]
Again by $P_{D-tC}$ being big and nef, $(P^2_{D-tC})>0$, hence by the Hodge index theorem $(C^2)<0$ holds whenever 
$(P_{D-tC}\cdot C)=0$. But this would contradict our assumption on $(C^2)$.
\end{proof}

In the following we explain how to use variation of Zariski decomposition to 

\begin{eg}
Let $X$ be the blow-up of the projective plane at a point $P$, then the N\'eron--Severi space of $X$  has the basis consisting of $H$, the pullback of the hyperplane class, and $E$, the class of the exceptional divisor. The intersection form is given by
\[
(H^2)\equ 1\ ,\ (H\cdot E)\equ 0\ ,\ (E^2) \equ -1\ .
\]
The (pseudo-)effective cone of $X$ is spanned by $E$ and $H-E$, while the nef cone of $X$ is spanned by the classes $H$ and $H-E$. 

Let $L$ be the ample divisor  $2H-E$, $C\deq H-E$ (the proper transform of a line through the point $P$), and $x\in C$ a point not lying on $E$. Then $\alpha(t)\equiv 0$, and we will only need to worry about computing $\beta(t)$. 

Let us compute how long $L-tC$ stays in the pseudo-effective cone. 
\begin{eqnarray*}
\mu_C(L) & \deq & \sup\st{ t>0\mid L-tC \text{ is big}} \\
& = & \sup\st{ t>0\mid ((L-tC)\cdot H)  \text{ and } ((L-tC)\cdot (H-E))}
\end{eqnarray*}
as the classes $H$ and $H-E$ span the nef cone of $X$. As $L-tC=(2-t)H+(t-1)E$, we see that 
\[
\mu_C(L) \equ \sup\st{2,3} \equ 2\ . 
\]
The essential information we need is the Zariski decomposition of divisors along the ray $L-tC$ for $t>0$. By the openness of amplitude in the N\'eron--Severi space, $L-tC$ is an ample $\RR$-divisor class for small $t>0$. As long as $L-tC$ is ample, we have $P_{L-tC}=L-tC$, therefore 
\[
\beta(t) \deq (P_{L-tC}\cdot C) \equ (L\cdot C) - t(C^2)\ .
\]
Taking into account that $(C^2)=((H-E)^2) = 0$, we obtain
\[
\beta(t) \,\equiv\, (L\cdot C)  \equ   ((2H-E)\cdot(H-E)) \equ 1\ . 
\] 
By the Kleiman--Nakai--Moishezon criterion the fact that $E$ is the only negative curve on $X$ implies that $L-tC$ is nef 
if and only if $((L-tC)\cdot E) \geq 0$. As 
\[
((L-tC)\cdot E) \equ (((2H-E)-t(H-E))\cdot E) \equ (((2-t)H+(t-1)E)\cdot E) \equ 1-t\ ,
\]
we see that $L-tC$ is nef precisely when $0\leq t\leq 1$. 

For values $1\leq t\leq 2$, the negative part of $L-tC$ will be supported on $E$, this being the only non-trivial possibility for the support of a negative cycle.  We are now in luck since 
\[
L-tC \equ \underbrace{(2-t)H}_{P_{L-tC}} \ +\ \underbrace{(t-1)E}_{N_{L-tC}} 
\]
satisfies the conditions for being a Zariski decomposition, by uniqueness this is the only one. Hence, over the interval 
$1\leq t\leq 2$ we obtain 
\[
\beta(t) \equ ((2-t)H \cdot (H-E)) \equ 2-t\ .
\]
\end{eg}

\begin{exer}
Let $X$ be the blow-up of $\PP^2$ at two distinct point $P$ and $Q$. Using Example~\ref{eg:two_points}, compute examples of Newton--Okounkov bodies, and try to find ones with as many vertices as possible. 
\end{exer}

\begin{rmk}
The moral of the story is that our ability to determine Newton--Okounkov bodies on surfaces relies on our knowledge about the variation of Zariski decomposition of divisors. This in turn depends on knowing the set of negative curves on the surface $X$ and the intersection form. 

Note however that one does not need to know how Zariski decomposition varies over the whole of $\Bbig(X)$, only along the line segment $[L,L-\mu_C(L)] \subseteq \Bbig(X)$. This observation will turn out to be essential for determining shapes of Newton--Okounkov bodies in dimension two precisely. 
\end{rmk}

Up until now all examples we have seen were rational polygons. We will now see that this is not far from the truth. 

\begin{thm}[\cite{KLM1}, Theorem B]\label{thm:NO-polygons}
The Newton--Okounkov body of a big $\QQ$-divisor $L$ on a smooth projective surface is a polygon with rational slopes. 
All vertices not lying on the vertical line $t=\mu_C(L)$ are rational. 	
\end{thm}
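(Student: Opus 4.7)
The plan is to combine the Lazarsfeld--Musta\c t\u a description of Newton--Okounkov bodies on surfaces (Theorem~\ref{thm:LM surfaces}) with variation of Zariski decomposition (Theorem~\ref{thm:loc poly surfaces}). By Theorem~\ref{thm:LM surfaces}, the body $\DCx{L}$ is the planar region sandwiched between the graphs of $\alpha$ and $\beta=\alpha+(P_{L-tC}\cdot C)$ on $[\nu,\mu]$, with $\mu=\mu_C(L)$. Consequently, it suffices to prove that both $\alpha$ and $\beta$ are piecewise affine linear on $[\nu,\mu]$ with finitely many pieces of rational slope, and that all breakpoints together with the boundary data at $t=\nu$ are rational.

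I would first analyse the situation on any closed subinterval $[\nu,\mu']\subseteq[\nu,\mu)$. The segment $\{L-tC\mid t\in[\nu,\mu']\}$ is a compact subset of $\Bbig(X)$, so by local finiteness of the Zariski chamber decomposition it meets only finitely many chambers. On each chamber with fixed negative support $E_1,\ldots,E_r$, the coefficients $a_i(t)$ appearing in $P_{L-tC}=L-tC-\sum_i a_i(t)E_i$ are the unique solution of the rational linear system $(P_{L-tC}\cdot E_j)=0$. Hence each $a_i(t)$ is affine linear in $t$ with rational coefficients, which in turn makes both $(P_{L-tC}\cdot C)$ and $\alpha(t)=\sum_i a_i(t)\cdot\ord_x(E_i|_C)$ affine linear with rational slope on that chamber. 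Chamber crossings occur at rational values of $t$ because the walls $E\orth$ and the ray $L-tC$ are defined over $\QQ$.

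The main obstacle is to rule out accumulation of chamber crossings as $t\to\mu^{-}$, so that the ray in fact meets only finitely many chambers over the full half-open interval $[\nu,\mu)$. A natural strategy is to show that the pool of irreducible curves $E$ that can occur in $\Supp(N_{L-tC})$ for some $t\in[\nu,\mu)$ is already finite, since then finitely many distinct chamber supports are possible and hence only finitely many chamber transitions can take place. To produce such a bound one would combine: (i) the continuity of Zariski decomposition on $\Bbig(X)$, which controls the behaviour of $N_{L-tC}$ as $t\to\mu^-$ through a careful choice of ample test class; and (ii) the fact that each support has negative-definite intersection matrix, so the Hodge index theorem restricts the combinatorial structure of families of curves that can occur. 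Making this uniform on the closed interval $[\nu,\mu]$ is the technical heart of the proof.

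With these ingredients in place, $\alpha$ and $\beta$ are piecewise affine linear on $[\nu,\mu]$ with finitely many rational-slope pieces, and all breakpoints inside $[\nu,\mu)$ are rational. The sole vertex that may fail to be rational lies on the vertical line $t=\mu_C(L)$, since $\mu_C(L)$, defined as a supremum over bigness conditions, need not be rational even for rational $L$ and $C$; this matches precisely the caveat in the statement.
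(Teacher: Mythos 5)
Your first two paragraphs are exactly right, and the role you assign to the third --- ruling out accumulation of chamber crossings as $t\to\mu^-$ by bounding the pool of curves that can appear in $\Supp N_{L-tC}$ --- is precisely the crux. The mechanism you propose for this bound, however, is the genuine gap. Continuity of Zariski decomposition holds on $\Bbig(X)$ but does \emph{not} in general extend to the pseudo-effective boundary $\oEff(X)$; since $L-\mu_C(L)C$ is not big, appealing to continuity ``through a careful choice of ample test class'' does not control $N_{L-tC}$ as $t\to\mu^-$. And the Hodge index theorem, while it forces each individual support to be negative definite, does not bound the number of possible supports on a surface carrying infinitely many negative curves. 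Neither tool, as stated, produces the finite pool you need.

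The correct mechanism, which is Proposition~\ref{prop:key result} (\cite{KLM1}*{Proposition 2.1}), is more elementary and rests on two facts you did not invoke. First, Fujita's extension of Zariski decomposition to \emph{pseudo-effective} $\RR$-divisors (Theorem~\ref{thm:ZD}), applied to the boundary class $L-\mu C$ itself, produces a negative part $N_{L-\mu C}$ with finitely many components. Second, the minimality characterization of the negative part gives \emph{monotonicity}: writing $s=\mu-t$, $D'_s\deq L-(\mu-s)C$, and $N'_s\deq N_{D'_s}$, for $s'<s$ one has $P'_{s'} = D'_s - \bigl((s-s')C + N'_{s'}\bigr)$ with $P'_{s'}$ nef, so minimality of $N'_s$ forces $(s-s')C + N'_{s'} - N'_s$ to be effective; after checking that $C\notin\Supp N'_s$ for $0<s\leq\mu-\nu$, this yields $N'_{s'}\geq N'_s$, i.e.\ $N_{L-tC}$ is increasing in $t$. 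In particular $\Supp N_{L-tC}\subseteq\Supp N_{L-\mu C}$ for all $t<\mu$, and since the latter is finite the ray meets at most $n+1$ chambers where $n$ is the number of components of $N_{L-\mu C}$.
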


\begin{corollary}\label{cor:rational polygon}
Let $X$ be a smooth projective surface, $L$ a big $\QQ$-divisor, $\ybul=(C,x)$ an admissible flag on $X$. Then $\dyl$ is a rational polygon if and only if $\mu_C(L)\in\QQ$.  
\end{corollary}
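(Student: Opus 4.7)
The strategy is simply to apply Theorem~\ref{thm:NO-polygons} and track which data fail to be rational in each direction. By that theorem, $\dyl$ is already known to be a polygon whose edges have rational slopes and whose vertices with $t$-coordinate strictly less than $\mu_C(L)$ are rational points of $\RR^2$. The only possible source of irrationality is therefore the behaviour of $\dyl$ along the right-hand vertical line $\{t=\mu_C(L)\}$.

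For the $(\Rightarrow)$ direction, suppose $\dyl$ is a rational polygon. Then every vertex lies in $\QQ^2$, and in particular the vertex (or vertices) of $\dyl$ attaining the maximum $t$-value have rational first coordinate. Since by definition of $\mu_C(L)$ this maximum value is precisely $\mu_C(L)$, we conclude $\mu_C(L)\in\QQ$.

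For the converse $(\Leftarrow)$, assume $\mu_C(L)\in\QQ$; we must promote the ``all vertices off the line $t=\mu_C(L)$ are rational'' statement of Theorem~\ref{thm:NO-polygons} to include those lying on it. Any such extremal vertex $v=(\mu_C(L),y_v)$ already has rational first coordinate. Since $\dyl$ is a polygon with only finitely many vertices, $v$ is joined by an edge to some other vertex $w=(t_w,y_w)$ with $t_w<\mu_C(L)$, which is rational by the theorem; moreover the slope of the edge $vw$ is rational. Writing $y_v$ as $y_w$ plus this rational slope times $(\mu_C(L)-t_w)\in\QQ$ shows $y_v\in\QQ$. As this applies to each extremal vertex, every vertex of $\dyl$ is rational, so $\dyl$ is a rational polygon.

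The only subtlety worth flagging is that $\dyl$ need not reach $t=\mu_C(L)$ by a proper edge: it may degenerate there to a single point $(\mu_C(L),\alpha(\mu_C(L)))=(\mu_C(L),\beta(\mu_C(L)))$. This case is handled identically, since that single extremal vertex is still connected by a rational-slope edge (coming from either the graph of $\alpha$ or that of $\beta$ near $t=\mu_C(L)$) to the nearest vertex with smaller $t$-coordinate, which is rational by Theorem~\ref{thm:NO-polygons}.
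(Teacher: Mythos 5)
Your argument is correct. The paper states this corollary without a separate proof, treating it as an immediate consequence of Theorem~\ref{thm:NO-polygons}, and your write-up supplies exactly the routine bookkeeping one would expect: the maximum $t$-coordinate attained by the polygon is $\mu_C(L)$ (this comes from the Lazarsfeld--Musta\c{t}\u{a} description in Theorem~\ref{thm:LM surfaces}), so some vertex lies on the line $t=\mu_C(L)$, whence rationality of the polygon forces $\mu_C(L)\in\QQ$; conversely, the rational-slope property of the edges propagates rationality from the already-rational vertices with $t<\mu_C(L)$ to those sitting on the line $t=\mu_C(L)$, including in the degenerate case where the polygon meets that line in a single point.
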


\begin{rmk}
The idea of the proof is to study the variation of Zariski decomposition along the line segment $[L,L-\mu_C(L)]$. At this point it becomes crucial that Zariski decomposition exists for pseudo-effective $\RR$-divisors, since the support of (the a priori only pseudo-effective $\RR$-divisor class) $N_{L-\mu_C(L)}$ acts as a natural upper bound for the negative parts of 
all the divisors $L-tC$. The other key observation is that $N_{L-tC}$ is in fact increasing with $t$.
\end{rmk}

The technical core of the proof of Theorem~\ref{thm:NO-polygons} is the following statement about negative parts of divisors. Beforehand, we introduce some notation. Set $\mu\deq\mu_C(L)$ and $L'\deq L-\mu C$; the divisor $D'$ is then pseudo-effective but not big by the definition of $\mu$. For any $t\in [\nu,\mu]$ we write $s\deq \mu-t$, and define 
\[
L_s' \deq L' + sC \equ L' + (t-\mu)C \equ L-tC\ .
\]
We will consider the line segment $\st{L_t\mid \nu\leq t\leq \mu}$ in the form 
\[
\st{L_s'\mid 0\leq s\leq \mu-\nu}\ .
\] 
Let 
\[
D_s' \equ P_s' \ +\ N_s'
\] 
be the Zariski decomposition of $D_s'$. 
 
\begin{prop}[\cite{KLM1}*{Proposition 2.1}]\label{prop:key result}
With notation as above, the function $s\mapsto N_s'$ is decreasing along the interval $s\in [0,\mu-\nu]$, that is, for each
$0\leq s'<s\leq \mu-\nu$ the divisor $N_s-N_s'$ is effective. 

If $n$ is the number of irreducible components of $N_0'$, then there is a partition $(p_i)_{0\leq i\leq k}$ of the interval $[0,\mu-\nu]$ for some $k\leq n$, and there exist divisors $A_i$ and $B_i$ with the latter having rational coefficients such that  
\[
N_s' \equ A_i +sB_i\ \ \ \text{for all $s\in [p_i,p_{i+1}]$.}
\]	
\end{prop}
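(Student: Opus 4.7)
The plan is to prove Proposition~\ref{prop:key result} in two steps: first, the monotonicity of $s \mapsto N_s'$ on the interval $[0,\mu-\nu]$, and second, the piecewise-affine structure with at most $n$ breakpoints. Throughout I view $L_s' = L' + sC$ as a one-parameter family of pseudo-effective $\RR$-divisors with Zariski decompositions $L_s' = P_s' + N_s'$, and exploit the defining orthogonality $(P_s' \cdot N_s') = 0$ together with the negative-definite intersection form on $\Supp N_s'$.

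\textbf{Monotonicity.} Fix $0 \leq s' < s \leq \mu - \nu$. Subtracting the two Zariski decompositions produces the identity
\[
P_s' - P_{s'}' \equ (s-s')C + N_{s'}' - N_s'.
\]
The goal is to show $\Delta \deq N_{s'}' - N_s'$ is effective. I argue by contradiction: write $\Delta = \Delta^+ - \Delta^-$ as a difference of effective divisors with disjoint supports, and assume $\Delta^- \neq 0$. Each irreducible component $\Gamma$ of $\Delta^-$ lies in $\Supp N_s'$, so $(P_s' \cdot \Gamma) = 0$ by orthogonality. Intersecting the identity above with $\Delta^-$ and using (i) $(P_{s'}' \cdot \Delta^-) \geq 0$ by nefness of $P_{s'}'$, (ii) $(\Delta^+ \cdot \Delta^-) \geq 0$ since these effective divisors have disjoint supports, and (iii) $(C \cdot \Delta^-) \geq 0$, which holds because $C \notin \Supp N_s'$ for $s \leq \mu - \nu$ by the defining property of $\nu$, making $C$ a distinct irreducible curve from every component of $\Delta^- \subseteq \Supp N_s'$, one rearranges to conclude $(\Delta^-)^2 \geq 0$. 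This contradicts the negative-definite intersection form on $\Supp N_s' \supseteq \Supp \Delta^-$, forcing $\Delta^- = 0$.

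\textbf{Piecewise-affine structure.} Once monotonicity is in hand, the supports $\Sigma_s \deq \Supp N_s'$ form a decreasing nested family of subsets of the finite set $\Supp N_0'$. Since $\Supp N_0'$ has $n$ irreducible components, $\Sigma_s$ can drop at most $n$ times as $s$ ranges over $[0,\mu-\nu]$, giving a partition $0 = p_0 < p_1 < \cdots < p_k = \mu - \nu$ with $k \leq n$ on which $\Sigma_s$ is locally constant. On each subinterval $[p_i, p_{i+1}]$ with support $\Sigma_i = \{E_1,\ldots,E_{r_i}\}$, the coefficients $a_j(s)$ of $N_s' = \sum_j a_j(s) E_j$ are determined by the $r_i$ orthogonality conditions
\[
\Big( L' + sC - \sum_j a_j(s) E_j \Big) \cdot E_\ell \equ 0, \quad 1 \leq \ell \leq r_i.
\]
The coefficient matrix $(E_j \cdot E_\ell)_{j,\ell}$ is integral and negative definite (hence invertible), and the right-hand side $(L' \cdot E_\ell) + s (C \cdot E_\ell)$ is affine linear in $s$ with rational slope $(C \cdot E_\ell)$. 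Inverting the system expresses each $a_j(s)$ as $\alpha_j + s \beta_j$ with $\beta_j \in \QQ$, while $\alpha_j$ inherits potential irrationality from $(L' \cdot E_\ell)$. This yields $N_s' = A_i + s B_i$ on $[p_i, p_{i+1}]$ with $B_i$ rational, as claimed.

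The main obstacle I anticipate is the monotonicity step. Analogous monotonicity statements for Zariski decomposition under addition of nef classes are classical (cf.~\cite{BKS04}*{Lemma 1.12}), but here we add a multiple of an effective, possibly non-nef, irreducible curve $C$, requiring a direct intersection-theoretic argument. The most delicate point is controlling $(C \cdot \Delta^-)$, which rests on the a priori fact that $C$ is absent from $\Supp N_s'$ throughout $[0,\mu-\nu]$; this is encoded in the choice of $\nu$ as the $C$-coefficient of $N_L$ in Theorem~\ref{thm:LM surfaces}. Once the monotonicity is secured, the negative-definiteness of $\Supp N_s'$ together with the rationality of the intersection numbers $(E_j \cdot E_\ell)$ and $(C \cdot E_\ell)$ reduces the piecewise-affine structure to a routine linear-algebra consequence.
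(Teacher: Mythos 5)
Your monotonicity argument is a genuine alternative to the paper's. The paper invokes the minimality characterization of the negative part of a Zariski decomposition: $P'_{s'}$ is nef and $D'_s - P'_{s'} = (s-s')C + N'_{s'}$ is effective, so minimality of $N'_s$ gives $(s-s')C + N'_{s'} \geq N'_s$ directly. Your route instead argues by contradiction from negative-definiteness of $\Supp N'_s$, using only the defining orthogonality and nefness. Both work; yours is more self-contained in that it does not appeal to the minimality lemma, while the paper's is shorter once that lemma is in hand. The piecewise-linearity step is essentially the paper's: breakpoints where the support drops, then solve the non-degenerate linear system $N'_s\cdot C_j = (D'+sC)\cdot C_j$ on each subinterval and extend from the open to the closed interval by continuity of Zariski decomposition.

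There is, however, a gap where you write that $C \notin \Supp N'_s$ throughout $[0,\mu-\nu]$ is ``an a priori fact \ldots encoded in the choice of $\nu$.'' It is not. The definition of $\nu$ as the $C$-coefficient of $N_L$ gives you this only at the one endpoint $s=\mu-\nu$: writing $N_L = \nu C + N''$ with $C\notin\Supp N''$, one checks that $P_L + N''$ satisfies the defining conditions of the Zariski decomposition of $L-\nu C$, so $C\notin\Supp N'_{\mu-\nu}$. For $s<\mu-\nu$ nothing is automatic, and this is precisely the step your contradiction argument relies on (it is what makes $(C\cdot\Delta^-)\geq 0$). The paper supplies a short argument: if $C\in\Supp N'_s$ for some $s$, then for every $\lambda>0$ the decomposition $D'_{s+\lambda} = P'_s + (N'_s + \lambda C)$ already satisfies all the conditions of Theorem~\ref{thm:ZD} (nefness, effectivity, negative-definiteness of the unchanged support, orthogonality), so by uniqueness it \emph{is} the Zariski decomposition of $D'_{s+\lambda}$; taking $\lambda = \mu-\nu-s$ then forces $C\in\Supp N'_{\mu-\nu}$, a contradiction. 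You need to include some such argument; without it your invocation of ``the defining property of $\nu$'' is circular, since it silently assumes the very statement about $N'_s$ for general $s$ that has to be established.
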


\begin{proof}
 Let $C_1,\ldots, C_n$ be the irreducible components of $\Supp (N')$, where 
$N'=N'_0$. Choose real numbers $s',s$ such that $0\leq s'< s\leq \mu-\nu$. We 
can then write
\[ 
P'_{s'} \equ D'_{s'}-N'_{s'} \equ  (D'_s-(s-s')C)-N'_{s'} \equ D'_s - 
((s-s')C+N'_{s'}).
\] 
As $P'_{s'}$ is nef and the negative part of the Zariski decomposition is 
minimal, the divisor $(s-s')C+N'_{s'}-N'_s$ is effective and it remains only to show that $C$
is not in the support of $N'_s$ for any $s\in [0, \mu-\nu]$. If $C$ were in 
the support of $N'_s$ for some $s$, then for any $\lambda >0$ the Zariski 
decomposition of $D'_{s+\lambda}$ would be $D'_{s+\lambda}=P'_s+(N'_s+\lambda C)$.
In particular, $C$ would be in the support of  $N'_{\mu-\nu}$, contradicting  
the definition of $\nu$. 

Rearranging the $C_i$'s, we assume that the support of $N'_{\mu-\nu}$ consists of 
$C_{k+1}, \ldots, C_n$. We set 
\[
p_i \deq  \sup \{ s\ | \ C_i\subseteq \Supp(N'_s)\} \textup{ for all } i=1\ldots k\ .
\]
Without loss of generality $0< p_1\leq \ldots \leq p_{k-1}\leq p_k\leq \mu 
-\nu$. We will show that $N'_s$ is linear on $[p_i, p_{i+1}]$ for this choice of
$p_i$'s. By the continuity of the Zariski decomposition (see \cite{BKS04}*{Proposition 1.14}), it 
is enough to show that $N'_s$ is linear on the open interval $(p_i,p_{i+1})$. If
$s\in (p_i, p_{i+1})$ then the support of $N'_s$ is contained in 
$\{C_{i+1},\ldots ,C_{n}\}$, and $N'_s$ is determined uniquely by the equations
\[
N'_s\cdot C_j \equ (D'+sC)\cdot C_j, \textup{ for } i+1\leq j\leq n\ .
\] 
As the intersection matrix of the curves $C_{i+1},\ldots ,C_{n}$ is 
non-degenerate, there are unique divisors  $A_i$ and $B_i$ supported on 
$\cup_{l=i+1}^n C_l$ such that
\[
A_i\cdot C_j \equ D'\cdot C_j \textup{ and } B_i\cdot C_j= C\cdot C_j \textup{ for all }i+1\leq j\leq n\ .
\] 
Note that $B_i$ is a rational divisor. By \cite{BL}*{Lemma 14.9} (first proved
in \cite{Zar}) both $A_i$ and $B_i$ are effective and it follows that 
$N'_s= A_i+s B_i$ for any $s\in (p_i, p_{i+1})$.
\end{proof}

\begin{proof}[Proof of Theorem~\ref{thm:NO-polygons}]
To begin with \cite{LM}*{Theorem 6.4} implies that the function $\alpha$  is convex, $\beta$  is concave and $\alpha\leq \beta$ all the time. 
Next,  Proposition~\ref{prop:key result} yields that the functions $\alpha$ and $\beta$ are piecewise linear with only finitely many breakpoints which occur
when we cross walls of Zariski chambers, hence for rational values of $t$. 

Finally, $\alpha$ is an increasing function of $t$ by Proposition~\ref{prop:key result}, because $N_t=N_{\mu-s}'$, and $\alpha(t)= \ord_x(N_t|_X)$, therefore we are done. 
\end{proof}

\begin{exer}
Let $X$ be a smooth projective surface, $L$ a big $\RR$-divisor, $\ybul$ an admissible flag on $X$. Show that the number of vertices of the polygon $\dyl$ is at most $2\rho(X)+2$. Verify that this bound is sharp. 
\end{exer}

\begin{conjecture}(who do I attribute this to?)\label{conj:NO and Picard number}
Let $X$ be a smooth projective surface, $L$ a big $\RR$-divisor. Let $f\colon \tilde{X}\to X$ be a proper birational morphism, $\ybul$ an admissible flag on $\tilde{X}$.  Then $\nob{\ybul}{f^*L}$ has at most $2\rho(X)+2$ vertices.  
\end{conjecture}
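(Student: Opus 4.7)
The plan is to describe $\nob{\ybul}{f^*L}$ via Theorem~\ref{thm:LM surfaces} and bound its vertex count in terms of the variation of Zariski decomposition of $f^*L - tC$ as $t$ ranges over $[\nu, \mu]$ with $\mu = \mu_C(f^*L)$ and $\nu$ the coefficient of $C$ in $N_{f^*L}$. By Proposition~\ref{prop:key result} the functions $\alpha(t)$ and $\beta(t)$ are piecewise linear, and each of their breakpoints is a value of $t$ at which the support of $N_t := N_{f^*L - tC}$ gains a new irreducible component. Consequently the polygon has at most $2(k+1)+2$ vertices, where $k$ is the number of such breakpoints, so it suffices to prove that the maximal negative part $N_{f^*L - \mu C}$ has at most $\rho(X) - 1$ irreducible components.

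I would then split the components of $N_{f^*L - \mu C}$ into two classes: those that are non-$f$-exceptional (strict transforms of irreducible negative curves on $X$) and those that are $f$-exceptional. Under the pushforward $f_*$, the first class maps injectively to a set of distinct irreducible negative curves on $X$ whose intersection matrix is negative definite, as follows from the projection formula together with the fact that $f$ is an isomorphism away from its exceptional locus. The Hodge index theorem applied on $X$ then bounds the number of such curves by $\rho(X) - 1$.

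For the $f$-exceptional components the key input is the decomposition $f^*L - tC = f^*(L - tC_0) + t\Lambda$, where $C_0 := f_*C$ and $\Lambda := f^*C_0 - C$ is an effective $f$-exceptional divisor. Combined with the orthogonal block decomposition $N^1(\tilde X)_\RR = f^*N^1(X)_\RR \oplus \vecspan{E_1,\ldots,E_m}$ for the intersection pairing (which is a consequence of $(f^*\alpha)\cdot E_i = 0$ for all $\alpha$), this forces the coefficients of the $f$-exceptional components of $N_t$ to solve an affine-linear system driven by the non-exceptional components of $N_{L-tC_0}$ together with the fixed vector $\Lambda$. Thus the $f$-exceptional part of $N_t$ varies linearly in $t$ on each subinterval of $[\nu,\mu]$ on which the set of non-exceptional components of $N_t$ is locally constant, contributing no additional breakpoints to $\alpha$ or $\beta$. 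The case in which $C$ itself is $f$-exceptional is reduced by factoring $f$ through the contraction of $C$ to a smooth point, turning $\nob{\ybul}{f^*L}$ into an infinitesimal Newton--Okounkov body on a surface of Picard number $\rho(\tilde X) - 1$, to which one can apply induction on $\rho(\tilde X) - \rho(X)$.

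The hard part will be to make the synchronization argument of the previous paragraph precise: one must show that whenever the segment $\{f^*L - tC : t\in[\nu,\mu]\}$ crosses a wall of the Zariski chamber decomposition of $\Bbig(\tilde X)$ for a purely $f$-exceptional reason --- that is, $(P_t\cdot E)$ changes sign for some $f$-exceptional $E$ without a non-exceptional wall being crossed simultaneously --- no new vertex of $\nob{\ybul}{f^*L}$ is actually produced. The natural way to attempt this is to analyze the linear system defining the Zariski decomposition using the orthogonal block decomposition above, and to track how the coefficients of $N_t$ evolve in $t$; the argument should hinge on the observation that $f^*L$ and the $E_i$ live in complementary orthogonal blocks, so the $f$-exceptional and non-exceptional parts of $N_t$ decouple in a controlled way and any $f$-exceptional chamber crossing is forced by a non-exceptional one on $X$.
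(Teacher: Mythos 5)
The statement you are addressing is labeled a \emph{conjecture} in the paper and carries the author note ``(who do I attribute this to?)''; the paper itself offers no proof. So I am evaluating your proposal on its own merits, and I see two concrete breaking points.

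\textbf{The reduction to counting components of $N_\mu$ is too strong and is in fact false.} You propose to show that $N_{f^*L-\mu C}$ has at most $\rho(X)-1$ irreducible components. Take $X=\PP^2$, let $f\colon\tilde X\to X$ be the blow-up of two points with exceptional curves $E_1,E_2$, let $L$ be the hyperplane class (so $f^*L=H$ on $\tilde X$), and let $C=H-E_1-E_2$ be the strict transform of the line through the two points. Then $H-tC=(1-t)H+tE_1+tE_2$, $\mu_C(f^*L)=1$, and $N_{f^*L-\mu C}=E_1+E_2$ has two components, whereas $\rho(X)-1=0$. Nevertheless $\nob{\ybul}{f^*L}$ is the triangle with vertices $(0,0)$, $(1,0)$, $(0,1)$, so the conjectured bound $2\rho(X)+2=4$ is respected. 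What this example shows is that the relevant quantity is not the number of components of $N_\mu$ but the number of \emph{distinct parameter values at which the support of $N_t$ jumps}; here both $E_1$ and $E_2$ enter simultaneously at $t=\nu=0$, contributing at most one wall crossing and producing no extra vertices. Your plan as written proves a sufficient condition that already fails in the simplest nontrivial example, so even a correct execution of the remaining steps would not yield the conjecture.

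\textbf{The key claim about negative definiteness of the pushforward does not hold.} You assert that the non-exceptional components of $N_t$ push forward to negative curves on $X$ whose intersection matrix is negative definite. This is not a consequence of the projection formula. If $\tilde\Gamma$ is the strict transform of $\Gamma_0=f_*\tilde\Gamma$ and $f$ is a single blow-up at a point $p$, then $\tilde\Gamma^2=\Gamma_0^2-\mult_p(\Gamma_0)^2$, so a $(-1)$-curve on $\tilde X$ may push forward to a curve of \emph{positive} self-intersection on $X$ --- the strict transform of a conic through five blown-up points being the standard example. More generally, passing to $X$ increases (or preserves) intersection numbers, so negative definiteness on $\tilde X$ gives no control over the intersection matrix downstairs. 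The Hodge index argument you invoke on $X$ therefore does not apply.

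Finally, the ``decoupling'' step should be treated as a gap rather than a sketch. The orthogonal splitting $N^1(\tilde X)_\RR=f^*N^1(X)_\RR\oplus\vecspan{E_1,\dots,E_m}$ is correct, but the strict transforms $\tilde\Gamma_i$ are \emph{not} elements of the first summand --- they have nonzero exceptional components --- so the intersection pairing between strict transforms and $f$-exceptional divisors is generally nonzero, and the linear system defining the Zariski decomposition does not split into independent blocks along your proposed lines. You acknowledge this as ``the hard part'', but I do not see how to push through the synchronization you want without an additional idea; the heuristic that ``any $f$-exceptional chamber crossing is forced by a non-exceptional one on $X$'' is precisely what is at stake, and it needs proof, not restatement. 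If the conjecture is true, my guess is that the useful structure is that $(f^*L\cdot E_i)=0$ for all $i$, which forces exceptional curves to enter in tightly correlated families, rather than any decoupling of the negative-part coefficients.
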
	

The essential information one needs to answer questions like Conjecture~\ref{conj:NO and Picard number} is how many Zariski chambers the line segment $[L,L-\mu_C(L)C]$ crosses. This kind of information is on the other hand hard to obtain in general, they are very closely related to the rationality of Seshadri constants.   As an illustration, here is an equivalent formulation of Nagata's conjecture. 

\begin{conjecture}[Nagata]
Let $X$ be the blow-up of $\PP^2$ at a set of $n$ general points, let $H$ be the hyperplane class, $\E\deq E_1+\ldots+E_n$ the exceptional divisor. Then the ray $H-t \E$ ($t\geq 0$) only crosses one Zariski chamber before it leaves $\oEff(X)$. 
\end{conjecture}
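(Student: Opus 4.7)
The strategy is to reduce the statement directly to the classical Nagata inequality. Since $(H^2)=1$, $(H\cdot E_i)=0$ and $(E_i\cdot E_j)=-\delta_{ij}$, one has $(H-t\E)^2=1-nt^2$, so the ray has positive self-intersection on $[0,1/\sqrt{n})$. In particular $H-t\E$ is big on this range, and if the statement of the conjecture holds then the ray is nef throughout, exiting $\oEff(X)$ exactly at $t=1/\sqrt{n}$.

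By the variation-of-Zariski-decomposition picture recalled in Theorem~\ref{thm:loc poly surfaces} and Example~\ref{eg:two_points}, the ray can only leave the nef chamber by meeting a hyperplane $C^\perp$ for some irreducible negative curve $C$ on $X$. Writing $C\sim aH-\sum_{i=1}^n b_i E_i$ with $C\neq E_i$, the ray meets $C^\perp$ at $t_0=a/\sum_i b_i$, so the conjecture is equivalent to the assertion
\[
a\sqrt{n} \,\geq\, \sum_{i=1}^n b_i
\]
for every such curve $C$, i.e.\ for every irreducible curve in $\PP^2$ of degree $a$ passing through the $n$ general points with multiplicities $b_i$.

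The natural first attempt is Cauchy--Schwarz: $\sum b_i \leq \sqrt{n\sum b_i^2}$, which yields the inequality whenever $(C^2)=a^2-\sum b_i^2\geq 0$. This settles the nef range and, combined with the fact that for $n\leq 9$ the negative curves are the finitely many $(-1)$-classes coming from the root-lattice structure of the blow-up (which can be enumerated and checked by hand), proves the conjecture for $n\leq 9$. The decisive obstacle is the case $n\geq 10$: there exist $(-1)$-curves of arbitrarily high degree on $X$, Cauchy--Schwarz is no longer sharp enough, and any real attack must exploit the genericity of the point configuration in a substantive way, typically through specialization/degeneration techniques as in the work of Ciliberto--Miranda, Harbourne--Ro\'e, Dumnicki and others. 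This last step is precisely the content of Nagata's original conjecture, which remains open; thus a complete proof along the lines sketched here is currently out of reach.
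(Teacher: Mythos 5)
The statement you are asked to prove is presented in the paper as a \emph{conjecture}, not a theorem: it is precisely the classical Nagata conjecture rewritten in the language of Zariski chambers, and the paper offers no proof of it. The only rigorous result attached to this formulation is the much weaker Proposition~\ref{prop:only 2 chambers}, showing that the ray crosses \emph{at most two} Zariski chambers. Your proposal correctly identifies this situation: the translation of ``one chamber'' into the inequality $a\sqrt{n}\geq\sum b_i$ for every irreducible plane curve of degree $a$ with multiplicities $b_i$ at the $n$ general points is the right reduction, and your conclusion that a complete proof is out of reach is the honest and correct one.

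There is, however, a genuine error in the middle of your argument: you claim that for $n\leq 9$ the conjecture can be ``proved'' by checking the finitely many $(-1)$-classes. First, for $n=9$ there are infinitely many $(-1)$-curves, so the enumeration does not terminate. More seriously, the conjecture as literally stated is \emph{false} for $n=2,3,5,6,7,8$. For instance, for $n=2$ the negative curve $L-E_1-E_2$ gives $t_0=1/2$, yet the ray stays big until $t=1/\sqrt{2}>1/2$, so it passes through two chambers --- one can read this off directly from Example~\ref{eg:two_points}. Nagata's inequality is a theorem when $n$ is a perfect square and conjectural for $n\geq 10$; for the remaining small non-square $n$ it simply fails, which is why the conjecture should carry the standing hypothesis $n\geq 10$ (or $n$ a perfect square). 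Once this range is corrected, your assessment of the genuinely open core for $n\geq 10$ is accurate.
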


Surprisingly enough, it is not difficult to show that the ray $H-t\E$ in the Nagata situation only goes through at most two chambers before leaving $\oEff(X)$.

\begin{proposition}[\cite{DKMS}*{Proposition 2.5}]\label{prop:only 2 chambers}
	Let $f:X\to\PP^2$ be the blowup of $\PP^2$ at $n$ general points with
	exceptional divisor $\E\deq E_1+\ldots+E_n$.
	Let $H$ be the pull-back of the hyperplane bundle, then the ray $R=H-t\E$ meets at most two Zariski chambers on $X$.
\end{proposition}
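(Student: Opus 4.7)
The plan is to exploit the $S_n$-symmetry of the setup. The symmetric group $S_n$ acts on $X$ by permuting the blown-up points, and this action fixes both $H$ and $\E$, hence every divisor along $R$. By uniqueness of the Zariski decomposition (Theorem~\ref{thm:ZD}), both $P_t$ and $N_t$ are $S_n$-invariant, so their numerical classes lie in the two-dimensional invariant subspace $V := \langle H, \E \rangle \subseteq N^1(X)_\RR$. All variation of the Zariski decomposition along $R$ is therefore governed by planar computations in $V$.

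Set $t_0 := \sup\{\, t \geq 0 \,:\, H - t\E \in \Nef(X)\,\}$, so that $[0, t_0]$ contributes the nef chamber. The goal is to show that the support of $N_t$ is constant on the complementary interval $(t_0, \mu]$, where $\mu$ is the point at which $R$ exits $\oEff(X)$. Let $\mathcal{O}_0 := \Null(H - t_0\E)$ be the (nonempty, $S_n$-invariant) set of irreducible curves orthogonal to $H - t_0\E$. I would write down the candidate Zariski decomposition explicitly, namely
\[
P_t \ := \ \frac{1 - n t_0 t}{1 - n t_0^2}\,(H - t_0\E)\ , \qquad N_t \ := \ (H - t\E) - P_t\ ,
\]
and a direct computation shows $[N_t]$ is a positive multiple of the class $H - \E/(n t_0)$ in $V$.

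The main step is to verify that $(P_t,N_t)$ satisfies the four Zariski axioms for all $t \in [t_0, 1/(nt_0)]$; uniqueness will then identify it as the genuine decomposition. Nefness of $P_t$ is automatic (non-negative multiple of the nef class $H - t_0\E$), and $P_t \cdot C = 0$ for $C \in \mathcal{O}_0$ is forced by $C \in \Null(H - t_0\E)$. To see that $N_t$ is effective and supported on $\mathcal{O}_0$, observe that for any $S_n$-orbit $O \subseteq \mathcal{O}_0$ the orbit sum $\sum_{C \in O} C$ has class proportional to $H - \E/(nt_0)$ (by averaging multiplicities over $O$ and using $d_C/m_C = t_0$), so any positive multiple of this class admits an effective representative supported on $\mathcal{O}_0$. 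The main obstacle I anticipate is the negative-definiteness of the intersection form on $\mathcal{O}_0$: this will follow from the Hodge index theorem applied to $(H - t_0\E)^\perp$, provided $H - t_0\E$ is big, i.e.\ provided $t_0 < 1/\sqrt{n}$.

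It remains to identify $\mu$ and to handle the remaining case. Since $H - t_0\E$ is nef, pairing with any pseudo-effective class yields a non-negative number; in particular $(H - t\E)\cdot(H - t_0\E) = 1 - n t t_0 \geq 0$ forces $\mu \leq 1/(n t_0)$. Combined with $P_{1/(nt_0)} = 0$, which makes $H - \tfrac{1}{nt_0}\E = N_{1/(nt_0)}$ effective, this yields $\mu = 1/(n t_0)$. The degenerate case $t_0 = 1/\sqrt{n}$ is subsumed by the same pairing: it gives $\mu = t_0$, so $R$ leaves $\Nef(X)$ and $\oEff(X)$ simultaneously and traverses only the nef chamber. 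In every case $R$ meets at most two Zariski chambers.
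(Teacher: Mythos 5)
Your overall strategy is the same as the paper's: along the sub-ray $t\in[t_0,\mu]$ you write down an explicit candidate Zariski decomposition whose positive part is a multiple of the nef boundary class $H-t_0\E$ and whose negative part is a multiple of a symmetric effective cycle orthogonal to $H-t_0\E$, then you invoke the Hodge Index Theorem for negative definiteness and uniqueness of the Zariski decomposition to conclude. Your formula $P_t = \frac{1-nt_0t}{1-nt_0^2}(H-t_0\E)$ is exactly the paper's positive part $\frac{\mu-t}{\mu-\eps}(H-\eps\E)$ after substituting $\eps=t_0$, $\mu=1/(nt_0)$, and the treatment of the degenerate case $t_0 = 1/\sqrt n$ also matches.

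There is, however, a genuine gap at the very start. The claim that \emph{$S_n$ acts on $X$ by permuting the blown-up points} is false: for $n$ general points (and already for $n\geq 4$, whose $\mathrm{PGL}_3$-stabiliser is trivial) there is no automorphism of $\PP^2$ permuting them, hence no automorphism of $X$ permuting the $E_i$. Consequently the deduction that $P_t$, $N_t$ are ``$S_n$-invariant divisors'', and the notions of ``$S_n$-invariant set of curves'' and ``$S_n$-orbit $O\subseteq\mathcal O_0$'', are not available as stated. What you actually need---and what makes the orbit-sum construction work---is the weaker but nontrivial fact that, because the points are general, the \emph{set of numerical classes of irreducible curves} on $X$ is preserved by the lattice involutions permuting the $E_i$: if $dH-\sum m_iE_i$ is the class of an irreducible curve in $\Null(H-t_0\E)$, so is $dH-\sum m_{\sigma(i)}E_i$ for every $\sigma\in S_n$. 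This is a monodromy/semicontinuity statement about varying the $n$ points, and it is precisely what the paper imports as \cite{DKMS}*{Lemma 2.2} when it ``symmetrizes'' the Seshadri-computing curve $C$ to form $\Gamma$. Once you replace the nonexistent group action by that lemma, the rest of your argument (existence of the symmetric effective representative supported on $\mathcal{O}_0$, negative definiteness on $(H-t_0\E)^\perp$, identification of $\mu=1/(nt_0)$, and the degenerate case) is correct and in effect reproduces the paper's proof.
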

\begin{proof}
If the multi-point Seshadri constant $\eps \deq \eps(\calo_{\P^2}(1);n)$ is maximal, that is, equal to $\frac1{\sqrt{n}}$, then the ray crosses only the nef cone.

If $\eps$ is submaximal, then there is a curve $C=dH-\sum m_iE_i$
computing this Seshadri constant, i.e. $\eps=\frac{d}{\sum m_i}$.

If this curve is homogeneous,
i.e. $m=m_1=\dots=m_s$, then we put $\Gamma\deq C$.
Otherwise we define $\Gamma=\gamma H-M\cdot\E$
as a symmetrization of $C$ as in  \cite{DKMS}*{Lemma 2.2}, that is,  we sum the curves $C_k=dH-\sum m_{\sigma^k(i)}E_i$
over a length $n$ cycle $\sigma\in\Sigma_n$.

Let $\mu=\frac{M}{\gamma}$. Note that $\mu=\mu(L;\E)$.
Indeed, it is in any case $\mu\leq\mu(L;\E)$ by the construction
of $\Gamma$ and a strict inequality would contradict the fact that
$\Gamma$ computes the Seshadri constant.

Now, we claim that
$$H-t\E=\frac{\mu-t}{\mu-\eps}(H-\eps\E)+\frac{t-\eps}{\gamma(\mu-\eps)}\Gamma$$
is the Zariski decomposition of $H-t\E$ for $\eps\leq\, t\leq\, \mu$. Indeed,
$H-\eps\E$ is nef by definition and it is orthogonal to all components of $\Gamma$,
which together with the Index Theorem implies that the intersection matrix of $\Gamma$
is negative definite.

Thus the ray $R$ after leaving the nef chamber remains in a single Zariski chamber.
\end{proof}

At the other extreme, one can quickly come up with examples with rays  meeting a maximal number of chambers.

\begin{example}[\cite{DKMS}*{Example 2.6}]
	Keeping the notation from Proposition \ref{prop:only 2 chambers}
	let $L=(\frac{n(n+1)}{2}+1)H-E_1-2E_2-\ldots-nE_n$. Then $L$ is an ample divisor
	on $X$ and the ray $R=L+\lambda\E$ crosses $s+1=\rho(X)$ Zariski chambers.
	Indeed, with $\lambda$ growing, exceptional divisors $E_1, E_2,\ldots, E_n$
	join the support of the Zariski decomposition of $L-\lambda\E$ one by one.
 \end{example}

Although the rays $H-t\E$ in the Nagata example are not of the form $L-tC$ for an irreducible curve $C$, together with 
Theorem~\ref{thm:LM surfaces}  they inspire the following definition.

\begin{definition}[Numerical Newton--Okounkov bodies on surfaces]
	Let $X$ be a smooth projective surface, $L$ a big $\RR$-divisor, $\alpha\in \NR$ an arbitrary class. We set 
	\[
	\Delta^{\text{num}}_\alpha(L) \deq \st{(t,y)\in\RR^2\mid 0\leq t\leq \mu_\alpha(L)\, ,\, 0\leq y\leq  (P_{L-t\alpha}\cdot\alpha)}\ ,
	\]
	where $\mu_\alpha(L) \deq \sup\st{t>0\mid L-t\alpha\text{ is big}}$. 
\end{definition}

\begin{exer} With notation as above,
	\begin{enumerate}
		\item if $-\alpha$ is not pseudo-effective, then $\Delta^{\text{num}}_\alpha(L)$ is a convex polygon.
		\item The construction is homogeneous respects numerical equivalence of divisors.
		\item Explore the continuity properties of $\Delta^{\text{num}}_\alpha(L)$ in $\alpha$ and $L$.
		\item For which $\alpha$ is it true that $\vol{\RR^2}{\Delta^{\text{num}}_\alpha(L) }=2\cdot \vol{X}{L}$? 
	\end{enumerate}
\end{exer}

\subsection{Rationality questions regarding Newton--Okounkov polygons}

Next, we look at the question what kind of a number can   $\mu_C(L)$ can be. Let us see first that it is not always rational. 

\begin{eg}(Taken from \cite{LM})
Let $X=Y\times Y$ be an abelian surface where $Y$ is an elliptic curve without complex multiplicaton, then the classes $F_1,F_2$ of the fibres of the two projections and the diagonal form a basis of  $N^1(X)_\RR$. The nef and pseudo-effective cones coincide, and are described by 
\[
\oEff(X) \equ \st{\alpha\in N^1(X)\mid (\alpha^2) \geq 0\, ,\, (\alpha\cdot H)\geq 0 }
\]
for some (or, equivalently, all) ample classes $H$. The source of the irrationality is the fact that equalities of the form $((L-tC)^2)=0$ tend to have irrational roots. 

Let $L$ be an ample Cartier divisor, $C\subseteq X$ and ample curve, and $x\in X$ an arbitrary point. Then $\alpha(t)\equiv 0$, for $X$ has no negative curves at all. 

Then 
\[
\mu_C(L) \equ \text{ the smaller root of the polynomial $((L-tC)^2)$.}
\]
For most choices of $L$ and $C$, $\mu_C(L)\not\in\QQ$. Based on this information, one can quickly compute that 
\[
\beta(t) \equ (L\cdot C) - t(C^2)\ .  
\]
\end{eg}

\begin{thm}[\cite{KLM1}]
With notation as above, 
\begin{enumerate}
	\item $\mu_C(L)\in \QQ$ or it satisfies a quadratic equation over $\QQ$.
	\item If $\alpha\in\RR_{>0}$ satisfies a quadratic equation and it is the smaller of the two roots, then $\alpha=\mu_C(L)$ for suitable $X,L,C$. 
\end{enumerate}	
\end{thm}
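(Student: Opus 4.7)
The plan is to realize $\mu_C(L)$ as a root of a rational polynomial of degree at most two, read off from the variation of Zariski decomposition along the ray $L-tC$. By Theorem~\ref{thm:loc poly surfaces} the segment $\{L-tC\mid 0\leq t<\mu_C(L)\}$ lies in $\Bbig(X)$ and meets only finitely many Zariski chambers; let $\Sigma=\Sigma_P$ denote the last one and $\{E_1,\dots,E_k\}$ its associated set $\mathrm{Null}(P)$ of negative curves. For $L-tC\in\Sigma$ the negative part is $N_{L-tC}=\sum_i a_i(t)E_i$, with coefficients determined by the negative-definite rational linear system
\[
\sum_i a_i(t)(E_i\cdot E_j) \equ (L-tC)\cdot E_j,\qquad j=1,\dots,k.
\]
Exactly as in the proof of Proposition~\ref{prop:key result}, each $a_i(t)$ comes out affine linear in $t$ with rational coefficients. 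Hence $P_{L-tC}=L-tC-N_{L-tC}$ is a rational affine linear function of $t$, so $\vol_X(L-tC)=(P_{L-tC}^2)$ is a non-constant polynomial in $t$ of degree at most two with rational coefficients on the $t$-interval parametrizing $\Sigma$. Continuity of $\vol_X$ on $N^1(X)_{\RR}$ (Theorem~\ref{thm:volume}) together with the vanishing of $\vol_X$ on the boundary of $\Bbig(X)$ then forces this polynomial to vanish at $t=\mu_C(L)$, so $\mu_C(L)$ is rational or a quadratic irrational over $\QQ$.

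\textbf{Part (2).} For the realization I plan to work on a surface where no Zariski decomposition is visible, so that $\mu_C(L)$ is directly the smaller positive root of $((L-tC)^2)=0$. Take $X=Y\times Y$ with $Y$ an elliptic curve without complex multiplication, as in the example preceding the theorem: $X$ carries no negative curves, whence $\oEff(X)=\Nef(X)$ and $\Bbig(X)$ is the interior of the forward positive cone of the intersection form. For ample $L,C\in N^1(X)_{\QQ}$ the ray $L-tC$ stays nef until its self-intersection reaches zero, whence
\[
\mu_C(L) \equ \text{smaller positive root of}\ \ (C^2)\,t^2-2(L\cdot C)\,t+(L^2).
\]
Given $\alpha>0$ with $a\alpha^2+b\alpha+c=0$ and conjugate $\alpha'\geq\alpha>0$, Vieta's formulas (after normalizing $a>0$) force $c>0$ and $b\leq 0$, so it will suffice to realize $\bigl((L^2),\ 2(L\cdot C),\ (C^2)\bigr)$ in the ratio $c:-b:a$ by ample rational classes. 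The N\'eron--Severi lattice of $Y\times Y$ has rank three with basis $(F_1,F_2,\Delta)$ and explicit intersection pairing of signature $(1,2)$; the six coefficients of $L$ and $C$ then provide enough freedom to match three intersection numbers, and a direct parametric computation will produce ample integral classes realizing any admissible triple (with $X$ permitted to vary among related abelian surfaces when the square class of $b^2-4ac$ demands a different quadratic field).

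\textbf{Main obstacle.} In (1) the only delicate point is that the rational polynomial computed on the interior of the last chamber extends continuously to the boundary value $t=\mu_C(L)$ and vanishes there, which is immediate from the continuity of the volume function. The substance of the proof lies in the realization step of (2): matching prescribed intersection numbers by positive rational classes is a Diophantine question, best handled by letting $X$ depend on the square class of $b^2-4ac$ so that every quadratic field $\QQ(\sqrt d)$ appears as the field containing some $\mu_C(L)$ for a suitable triple $(X,L,C)$.
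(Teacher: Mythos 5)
The paper gives no proof of either part, so there is nothing to compare against directly; your proposal should be judged on its own terms.

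Your argument for part (1) is correct. The one thing to fix is attribution: the finiteness of the set of Zariski chambers met by the half-open segment $\{L-tC \mid 0\leq t<\mu_C(L)\}$ does not follow from the local finiteness of Theorem~\ref{thm:loc poly surfaces}, because the segment accumulates on a boundary point of $\Bbig(X)$, precisely where local finiteness is silent. That finiteness is the content of Proposition~\ref{prop:key result}, which exploits Fujita's Zariski decomposition at the pseudo-effective endpoint $L-\mu_C(L)C$ to obtain a finite list of curves and a finite rational partition of the parameter interval. Once that is invoked, your chain of deductions is sound: on the last sub-interval, the negative part is an affine-linear function of $t$ with rational divisor coefficients (as you correctly note, the system $\sum_i a_i(t)(E_i\cdot E_j)=(L\cdot E_j)-t(C\cdot E_j)$ over the rational negative-definite Gram matrix gives this directly, even though the statement of Proposition~\ref{prop:key result} phrases constancy in terms of the $s$-parameter and the a priori irrational class $L-\mu_C(L)C$); hence $t\mapsto (P_{L-tC}^2)$ is a rational polynomial of degree at most $2$, and continuity of $\vol_X$ on all of $N^1(X)_\RR$ forces it to vanish at $t=\mu_C(L)$ while remaining positive on the open chamber, so it is non-zero of degree at least $1$.

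Part (2) has a genuine gap at its substantive step. Your reduction — realize $\bigl((L^2),\,2(L\cdot C),\,(C^2)\bigr)$ in the ratio $c:(-b):a$ by ample rational classes on a surface with $\oEff=\Nef=\overline{\Amp}$ — is the right formulation, and the sign and discriminant checks via Vieta and Hodge index are correct. But ``a direct parametric computation will produce ample integral classes realizing any admissible triple'' is the whole content of (2), and it is not settled by counting six coefficients against two ratio constraints. What is being asked is which binary Gram matrices $\bigl(\begin{smallmatrix}(L^2)&(L\cdot C)\\ (L\cdot C)&(C^2)\end{smallmatrix}\bigr)$ arise, up to positive rational scaling, as restrictions of the fixed rational ternary form $2(xy+yz+zx)$ on $N^1(Y\times Y)$ to rank-two subspaces meeting the ample cone. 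This is a representability question for quadratic forms over $\QQ$, controlled by local invariants, not by dimension; in particular, even the square class of $(L\cdot C)^2-(L^2)(C^2)$ is constrained by the discriminant and Hasse data of the ambient lattice via $\det(Q|_W)\cdot\det(Q|_{W^\perp})\equiv\det Q$ modulo squares. Your hedge of letting $X$ ``vary among related abelian surfaces'' acknowledges that a single $X$ may not suffice, but it does not discharge the obligation: without an explicit recipe producing $(X,L,C)$ from the data $(a,b,c)$ — say a family of abelian surfaces whose N\'eron--Severi lattices run through the requisite discriminants together with concrete ample classes — the realization remains an assertion rather than a proof. That recipe is what [KLM1] has to supply, and it needs to appear here as well.
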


\begin{proof}
We only prove the first statement and refer the reader for a verification of the second one to \cite{KLM1}. 
\end{proof}

\begin{exer}
Find a smooth projective surface $X$, an ample Cartier divisor $L$, and a curve $C$, such that $\mu_C(L)\not\in\QQ$, and 
$\dyl$ has an inner breakpoint. 	
\end{exer}

Although the following rationality result can be deduced from Corollary~\ref{cor:rational polygon}, it is educational to see a stand-alone proof.

\begin{prop}
All Okounkov bodies of big divisors on the nine-point blow-up of the
projective plane are rational polygones.
\end{prop}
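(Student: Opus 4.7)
By Corollary~\ref{cor:rational polygon}, the claim is equivalent to showing that $\mu_C(L)\in\QQ$ for every big $\QQ$-divisor $L$ and every admissible flag $\ybul = (C,x)$ on the nine-point blow-up $X$. The plan is to analyze the last Zariski chamber of the ray $\{L - tC\}_{t\geq 0}$. By Proposition~\ref{prop:key result}, on this last chamber $t \in [p_k,\mu_C(L)]$ the positive part takes the affine form $P_{L-tC} = \alpha + t\beta$ with $\alpha,\beta \in N^1(X)_\QQ$, so the volume function
\[
\vol(L-tC) \equ (P_{L-tC})^2 \equ \alpha^2 + 2t(\alpha\cdot\beta) + t^2(\beta^2)
\]
is a polynomial of degree at most two in $t$ with rational coefficients, and $\mu_C(L)$ is its root. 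Thus rationality of $\mu_C(L)$ reduces to the existence of a rational root of this specific quadratic.

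The distinguishing feature of the nine-point blow-up that I would exploit is that the canonical class $K_X$ is an integral isotropic vector, $K_X^2 = 0$, and (for nine general points) $-K_X$ is nef, lying on the boundary of the nef cone. My proposed strategy is to establish the following dichotomy on every last Zariski chamber: either (a) the leading coefficient $\beta^2$ vanishes, in which case $\vol(L-tC)$ is linear in $t$ and $\mu_C(L) = -\alpha^2/(2(\alpha\cdot\beta)) \in \QQ$ is visibly rational, or (b) the rational $2$-plane $V := \text{span}_\QQ(\alpha,\beta) \subset N^1(X)_\QQ$ contains a rational isotropic vector, in which case the induced quadratic form on $V$ splits into a product of rational linear factors, again forcing $\mu_C(L)\in\QQ$. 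The sample computations in Subsection~3.2 and in the examples of the previous subsection manifest exactly this dichotomy, with the linear case being by far the most common.

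The technical heart --- and the main obstacle --- is justifying this dichotomy rigorously. A natural route is to use the explicit description $\beta = -(C - \pi(C))$, where $\pi$ denotes the orthogonal projection of $C$ onto the span of the irreducible components $N_1,\ldots,N_k$ of the negative part in the last chamber; equivalently, both $\alpha$ and $\beta$ lie in $V \subset \text{span}(N_1,\ldots,N_k)^\perp$, on which the intersection form has signature $(1,9-k)$. One then has to combine the Zariski decomposition orthogonality relations $P_{L-tC}\cdot N_i = 0$ with the structure of the unimodular lattice $N^1(X) \cong \text{II}_{1,9}$ and the special position of $K_X$ inside it. Via a Hodge-index argument on the $2$-plane spanned by $P(\mu)$ and a suitable rational isotropic vector one expects to force rationality of $\mu_C(L)$, with the two cases (a) and (b) above corresponding to whether $\beta$ happens to lie in the null cone of $V$ or not.
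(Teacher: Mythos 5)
Your reduction via Corollary~\ref{cor:rational polygon} to the rationality of $\mu_C(L)$ is the correct starting point, and it is true that on the last Zariski chamber the positive part $P_{L-tC}$ is affine in $t$ with rational coefficients $\alpha,\beta\in N^1(X)_\QQ$ (on that chamber the negative part is pinned down by the rational linear system $N_{L-tC}\cdot C_j = (L-tC)\cdot C_j$ over a fixed set of negative components $C_j$), so that $\mu_C(L)$ is a root of a quadratic with rational coefficients. The dichotomy you then announce --- that either $\beta^2=0$ or the rational plane $V=\operatorname{span}_\QQ(\alpha,\beta)$ contains a rational isotropic vector --- is, however, precisely the point that needs proof, and you leave it open, offering only a sketch that invokes ``a suitable rational isotropic vector'' and the lattice $\mathrm{II}_{1,9}$ without ever saying which vector or why it must land in $V$.

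The concrete facts required to close the gap are exactly the ones the paper's much shorter argument is built on: on the nine-point blow-up every nef but not big $\RR$-divisor class is a non-negative multiple of the \emph{integral} class $-K_X$, and $-K_X$ meets every negative curve strictly positively. Given these, the paper bypasses the volume quadratic entirely and instead examines the Zariski decomposition $L-\mu C = P+N$ of the limiting divisor. Either $N=0$, so $P=b(-K_X)$ with $b\in\QQ$ because $L$, $C$, $K_X$ are all rational; or $P=0$, so $L-\mu C$ lies in the rational subspace spanned by the negative components; or both are nonzero, which is impossible since $P=b(-K_X)$ cannot satisfy $P\cdot N_i=0$ when $(-K_X)\cdot N_i>0$. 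In your language: in the first case $-K_X$ is the rational isotropic vector in $V$ (it lies in $\operatorname{span}_\QQ(\alpha,\beta)$ because it is a rational class in $\operatorname{span}_\RR(\alpha,\beta)$), the second case is $\alpha+\mu\beta=0$ giving $\mu\in\QQ$ directly, and the third never occurs. Nothing short of supplying these two properties of $-K_X$ will establish your dichotomy --- and once you do, the quadratic-form scaffolding becomes unnecessary, since the three-way case split on $(P,N)$ already finishes the proof.
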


\begin{proof}
Let $L$ be a big divisor on the nine-point blow-up $X$, $C$ an
irreducible curve on $X$, we take the point in the flag to be a very general
point on $C$.

With this setup we want to show that $\Delta\deq\Delta_C(L)$ is a rational
polygone. By the results of \cite{KLM1} $\Delta$ is certainly a
polygone, which is rational if and only if $\mu\deq\mu_C(L)$ is rational. This is
what we will prove now.

Let
\[
L-\mu C = P + N = P + \sum_{i=1}^{r}a_iE_i
\]
be the Zariski decomposition of the pseudo-effective (but not big!) divisor
$L-\mu C$.

\noindent
\textsc{Case 1:}  Suppose $N=0$. Then $L-\mu C$ is a nef but not big divisor, hence a
non-negative real multiple of $-K_X$: $P=b(-K_X)$. However, since $L$,$C$, and
$-K_X$ are all rational (in fact, integral) points in the N\'eron--Severi
space, $b$ must be rational as well, hence so i $\mu$.

Note: here we make very heavy use of the special nature of the situation: more
precisely , that we know that the only nef but not big divisor is integral
(namely $-K_X$). \\

\noindent\textsc{Case 2:} Suppose $P=0$, hence $L-\mu C=\sum_{i=1}a_iE_i$. In this case
$\mu$ is the 'time' when $L-tC$ hits the rational plane spanned by the
integral vectors $E_1,\dots,E_r$, hence it must be rational again. \\

\noindent\textsc{Case 3:}  Suppose that none of $P$ and $N$ are zero. Then again, since $L-\mu C$
is not big, neither is $P$, so again, $P=b(-K_X)$ for some \emph{positive} real
number $b$. However, $-K_X$ is \emph{not  perpendicular to any negative curve}
on $X$ (it has positive intersection with all of them), hence $N$ must be
zero, a contradiction.
\end{proof}  

\begin{prop}\label{prop:surface}
Let $X$ be a smooth projective surface and $L$ be a big and nef line bundle on $X$.  Then there exists an irreducible curve $Y_1\subseteq X$ such that the Okounkov body $\Delta_{Y_\bullet}(L)$ is a rational simplex, where $Y_2$ is a general point on the curve $Y_1$.
\end{prop}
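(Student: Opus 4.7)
The plan is to choose $Y_1$ to be an irreducible smooth curve $C$ linearly equivalent to some positive integer multiple of $L$, so that the segment $\{L-tC:t\in[0,\mu_C(L)]\}$ lies on the ray $\RR_{\geq 0}\cdot[L]\subseteq N^1(X)_\RR$ throughout. Along this segment the divisors remain nef, the Zariski decomposition is trivial, and by Theorem~\ref{thm:LM surfaces} the body $\Delta_{Y_\bullet}(L)$ collapses into a single rational triangle.

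First I would show that $L$ is semiample. The Hodge index theorem applied to the big and nef class $L$ guarantees that the irreducible curves $\Gamma$ with $(L\cdot\Gamma)=0$ span a negative definite sublattice of $N^1(X)_\RR$, and hence are finite in number. The classical contraction theorem for negative definite configurations on a smooth projective surface then yields a birational morphism to a normal projective surface $Y$ on which $L$ descends to an ample $\QQ$-Cartier class. Consequently, for some $m\geq 1$ sufficiently large and divisible, $mL$ is globally generated, the induced morphism $\varphi\deq\varphi_{|mL|}\colon X\to Y\subseteq\PP^N$ is birational onto its image, $mL\sim\varphi^*H$ for some very ample $H$ on $Y$, and $\varphi$ contracts only finitely many curves in $X$ to a finite subset $\Sigma\subseteq Y$.

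Next I would construct $C$ via Bertini: a general hyperplane section $H'\subseteq Y$ is smooth and irreducible, and a further generic choice arranges $H'\cap\Sigma=\emptyset$. Then $C\deq\varphi^{-1}(H')$ is a smooth irreducible curve on $X$ with $C\sim mL$, since $\varphi$ restricts to an isomorphism $C\to H'$. Taking $Y_1=C$ and $Y_2=x\in C$ an arbitrary point, Theorem~\ref{thm:LM surfaces} applies: $\nu=0$ since $L$ is nef; $\mu_C(L)=1/m$ since $L-tC\equiv(1-tm)L$ is big exactly when $t<1/m$; and for every $t\in[0,1/m]$ the nef class $L-tC$ has trivial Zariski decomposition, so that $P_{L-tC}\equiv(1-tm)L$ while $N_{L-tC}$ is the zero divisor. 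Hence $\alpha(t)\equiv 0$ and $\beta(t)=((1-tm)L\cdot C)=m(1-tm)(L^2)$, so
\[
\Delta_{Y_\bullet}(L)\equ\st{(t,y)\in\RR^2\mid 0\leq t\leq 1/m,\ 0\leq y\leq m(1-tm)(L^2)}
\]
is the triangle with rational vertices $(0,0)$, $(0,m(L^2))$ and $(1/m,0)$, i.e.\ a rational $2$-simplex (the volume check $\tfrac12\cdot\tfrac1m\cdot m(L^2)=\tfrac12(L^2)$ is consistent with Theorem~\ref{thm:NO formal}).

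The hard part is the semiampleness input, which is classical but not developed in the excerpt. If one insists on avoiding it, the alternative is to exhibit directly an irreducible curve in some positive rational multiple of $[L]\in N^1(X)_\QQ$---for instance by combining Kodaira's lemma $mL\sim A+E$ with $A$ ample and $E$ effective with a Bertini--Kleiman argument applied to the mobile part of $|mL|$---but once such a $C\sim rL$ is in hand the remainder is forced, because the segment from $L$ to the origin in $\Bbig(X)$ lies on a single ray and therefore crosses no Zariski chamber wall in the sense of Proposition~\ref{prop:key result}.
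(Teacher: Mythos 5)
There is a genuine gap at the very first step: a big and nef line bundle on a smooth projective surface need \emph{not} be semi-ample. Your argument reasons that the $L$-trivial curves form a negative definite configuration and hence can be contracted, and then asserts that $L$ descends to an ample $\QQ$-Cartier class on the target. The contraction $\varphi\colon X\to Y$ does exist, but $L$ descending (even $\QQ$-Cartier) requires a multiple $mL$ to become \emph{trivial}, not merely numerically trivial, on a neighbourhood of the exceptional curves, and this is precisely what can fail. Zariski's classical example (see \cite{PAGI}*{Example 2.3.3}): blow up $\PP^2$ in $12$ very general points of a smooth cubic $C_0$, let $C$ be the proper transform of $C_0$ (a $(-3)$-curve), and set $L=4H-E_1-\cdots-E_{12}$. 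Then $L$ is big and nef with $(L\cdot C)=0$, but $L|_C$ has degree $0$ and is non-torsion in $\Pic^0(C)$, so no multiple of $L$ is base-point free; $C$ is a fixed component of every $|mL|$. In the same example your fallback also fails: since $\Pic(X)$ is free of finite rank, numerical and linear equivalence coincide, so any effective divisor numerically proportional to $L$ lies in some $|mL|$ and therefore contains $C$ as a component; no irreducible curve lies on the ray $\RR_{>0}[L]$.

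The paper's proof handles exactly this by \emph{not} trying to find a curve in the class of $mL$. It lets $C_1,\dots,C_r$ be the irreducible curves in $\mathbf{B}(L)$, notes $(L\cdot C_i)=0$ (since $\mathbf{B}(L)\subseteq\mathbf{B}_+(L)=\Null(L)$) so that the $C_i$ span a negative definite lattice, and invokes the Zariski--Fujita theorem to find $m,a_i$ with $mL-\sum a_iC_i$ base-point free; then $Y_1$ is chosen as a general irreducible member of $|mL-\sum a_iC_i|$. The crucial computation is that along $L-tY_1\sim(1-mt)L+t\sum a_iC_i$ the Zariski decomposition is $P_t=(1-mt)L$, $N_t=t\sum a_iC_i$ --- so the negative part grows linearly in $t$ rather than remaining zero --- and one stays in a single Zariski chamber throughout $[0,1/m]$. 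Choosing $Y_2$ outside $\mathbf{B}(L)$ then gives $\alpha\equiv 0$, $\beta(t)=(1-mt)(Y_1\cdot L)$, a rational simplex. In the semi-ample case your Bertini argument is fine and agrees with the paper (with $r=0$), but the non-semi-ample case is where the real content lies and where your proof breaks down.
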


\begin{proof}
If $L$ is semi-ample, this follows from Theorem~\ref{thm:rtl poly}.  We can therefore assume that $L$ is not semi-ample, i.e. the stable base locus $\textbf{B}(L)$ is nonempty.  
Let $C_1,\ldots ,C_r$ be the irreducible curves contained in $\textbf{B}(L)$.  Then there exist positive integers $m,a_1,\ldots ,a_r\in\ZZ$ such that the base locus of the divisor $mL-\sum_{i=1}^{i=r}a_iC_i$ 
is a finite set.  By the Zariski-Fujita theorem \cite{PAGI}*{Remark 2.1.32} this divisor is semi-ample.  Consequently, taking large enough multiples, we can assume that $mL-\sum_{i=1}^{i=r}a_iC_i$ is a base point-free divisor and $\textbf{B}(L)=C_1\cup\ldots \cup C_r$.  Taking this into account we choose $Y_1\in |mL-\sum_{i=1}^{i=r}a_iC_i|$ to be an irreducible curve.

The matrix $||(C_i.C_j)||_{i,j}$ is negative-definite.  To see this, first observe that $(L.C_i)=0$ for all $i=1,\ldots ,r$, because $\mathbf{B}(L)\subseteq\mathbf{B}_+(L)=\textup{Null}(L)$.  (The inclusion follows from \cite{PAGII}*{Definition 10.3.2}, and the equality from 
\cite{PAGII}*{Theorem 10.3.5}.)  Therefore any linear combination $D=\sum b_i C_i$ has $(L.D)=0$.  Since $(L^2)>0$ (as $L$ is big and nef), the Hodge Index Theorem implies $(D^2)<0$, which shows the matrix is negative-definite.  It follows that for any $b_i\in\mathbb{R}_+$, the divisor $\sum b_iC_i$ is pseudo-effective but not big.

Now consider the family of divisors $D_t = L-tY_1$ for all $t\geq 0$.  We claim that $D_t$ is effective iff $0\leq t\leq 1/m$. For this, notice that 
\[
 D_t \equ L-tY_1 \sim L-t(mL-\sum_{i=1}^{r}a_iC_i) \equ (1-mt)L + t\cdot\sum_{i=1}^{r}a_iC_i\ .
\]
By this description $D_t=L-tY_1$ is effective for $0\leq t\leq 1/m$. If $t>1/m$, set $\epsilon=t-1/m$. Since $(L\cdot C_i)=0$ for all $i=1,\ldots , r$, and $(L^2)>0$, then the intersection number
\[
(D_t\cdot L) \equ ( (-\epsilon L+ (1+\epsilon)\sum_{i=1}^{r}a_iC_i)\cdot L) \equ -\epsilon(L^2) < 0\ .
\]
Since $L$ is nef, this implies that $D_t$ is not effective when $t>1/m$.

 Above, we have seen that $D_t\sim (1-mt)L + t\cdot\sum_{i=1}^{r}a_iC_i$. Thus the divisors $P_t \equ (1-mt)L$ and $N_t \equ t\sum_{i=1}^{r}a_iC_i$ form the Zariski decomposition of $D_t$ for all $0\leq t\leq 1/m$. This follows since $D_t = P_t+N_t$, $(P_t\cdot N_t)=0$, $P_t$ is nef (since $L$ is), $N_t$ is a negative definite cycle, and the uniqueness of Zariski decompositions. This and the claim above say that the divisor $D_t=L-tY_1$ stays in the same Zariski chamber for all $0\leq t \leq 1/m$. 

Finally, we know by \cite{LM}*{Theorem 6.4} that if we choose a smooth point $x\in Y_1$, we have the following description of the Okounkov body:
\[
\Delta_{(Y_1 ,x)}(L) \ = \ \{ (t,y)\in\RR^2 \ | \ a\leq t\leq \mu_{Y_1}(L), \textup{ and } \alpha (t)\leq y\leq \beta (t) \ \} .
\]
The choice of $Y_1$ forces $a=0$ and the discussion in the previous paragraph imply that $\mu_{Y_1}(L)=1/m$. The proof of \cite{LM}{Theorem 6.4} says that $\alpha(t)=\textup{ord}_x(N_t|_{Y_1})$ and $\beta(t)=\textup{ord}_x(N_t|_{Y_1})+(Y_1 .P_t)$.  By choosing $x\notin \textbf{B}(L)$, which is always possible since the curve $Y_1$ moves in base point free linear series, we obtain $\alpha(t)=0$ and $\beta(t)=(1-mt)(Y_1 .L)$. Thus $\Delta_{Y_\bullet}(L)$ is a simplex. 
\end{proof}

\begin{cor}\label{cor:surface}
Let $X$ be a smooth projective surface, $D$ a big divisor on $X$. Then there exists an admissible flag $Y_\bullet$ with respect to which
$\Delta_{Y_\bullet}(L)$ is a rational simplex. 
\end{cor}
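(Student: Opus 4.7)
The plan is to reduce the statement to Proposition~\ref{prop:surface} via Zariski decomposition. First, I would write the Zariski decomposition $D \equ P_D + N_D$, with $P_D$ big and nef and $N_D$ the negative part (both a priori only $\QQ$-divisors), and choose an integer $k\geq 1$ such that $kP_D$ and $kN_D$ are integral Cartier divisors. Since Newton--Okounkov bodies scale under integer multiples by Theorem~\ref{thm:NO formal}$(3)$, it suffices to exhibit an admissible flag $\ybul$ for which $\nob{\ybul}{kD}$ is a rational simplex.

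Next, I would apply Proposition~\ref{prop:surface} to the big and nef integral divisor $kP_D$, producing an admissible flag $\ybul = (Y_1, Y_2)$ such that $\nob{\ybul}{kP_D}$ is a rational simplex. Recall from the construction there that $Y_1$ is chosen as an irreducible element of a base-point free linear system of the form $|\ell kP_D - \sum_j a_j C_j|$ (with $C_j$ the components of $\Bstable(kP_D)$), and $Y_2$ is a general point of $Y_1$. Since this linear system is base-point free and associated to a non-trivial class, it is positive-dimensional, so one may additionally require $Y_1$ to be an irreducible curve distinct from every prime component of $\Supp(N_D)$; and genericity of $Y_2 \in Y_1$ further ensures $Y_2 \notin \Supp(N_D)$.

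The final step is to verify $\nob{\ybul}{kD} \equ \nob{\ybul}{kP_D}$ for this flag. It is a standard property of the Zariski decomposition that for every $m\geq 1$, multiplication by the canonical section $s_{mkN_D}\in\HH{0}{X}{\sO_X(mkN_D)}$ induces an isomorphism $\HH{0}{X}{\sO_X(mkP_D)} \xrightarrow{\sim} \HH{0}{X}{\sO_X(mkD)}$ (this is the content of the proposition citing \cite{PAGI}*{Section 2.3.C} above). Because $Y_1$ is not a component of $N_D$ and $Y_2 \notin \Supp(N_D)$, we have $\nu_{\ybul}(s_{mkN_D}) \equ \origin$, and the additivity of $\nu_{\ybul}$ under tensor products of sections yields $\nu_{\ybul}(s \otimes s_{mkN_D}) \equ \nu_{\ybul}(s)$ for every $s\in\HH{0}{X}{\sO_X(mkP_D)}$. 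Taking closed convex hulls of normalized valuation vectors then gives $\nob{\ybul}{kD} \equ \nob{\ybul}{kP_D}$, which is a rational simplex.

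The only mild subtlety in this plan is the Bertini-style adjustment guaranteeing that the flag furnished by Proposition~\ref{prop:surface} for $kP_D$ is in suitably general position with respect to $\Supp(N_D)$; this is not a serious obstacle. Rescaling by $1/k$ then produces the desired rational simplex $\nob{\ybul}{D}$, completing the argument.
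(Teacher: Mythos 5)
Your proof is correct and follows the same overall strategy as the paper --- decompose $D=P_D+N_D$ by Zariski and then invoke Proposition~\ref{prop:surface} on the big and nef part --- but differs in how the final step (passing from $P_D$ back to $D$) is handled. The paper simply cites \cite{LS11}*{Corollary 2.2}, which states that for \emph{any} admissible flag $\ybul$ the body $\Delta_{\ybul}(D)$ is a rational translate of $\Delta_{\ybul}(P_D)$, and concludes at once. You replace this citation with a direct, self-contained argument: choose the flag in general position with respect to $\Supp N_D$ so that the canonical section $s_{mkN_D}$ has valuation vector $\origin$, and then the isomorphism $\HH{0}{X}{mkP_D}\xrightarrow{\sim}\HH{0}{X}{mkD}$ (the Proposition quoted from \cite{PAGI}*{Section 2.3.C}) is valuation-preserving, giving $\nob{\ybul}{kD}=\nob{\ybul}{kP_D}$ on the nose. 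The trade-off is clear: the paper's route is shorter and works for all flags (yielding the rational-translate statement), while yours is elementary, avoids the external reference, and produces the stronger conclusion of equality (translate by zero) at the cost of a flag constraint. One small remark: the constraint you flag as a ``mild Bertini-style subtlety'' --- that $Y_1$ not be a component of $\Supp N_D$ --- is in fact automatic, not an extra genericity condition. Each component $C$ of $N_D$ satisfies $(P_D\cdot C)=0$ by part (iii) of Theorem~\ref{thm:ZD}, whereas the curve $Y_1$ produced by the proof of Proposition~\ref{prop:surface} lies in a base-point-free system with $(P_D\cdot Y_1)>0$; so $Y_1$ can never coincide with a component of $N_D$, and only the choice of $Y_2\in Y_1\setminus\Supp N_D$ uses genericity.
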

\begin{proof}
Let $D=P+N$ be the Zariski decomposition of $D$. By \cite{LS11}{Corollary 2.2}, $\Delta_{Y_\bullet}(D)$ is a rational translate of $\Delta_{Y_\bullet}(P)$. By Proposition \ref{prop:surface} there exists a flag $Y_\bullet$ for which $\Delta_{Y_\bullet}(P)$ is a rational simplex, but then so is $\Delta_{Y_\bullet}(D)$ for the same
flag. 
\end{proof}

\begin{rmk}
If $L$ is big and nef, but not semi-ample, then $R(X,L)$ is not finitely generated. Proposition~\ref{prop:surface} yields a significant class of examples of line bundles that are not 
finitely generated, but possess Newton--Okounkov bodies that are rational polygones. Observe that the above proof is very surface-specific,  it is an interesting  question to construct 
big and nef, but non-finitely generated divisors with  rational polytopes as Okounkov bodies if the dimension of the underlying variety is at least three.
\end{rmk}

\begin{exer} $\star$ 
In the situation of Proposition~\ref{prop:surface}, set $Y\deq X\times \PP^1$, and $D\deq L\boxtimes \sO_{\PP^1}(1)$. Check if $R(Y,D)$ is finitely generated, and find flags $\ybul$ 
for which $\nob{\ybul}{D}$ is a rational polytope. 
\end{exer}

\subsection{Infinitesimal Newton--Okounkov polygons}
 
Infinitesimal Newton--Okounkov bodies have been treated in Subsections 2.5 and 2.6 (the original sources are \cites{KL14,KL15b}), here we will take a closer look at the 
infinitesimal theory in dimension two. First we set up notation. Let $X$ be a smooth projective surface, $x\in X$ an arbitrary point, $D$ a divisor on $X$, possibly with 
$\QQ$- or $\RR$-coefficients. Write $\pi\colon \tilde{X}\to X$ for the blowing up of $X$ at $x$, and write $E$ for the exceptional divisor of $\pi$. 

An \emph{infinitesimal Newton--Okounkov polygon} of   $D$ at $x\in X$ is a polygon of the form $\Delta_{(E,z)}(\pi^*D$), where $z\in E$ is an arbitrary point. 
As it turns out, one has more control over infinitesimal Newton--Okounkov polygons than in general. 

\begin{proposition}[\cite{KL14}*{Proposition 3.1}]\label{prop:propinf}
With notation as above, one has 
\begin{enumerate}
\item $\Delta_{(E,z)}(\pi^*D)\subseteq \Delta_{\mu'(D,x)}^{-1}$ for any $z\in E$;
\item there exist finitely many points $z_1,\ldots ,z_k\in E$ such that the polygon $\Delta_{(E,z)}(\pi^*D)$ is independent of 
$z\in E\setminus\{z_1,\ldots,z_k\}$, with  base  the whole line segment $[0,\mu']\times\{0\}$, 
\end{enumerate}
where $\mu'\deq \sup\st{t>0\mid \pi^*D-tE\text{ is big }}$. 
\end{proposition}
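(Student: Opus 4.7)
For part (i), the claim is a direct consequence of Proposition~\ref{prop:inverted}. Since $\dim X = 2$, any point $z\in E\cong \PP^1$ is trivially a linear subspace, hence $(E,z)$ qualifies as an infinitesimal flag over $x$. Applying Proposition~\ref{prop:inverted} yields $\Delta_{(E,z)}(\pi^*D)\subseteq \iss{\mu(D;x)}$, and a direct comparison of definitions gives $\mu(D;x)=\mu_E(\pi^*D)=\mu'(D,x)$, which is exactly the inclusion required.

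For part (ii), the starting point is the Lazarsfeld--Musta\c t\u a description (Theorem~\ref{thm:LM surfaces}) applied to the big divisor $\pi^*D$ on the surface $\tilde X$ with the flag $Y_\bullet=(E,z)$:
\[
\Delta_{(E,z)}(\pi^*D) \,=\, \{ (t,y)\in\RR^2 \mid \nu \leq t \leq \mu',\ \alpha_z(t)\leq y\leq \beta_z(t)\},
\]
where $\nu$ is the coefficient of $E$ in $N_{\pi^*D}$, $\alpha_z(t)=\ord_z(N_{\pi^*D-tE}|_E)$, and $\beta_z(t)-\alpha_z(t)=(P_{\pi^*D-tE}\cdot E)$. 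The crucial feature for us is that the width $\beta_z - \alpha_z$ does not involve $z$ at all, so only the lower envelope $\alpha_z$ depends on the choice of $z\in E$.

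To locate the finitely many exceptional points, we invoke Proposition~\ref{prop:key result}: as $t$ sweeps across $[\nu,\mu']$, the ray $\pi^*D - tE$ crosses only finitely many Zariski chambers, and across the entire interval the negative part $N_{\pi^*D-tE}$ is supported on a finite collection of irreducible curves $C_1,\ldots, C_r$, each distinct from $E$ (Proposition~\ref{prop:key result} explicitly excludes the subtracted curve from the support for $t>\nu$). Since each $C_i\neq E$ is irreducible, it meets $E$ in only finitely many points; taking $\{z_1,\ldots,z_k\}$ to be the union of all such intersections produces the desired finite exceptional set. For any $z\in E\setminus\{z_1,\ldots,z_k\}$, the point $z$ avoids every $C_i$, so $\alpha_z(t)\equiv 0$ on $[\nu,\mu']$, and the polygon becomes
\[
\Delta_{(E,z)}(\pi^*D) \,=\, \{(t,y)\mid \nu\leq t\leq \mu',\ 0\leq y\leq (P_{\pi^*D-tE}\cdot E)\},
\]
which is independent of $z$, with bottom edge a segment on the $t$-axis. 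Finally, the identification of this segment with $[0,\mu']\times\{0\}$ amounts to showing $\nu=0$, i.e.\ $E\not\subseteq \Bminus(\pi^*D)$; this is automatic whenever $x\notin \Bminus(D)$ via the inclusion $\Bminus(\pi^*D)\subseteq \pi^{-1}(\Bminus(D))$, and holds in particular whenever $D$ is nef.

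The step most likely to require care is the finiteness assertion: one needs the variational statement in Proposition~\ref{prop:key result} to conclude that only finitely many irreducible curves occur in the negative parts along the entire ray, rather than just on each compact subinterval. Once this finiteness is in hand, the geometric conclusion about the exceptional points $\{z_1,\dots,z_k\}$ and the $z$-independence of the polygon follows formally from the Lazarsfeld--Musta\c t\u a description.
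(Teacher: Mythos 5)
Your proof is correct and takes essentially the same route as the paper: part (i) is an immediate specialization of Proposition~\ref{prop:inverted}, and part (ii) combines the Lazarsfeld--Musta\c t\u a description with the monotonicity of negative parts (Proposition~\ref{prop:key result}) to bound the exceptional set. Two small remarks. First, your identification of the exceptional set as $\bigcup_i (C_i\cap E)$ is the same as the paper's $\Supp(N_{\mu'}')\cap E$: by the monotonicity of $t\mapsto N'_t$ established in Proposition~\ref{prop:key result}, the curves $C_1,\dots,C_r$ appearing in negative parts along the ray are precisely the components of $N_{\mu'}'$; the paper phrases it in terms of the terminal negative part, you phrase it in terms of the whole ray, but these agree and both are finite. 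Your worry that finiteness might only hold on proper subintervals $[\nu,\mu']$ with $\mu'<\mu$ is resolved exactly because Proposition~\ref{prop:key result} is formulated for the closed interval up to and including the pseudo-effective endpoint, using Fujita's extension of Zariski decomposition to pseudo-effective $\RR$-divisors. Second, you correctly flag that the base of the polygon is a priori $[\nu',\mu']\times\{0\}$ where $\nu'$ is the coefficient of $E$ in $N_{\pi^*D}$, and that the stated base $[0,\mu']\times\{0\}$ requires $\nu'=0$. The paper's proof displays the base as $[\nu',\mu']\times\{0\}$ and then asserts the conclusion without addressing this discrepancy; your observation that $\nu'=0$ follows from $x\notin\Bminus(D)$ (automatic when $D$ is nef, and in particular in the intended applications to ample divisors on abelian surfaces) supplies the missing hypothesis and is a genuine improvement in precision over the survey's phrasing.
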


\begin{rmk}
 The invariant $\mu'(D,x)$ is a slight variation of the usual Seshadri constant with respect to the pseudo-effective cone: 
 \[
  \mu'(D,x) \deq \mu(\pi^*D,z)\ ,
 \]
where we take into account that the latter turns out to be independent of the choice of $z\in E$. 
\end{rmk}

\begin{proof}
$(1)$ This statement we have already proved in all dimensions.  
$(2)$ Let
\[
D'_t \deq  \pi^*(D)-tE =  P_t'+N_t'
\]
be the appropriate Zariski decomposition. Neither of the coefficient $\nu'$ of $E$ in the negative part $N_0'$ and $\mu'(D,x)$  
depend on the choice of  $z\in E$, hence $\Delta_{(E,z)}(\pi^*(D))\subseteq [\nu',\mu']\times \RR_+$ for any $z\in E$. 

For any $\xi\in [\nu',\mu']$ the length of the vertical slice $\Delta_{(E,z)}(\pi^*(D))_{t=\xi}$ 
is independent of  $z\in E$. Thus, to finish up, it suffices  to show that there exist finitely many points $z_1,\ldots ,z_k\in E$ 
such that $[\nu',\mu']\times\{0\}\subseteq \Delta_{(E,z)}(\pi^*(D))$ for all $z\in E\setminus\{z_1,\ldots ,z_k\}$. 

This, however,  is a consequence of Proposition~\ref{prop:key result},    which states that the function $t\in [\nu',\mu'] \longrightarrow N_t'$ is increasing: in particular, 
the divisor $N_{\mu'}'-N_{t}'$ is effective  for any $t\in [\nu',\mu']$. Consequently,  the function $\alpha$ is identically zero  on the whole interval $[\nu',\mu']$ 
whenever $x\notin\textup{Supp}(N_{\mu'}')\cap E$, as stated.
\end{proof}

The second part of the Proposition implies that it makes sense to talk about the \emph{generic infinitesimal Newton--Okounkov polygon} of $D$ at the point $x\in X$, 
which we denote by $\Delta_x(D)$. 

It is a useful observation that  local positivity at very general points is somewhat less difficult to control.  
Our starting point is a result  of Nakamaye, based on the ideas from \cite{EKL}.

\begin{lemma}[Nakamaye]\label{lem:nakamaye}
Let $x\in X$ be a very general point and $D$ be an effective integral divisor on $X$. Suppose $W\subseteq X$ be an irreducible curve passing through $x$. Let $\overline{W}$ be the proper transform of $W$ through the blow-up $\pi :X'\rightarrow X$ of the point $x$. Also, define 
\[
\alpha(W) \ = \ \textup{inf}_{\beta\in \QQ}\{\overline{W}\subseteq\Null(\pi^*(D)-\beta E)\}\ .
\]
Then $\textup{mult}_{\overline{W}}(||\pi^*(D)-\beta E||)\geq \beta -\alpha(W)$ for all $\beta\geq \alpha(W)$.
\end{lemma}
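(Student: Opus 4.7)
The plan is to express $\mult_{\overline W}(\|\pi^*D-\beta E\|)$ as the coefficient of $\overline W$ in the negative part of the Zariski decomposition of $\pi^*D-\beta E$, bound it from below by a Hodge--Zariski style intersection inequality, and reduce the remaining estimate to an Ein--K\"uchle--Lazarsfeld self-intersection bound that uses the hypothesis that $x$ is very general.

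First I would unpack the definition of $\alpha(W)$. Since $(\overline W\cdot E)=\mult_x(W)$ and $(\overline W\cdot\pi^*D)=(D\cdot W)$, the affine function $\beta\mapsto(\pi^*D-\beta E)\cdot\overline W=(D\cdot W)-\beta\mult_x(W)$ is strictly decreasing and vanishes at $\beta_0\deq(D\cdot W)/\mult_x(W)$. The condition $\overline W\subseteq\Null(\pi^*D-\beta E)$ translates to $(\pi^*D-\beta E)\cdot\overline W\leq 0$, and taking the infimum over rational $\beta$ identifies $\alpha(W)=\beta_0$. The asserted inequality is trivial for $\beta=\alpha(W)$, so we may assume $\alpha(W)<\beta<\mu'(D;x)$ and in particular that $\pi^*D-\beta E$ is big.

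Next, let $\pi^*D-\beta E=P_{\beta}+N_{\beta}$ be the Zariski decomposition (Theorem~\ref{thm:ZD}) and set $\lambda\deq\mult_{\overline W}(N_{\beta})$; on a surface this coincides with $\mult_{\overline W}\|\pi^*D-\beta E\|$ by the standard identification $H^0(X',m(\pi^*D-\beta E))\cong H^0(X',\lfloor mP_{\beta}\rfloor)$. Writing $N_{\beta}=\lambda\overline W+N'$ with $\overline W\not\subseteq\Supp(N')$, intersecting with $\overline W$ and using that $(P_{\beta}\cdot\overline W)\geq 0$ (nefness) and $(N'\cdot\overline W)\geq 0$ (effectiveness), yields
\[
(\pi^*D-\beta E)\cdot\overline W\ \geq\ \lambda\cdot(\overline W^{\,2})\ .
\]
The left-hand side equals $-\mult_x(W)(\beta-\alpha(W))<0$; combined with $\lambda\geq 0$ and the big-and-nefness of $P_{\beta}$, the Hodge index theorem forces $(\overline W^{\,2})<0$. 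Substituting $(\overline W^{\,2})=(W^2)-\mult_x(W)^2$ and rearranging gives
\[
\lambda\ \geq\ \frac{\mult_x(W)\cdot(\beta-\alpha(W))}{\mult_x(W)^2-(W^2)}\ ,
\]
so that the desired bound $\lambda\geq\beta-\alpha(W)$ is equivalent to the self-intersection inequality
\[
(W^2)\ \geq\ \mult_x(W)\bigl(\mult_x(W)-1\bigr)\ .
\]

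The main obstacle is to establish this last numerical inequality, which is exactly where the assumption that $x$ is \emph{very} general (rather than merely general) is essentially used. The idea, following \cite{EKL}, is that a very general $x$ cannot lie on any rigid irreducible curve, so $W$ belongs to a positive-dimensional algebraic family $\{W_t\}_{t\in T}$ dominating $X$, along which the multiplicity at the moving base point stays equal to $m\deq\mult_x(W)$. An infinitesimal deformation argument then compares tangent directions of $T$ at $t_0$ (where $W_{t_0}=W$) with the $m$-th order jet data of $W$ at $x$ and produces the bound $(W^2)\geq m(m-1)$ via an intersection-theoretic count on the blow-up $X'$. Carrying out this EKL-style deformation computation carefully is the technical heart of the proof, and it is the only step where the hypothesis on $x$ is genuinely needed.
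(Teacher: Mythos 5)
The paper does not actually give a proof of this lemma; it states it as an input and refers the reader to Nakamaye's paper \cite{NakVeryGen} (whose argument, and the related \cite{EKL}*{\S 2} and \cite{PAGI}*{Proposition 5.2.13}, proceed by differentiating divisors moving in families). Your proof is therefore genuinely blind and takes a visibly different route: instead of manipulating jets and differential operators, you work entirely with the Zariski decomposition of $\pi^*D-\beta E$ on the surface $X'$ and reduce the asymptotic multiplicity estimate to the numerical inequality $(W^2)\geq \mult_x(W)\bigl(\mult_x(W)-1\bigr)$ for a curve through a very general point. That reduction is correct: the intersection estimate $(\pi^*D-\beta E)\cdot\overline W\geq\lambda(\overline W^2)$ comes straight from the nefness of $P_\beta$ and the effectivity of $N_\beta-\lambda\overline W$, the sign analysis forcing $(\overline W^2)<0$ is correct (and in fact needs no appeal to Hodge index at all, since $\lambda\geq 0$ together with a negative left-hand side already does the job), and $\frac{m}{m^2-(W^2)}\geq 1$ is exactly the condition $(W^2)\geq m(m-1)$. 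What your approach buys is clarity: all the subtlety of ``very general'' is packaged into one known self-intersection bound (which is essentially the Proposition attributed to \cites{Bas,KSS} and \cite{Xu95} at the end of Section 2, plus the standard countability argument that a very general point cannot lie with multiplicity $m$ on a rigid curve), and everything else is pure convex geometry of the Zariski cone. The trade-off is that the EKL/Nakamaye method is not restricted to surfaces, while yours is intrinsically two-dimensional.

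Two small cautions. First, ``equivalent'' in the sentence preceding the displayed self-intersection inequality should read ``would follow from'': your estimate $\lambda\geq\frac{m(\beta-\alpha(W))}{m^2-(W^2)}$ implies $\lambda\geq\beta-\alpha(W)$ once $(W^2)\geq m(m-1)$, but there is no converse. Second, your reading of $\alpha(W)$ as the Seshadri-type ratio $(D\cdot W)/\mult_x(W)$ is what the paper's literal definition of $\Null$ gives and matches the way the lemma is used in the proof of Proposition~\ref{prop:genericinf} (where $\alpha(C)=\epsilon(L;x)=(L\cdot C)/\mult_x(C)$). Be aware, though, that Nakamaye's own threshold is the smaller one measured by entry into the augmented base locus (equivalently $\Null(P_\beta)$), and the two can disagree; the ``$\inf$'' in the statement hints at that intended meaning. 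Your argument proves the version with the Seshadri ratio, which is the one the paper actually applies, but if the stronger statement is ever needed the present proof would have to be supplemented by the observation that $(N_{\alpha}\cdot\overline W)=0$ at the threshold, or by reverting to the differentiation argument. Finally, you rightly identify the self-intersection bound as the technical heart; carrying it out is not done here, but it can be cited and is genuinely where ``very general'' (as opposed to ``general'') is used.
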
 

Lemma~\ref{lem:nakamaye} forces the generic infinitesimal Newton--Okounkov polygon of very generic points to land in certain area of the plane, depending on the Seshadri constant. 
The resulting  convex-geometric estimate has manifold applications, here  we will focus on the connection with lower bounds on Seshadri constants. 

\begin{proposition}[\cite{KL14}*{Proposition 4.2}]\label{prop:genericinf}
Let $L$ be an ample Cartier divisor on $X$ and  $x\in X$ be a very general point. Then the following mutually exclusive cases can occur. 
\begin{enumerate}
\item $\mu'(L;x)=\epsilon (L;x)$, then $\Delta_x(L)=\Delta^{-1}_{\epsilon(L;x)}$.
\item $\mu'(L,x)>\epsilon (L,x)$, then  there exists an irreducible curve $F\subseteq X$ with $(L\cdot F)=p$ and $\mult_x(F)=q$ such that $\epsilon(L;x)=p/q$. 
Under these circumstances, 
\begin{enumerate}
\item If $q\geq 2$, then $\Delta_x(L)\subseteq \triangle_{ODR}$, where $O=(0,0), D=(p/q,p/q)$ and $R=(p/(q-1),0)$.
\item If $q=1$, then the polygon $\Delta_x(L)$ is contained in the area  below the line $y=t$, and between the horizontal lines 
$y=0$ and $y=\epsilon(L;x)$.
\end{enumerate}
\end{enumerate}
\end{proposition}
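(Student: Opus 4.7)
The plan is to apply Theorem~\ref{thm:LM surfaces} on the blow-up $\pi\colon X'\to X$ of $x$ with the flag $(E,z)$ for very general $z\in E$, and to control the positive part of $\pi^*L-tE$ along the ray using Nakamaye's Lemma~\ref{lem:nakamaye}. By Proposition~\ref{prop:propinf} the lower function satisfies $\alpha(t)\equiv 0$ for generic $z$, so $\Delta_x(L)$ is determined by the single upper function $\beta(t)=(P_{\pi^*L-tE}\cdot E)$ on $[0,\mu'(L;x)]$. Since $(\pi^*L\cdot E)=0$ and $(E^2)=-1$, whenever $\pi^*L-tE$ is nef we have $P_{\pi^*L-tE}=\pi^*L-tE$ and $\beta(t)=t$; in case $(1)$ this holds throughout $[0,\epsilon(L;x)]=[0,\mu'(L;x)]$, immediately yielding $\Delta_x(L)=\iss{\epsilon(L;x)}$.

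In case $(2)$, since $\pi^*L-\epsilon(L;x) E$ lies on the nef boundary yet is big on $X'$, Theorem~\ref{thm:Nakai Moishezon} yields an irreducible curve $\overline{C}\subseteq X'$ with $((\pi^*L-\epsilon(L;x) E)\cdot \overline{C})=0$. The relation $((\pi^*L-\epsilon(L;x) E)\cdot E)=\epsilon(L;x)>0$ rules out $\overline{C}=E$, so we may write $\overline{C}=\pi^*F-qE$ for an irreducible $F\subseteq X$ with $q=\mult_x(F)$ and $p=(L\cdot F)$, whence $\epsilon(L;x)=p/q$. Applying Lemma~\ref{lem:nakamaye} to $W=F$ (noting that $\overline{F}\subseteq\Null(\pi^*L-\epsilon(L;x) E)$ while $\pi^*L-\beta E$ is ample for $\beta<\epsilon(L;x)$, so that $\alpha(F)=\epsilon(L;x)$), the coefficient of $\overline{F}$ in $N_{\pi^*L-tE}$ is at least $t-\epsilon(L;x)$ for every $t\in[\epsilon(L;x),\mu'(L;x)]$. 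As $\alpha(t)\equiv 0$ prevents $E$ itself from being a component of $N_{\pi^*L-tE}$, and every other component $B$ of this negative part satisfies $(B\cdot E)\geq 0$, we deduce
\[
\beta(t)\;=\;((\pi^*L-tE)\cdot E)-(N_{\pi^*L-tE}\cdot E)\;\leq\; t-q(t-\epsilon(L;x))\;=\;q\epsilon(L;x)-(q-1)t\ .
\]

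Combining this with $\beta(t)\leq t$ from the inverted-simplex bound of Proposition~\ref{prop:propinf}$(1)$ finishes the argument. If $q\geq 2$ the second bound is affine-decreasing and vanishes at $t=p/(q-1)$; positivity of $\beta$ then forces $\mu'(L;x)\leq p/(q-1)$, and the two bounds together delineate precisely the triangle with vertices $O=(0,0)$, $D=(p/q,p/q)$ and $R=(p/(q-1),0)$, so $\Delta_x(L)\subseteq \triangle_{ODR}$. If $q=1$ the second estimate degenerates to $\beta(t)\leq \epsilon(L;x)$, which combined with $\beta(t)\leq t$ yields the horizontal strip claimed in $(2b)$. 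The principal obstacle is producing the Seshadri curve $F$ from the failure of ampleness on the nef boundary of $\pi^*L-tE$ and correctly extracting $\alpha(F)=\epsilon(L;x)$ so that Nakamaye's lower bound on $\overline{F}$-multiplicity translates into an upper bound on $\beta(t)$ via $E$; once these are in place, the rest is intersection-theoretic bookkeeping on $X'$.
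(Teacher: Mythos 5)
Your argument is correct and follows the same route as the paper's proof: both reduce to bounding $\beta(t)=(P_{\pi^*L-tE}\cdot E)$ after noting $\alpha\equiv 0$ for generic $z$, both invoke Nakamaye's Lemma~\ref{lem:nakamaye} to extract a coefficient of at least $t-\epsilon(L;x)$ for $\overline{F}$ in $N_{\pi^*L-tE}$, and both then deduce $\beta(t)\leq t-q(t-\epsilon(L;x))$ by intersecting with $E$. You supply some details the paper leaves implicit---notably the Nakai--Moishezon extraction of the Seshadri curve and the verification that $\alpha(F)=\epsilon(L;x)$---but the underlying mechanism is identical.
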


\begin{proof}
If  $\epsilon(L,x)=\mu'(L,x)$, then one has automatically $\Delta=\Delta_{\mu'(L,x)}^{-1}$. Therefore we can assume without loss of generality that $\mu'(L,x)>\epsilon(L,x)$. 
In particular, there exists a curve $C\subseteq X$ with $(L\cdot C)=p$ and $\mult_x(C)=q$ such that $\epsilon(L,x)=p/q$. 

Let $\overline{C}$ be the proper transform of  $C$ on $\tilde{X}$. The idea is to calculate  the length of the  vertical segment in the polygon $\Delta(L,x)$ at $t=t_0$ for any 
$t_0\geq \epsilon(A,L)$:
\[
\length\big(\Delta(L)_{t=t_0}\big) \equ  (P_{t_0}\cdot E)\ = \ t_0-(N_{t_0}\cdot E)\ ,
\]
where $\pi^*(L)-t_0E=P_{t_0}+N_{t_0}$ is the corresponding Zariski decomposition. By Lemma~\ref{lem:nakamaye}, one can write $N_{t_0} \equ (t_0-\epsilon(L,x))\overline{C}+N_{t_0}'$, 
where $N_{t_0}'$ remains  effective. This implies the  inequality
\begin{equation}\label{eq:length}
\length\big(\Delta(L)_{t=t_0}\big) \equ t_0-\big((t_0-\epsilon(A,x))\overline{C}+N_{t_0}' \cdot E\big) \ \leq \ 
t_0-(t_0-\epsilon(A,x))q \ .
\end{equation}
The vertical line segment $\Delta(L;x)_{t=t_0}$ starts on the $t$-axis at the point $(t_0,0)$ for any $t_0\geq 0$. Therefore,  by (\ref{eq:length}), the polygon sits below the line 
\[
y \equ t_0-(t_0-\epsilon(L,x))q \equ  (1-q)t_0+\epsilon(L,x)q \ .
\]
When $q=1$, then this line is the horizontal line $y=\epsilon(L,x)$ and when $q\geq 2$ then it is the line passing through the points $D=(p/q,p/q)$ and $B=(p/(q-1),0)$. This finishes the proof.
\end{proof}

As a first corollary, we recover a result of Ein and Lazarsfeld \cite{EL} about lower bounds on Seshadri constants. The idea of the proof is to use areas comparisons between the relevant 
convex objects in the place. 

\begin{corollary}\label{cor:genericseshadri}
 Let $L$ be an ample line bundle on a smooth projective surface. If $x\in X$ is very general, then  $\epsilon(L,x)\geq 1$.
\end{corollary}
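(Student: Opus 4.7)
The plan is to argue by contradiction from $\elx < 1$ at a very general point $x$, using Proposition~\ref{prop:genericinf} together with the area identity
\[
\Area(\NO{x}{L}) \equ \frac{(L^2)}{2},
\]
which follows from Theorem~\ref{thm:NO formal}(2) applied to $\pi^*L$ on the blow-up $\tX$, whose volume coincides with $(L^2)$ since volume is preserved under proper birational pullback. The argument then splits into the three cases of Proposition~\ref{prop:genericinf}.

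First I would dispose of cases (1) and (2)(b). In case (1), the polygon $\NO{x}{L}$ equals the inverted simplex $\iss{\elx}$, which has area $\elx^2/2$, and comparison of areas forces $\elx = \sqrt{(L^2)} \geq 1$ because $(L^2)$ is a positive integer. In case (2)(b), the curve $F$ computing $\elx$ is smooth at $x$ (so $q = \mult_x F = 1$), giving $\elx = p = (L\cdot F) \geq 1$ directly from ampleness of $L$ and the fact that $p$ is a positive integer.

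The main case is (2)(a), with $q \geq 2$. Suppose for contradiction that $\elx = p/q < 1$; integrality of $p,q \in \NN$ then forces $p \leq q-1$. Proposition~\ref{prop:genericinf}(2)(a) gives $\NO{x}{L} \subseteq \triangle_{ODR}$ where $O = (0,0)$, $D = (p/q, p/q)$, $R = (p/(q-1), 0)$, a triangle of area $\tfrac{p^2}{2q(q-1)}$. Comparing areas yields
\[
\frac{(L^2)}{2} \equ \Area(\NO{x}{L}) \dleq \frac{p^2}{2q(q-1)} \dleq \frac{(q-1)^2}{2q(q-1)} \equ \frac{q-1}{2q} \,<\, \frac{1}{2},
\]
so $(L^2) < 1$, contradicting $(L^2) \in \ZZ_{>0}$. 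All the geometric heavy lifting is packaged into Proposition~\ref{prop:genericinf} (where Nakamaye's Lemma~\ref{lem:nakamaye} does the real work); beyond that, the finish is purely combinatorial arithmetic, so no step presents a genuine obstacle.
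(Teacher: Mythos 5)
Your proof is correct and takes essentially the same route as the paper's: reduce to case (2a) of Proposition~\ref{prop:genericinf}, compare the area $(L^2)/2$ of the generic infinitesimal polygon with the area $p^2/(2q(q-1))$ of the bounding triangle, and use integrality of $p, q$ to force $(L^2) < 1$ if $\epsilon(L;x) < 1$. You are a bit more explicit in disposing of cases (1) and (2b), and you streamline the arithmetic by substituting $p \leq q-1$ directly rather than first extracting the lower bound $\epsilon \geq \sqrt{(L^2)(1-1/q)}$, but the key ideas and the contradiction are identical.
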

\begin{proof}
 By the definition of Seshadri constants and Proposition~\ref{prop:genericinf}, it suffices  to consider the case $(2a)$. 
 Thus,  we know that $\Delta(L;x)\subseteq \triangle_{ODB}$, and as a consequence 
\[
\textup{Area}(\Delta(L;x)) \equ  \frac{L^2}{2} \ \leq \ \textup{Area}(ODB) \equ \frac{p^2}{2q(q-1)} \ . 
\]
In particular,  $\epsilon(L,x)\geq \sqrt{(L^2)(1-\frac{1}{q})}$. Hence,  if we assume  $\epsilon(L,x)<1$, then  by  the rationality of $\epsilon(L;x)$, we also have  
$\epsilon(L,x)\leq \frac{q-1}{q}$. Using the inequality between the areas, we arrive at  $(L^2)<1$, which stands in contradiction with the assumption
that $L$ is an ample Cartier divisor. 
\end{proof}

A small twist of the above argument delivers a new angle on an interesting conjecture of Szemberg \cite{Szemb}. 

\begin{conj}[Szemberg \cite{Szemb}]\label{conj:Szemberg}
Let $X$ be a smooth projective surface with $\rho(X)=1$, $L$ the ample generator of $N^1(X)$ with $(L^2)=N$. Assume that $N$ is not a square. Let $(p_0,q_0)$ be the primitive solution
of the Pell equation $y^2-dx^2=1$, then 
\[
 \epsilon(L;x) \dgeq  \frac{p_0N}{q_0} 
\]
for a very general point $x\in X$. 
\end{conj}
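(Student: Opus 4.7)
The strategy is to transfer the conjecture onto the generic infinitesimal Newton--Okounkov polygon $\Delta_x(L)$ and read off the desired bound from the area of the polygon, using the Picard rank one hypothesis to impose a Pell--type Diophantine constraint. (I read the right-hand side as $\tfrac{Nq_0}{p_0}$ rather than $\tfrac{Np_0}{q_0}$, since $p_0^2-Nq_0^2=1$ forces $\tfrac{Np_0}{q_0}>\sqrt{N}$, which would exceed the elementary ceiling $\epsilon(L;x)\leq\sqrt{(L^2)}$.)

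First I would observe that the hypothesis $\rho(X)=1$ together with the ampleness of $L$ gives $\mu'(L;x)=\sqrt{N}$ for every point $x\in X$, so that, by Proposition~\ref{prop:propinf}, the polygon $\Delta_x(L)$ sits inside $\iss{\sqrt{N}}$. If $\epsilon(L;x)=\sqrt{N}$ the conjecture is immediate. Otherwise, Proposition~\ref{prop:genericinf} furnishes an irreducible curve $F\subseteq X$ with $(L\cdot F)=p$, $\mult_x F=q$, and $\epsilon(L;x)=p/q$; the case $q=1$ is excluded, for it would yield $\epsilon\geq N>\sqrt{N}$, so $q\geq 2$. The Picard--rank--one hypothesis now forces $F\equiv mL$ for some positive integer $m$, so $p=mN$ and $\epsilon(L;x)=mN/q$.

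Next I would apply part $(2a)$ of Proposition~\ref{prop:genericinf}: the polygon $\Delta_x(L)$ is contained in the triangle $\triangle_{ODR}$ with $O=(0,0)$, $D=(p/q,p/q)$, $R=(p/(q-1),0)$. Comparing areas,
\[
\tfrac{N}{2}\,=\,\Area(\Delta_x(L))\,\leq\,\Area(\triangle_{ODR})\,=\,\tfrac{p^2}{2q(q-1)},
\]
which, after substituting $p=mN$, simplifies to $q^2-Nm^2\leq q$. Meanwhile the submaximality $\epsilon(L;x)<\sqrt{N}$ gives $q^2-Nm^2\geq 1$. If these inequalities can be sharpened to the equality $q^2-Nm^2=1$, then $(q,m)$ is a positive solution of the Pell equation $y^2-Nx^2=1$; all such solutions dominate the fundamental one $(p_0,q_0)$, so $q\geq p_0$ and $m\geq q_0$, and a short monotonicity argument on Pell convergents yields $\epsilon(L;x)=mN/q\geq Nq_0/p_0$.

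The main obstacle is the gap between $q^2-Nm^2\leq q$ and the sharper $q^2-Nm^2\leq 1$. The area comparison alone is blind to the very-general position of $x$: for instance with $N=2$ the pair $(m,q)=(1,2)$ satisfies $q^2-Nm^2=2\leq q$ and would produce $\epsilon=1<Nq_0/p_0=\tfrac{4}{3}$, but cannot be realized by a curve through a very general point on a surface with $\rho=1$ and $(L^2)=2$. Closing this last gap should require either a refined convex-geometric estimate that uses more of $\Delta_x(L)$ than just the ambient triangle --- in particular, the rational vertex structure from Theorem~\ref{thm:NO-polygons} together with the nef region $y\leq t$ for $t\leq\epsilon$ --- or an Xu-style multiplicity inequality that genuinely separates very general points from arbitrary ones, of the shape $(F^2)\geq q^2-1$ for irreducible curves acquiring a $q$-fold point at a very general point of a surface with Picard number one. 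This last step is where I expect all the real difficulty to lie: the infinitesimal Newton--Okounkov framework cleanly reduces the conjecture to a convex and Diophantine question, but does not by itself encode the very-general hypothesis strongly enough to settle it.
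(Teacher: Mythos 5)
This is a \emph{conjecture}, not a theorem: the paper has no proof of Conjecture~\ref{conj:Szemberg} for you to be measured against, only a partial result (the cited theorem of [CracowGroup], which yields a finite exceptional set rather than the clean Pell bound) and an exercise asking for the special cases $N=n^2-1$ and $N=n^2+n$. Your proposal is not a proof, and you say so; what it is, is a clean reduction that matches the framework the paper develops around the conjecture. Setting $\mu'(L;x)=\sqrt{N}$ from $\rho(X)=1$, invoking Proposition~\ref{prop:genericinf} to produce a Seshadri-exceptional curve $F$ with $(L\cdot F)=p$, $\mult_xF=q$, using $F\equiv mL$ to get $p=mN$, and then comparing areas against $\triangle_{ODR}$ is exactly the mechanism behind Corollary~\ref{cor:genericseshadri}, so your set-up is the paper's own.

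The gap you flag is real, and it is worth naming more precisely: the area inequality
\[
\frac{N}{2} \;\leq\; \frac{p^2}{2q(q-1)}\ ,\qquad\text{i.e.}\qquad (F^2)=m^2N\;\geq\;q(q-1)\,,
\]
is Xu-type information and carries \emph{no more} than $q^2-Nm^2\leq q$. The conjecture, rephrased through your reduction, is the strictly stronger assertion $q^2-Nm^2\leq 1$, i.e.\ $(F^2)\geq q^2-1$ for an irreducible curve acquiring a $q$-fold point at a very general point of a Picard-number-one surface. That strengthening of Xu's bound is precisely what is open; it is the entire content of Szemberg's conjecture, and it is why [CracowGroup] proves only a ``finite exceptions'' statement. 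So when you write that the Newton--Okounkov framework ``does not by itself encode the very-general hypothesis strongly enough to settle it,'' you have diagnosed the situation correctly. Two smaller remarks. First, the re-reading of the Pell data is unnecessary: with $(p_0,q_0)=(x_0,y_0)$ and $q_0^2-Np_0^2=1$ the stated bound $Np_0/q_0$ is already $<\sqrt{N}$, and for $N=n^2-1$ (fundamental solution $(1,n)$) it gives $(n^2-1)/n$, matching the exercise. Second, even in the exercise's special cases the bare inequality $q(q-1)\leq Nm^2$ is not quite enough on the nose; one should further intersect $\triangle_{ODR}$ with the inverted simplex $\iss{\sqrt{N}}$ from Proposition~\ref{prop:propinf}, since $p/(q-1)$ may exceed $\sqrt{N}$ and then $\Delta_x(L)$ is strictly smaller than $\triangle_{ODR}$. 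With that refinement your reduction does close the exercise cases; for general non-square $N$ it reduces but does not resolve.
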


The argument of Corollary~\ref{cor:genericseshadri} yields the following result. 

\begin{thm}[\cite{CracowGroup}]
Let $X$ be a smooth projective surface, $x\in X$ a very general point , $L$ an ample line bundle on $X$ such that $(L^2)=N$ is ample, and $N$   is not a square. Let $(p,q)$ be an arbitrary solution 
of the Pell equation $y^2-dx^2=1$. Then either 
\[
 \epsilon(L;x) \dgeq 1\ ,
\]
or there exists an effectively computable finite set $\Exc(N,p,q)$ such that $\epsilon(L;x)=a/b$ implies $(a,b)\in \Exc(N,p,q)$.  
\end{thm}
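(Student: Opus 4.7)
The plan is to combine the area-comparison of Corollary~\ref{cor:genericseshadri} with an arithmetic descent driven by the Pell unit $p+q\sqrt{N}\in\ZZ[\sqrt N]$.

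First I would assume $\epsilon(L;x)<\sqrt N$ is submaximal, so that Proposition~\ref{prop:genericinf} produces an irreducible curve $F$ with $a=(L\cdot F)$, $b=\mult_x(F)\geq 2$, and $\epsilon(L;x)=a/b$. The containment $\Delta_x(L)\subseteq \triangle_{ODR}$ together with $\Area(\Delta_x(L))=(L^2)/2=N/2$ yields the \emph{fundamental inequality}
\[
Nb(b-1)\dleq a^2 < Nb^2,
\]
equivalently $-Nb\leq a^2-Nb^2<0$. This already recovers $\epsilon(L;x)\geq 1$ unconditionally, as in Corollary~\ref{cor:genericseshadri}, so what the theorem demands is a finer, Pell-aware bookkeeping of the possible pairs $(a,b)$.

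Next I would invoke the norm-preserving involution on $\ZZ[\sqrt N]$ defined by multiplying $a+b\sqrt N$ by the Pell conjugate $p-q\sqrt N$, setting
\[
a' \deq |ap-Nbq|,\qquad b' \deq bp-aq.
\]
Since $a/b<\sqrt N<p/q$ one has $b'>0$, and $a'^2-Nb'^2=(a^2-Nb^2)(p^2-Nq^2)=a^2-Nb^2$. Using the estimate $\sqrt N\cdot b-a<\sqrt N$ (which follows from $a^2\geq Nb(b-1)>N(b-1)^2$), a direct computation gives $b'<b$ whenever $b$ exceeds an explicit threshold $B_0(N,p,q)$ of order $q\sqrt N$. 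Iterating the descent $(a,b)\mapsto(a',b')$ terminates in finitely many steps at a pair with $b\leq B_0$, and within that range the constraint $a<b\sqrt N$ bounds $a$ as well, producing the effectively computable finite set $\Exc(N,p,q)$ of admissible base pairs.

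The main obstacle is to make the descent rigorous on two fronts. First, one must check that the involution takes a pair satisfying the fundamental inequality to another such pair (or, failing that, carefully enlarge $\Exc(N,p,q)$ to absorb pairs whose descent falls outside the admissible region); the norm identity alone does not give this. Second, one needs to pin down the threshold $B_0$ explicitly and enumerate the base pairs algorithmically, so that $\Exc(N,p,q)$ is genuinely computable. Granted this, any pair $(a,b)$ arising from a Seshadri-computing curve with $\epsilon(L;x)<1$ either lies in $\Exc(N,p,q)$ already or descends into it, completing the dichotomy stated in the theorem.
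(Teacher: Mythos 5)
The descent scheme is not merely in need of bookkeeping care---it is the wrong tool, and the gap you flag in your last paragraph is fatal, not reparable by enlarging $\Exc(N,p,q)$. The fundamental inequality $-Nb\leq a^{2}-Nb^{2}<0$ has a lower bound $-Nb$ that scales with $b$, whereas the Pell involution preserves the norm $a^{2}-Nb^{2}$ exactly; so if $b'<b$ then $a'^{2}-Nb'^{2}=a^{2}-Nb^{2}\geq -Nb$ gives no control over whether $a'^{2}-Nb'^{2}\geq -Nb'$, and in general it will fail. The descended pair $(a',b')$ corresponds to no curve, so the geometric source of the inequality is gone and there is nothing to re-derive it from. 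Consequently the iteration has no reason to stay inside the admissible region, and letting $\Exc$ absorb every escapee just makes it infinite.

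The argument actually used (and what the area comparison buys you directly) is far shorter and needs no descent at all. Assume $\epsilon(L;x)$ fails the Pell bound, i.e.\ $\epsilon(L;x)=a/b<qN/p$ with $b=\mult_{x}F\geq 2$ as in Proposition~\ref{prop:genericinf}(2a). Then $Nb(b-1)\leq a^{2}<(qN/p)^{2}b^{2}$, so $1-\tfrac1b<\tfrac{q^{2}N}{p^{2}}=\tfrac{p^{2}-1}{p^{2}}=1-\tfrac1{p^{2}}$ by the Pell relation $p^{2}-Nq^{2}=1$, and therefore $b<p^{2}$; together with $a<b\sqrt N$ this bounds $(a,b)$ in an explicit finite set, which is the desired $\Exc(N,p,q)$. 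You should also dispose of case (2b) of Proposition~\ref{prop:genericinf}, where $\mult_{x}F=1$ forces $\epsilon(L;x)=a\in\ZZ$ with $a<\sqrt N$, so $(a,1)$ again lies in a finite set; your plan silently assumes $b\geq 2$. Finally, note the ``$\geq 1$'' in the displayed statement is a typo---as you observe, it would make the dichotomy vacuous; the intended alternative is the Pell-type lower bound $\epsilon(L;x)\geq qN/p$, which is what the computation above actually trades against.
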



The following inequality is useful for giving lower bounds on Seshadri constants at very general points (cf. \cite{Xu95}).

\begin{prop}[\cites{Bas,KSS}]
Let $X$ be a smooth projective surface, $(C_t,x_t)_{t\in T}$ a non-trivial family of pointed curves on $X$ such that there 
exists an integer $m\geq 2$ with $\mult_{x_t}C_t \geq 2$ for some $t\in T$. Then
\[
(C_t^2) \dgeq m(m-1) + \gon(C_t)\ .
\]	
\end{prop}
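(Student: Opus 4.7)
The plan is to combine Xu's classical deformation argument (which yields the weaker bound $(C_t^2)\geq m(m-1)+1$) with a gonality pencil on $\mathcal{C}_{t_0}$ to improve the correction term from $1$ to $\gon(C_t)$. First I would place the problem in a standard deformation-theoretic framework: by restricting $T$ to a general smooth curve through a chosen $t_0$, base-changing and normalising the total space, I may assume that $T$ is a smooth affine curve, $\mathcal{C}$ is a smooth surface, $\pi\colon\mathcal{C}\to T$ is flat with a section $\sigma$, and $f\colon\mathcal{C}\to X$ is the evaluation map with $f\circ\sigma(t)=x_t$. Write $C \deq C_{t_0}$, $x \deq x_{t_0}$; after shrinking $T$, the fibre $\mathcal{C}_{t_0}$ is smooth and birational to $C$ via $f$, so $\gon(\mathcal{C}_{t_0})=\gon(C)$.

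Next I would carry out the local calculation at $x$. In analytic coordinates $(u,t)$ near $\sigma(t_0)$ and appropriate coordinates on $X$, the hypothesis $\mult_{x_t}C_t\geq m$ along $\sigma$ forces an expansion $f(u,t)=x_t+\mathbf{a}_m(t)u^m+O(u^{m+1})$ with $\mathbf{a}_m(t)\neq 0$. A short intersection-theoretic count then shows that the cluster of points of $C\cap C_t$ concentrated near $x$ contributes at least $m(m-1)$ to $(C\cdot C_t)=(C^2)$: the naive $m^2$ from two multiplicity-$m$ germs meeting at a single point is reduced by $m$ because the marked point $x_t$ moves off $x$ to first order as $t$ varies, which is precisely where the non-triviality of the family enters.

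The heart of the argument is then to convert the residual intersection number $(C^2)-m(m-1)$ into a pencil on $\mathcal{C}_{t_0}$. Consider the fibred product $\mathcal{C}\times_T\mathcal{C}$ together with the correspondence it defines on $\mathcal{C}_{t_0}\times T$, after subtracting the diagonal component and its $\sigma$-thickening (which account for the $m(m-1)$ from the previous step). What remains is a family of divisors on $\mathcal{C}_{t_0}$ of degree at most $(C^2)-m(m-1)$ moving with $t$. This family defines a rational morphism $\mathcal{C}_{t_0}\dashrightarrow\PP^1$ of that degree, and hence $\gon(\mathcal{C}_{t_0})\leq (C^2)-m(m-1)$, which is the claim. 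The main obstacle is ensuring that this morphism is non-constant, i.e.\ that the residual divisors genuinely vary with $t$ rather than staying at a fixed base locus. Non-triviality of $(C_t,x_t)_{t\in T}$ is the essential input, but making it bite requires a case split: when the locus $\{x_t\}_{t\in T}\subseteq X$ sweeps out a curve, a dimension count on $\mathcal{C}\times_T\mathcal{C}$ forces the residual correspondence to be non-degenerate; when instead all $x_t$ coincide at a single $m$-fold point, one reduces directly via projection from $x$ and applies the classical bound for linear series through an $m$-fold point.
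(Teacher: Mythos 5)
The paper itself gives no proof of this statement: it simply cites [Bas] and [KSS] in the theorem header, so there is no in-text argument to compare against. Your sketch reconstructs the argument of Knutsen--Syzdek--Szemberg: reduce to a smooth one-parameter family over a curve, peel off the contribution at the moving $m$-fold point, and interpret the residual intersection of nearby fibres with the central one as a moving divisor of degree $d\leq (C^2)-m(m-1)$, giving a $g^1_d$ on the normalisation and hence $\gon\leq d$. That is the right route.

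Two places want more care. First, the local bound $m(m-1)$ is not a straightforward ``$m^2$ reduced by $m$'': if $x_t$ stays fixed the local contribution is actually $\geq m^2$, and if $x_t$ moves the cluster of intersection points collapsing to $x$ is a delicate limit. The clean way to get $m(m-1)$ is the Xu/Ein--K\"uchle--Lazarsfeld differentiation argument: a nonzero tangent to the pointed family produces a section of $N_{C/X}\cong\mathcal{O}_C(C)$ whose pullback to the normalisation vanishes to order at least $m-1$ at every branch over $x$, and since the branch multiplicities sum to $m$, the total vanishing is at least $m(m-1)$, leaving a residual of degree $(C^2)-m(m-1)$. Second, and this is the crux, you must show that the resulting family of residual divisors on $\mathcal{C}_{t_0}$ really moves, so that it yields a non-constant map to $\mathbb{P}^1$. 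Your case split on whether $\{x_t\}$ sweeps out a curve is the right kind of dichotomy, but the claim that a ``dimension count forces the residual correspondence to be non-degenerate'' is exactly the step where the non-triviality of the \emph{pointed} family must bite, and it needs to be argued rather than gestured at; this is the one genuinely non-formal part of the proof.
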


\begin{exer}[cf. \cite{CracowGroup}]  $\star$ Let $X$ be a smooth projective surface with $\rho(X)=1$, write $L$ for the ample generator of $N^1(X)$, 
and set $(L^2)=N$. Verify that Conjecture~\ref{conj:Szemberg} holds whenever $N=n^2-1$ or $N=n^2+n$ for some positive integer $n$. More concretely, prove that under the given conditions 
\[
\epsilon(L;x) \dgeq \frac{n^2-1}{n}\ \ \ \text{ and }\ \ \ \epsilon(L;x) \dgeq \frac{2(n^2+n)}{2n+1}\ ,
\]
respectively, for  $x\in X$  a very general point. 
\end{exer}

 \newpage

\section{Construction of singular divisors and higher syzygies on abelian surfaces}

\subsection{Generalities on syzygies and property $(N_p)$}

A very effective way to study  varieties is via  their possible embeddings into projective spaces of various dimensions. As we saw earlier,  a very ample line bundle $L$ on a projective variety 
$X$ gives rise  to an embedding
\[
\phi_L \colon X \ \hookrightarrow \ \PP^N \equ \PP(H^0(X,L)) \ . 
\]
The homogeneous coordinate ring of the image is  the main algebraic invariant associated to the pair $(X,L)$, this can be identified with the section ring  
\[
R(X,L) \equ \bigoplus_{m\in\NN} H^0(X,L^{\otimes m})
\]
of $L$. There is an exciting interplay between  the algebraic properties of the section ring $R(X,L)$  and the geometry of the embedding $\phi_L$, which 
 has long been an important area of research. 

The algebraic behavior of $R(X,L)$ is best studied in the category of graded modules over the polynomial ring 
\[
S \deq \CC[x_0,\ldots ,x_N] \equ \Sym^\bullet(H^0(X,L))\ .
\]
As all finitely generated graded $S$-modules, $R(X,L)$ admits a minimal graded free resolution $E_{\bullet}$ of the shape 
\[
\ldots \rightarrow E_i \rightarrow E_{i-1}\rightarrow \ldots \rightarrow E_2\rightarrow E_1\rightarrow E_0\rightarrow R_l \rightarrow 0 \ ,
\]
where each $E_i=\oplus_{j}S(-a_{i,j})$ is a free graded $S$-module. Beside its obvious geometric relevance, studying the set of numbers $(a_{i,j})$, or equivalently, 
the Betti diagram that arises,  is of great  interest. 

Building on ideas of Castelnuovo  and later  Mumford \cite{Mumford}, Green and Lazarsfeld \cite{GL}  introduced a sequence of properties requiring   
 the first $p$ terms in $E_{\bullet}$  to be as simple as possible. 
\begin{defn}[Property $(N_p)$]
With notation as above, we say that the pair $(X,L)$ (or simply that $L$) satisfies property $(N_p)$, if  
$E_0=S$ and 
\[
a_{i,j} \equ i+1 \ \ \ \ \text{for all $j$ and all $1\leq i\leq p$.}
\] 
\end{defn}
 
\begin{rmk}[Koszul cohomology]
Still keeping the notation from above, observe that we can rewrite
\[
  E_i = \bigoplus_{q\in \mathbb{Z}}S(-i-q)\otimes_{\mathbb{C}} K_{i,q}(X,L) \ ,
\]
where the vector spaces $K_{i,q}(X,L)$ are called that \emph{Koszul cohomology groups of $(X,L)$} in the appropriate bidegree. 
\end{rmk}
 
\begin{rmk}  
In geometric terms, property $(N_0)$ holds for $L$ if and only if $\phi_L$ defines a projectively normal embedding, while  property $(N_1)$ 
is equivalent to asking for property $(N_0)$  and that the homogenous ideal $\sI_{X|\PP^N}$ of $X$ in $\PP^N$ be  generated by quadrics. 
\end{rmk}

Due to its classical roots and its obvious relevance for projective geometry, the area surrounding  property  $(N_p)$ has generated a significant amount of  
work in the last thirty years or so with some of the highlights being \cites{BEL, EL1, EL2,P,Voisin}. As it can be seen from the state of affairs, controlling higher syzygies 
has always been  a notoriously difficult question. For more details about this circle of ideas the reader is kindly referred  to \cite{PAGI}*{Section 1.8.D}  and \cite{E} 
for a more algebraic treatment.

One of the early results on property $(N_p)$ that shaped the field to a large extent is Green's theorem \cite{G} on property $(N_p)$ for curves. 

\begin{thm}[Green]\label{thm:Green}
Let $p$ be a natural number, $X$ a smooth curve of genus $g$, $L$ an ample line bundle on $X$. If $\deg_X L \geq 2g+1+p $, then $L$ has property $(N_p)$.  
\end{thm}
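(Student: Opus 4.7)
The plan is to prove $(N_p)$ via a Koszul cohomology argument, exploiting the positivity of the syzygy bundle of $L$.

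First I would reduce $(N_p)$ to the vanishing of specific Koszul cohomology groups: $(X,L)$ satisfies $(N_p)$ if and only if $L$ is projectively normal and $K_{i,q}(X,L) = 0$ for all $1 \leq i \leq p$ and $q \geq 2$. Projective normality (i.e.\ $(N_0)$) follows from $\deg L \geq 2g+1$ by the classical Castelnuovo argument: Serre duality gives $H^1(L^k) = 0$ for every $k \geq 1$, from which the surjectivity of the multiplication maps $H^0(L) \otimes H^0(L^k) \to H^0(L^{k+1})$ follows by a standard base-point-free pencil argument.

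The next step is to translate the Koszul vanishing into vector bundle cohomology. Introduce the syzygy bundle $M_L$ by
$$0 \to M_L \to H^0(X,L) \otimes \sO_X \to L \to 0,$$
take the $(i+1)$-st exterior power, and twist by $L^{q-1}$. A standard diagram chase comparing the resulting long exact sequence with the Koszul complex, combined with the vanishing $H^1(L^{q-1}) = 0$ for $q \geq 2$, yields the identification
$$K_{i,q}(X,L) \cong H^1\bigl(X, \wedge^{i+1} M_L \otimes L^{q-1}\bigr).$$
The problem thus reduces to showing that this $H^1$ vanishes for $1 \leq i \leq p$ and $q \geq 2$; the critical case is $i = p$, $q = 2$, since larger $q$ increases the positivity of the twist and smaller $i$ decreases the rank of the exterior power.

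To establish this vanishing, my approach would be a filtration argument. Choose $p+1$ general points $P_0, \ldots, P_p$ on $X$; the hypothesis $\deg L \geq 2g+1+p$ guarantees that $h^1\bigl(L(-P_{i_1} - \cdots - P_{i_k})\bigr) = 0$ for every subset of size $k \leq p+1$. Using these points one can decompose $H^0(L)$ via sections vanishing along subsets of $\{P_0, \ldots, P_p\}$, and then pull this structure back through the evaluation sequence to filter $M_L$ by subbundles whose successive quotients are line bundles (essentially the tangent-type line bundles $L(-P_i)^{-1}$). Taking $(p+1)$-st exterior powers and twisting by $L^{q-1}$ then produces graded pieces of degree at least $2g-1$, whose $H^1$ vanishes by Serre duality; this vanishing propagates through the associated long exact sequences to give the desired $H^1\bigl(\wedge^{p+1} M_L \otimes L^{q-1}\bigr) = 0$.

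The main technical obstacle is carrying out this filtration argument rigorously: one must verify that the required subbundles of $M_L$ exist — which depends on the cohomological surjectivity conditions enforced by the generality of the $P_i$ — and that the degrees of the graded pieces come out exactly right. The hypothesis $\deg L \geq 2g+1+p$ is sharp for this line of reasoning, since removing $p+1$ points must leave a nonspecial line bundle; any weakening of the bound would allow some graded piece to have nonvanishing $H^1$, breaking the argument.
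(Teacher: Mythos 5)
The paper does not actually prove Theorem~\ref{thm:Green}; it is stated without proof. What is proved in Subsection~\ref{subsection:Green} is the strictly weaker Theorem~\ref{thm:suboptimal Green}, with a worse degree bound, and the method there is different from yours: the paper invokes Butler's theorem (Theorem~\ref{thm:Butler}) to get semi-stability of $M_L$ from $\mu(L)=\deg L\geq 2g$, then computes the slope of $\omega_C\otimes\wedge^{i+1}M_L^*\otimes L^{\otimes -s}$ and concludes $H^0=0$ because a semi-stable bundle of negative slope has no sections. That argument is deliberately soft, avoids filtrations entirely, and pays with a non-optimal bound. Your filtration approach is the classical route to the sharp bound $2g+1+p$ (Green's original proof, see also \cite{L2}); the two proofs represent a genuine trade-off between sharpness and simplicity.

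Within your sketch there are two gaps. First, the successive quotients of the filtration of $M_L$ obtained by peeling off a point $P$ are $\sO_X(-P)$, not $L(-P)^{-1}$: the snake lemma applied to the evaluation sequences for $L(-P)$ and $L$ gives $M_L/M_{L(-P)}\cong\sO_X(-P)$. Since your argument tracks degrees of the graded pieces, the slip is not cosmetic --- $\sO_X(-P)$ has degree $-1$, whereas $L(-P)^{-1}$ has degree $-(\deg L-1)$, which would make the graded pieces far too negative for any vanishing to hold. Second, even with the correct quotients, the graded pieces of $\wedge^{p+1}M_L\otimes L^{q-1}$ are not all line bundles: if you peel off $p+1$ points to reach $L'=L(-P_0-\cdots-P_p)$, the graded pieces of $\wedge^{p+1}M_L$ have the form $\wedge^a M_{L'}\otimes\sO_X(-\sum_{i\in S}P_i)$ with $a+|S|=p+1$, and for $a\geq 1$ these have rank $>1$. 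Only the $a=0$ piece $\sO_X(-P_0-\cdots-P_p)\otimes L^{q-1}$, of degree at least $2g$, has its $H^1$-vanishing read off from the degree alone; the $a\geq 1$ pieces require a further induction (on $p$, say) or an auxiliary vanishing lemma for twists of $\wedge^a M_{L'}$, and this is exactly where the sharpness of the bound $2g+1+p$ is actually used.
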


In Subsection~\ref{subsection:Green} we show how a simple stability argument leads to a slightly weaker version of Green's theorem. 

Inspired by Fujita's influential conjectures on global generation and very ampleness,  and Green's theorem on the other hand, Mukai formulated a series of conjectures for 
the properties $(N_p)$. 

\begin{conj}[Mukai]\label{conj:Mukai}
Let $p$ be a natural number, $X$  a smooth projective variety, $L$ an ample line bundle on $X$. Then $K_X+mL$ has property $(N_p)$, whenever $m\geq \dim(X) + p+2$. 
\end{conj}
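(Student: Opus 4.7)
The plan is to attack Mukai's conjecture via Koszul cohomology combined with the construction of singular divisors, using Newton--Okounkov bodies as the engine for producing those divisors. Setting $A_m\deq K_X+mL$, property $(N_p)$ for $A_m$ is equivalent to the vanishing of the Koszul cohomology groups $K_{i,j}(X,A_m)$ for $1\leq i\leq p$ and $j\geq 2$, which by the standard reformulation via the kernel bundle $M_{A_m}\deq \ker(H^0(A_m)\otimes \sOX \to A_m)$ reduces to showing $H^1(X,\wedge^{i+1}M_{A_m}\otimes A_m^{\otimes s})=0$ for all $1\leq i\leq p$ and $s\geq 1$. The first step of the proof would be to perform this reduction carefully and to dispose of the $j\geq 3$ vanishings by Castelnuovo--Mumford regularity (using that $A_m$ is $0$-regular with respect to $L$ for $m\geq n+2$, which is Fujita's conjecture in the globally generated case and hence is the $p=0$ base case).

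Second, I would set up an induction on $p$ (or on $n=\dim X$) using  a ladder construction: pick a general divisor $Y\in |L|$ and compare the Koszul complexes of $(X,A_m)$ and $(Y,A_m|_Y)$ via the standard short exact sequence $0\to \wedge^{p+1}M_{A_m}(-L)\to \wedge^{p+1}M_{A_m}\to \wedge^{p+1}M_{A_m}|_Y\to 0$. The numerics are designed for this: the bound $m\geq n+p+2$ drops to $(n-1)+p+2$ upon restricting to a smooth $Y\in |L|$, so the inductive hypothesis applies on $Y$. The restriction argument requires enough sections of $L$ to produce a smooth $Y$ through a general point, which in turn  reduces us to the case where the generator map is surjective --- this is where multiplier ideal techniques enter.

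Third, for the base of the induction and for the closure of the restriction argument, the central geometric input is the construction, at each $x\in X$, of an effective $\QQ$-divisor $D_x\equiv_\QQ (1-\delta)L$ whose multiplier ideal $\shj(D_x)$ cuts out an isolated subscheme at $x$. Here is where Newton--Okounkov bodies intervene directly: by Theorem~\ref{thm:main augmented} (and the associated  infinitesimal theory of Subsection~2.6), the ampleness of $L$ guarantees an inverted simplex $\iss{\xi}\subseteq\inob{\ybul}{L}$ over every point $x$, and the size $\xi(L;x)$ of the largest such simplex equals  the moving Seshadri constant $\epsilon(\|L\|;x)$ by Theorem~\ref{thm: lis is mov Sesh}. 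Producing high-multiplicity sections of $mL$ vanishing at $x$  translates into extracting valuation vectors far from the origin in $\inob{\ybul}{mL}$, and the convex-geometric size of the accessible region controls how singular a divisor one can construct. Combined with Nadel vanishing
\[
H^i\(X,\shi\otimes \sOX(K_X+mL)\)\equ 0 \quad (i>0),\ \shi\deq \shj(D_x),
\]
which applies because $(K_X+mL)-D_x\equiv K_X+(m-1+\delta)L$ is ample with substantial cushion for $m\geq n+p+2$, this yields the required local-to-global surjectivity controlling the kernel bundle's cohomology at each point.

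The main obstacle --- and the reason Mukai's conjecture is open in this generality --- is twofold. First, to handle $p\geq 1$ one needs to construct singular divisors  at $p+1$  points simultaneously with prescribed coupled singularity behavior, and the infinitesimal positivity measured by $\xi(L;x)$ at a single point is not a priori strong enough to control such multi-point interpolation; producing the right multi-point analogue of Theorem~\ref{thm:main augmented} is the crucial missing piece, and this is likely where a genuinely new convex-geometric input is required. Second, the sharp bound $n+p+2$ (rather than something like $n+2p+2$, which is what Ein--Lazarsfeld type arguments naturally give for very ample $L$) demands that the singular divisors be constructed with an almost integral singularity, meaning that $\xi(L;x)$ must be bounded below by a universal constant near $1$ at a sufficiently general point --- a quantitative strengthening of Corollary~\ref{cor:genericseshadri} in arbitrary dimension. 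Establishing both of these is, in my view, the essential content that would be needed to close the argument.
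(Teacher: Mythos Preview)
The statement you are attempting to prove is Conjecture~\ref{conj:Mukai}, which the paper explicitly presents as an \emph{open conjecture}; the paper states outright that ``the conjecture in its entirety appears to be way out of reach even for surfaces currently'' and offers no proof. There is therefore no proof in the paper to compare against. What the paper does instead is treat the very special case of abelian surfaces (Theorem~\ref{thm:np}), where the group action, the triviality of $K_X$, and the two-dimensional infinitesimal Newton--Okounkov machinery allow for a complete analysis.

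Your proposal is not a proof but a research strategy, and you candidly say as much in your final paragraph. The two obstacles you identify --- multi-point singular divisor construction with sharp numerics, and a universal lower bound on $\xi(L;x)$ close to $1$ in arbitrary dimension --- are genuine and currently unresolved; the second in particular would be a dramatic strengthening of Ein--K\"uchle--Lazarsfeld and is not known. There are also gaps earlier in the outline that you gloss over: your ladder step requires a smooth $Y\in |L|$, hence base-point-freeness of $L$, which for merely ample $L$ is already Fujita's conjecture and is open in dimension $\geq 6$; and your claimed ``$p=0$ base case'' (that $A_m$ is $0$-regular for $m\geq n+2$) is projective normality of $K_X+(n+2)L$, which is itself part of the open Mukai conjecture, not a known input. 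Finally, a minor slip: for Nadel vanishing applied to $\sO_X(K_X+mL)\otimes\sJ(D_x)$ the relevant positivity condition is on $mL-D_x$, not on $(K_X+mL)-D_x$ as you wrote. In short, the approach you sketch is a reasonable heuristic for why the numerics in Mukai's conjecture are plausible, but it does not constitute a proof, and the paper makes no claim to one either.
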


Although non-trivial work has been done in dimension two (see \cite{GP} for a nice overview, and \cite{GPK3} for further interesting work), the conjecture in its entirety 
appears  to be way out of reach even for surfaces currently. 
One of the few class of varieties where the  Fujita-Mukai conjecture has been successfully treated in all dimensions  is that of abelian varieties. 
Based on earlier work of Kempf \cite{Kempf}, 
Pareschi \cite{P} proved a conjecture of Lazarsfeld that gives a Fujita-type answer to syzygy questions on abelian varieties. 

\begin{thm}[Pareschi]\label{thm:Pareschi}
Let $(X,L)$ be a polarized abelian variety, $p$ a natural number. If $m\geq p+3$, then $mL$ satisfies property $(N_p)$. 
\end{thm}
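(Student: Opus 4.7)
The plan is to translate property $(N_p)$ into a collection of cohomological vanishings involving kernel bundles, and then to verify those vanishings on an abelian variety using the Fourier--Mukai transform and the theory of $M$-regular sheaves.  Recall the standard \emph{kernel bundle}
\[
 M_L \deq \ker\bigl(H^0(X,L)\otimes\sO_X \twoheadrightarrow L\bigr).
\]
A well-known cohomological reformulation (going back to Green and worked out cleanly by Lazarsfeld) says that if $L$ is normally generated, then $L$ satisfies $(N_p)$ if and only if
\[
 H^1\bigl(X,\wedge^{q+1}M_L\otimes L^{\otimes k}\bigr)\ \equ\ 0 \qquad\text{for all } 0\leq q\leq p \text{ and all } k\geq 1.
\]
Since Kempf's theorem already gives normal generation of $mL$ for $m\geq 2$, property $(N_p)$ for $L'\deq mL$ with $m\geq p+3$ reduces to showing these vanishings for $L'$.

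The second ingredient is Mukai's Fourier transform $R\widehat{\,\cdot\,}\colon D^b(X)\to D^b(\widehat X)$, defined using the Poincar\'e bundle.  Following Pareschi--Popa, a coherent sheaf $\sF$ on $X$ is \emph{$M$-regular} if $\codim_{\widehat X}\Supp R^i\widehat{\sF}>i$ for every $i>0$.  The two facts I would use as black boxes are: (a) every $M$-regular sheaf is \emph{continuously globally generated}, i.e.\ for every non-empty open $U\subseteq\widehat X$ the twisted evaluation $\bigoplus_{\alpha\in U}H^0(\sF\otimes P_\alpha)\otimes P_\alpha^{-1}\to\sF$ is surjective; and (b) the tensor product of an $M$-regular sheaf with a continuously globally generated sheaf is again continuously globally generated, and in particular $0$-regular in the Castelnuovo--Mumford sense with respect to any polarization when twisted further by an ample line bundle, forcing $H^1$ to vanish.

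Now set $N\deq mL$ with $m\geq p+3$ and decompose $N=2L+(m-2)L$.  The main step is to prove, by induction on $q=0,1,\dots,p$, the $M$-regularity of
\[
 \wedge^{q+1}M_N\otimes L^{\otimes a}
\]
for appropriate $a\geq 1$.  The inductive mechanism comes from the Koszul-type short exact sequences
\[
 0\ \lra\ \wedge^{q+1}M_N\ \lra\ \wedge^{q+1}H^0(N)\otimes\sO_X\ \lra\ \wedge^{q}M_N\otimes N\ \lra\ 0,
\]
which reduce the $M$-regularity of $\wedge^{q+1}M_N\otimes L^{\otimes a}$ to that of $\wedge^{q}M_N\otimes N\otimes L^{\otimes a}=\wedge^{q}M_N\otimes L^{\otimes a+m}$ together with the vanishing $H^i(L^{\otimes a}\otimes P_\alpha)=0$ for $i>0$ and $\alpha\in\widehat X$, which is free because $L$ is ample.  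One checks that the decomposition $N=2L+(m-2)L$ with $m-2\geq p+1$ provides enough ``room'' to split off one copy of $L$ at each inductive step while keeping $(m-2-q)L$ ample (hence $M$-regular, in fact $IT_0$) for every $0\leq q\leq p$.  Combining the $M$-regularity of $\wedge^{p+1}M_N\otimes L$ with property (b) applied to the ample sheaf $L^{\otimes k-1}$ yields $H^1(\wedge^{p+1}M_N\otimes N^{\otimes k})=0$ for all $k\geq 1$, which is the required criterion.

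The genuinely hard part is step two: proving that the tensor product of an $M$-regular sheaf with a continuously globally generated sheaf has the desired cohomology vanishing, and that $M$-regularity propagates through the Koszul sequences above.  This is precisely where the interplay between the Fourier--Mukai transform, base change, and the Poincar\'e bundle enters, and it is the genuine content of Pareschi--Popa's technology as opposed to bookkeeping.  Everything else---the choice of decomposition $m=2+(m-2)$, the inductive unwinding of wedge powers, and the bootstrap from $M$-regularity to the $H^1$-vanishing criterion for $(N_p)$---is an organisational layer built on top of this core vanishing theorem.
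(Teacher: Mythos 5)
The paper does not prove Theorem~\ref{thm:Pareschi}; it is cited from Pareschi's 2000 \emph{J.\ Amer.\ Math.\ Soc.}\ paper \cite{P} and used as background, so there is no in-text argument to compare against. What the paper actually \emph{proves} in this section (Theorem~\ref{thm:np}) is an entirely different, numerical statement for abelian surfaces whose proof goes through the Lazarsfeld--Pareschi--Popa multiplier-ideal criterion (Theorem~\ref{thm:LPP}) and infinitesimal Newton--Okounkov polygons, not through kernel bundles or $M$-regularity.

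Your sketch outlines the \emph{Pareschi--Popa} route to this theorem rather than Pareschi's original one. Pareschi's 2000 proof does not mention $M$-regularity at all (that machinery was introduced only in their later \emph{Regularity on abelian varieties} papers); instead it works directly with the kernel-bundle criterion for $(N_p)$ and proves an iterated multiplication-map surjectivity statement in the spirit of Kempf, using theta-group representation theory and base-change along the isogeny $X\times X\to X\times X$, $(x,y)\mapsto(x+y,x-y)$. The $M$-regularity reformulation you invoke is the cleaner and more general repackaging found in \cite{PP}, and it does give a proof---with the bonus, noted in the paper's remark after Corollary~\ref{cor:one}, that $(p+2)L$ already suffices once $L$ has no fixed components. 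So your route buys generality and conceptual clarity at the cost of importing the full Fourier--Mukai apparatus, whereas Pareschi's original argument is more hands-on and historically prior.

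Two corrections of substance. First, Kempf (and independently Koizumi) proved normal generation of $mL$ for $m\geq 3$, not $m\geq 2$; the case $m=2$ requires extra hypotheses (Ohbuchi). This does not affect you since $p+3\geq 3$, but the claim as stated is false. Second, the inductive mechanism as you describe it is not quite right: tensoring the Koszul sequence for $\wedge^{q+1}M_N$ by $L^{\otimes a}$ and passing to cohomology does not by itself ``reduce'' $M$-regularity of $\wedge^{q+1}M_N\otimes L^{\otimes a}$ to that of $\wedge^{q}M_N\otimes N\otimes L^{\otimes a}$; what you actually need to show is the surjectivity of the connecting multiplication maps, which is exactly where the Pareschi--Popa tensor theorem (your black box~(b)) must be applied at every stage, not only at the end. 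The correct inductive statement is the $IT_0$/$M$-regularity of the tensor powers $M_N^{\otimes(q+1)}\otimes L^{\otimes a}$ (recall $\wedge^{q+1}M_N$ is a direct summand of $M_N^{\otimes(q+1)}$ in characteristic zero), established by repeatedly peeling off one factor of $M_N$ and one copy of $L$ using the Koszul sequence for $M_N$ itself, not for $\wedge^{q+1}M_N$. With that repair the outline is sound.
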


The idea behind the statement is a general principle conjecture by Lazarsfeld: syzygies on abelian varieties should behave roughly the same way as on elliptic curves. 
Pareschi's Theorem is a particularly precise instance of this phenomenon. 

\begin{rmk}
 An interesting feature of abelian varieties is that the bound for property $(N_p)$ is independent of $\dim(X)$. Note at the same time that as far as taking multiples 
 of given ample divisors goes, Pareschi's theorem is sharp already in the case of elliptic curves. If $X$ is an elliptic curve, then a divisor of degree $1$ is effective, 
 but has a base point. A divisor of degree two is base-point free, in fact it defines a double cover of $\PP^1$, but not very ample. This pattern continues, see the exercise 
 below (cf. \cite{GP}).
\end{rmk}

\begin{exer} Let $p$ be a natural number, $X$  an elliptic curve, $D$ a divisor of degree $p+3$ on $X$. Show that $D$ satisfies property $(N_p)$, but not $(N_{p+1})$. 
\end{exer}

\begin{rmk}
Observe that Fujita--Mukai type conjectures are not quite satisfactory from the point of view that they do not scale well under taking multiples. More concretely, 
let $L$ be an ample line bundle on an abelian variety $X$, $p$ a natural number. By Pareschi's theorem $(p+3)L$ satisfies property $(N_p)$. Let us now write 
$L'\deq (p+3)L$ and ask which multiples of $L'$ satisfy property $(N_p)$. If we turn to Pareschi's theorem again, the answer is $\geq p+3$, which is now very 
far from the truth, which we know to be $\geq 1$. The conclusion we can draw is that Mukai-type results tend to be unprecise for non-primitive line bundles. 

In general  Koll\'ar's suggestion  \cite{SingsPairs} (see also \cite{PAGII}) to try and control positivity in terms of intersection numbers appears to be more precise.  
\end{rmk}

\subsection{Green's theorem and stability}\label{subsection:Green}

As an interlude we show  that at the cost of weakening the bound in Green's theorem for curves 
one can buy a  simplified treatment by studying syzygy bundles. This proof  was motivated by the paper \cite{AB} of Arcara and Bertram.

\begin{thm}[\cite{KKM_unpub}]\label{thm:suboptimal Green}
Let $C$ be a smooth projective curve of genus  $g$, $L$ a line bundle on $C$, $p$ a natural number. If 
\[
 \deg L \dgeq g + \frac{g+p-2+\sqrt{(g+p-2)^2+4(g(p-2)+2)}}{2}\ ,
\]
then $L$ satisfies property $N_p$. 
\end{thm}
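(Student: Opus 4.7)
The plan is to translate property $(N_p)$ into a single cohomological vanishing on the syzygy bundle of $L$, and then use Mumford stability of that bundle to force the vanishing.  The hypothesis on $\deg L$ makes $L$ normally generated and yields $h^1(C,L^k)=0$ for all $k\geq 1$.  Under these conditions the standard Koszul-cohomology dictionary (cf.~\cite{G} and \cite{PAGI}*{Section 1.8.D}) translates property $(N_p)$ into the vanishings
\[
H^1\bigl(C,\ \wedge^{i+1} M_L \otimes L^k\bigr) \ = \ 0\qquad \text{for } 1\leq i\leq p,\ k\geq 1,
\]
where the syzygy (kernel) bundle $M_L$ is defined by
\[
0 \ \to\  M_L \ \to\  H^0(C,L)\otimes \mathcal{O}_C \ \to\  L \ \to\  0.
\]
A slope calculation will show that the hardest case is $i=p$, $k=1$, and that all others follow \emph{a fortiori} once that one is settled.

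Next I would invoke Butler's classical theorem: for $\deg L \geq 2g+1$, the bundle $M_L$ is $\mu$-stable.  Since we are in characteristic zero, stability is preserved by exterior powers, so $\wedge^{p+1} M_L$ is again semistable with slope
\[
\mu(\wedge^{p+1} M_L) \ = \ (p+1)\,\mu(M_L) \ = \ -\, \frac{(p+1)\deg L}{\deg L - g},
\]
using Riemann--Roch to compute $\rk M_L = h^0(C,L)-1 = \deg L - g$ and $\deg M_L = -\deg L$.

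Serre duality, combined with the canonical identification $(\wedge^{p+1} M_L)^* \cong \wedge^{r-p-1} M_L \otimes L$ (with $r=\rk M_L$ and $\det M_L = L^{-1}$), converts the target vanishing $H^1(C,\wedge^{p+1} M_L \otimes L)=0$ into
\[
H^0\bigl(C,\ \wedge^{r-p-1} M_L \otimes K_C\bigr) \ = \ 0.
\]
Because the bundle on the left is semistable, this $H^0$ is forced to vanish as soon as its slope is strictly negative, which reads
\[
(r-p-1)\cdot \frac{\deg L}{\deg L - g} \ > \ 2g-2.
\]
Clearing denominators gives a quadratic inequality in $\deg L$; working out the positive root yields precisely the bound appearing in the theorem.

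The heart of the argument is the stability input of Butler's theorem; once that is in hand, the remaining steps are formal manipulation of slopes.  The main obstacle I anticipate is not conceptual but arithmetic: checking carefully that the slope threshold for vanishing unpacks into precisely the numerical expression given in the statement, a mechanical but somewhat tedious computation, and verifying that the intermediate cases $(i<p, \, k\geq 1)$ and $(i=p,\, k\geq 2)$ are all implied by the slope bound for the critical pair $(p,1)$.  Compared with the sharp Green bound $\deg L\geq 2g+1+p$, which one can recover via the more refined Bridgeland-stability arguments of \cite{AB}, Mumford stability alone is enough to obtain the slightly weaker Green-type bound stated in the theorem, and this is the point of the proof.
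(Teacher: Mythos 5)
Your proposal follows essentially the same route as the paper's proof: reduce $(N_p)$ to the Ein--Lazarsfeld vanishing criterion for $H^1(C,\wedge^{i+1}M_L\otimes L^{\otimes s})$, invoke Butler's theorem for (semi)stability of $M_L$, pass to exterior powers (which preserve semistability in characteristic zero), and conclude from the fact that a semistable bundle of negative slope has no sections. Your detour through the self-duality $(\wedge^{p+1}M_L)^*\cong\wedge^{r-p-1}M_L\otimes L$ is just a cosmetic repackaging of the paper's direct computation of the slope of $\omega_C\otimes\wedge^{i+1}M_L^*\otimes L^{\otimes -s}$, and your reduction to the worst case $(i,s)=(p,1)$ is exactly the paper's reduction, so the two arguments coincide.
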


\begin{rmk}
 If we fix $g$, and let $p$ tend towards infinity, in the limit we obtain $\deg L \geq p + O(p^{1/2+\epsilon})$ for $\epsilon>0$ arbitrarily small. 
 In this sense, our bound is asymptotically optimal. 
\end{rmk}

Our primary tools are  syzygy bundles. If $E$ is a globally generated vector bundles, then the \emph{syzygy bundle} $M_E$ associated to $E$
 is defined by the short exact sequence
\[
 0 \lra M_E \lra \HH{0}{X}{E}\otimes_\CC \sO_X \lra E \lra 0 
\]
obtained from the evaluation map on global sections. 

The main technical tools connected to the bundles $M_E$  we use are the following

\begin{thm}[\cite{EL1}]\label{thm:EL vanishing condition}
Let $X$ be a  projective variety, $L$ a globally generated  ample line bundle on $X$. If 
\[
 \HH{1}{X}{\wedge^{i+1}M_L\otimes L^{\otimes s}} \equ 0
\]
for all $0\leq i\leq p$, and all $s\geq 1$, then $L$ has the property $N_p$. 
\end{thm}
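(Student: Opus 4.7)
The starting point is the standard characterization (due to Green) of property $(N_p)$ in terms of Koszul cohomology: setting $V\deq H^0(X,L)$, $(X,L)$ satisfies $(N_p)$ if and only if the Koszul cohomology groups $K_{i,q}(X,L)$ vanish for all $0\leq i\leq p$ and all $q\geq 2$, where $K_{i,q}$ is the middle cohomology of
\[
\wedge^{i+1}V\otimes H^0(L^{q-1})\ \lra\ \wedge^i V\otimes H^0(L^q)\ \lra\ \wedge^{i-1}V\otimes H^0(L^{q+1}).
\]
So the task is to deduce this vanishing from the assumed vanishing of $H^1(X,\wedge^{i+1}M_L\otimes L^s)$.

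The bridge between syzygy bundles and Koszul cohomology is provided by the $(i+1)$-st exterior power of the defining sequence $0\to M_L\to V\otimes \sO_X\to L\to 0$, twisted by $L^s$:
\[
0\ \lra\ \wedge^{i+1}M_L\otimes L^s\ \lra\ \wedge^{i+1}V\otimes L^s\ \lra\ \wedge^i M_L\otimes L^{s+1}\ \lra\ 0.
\]
Taking global sections, the hypothesis $H^1(X,\wedge^{i+1}M_L\otimes L^s)=0$ immediately yields surjectivity of the multiplication map
\[
\mu_{i,s}\colon \wedge^{i+1}V\otimes H^0(L^s)\ \lra\ H^0(X,\wedge^i M_L\otimes L^{s+1})
\]
for every $0\leq i\leq p$ and every $s\geq 1$.

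The remaining step is to identify $K_{i,q}(X,L)$ with $\operatorname{coker}\mu_{i,q-1}$. This proceeds by a diagram chase: the parallel exact sequence $0\to \wedge^i M_L\otimes L^q\to \wedge^i V\otimes L^q\to \wedge^{i-1}M_L\otimes L^{q+1}\to 0$ exhibits $H^0(\wedge^i M_L\otimes L^q)$ as the kernel of the outgoing Koszul differential $\wedge^i V\otimes H^0(L^q)\to \wedge^{i-1}V\otimes H^0(L^{q+1})$, while $\mu_{i,q-1}$ composed with this inclusion agrees, up to sign, with the incoming Koszul differential from $\wedge^{i+1}V\otimes H^0(L^{q-1})$. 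Once this compatibility is verified, surjectivity of $\mu_{i,q-1}$ for $0\leq i\leq p$ and $q\geq 2$ translates directly into $K_{i,q}(X,L)=0$. The main technical obstacle is precisely this compatibility check between the Koszul differentials and the filtration coming from $M_L$; an equivalent route is to splice the short exact sequences into a Koszul-type resolution of $L^{q-1}$ by exterior powers of $M_L$ and run the associated hypercohomology spectral sequence, in which case the hypothesis is exactly what is needed to isolate $K_{i,q}$ as a single surviving entry.
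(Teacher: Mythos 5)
The paper does not prove this theorem; it is cited to Ein--Lazarsfeld \cite{EL1} and used as a black box, so there is no proof in the text to compare against. Your sketch is the standard argument and is correct: the vanishing of $H^1(\wedge^{i+1}M_L\otimes L^s)$ forces surjectivity of $\wedge^{i+1}V\otimes H^0(L^s)\to H^0(\wedge^i M_L\otimes L^{s+1})$, and since $H^0(\wedge^i M_L\otimes L^q)$ sits inside $\wedge^i V\otimes H^0(L^q)$ precisely as the kernel of the outgoing Koszul differential, while $\mu_{i,q-1}$ followed by that inclusion is (up to sign) the incoming Koszul differential, the Koszul cohomology groups $K_{i,q}(X,L)$ vanish for $0\leq i\leq p$ and $q\geq 2$, which is Green's criterion for $(N_p)$. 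The only non-formal point, as you rightly flag, is the compatibility of $\mu_{i,q-1}$ with the Koszul differential; this is a direct but unenlightening verification carried out in \cite{EL1} and in \cite{L2}, so your outline is complete modulo that check.
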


We can control the stability of syzygy bundles using the following result. 

\begin{thm}[Butler, \cite{But94}]\label{thm:Butler}
 Let $C$ be a smooth projective curve of genus $g$, $E$ a semi-stable globally generated vector bundle on $C$. If $\mu(E)\geq 2g$, then
 $M_E$ is semi-stable as well. 
\end{thm}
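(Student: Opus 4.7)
The plan is a proof by contradiction. Suppose for some saturated subsheaf $F \subset M_E$ we have $\mu(F) > \mu(M_E)$. Write $V = H^0(C,E)$. Since $M_E$ is saturated in the trivial bundle $V \otimes \mathcal{O}_C$ and $F$ is saturated in $M_E$, $F$ is saturated in $V \otimes \mathcal{O}_C$ as well; consequently both $Q := (V \otimes \mathcal{O}_C)/F$ and $M_E/F$ are locally free, and fit in a short exact sequence
\begin{equation*}
0 \to M_E/F \to Q \to E \to 0.
\end{equation*}
Taking $H^0$ and using $H^0(M_E) = 0$ (so $H^0(F)=0$ as well), one obtains a canonical splitting $H^0(Q) = V \oplus H^0(M_E/F)$ via the map $V \to H^0(Q)$ coming from $V \otimes \mathcal{O}_C \twoheadrightarrow Q$.

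As a preliminary reduction I would first treat the case of stable $E$. If $E$ is only semi-stable, take a Jordan--H\"older filtration $0 = E_0 \subset \dots \subset E_\ell = E$ with stable quotients of common slope $\mu(E) \geq 2g$; the slope bound forces $H^1(E_i) = 0$ (by Serre duality and semi-stability of the Serre dual) and hence produces short exact sequences of syzygy bundles
\begin{equation*}
0 \to M_{E_{i-1}} \to M_{E_i} \to M_{E_i/E_{i-1}} \to 0
\end{equation*}
that allow one to bootstrap semi-stability from the stable factors---recall that an extension of semi-stable bundles of the same slope is semi-stable---reducing the problem to the case where $E$ itself is stable.

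With $E$ stable, the heart of the matter is to exploit $F$ in order to manufacture a proper sub-bundle $E' \subsetneq E$ satisfying $\mu(E') > \mu(E)$, contradicting stability. The recipe uses the diagram above: because $Q$ is globally generated (quotient of a trivial bundle) and the splitting of $H^0(Q)$ identifies $H^0(M_E/F)$ as a direct summand, a generic-fiber construction produces a sub-bundle $E' \subset E$ whose rank and degree are controlled by those of $F$ through the bookkeeping of the two short exact sequences. The key numerical input is Riemann--Roch applied to $E'$: the condition $\mu(E)\geq 2g$ ensures $H^1(E')=0$ whenever $\mu(E')$ is controlled to exceed $2g-2$, so that $h^0(E') = \deg(E') + \mathrm{rk}(E')(1-g)$.

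The main obstacle will be to make the passage $F \leadsto E'$ precise enough that the inequality $\mu(F) > \mu(M_E)$ translates, via rank--degree bookkeeping and the Riemann--Roch identity above, into the strict inequality $\mu(E') > \mu(E)$. Without the hypothesis $\mu(E)\geq 2g$, Riemann--Roch would fail to compute $h^0$ from degree alone and the argument collapses; so the slope bound is used exactly once, but crucially, to convert a cohomological estimate on sub-bundles of $E$ into the numerical contradiction to the stability of $E$ that closes the proof.
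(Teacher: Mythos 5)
The paper states this theorem as a citation to \cite{But94} and does not prove it, so there is no in-paper argument to compare against. Your scaffolding is correct as far as it goes: the exact sequence $0 \to M_E/F \to Q \to E \to 0$ with $Q=(V\otimes\sO_C)/F$ does hold, the splitting $H^0(Q)\cong V\oplus H^0(M_E/F)$ follows from $H^0(M_E)=0$ and the factorization of the evaluation map, and the Jordan--H\"older reduction to stable $E$ is valid --- the graded pieces are semistable of slope $\mu(E)\geq 2g$, hence globally generated with $H^1=0$, so the filtration induces short exact sequences of syzygy bundles all of common slope $-\mu(E)/(\mu(E)-g)$, and an extension of semistable bundles of equal slope is semistable.

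The gap is where you flag it, but it is not a loose end to be tightened --- it is the theorem. ``A generic-fiber construction produces a sub-bundle $E'\subset E$'' names no construction, and there is no natural candidate: $F$ and $M_E/F$ sit inside $V\otimes\sO_C$ precisely so as to die in $E$, so neither they nor their sections map nontrivially into $E$, and the one plausible fiberwise recipe (regard $F_x\subset V=H^0(E)$ as a linear system for general $x$ and take the subsheaf of $E$ it generates) produces a subsheaf of rank $\min(\rk F,\rk E)$ whose degree bears no useful relation to $\deg F$. Beyond the missing construction, the target of the contradiction is also misidentified: Butler's argument does not manufacture a destabilizing subbundle of $E$. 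Instead it dualizes, uses that $F^*$ is a quotient of $V^*\otimes\sO_C$ and hence globally generated (and semistable once $F$ is taken to be the maximal destabilizing subsheaf), and pairs Riemann--Roch/Clifford-type counts of $h^0$ with the semistability of $E$ --- which enters only through upper bounds on $h^0$ of its sub- and quotient bundles, never through the existence of a destabilizer --- to reach a purely numerical contradiction with $\mu(F)>\mu(M_E)$. So the Riemann--Roch ingredient you identify is genuinely used, but it must be applied to $F^*$ and the bundles in your two exact sequences, not to a hypothetical $E'$ that the proposal never produces.
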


Theorem~\ref{thm:suboptimal Green} is a consequence of  a vanishing statement related to slope stability. The basic idea is that a semistable vector bundle of negative 
slope has no sections. In what follows $C$ is a smooth projective curve of genus $g$, and 
$L$ is a line bundle on $C$. We denote the rank of $M_L$ by $r$. 

\begin{prop}\label{prop:stability}
Assume that  $\deg L\geq 2g$, and let $i\geq 0$, $s\geq 1$ be arbitrary integers. Then 
\[
 \binom{r}{i+1}\cdot (2g-2-s \cdot \deg L) + \binom{r-1}{i}\cdot \deg L \,<\, 0
\]
implies 
\[
 \HH{1}{C}{\wedge^{i+1}M_L\otimes L^{\otimes s}} \equ 0\ .
\]
\end{prop}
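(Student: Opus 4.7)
The plan is to reduce the vanishing of $H^1$ to a standard slope-stability argument on a curve. First I would observe that the inequality $\deg L \geq 2g$ puts us in the regime where Butler's theorem applies. Indeed, $L$ is a line bundle, hence automatically slope-semistable, and by the Proposition on global generation of curves we have that $L$ is base-point free, so $M_L$ is well-defined. Theorem~\ref{thm:Butler} then yields that $M_L$ is itself semistable.

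Next I would upgrade semistability of $M_L$ to semistability of $\wedge^{i+1}M_L$ using the standard fact (valid in characteristic zero) that exterior powers of semistable bundles on a smooth projective curve remain semistable. Tensoring with the line bundle $L^{\otimes s}$ preserves slope-semistability, so $F \deq \wedge^{i+1}M_L \otimes L^{\otimes s}$ is semistable.

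The rest is bookkeeping with ranks, degrees, and slopes. From the defining sequence of $M_L$ we read off $\rk M_L = r$ and $\deg M_L = -\deg L$, so
\[
\rk F \equ \binom{r}{i+1}, \qquad \deg F \equ (i+1)\binom{r}{i+1}\cdot\frac{-\deg L}{r} + s\cdot\deg L\cdot\binom{r}{i+1} \equ -\binom{r-1}{i}\deg L + s\binom{r}{i+1}\deg L.
\]
By Serre duality, $H^1(C,F) \simeq H^0(C, F^\vee\otimes K_C)^\vee$, and the latter vanishes whenever $F^\vee \otimes K_C$ is semistable of negative slope, equivalently when $\mu(F) > 2g-2$. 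Writing this condition out using the computation above and clearing the positive denominator $\binom{r}{i+1}$ gives exactly the hypothesis
\[
\binom{r}{i+1}(2g-2-s\deg L) + \binom{r-1}{i}\deg L \,<\, 0\ .
\]

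The only nontrivial input, beyond Butler's theorem which is quoted, is the preservation of semistability under exterior powers; the main obstacle in writing a fully self-contained account would be either citing the Ramanan--Ramanathan / Hermite-Einstein route, or giving an elementary curve-specific argument. Everything else is a clean slope computation together with Serre duality on $C$.
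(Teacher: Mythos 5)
Your proposal is correct and is essentially the same argument as the paper's: Butler's theorem gives semistability of $M_L$, preservation of semistability under exterior powers and twisting by a line bundle gives semistability of the relevant bundle, and Serre duality reduces the $H^1$ vanishing to the observation that a semistable bundle of negative slope has no nonzero global sections; the slope/degree bookkeeping is identical. The only cosmetic difference is that you compute $\mu(F)$ and impose $\mu(F)>2g-2$, while the paper dualizes first and imposes $\mu(\omega_C\otimes\wedge^{i+1}M_L^*\otimes L^{-s})<0$, which is the same inequality.
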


\begin{cor}\label{cor:light Green}
 With notation as above, if $\deg L\geq 2g$, and 
 \[
  \binom{r}{i+1}\cdot (2g-2-s \cdot \deg L) + \binom{r-1}{i}\cdot \deg L \,<\, 0
 \]
for all $0\leq i\leq p$ and $s\geq 1$, then $L$ has property $N_p$. 
\end{cor}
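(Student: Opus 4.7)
The proof is essentially a direct concatenation of the two preceding results, so the plan is short. First I would fix integers $i$ with $0\leq i\leq p$ and $s\geq 1$. Since by hypothesis $\deg L\geq 2g$ and the numerical inequality
\[
\binom{r}{i+1}\cdot(2g-2-s\cdot\deg L)+\binom{r-1}{i}\cdot \deg L \,<\, 0
\]
holds for this particular pair $(i,s)$, Proposition~\ref{prop:stability} immediately yields the vanishing
\[
\HH{1}{C}{\wedge^{i+1}M_L\otimes L^{\otimes s}} \equ 0\ .
\]

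Since the hypothesis of the corollary asserts the numerical inequality for \emph{every} $0\leq i\leq p$ and \emph{every} $s\geq 1$, the above vanishing holds uniformly in this range. At this point the hypotheses of the Ein--Lazarsfeld criterion (Theorem~\ref{thm:EL vanishing condition}) are satisfied, so we conclude that $L$ has property $(N_p)$.

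There is really no obstacle here; the content of the statement is entirely packaged into Proposition~\ref{prop:stability}, whose proof (via Butler's stability result for syzygy bundles, Theorem~\ref{thm:Butler}) is the substantive step. The corollary itself merely records the compound implication, with the two running hypotheses $\deg L\geq 2g$ and the binomial inequality ensuring, respectively, that $M_L$ (and its exterior powers, suitably interpreted via Butler's theorem) is semistable, and that the resulting slope is negative enough to kill $H^1$.
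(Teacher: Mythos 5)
Your proof is correct and matches exactly the route the paper intends: the corollary is stated immediately after Proposition~\ref{prop:stability} with no separate argument, precisely because it is the direct concatenation of that proposition with the Ein--Lazarsfeld criterion (Theorem~\ref{thm:EL vanishing condition}), as you carry out. One small note on your closing remark: Butler's theorem gives semistability of $M_L$ itself, while the semistability of $\wedge^{i+1}M_L$ then follows from the general fact (in characteristic zero) that exterior powers of semistable bundles are semistable, not from Butler's result directly; this is how the paper's proof of Proposition~\ref{prop:stability} phrases it as well.
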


\begin{proof}[Proof of Proposition~\ref{prop:stability}]
First of all, $\deg L\geq 2g$ implies by \cite{HS}{Corollary IV.3.2 (a)} that $L$ is globally generated, hence the syzygy bundle $M_L$ is defined. By 
Serre duality,
\[
 \HH{1}{C}{\wedge^{i+1}M_L\otimes L^{\otimes s}} \equ \HH{0}{C}{\omega_C\otimes \wedge^{i+1}M_L^*\otimes L^{\otimes -s}}\ .
\]
Note that all line bundles are semistable, and so is $M_L$ by Theorem~\ref{thm:Butler}, hence $M_L^*$ and $\wedge^{i+1}M_L^*$ for all $i\geq 0$. In particular,
the vector bundles $\omega_C\otimes \wedge^{i+1}M_L^*\otimes L^{\otimes -s}$
are all semistable. As 
\[
 \rk \wedge^{i+1}M_L^* \equ \binom{r}{i+1}\ \ \ \text{and}\ \ \ \deg \wedge^{i+1}M_L^* \equ \binom{r-1}{i}\ ,
\]
this latter by \cite{Ful}{Remark 3.2.3 (c)S},
we have that 
\[
 \mu(\omega_C\otimes \wedge^{i+1}M_L^*\otimes L^{\otimes -s}) \equ 
 \frac{1}{\binom{r}{i+1}}\cdot (\binom{r}{i+1}\cdot (2g-2-s \cdot \deg L) + \binom{r-1}{i}\cdot \deg L )\ .
\]
Since a semistable vector bundle of negative slope cannot have a non-zero global section, the condition in the Proposition guarantees that 
\[
 \HH{0}{C}{\omega_C\otimes \wedge^{i+1}M_L^*\otimes L^{\otimes -s}} \equ 0\ 
\]
as required.
\end{proof}

\begin{proof}[Proof of Theorem~\ref{thm:suboptimal Green}]
We will check that the assumption
\[
 \deg L \dgeq g + \frac{g+p-2+\sqrt{(g+p-2)^2+4(g(p-2)+2)}}{2}\ ,
\]
guarantees that the conditions of Corollary~\ref{cor:light Green} are satisfied in the required range. 

First of all, observe that to verify
\[
  \binom{r}{i+1}\cdot (2g-2-s \cdot \deg L) + \binom{r-1}{i}\cdot \deg L \,<\, 0
 \]
for all $0\leq i\leq p$ and $s\geq 1$, it suffices to treat the case $s=1$. The conditon is then equivalent  to 
\[
 \binom{r}{i+1}\cdot (2g-2) \,<\, \binom{r-1}{i+1}\cdot \deg L\ ,
\]
and hence to 
\[
 \frac{r}{r-i-1}\cdot (2g-2) \,<\, \deg L\ .
\]
As 
\[
 \frac{r}{r-i-1} \, > \, \frac{r+1}{(r+1)-i-1}\ ,
\]
it suffices to treat the case when we have equality in the Riemann--Roch formula 
\[
 r \equ \hh{0}{C}{L} - 1 \dgeq \deg L + 1-g\ ,
\]
hence we will assume $r=\deg L+1-g$. This leads to the equivalent formulation
\[
 (\deg L-g+1)(2g-2) \,<\, \deg L \cdot (\deg L-g-p)\ .
\]
We set $\lambda = \deg L -g$, and arrive at the quadratic inequality 
\[
 \lambda^2 + (-g-p+2)\lambda + (-gp-2g-2) \,>\, 0\ .
\]
A routine check will show that the inequality indeed holds under the assumption of the Theorem. 
\end{proof}

\subsection{Higher syzygies on abelian surfaces --- main result and overview}

Motivated by  Koll\'ar's line of thought regarding Fujita-type conjectures, it is  natural to ask whether it is feasible to  study property $(N_p)$ for a given line bundle with certain numerics instead. 
Such  a new line of attack in the case of abelian varieties has been recently initiated by Hwang--To \cite{HT} where complex analytic techniques 
(more precisely upper bounds on volumes of tubular neighbourhoods of subtori of abelian varieties) were  used  to control projective normality of line bundles on 
abelian varieties in terms of Seshadri constants. 

The next step was taken by  Lazarsfeld--Pareschi--Popa \cite{LPP}, who used multiplier ideal methods to extend the results of \cite{HT}  to  higher syzygies on abelian varieties. 
The essential contribution of \cite{LPP} is that property $(N_p)$ can be guaranteed via constructing effective divisors with  prescribed multiplier 
ideals and numerics. Our approach builds in part  on the method of proof developed in  \cite{LPP}. For the record we state the final  result of \cite{LPP}. 

\begin{thm}[Lazarsfeld--Pareschi--Popa]\label{thm:LPPmain}
 Let $(X,L)$ be a polarized abelian variety of dimension $n$.  Assume that $\epsilon(L;o) > (p+2)n$. Then $L$ satisfies property $(N_p)$. 
\end{thm}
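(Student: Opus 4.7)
The plan is to combine Pareschi's Fourier--Mukai criterion for property $(N_p)$ on abelian varieties with an explicit construction of effective $\QQ$-divisors whose multiplier ideals cut out finite subsets; the Seshadri-constant hypothesis is exactly what makes such a construction possible with the right numerics.

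First I would reduce property $(N_p)$ for $L$ to an $M$-regularity statement. By Theorem~\ref{thm:EL vanishing condition} it suffices to establish $H^1(X, \wedge^{i+1}M_L \otimes L^{\otimes s}) = 0$ for $0 \leq i \leq p$ and $s \geq 1$, and on an abelian variety Pareschi's methods convert these vanishings into the assertion that a certain twist of $\wedge^{p+1}M_L \otimes L$ is $M$-regular in the sense of Pareschi--Popa (equivalently, is continuously globally generated). By the Pareschi--Popa tensor-product criterion, $M$-regularity in turn is implied by the following geometric statement: for $p+2$ general points $x_0,\dots,x_{p+1}\in X$, there exists an effective $\QQ$-divisor $D\equiv_{\mathrm{num}}\lambda L$ with $\lambda<1$ such that $\mathcal{J}(X,D)\subseteq \mathcal{I}_{\{x_0,\dots,x_{p+1}\}}$, with $\{x_0,\dots,x_{p+1}\}$ isolated in the cosupport. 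This converts the syzygy question into a local positivity problem on $X$.

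Second, I would use the Seshadri bound to build the divisor $D$. Since $\epsilon(L;o)^n \leq (L^n)$, the hypothesis gives $(L^n) > ((p+2)n)^n$, so a standard dimension count on $H^0(X,mL)$ produces, for some $m \gg 0$, an effective divisor $F_m \in |mL|$ with $\mult_o F_m > (p+2+\eta)n\,m$ for a small $\eta > 0$. The rescaled $\QQ$-divisor $F = \tfrac{1}{(p+2+\eta)m} F_m$ then satisfies $F \equiv_{\mathrm{num}} \tfrac{1}{p+2+\eta}L$ and $\mult_o F > n$, so $\mathcal{J}(X,F) \subseteq \m_o$. The full strength of the Seshadri bound is then invoked to force the cosupport of $\mathcal{J}(X,F)$ near $o$ to be zero-dimensional: any positive-dimensional component $V \ni o$ would produce a curve $C \subseteq V$ with $(L\cdot C)/\mult_o C$ bounded in terms of $\lambda < 1/(p+2)$, violating $\epsilon(L;o) > (p+2)n$. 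A perturbation argument (generic convex combination of $F$ with auxiliary divisors, or a log-canonical-threshold tie-breaking) isolates $o$. Using the group law, for general $x_0,\dots,x_{p+1}$ set $D = \sum_{i=0}^{p+1} t_{x_i}^{*}F \equiv_{\mathrm{num}} \tfrac{p+2}{p+2+\eta} L$, which has coefficient strictly less than $1$ and whose multiplier ideal isolates exactly $\{x_0,\dots,x_{p+1}\}$ by translation invariance and additivity of multiplier ideals for divisors in general position. Nadel vanishing applied to the ample $\RR$-divisor class $L - D \equiv \tfrac{\eta}{p+2+\eta} L$ then yields $H^i(X,\mathcal{J}(X,D)\otimes L \otimes \alpha) = 0$ for $i > 0$ and all $\alpha \in \Pic^0(X)$, feeding back into the $M$-regularity criterion and closing the argument.

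The main obstacle I expect is the isolation step inside Step 2: the dimension count only produces large multiplicity of $F$ at $o$, while the local structure of $\mathcal{J}(X,F)$ is governed by log-canonical thresholds of $F$ along every subvariety through $o$. Translating the single numerical inequality $\epsilon(L;o) > (p+2)n$ into simultaneous lower bounds on all those thresholds — and thus into the geometric statement that the cosupport near $o$ reduces to the point $o$ — is the technical core of \cite{LPP}, and is where one must be delicate; the rest of the argument is then a mostly formal application of Pareschi--Popa $M$-regularity and Nadel vanishing.
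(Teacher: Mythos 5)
Your proposal takes a genuinely different route from the one the paper outlines, and I believe it has two real gaps.

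The paper's own sketch (Theorem~\ref{thm:LPP} and the paragraph following it) follows \cite{LPP}: via Green's cohomological criterion, $(N_p)$ for $L$ is implied by $H^i\bigl(X^{\times(p+2)}, \boxtimes^{p+2}L\otimes N\otimes \sI_\Sigma\bigr)=0$ for $i>0$, where $\Sigma$ is the union of partial diagonals; one writes $\Sigma=\delta^{-1}(\Lambda)$ for the difference morphism $\delta\colon X^{\times(p+2)}\to X^{\times(p+1)}$ and $\Lambda=\bigcup_i \pr_i^{-1}(0)$; then, once one produces an effective $\QQ$-divisor $F_0\equiv\frac{1-c}{p+2}L$ on $X$ with $\sJ(X;F_0)=\sI_0$, the divisor $E_0=\sum_i\pr_i^*F_0$ on $X^{\times(p+1)}$ has $\sJ(E_0)=\bigotimes\pr_i^*\sJ(F_0)=\sI_\Lambda$ \emph{because the summands live on distinct factors and are therefore automatically in SNC position}; pulling back by $\delta$ and invoking Nadel vanishing closes the argument. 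Your Step~2 (dimension count giving $\mult_o F>n$, tie-breaking to isolate $o$, recognition that this isolation is the technical core) matches how \cite{LPP} actually constructs $F_0$.

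The first gap is the reduction. You pass from $H^1(X,\wedge^{i+1}M_L\otimes L^s)=0$ (Theorem~\ref{thm:EL vanishing condition}) to $M$-regularity of a twist of $\wedge^{p+1}M_L\otimes L$, and then to the geometric statement about isolating $p+2$ points with multiplier ideals. The middle link is asserted, not argued: Pareschi's and Pareschi--Popa's $M$-regularity machinery (\cite{P}, \cite{PP}) gives the $(p+3)L$ and $(p+2)L$ bounds, but the Seshadri-constant version of \cite{LPP} deliberately \emph{avoids} $M$-regularity and goes through Green's criterion on the product. I don't see how the $M$-regularity of that specific bundle reduces to your point-isolation statement, and this needs a proof.

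The second gap is sharper. You set $D=\sum_i t_{x_i}^*F$ on $X$ itself and claim $\sJ(X;D)=\sI_{\{x_0,\dots,x_{p+1}\}}$ "by translation invariance and additivity of multiplier ideals for divisors in general position." Multiplier ideals are not additive; the general inclusion is subadditivity $\sJ(D_1+D_2)\subseteq\sJ(D_1)\cdot\sJ(D_2)$, and the reverse requires an SNC-type hypothesis. Translating the same singular $F$ to $p+2$ generic points does not put the translates in SNC position relative to one another, and the sum can acquire strictly worse singularities than each summand. This is exactly why \cite{LPP} works on the product $X^{\times(p+1)}$ rather than on $X$: there the $\pr_i^*F_0$ come from distinct factors, so SNC is automatic and the multiplier ideal factors. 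To stay on $X$ you would need a separate argument, and I don't think a mere genericity of the $x_i$ supplies it.
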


The main goal of this section  is to study property $(N_p)$ for divisors  on abelian surfaces, and sketch the ideas behind the following fairly precise result, which 
is the central result of this section.  The proof will be given over the last four subsections of Section 3. 

\begin{theorem}\label{thm:np}\cite{KL15c}
Let $p\geq 0$ be a natural number, $X$  a complex abelian surface,  $L$ an ample line bundle on $X$ with $(L^2)\geq 5(p+2)^2$. 
Then the following are equivalent. 
\begin{enumerate}
\item $X$ does not contain an elliptic curve $C$ with $(C^2)=0$ and $1\leq (L\cdot C) \leq p+2$.
\item The line bundle $L$ satisfies property $N_p$.
\end{enumerate} 
\end{theorem}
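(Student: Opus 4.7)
The argument naturally splits into the two directions. The implication $(2)\Rightarrow (1)$ is the softer one and relies on restricting to the bad elliptic curve, while $(1)\Rightarrow (2)$ is the main content and brings together the infinitesimal Newton--Okounkov body machinery of Section~2 with the multiplier-ideal criterion of Lazarsfeld--Pareschi--Popa.

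\textbf{Direction $(2)\Rightarrow (1)$ (contrapositive).} Suppose there exists an elliptic curve $C\subset X$ with $(C^2)=0$ and $d \deq (L\cdot C)\leq p+2$. Then $L|_C$ is a divisor of degree $d$ on the elliptic curve $C$, and by the sharpness of Green's theorem on elliptic curves (the exercise following Theorem~\ref{thm:Pareschi}) any line bundle of degree $\leq p+2$ on $C$ fails $(N_p)$. Because $C\subset X$ is a Cartier divisor, the Koszul complexes on $X$ and on $C$ are related by a short exact sequence, and a standard diagram chase using the restriction $K_{p+1,1}(X,L)\to K_{p+1,1}(C,L|_C)$ together with the fact that $(C^2)=0$ (so that tensoring by $\sO_X(-C)$ introduces only controlled cohomology) forces the corresponding Koszul cohomology of $(X,L)$ to be nonzero. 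Hence $L$ fails $(N_p)$ as well.

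\textbf{Direction $(1)\Rightarrow (2)$.} I would invoke the LPP criterion, which converts property $(N_p)$ on a polarized abelian variety into the following multiplier-ideal statement: it suffices to produce, for every general cluster $Z\subset X$ of length $p+2$, an effective $\QQ$-divisor $D\equiv \lambda L$ with $0<\lambda<1$ such that $\shJ(X,D)=\shI_Z$ in a neighbourhood of a general translate of $Z$. The existence of such ``singular divisors'' is the key input, and this is where Newton--Okounkov bodies take the stage.

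Recall that by Theorem~\ref{thm: lis is mov Sesh} one has $\xi(L;x)=\e(\|L\|;x)$ whenever $\xi(L;x)>0$, so controlling local positivity of $L$ at $x$ is the same as controlling the size of the inverted simplex sitting inside $\inob{\ybul}{L}$. Using the variation of Zariski decomposition on the blow-up $\pi\colon X'\to X$ at $x$ (developed in Subsection~3.1), one shows that the only way $\xi(L;x)<p+2$ can hold at a very general point is that there be a curve $F\subset X$ with $(L\cdot F)/\mult_x F < p+2$. The Nakamaye-style analysis of Subsection~2.7, combined with the translation action of $X$ on itself, then promotes such an $F$ to a translate that moves in a positive-dimensional family and forces $(F^2)=0$; by adjunction and the classification of curves on an abelian surface this means $F$ is a translate of an elliptic curve, contradicting hypothesis~$(1)$. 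The bound $(L^2)\geq 5(p+2)^2$ is exactly what is needed to rule out the borderline numerical configurations (the crude volume estimate $\vol(L)/(p+2)^2$ alone would leave a gap that is closed by the improved analysis on abelian surfaces).

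Thus $\xi(L;x)\geq p+2$ for every $x\in X$, and the infinitesimal Newton--Okounkov body at each point contains the inverted simplex $\iss{p+2}$. Unwinding this yields, for $m\gg 0$ divisible, sections in $H^0(mL)$ vanishing to order $\geq m(p+2)$ at any prescribed point. Averaging $p+2$ such sections --- one centered at each point of $Z$, combined via translations on $X$ --- and then taking a general $\QQ$-linear combination with total class $\lambda L$ for some $\lambda<1$, produces the required divisor $D$ with $\shJ(X,D)=\shI_Z$ at a general translate.

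\textbf{Main obstacle.} The hardest step is the geometric classification used in the middle of the argument: showing that on an abelian surface the only obstructions to the bound $\xi(L;x)\geq p+2$ (equivalently, to $\e(\|L\|;x)\geq p+2$) come from elliptic curves with $(L\cdot C)\leq p+2$, and doing so with precisely the numerical threshold $5(p+2)^2$. This requires a careful combination of Hodge index, the analysis of negative parts of Zariski decompositions on the blow-up $X'$, and translation-invariance of the Newton--Okounkov body construction. Once this is in place, the construction of the singular divisors and the verification of the LPP criterion are comparatively formal.
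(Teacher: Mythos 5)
Your overall architecture for $(1)\Rightarrow(2)$ --- reduce to the Lazarsfeld--Pareschi--Popa multiplier-ideal criterion, then use infinitesimal Newton--Okounkov polygons and translation-invariance to build the singular divisor, with the elliptic-curve hypothesis entering to exclude the bad numerical configurations --- is the right skeleton. However, there are two substantive gaps.

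\textbf{The LPP criterion is misstated.} You claim one needs, for every length-$(p+2)$ cluster $Z$, a divisor $D\equiv \lambda L$ with $\sJ(X,D)=\sI_Z$. This is not what the LPP machinery requires. The point of working on the $(p+2)$-fold self-product with the subtraction morphism $\delta$ is precisely to collapse the problem to a \emph{single point}: the criterion (Theorem~\ref{thm:LPP}) asks for a single effective $\QQ$-divisor $F_0\equiv\frac{1-c}{p+2}L$ with $\sJ(X;F_0)=\sI_0$, the maximal ideal at the origin, and then the pullback $\delta^*(\sum \pr_i^* F_0)$ automatically has multiplier ideal $\sI_\Sigma$. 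Asking for multiplier ideals cutting out arbitrary clusters of $p+2$ points would be a vastly harder problem and a different criterion (closer to what one does for Fujita-type statements via Angehrn--Siu). Working with $B\deq L/(p+2)$ and a single point is also exactly why the numerical threshold is $(B^2)\geq 5$, i.e.\ $(L^2)\geq 5(p+2)^2$.

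\textbf{The Newton--Okounkov step is not the right one, and the divisor construction is too crude.} You propose to prove $\xi(L;x)\geq p+2$, i.e.\ $\e(\|L\|;x)\geq p+2$, i.e.\ $\e(B;x)\geq 1$, and then average sections vanishing to high order. Two problems. First, $\xi(B;x)\geq 1$ only gives the inverted simplex $\iss{1}\subset\inob{\ybul}{B}$, which is strictly weaker than the actual criterion used in the paper (Theorem~\ref{thm:NO to singular}): one needs the infinitesimal polygon of $\pi^*B$ to have nonempty interior intersection with the region $\Lambda=\{t\geq 2,\ 0\leq y\leq t/2\}$, equivalently (Theorems~\ref{thm:verygeneric} and~\ref{thm:nopolygonabelian}) the vertical slice at $t=2$ must have length $>1$. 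This is what forces the divisor $\pi^*B-2E$ to have a nef part $P$ with $\ord_z(N|_E)<1$ for all $z\in E$, which is the real engine. The paper does \emph{not} prove $\e(B;x)\geq 2$ (which would be the honest Seshadri threshold); its whole point is to handle the regime $\e(B;x)<2$ using the extra geometry available on an abelian surface. Second, the explicit construction by averaging high-order-vanishing sections would in general produce a divisor whose multiplier ideal is strictly smaller than $\sI_x$ (worse singularities), and does not control the coefficients of other negative curves in the blow-up. The paper instead takes $D'=P'+N+2E$ with $P'\equiv P$ a generic effective representative (via Koll\'ar--Bertini and the invariance of multiplier ideals under small perturbation), which is essential to land exactly on $\sI_x$ and not something smaller.

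For $(2)\Rightarrow(1)$ your restriction-to-$C$ Koszul argument is conceptually plausible but essentially unjustified as written ("standard diagram chase" covers a lot of ground, and $(C^2)=0$ alone does not obviously control the relevant long exact sequences). The paper's route is different: it cites the Gruson--Lazarsfeld--Peskine / Eisenbud--Green--Hulek--Popescu method, whereby $(N_p)$ forces vanishing for $\sI_{X\cap\Lambda}$ with $\Lambda\subset|L|^*$ a low-dimensional linear subspace, yielding $(p+1)$-very ampleness; an elliptic curve of $L$-degree $\leq p+2$ immediately violates $(p+1)$-very ampleness.
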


\begin{rmk}
Part of the added value of Theorem~\ref{thm:np} comes from  treating  the cases where $(L^2)$ is large, but 
$\epsilon(L;o)$ is small. As seen in \cite{BS}  such line bundles abound. Here is a concrete family. In order to be in the situation of \cite{BS}*{Theorem 1}, 
let $C$ be an elliptic curve without complex multiplication, and let $L=a_1F_1+a_2F_2+a_3\Delta$ be an ample line
bundle on $C\times C$, where  the $F_i$'s  are  fibres of the two natural projections and $\Delta\subseteq C\times C$ stands for the class the diagonal. 
The self-intersection is then computed by 
\[
 (L^2) \equ 2a_1a_2 + 2a_1a_3+ 2a_2a_3 \ .
\]
We will  take $a_1,a_2,a_3>0$, hence  \cite{BS}*{Example 2.1} applies. Set $a_2=3$ and $a_3=2$, our plan is to take $a_1\gg a_2,a_3$; in any case if $a_1\geq 4$ then 
\[
 \epsilon(L;o) \equ \min\st{a_1+a_2,a_1+a_3,a_2+a_3} \equ 5\ ,
\]
and there is no elliptic curve of $L$-degree less than $5$ on $C\times C$. Our choice of $a_2$  and $a_3$ forces the line bundle  $L$ to be primitive as well. 
\end{rmk}

As explained above, the sequence of properties $(N_p)$ is best considered as increasingly stronger geometric versions of positivity for line bundles along the lines of global generation
and very ampleness. From this point of view, Theorem~\ref{thm:np} is a natural generalization of Reider's celebrated result \cite{Reider}.

\begin{theorem}[Reider]\label{thm:reider}\cites{Reider,LazLect}
 Let $L$ be a nef line bundle on a smooth projective surface $X$. 
 \begin{enumerate}
  \item Assume that $(L^2)\geq 5$, and let $x\in X$ be a base-point of the linear series $|K_X+L$. Then there exists an effective divisor $D$ passing through $x$ such that
  \begin{eqnarray*}
   \text{either} && (D\cdot L) \equ 0 \ \ \text{ and }\ \ (D^2) \equ -1\  ,\\ 
   \text{or} && (D\cdot L) \equ 1\ \ \text{ and }\ \ (D^2) \equ 0\ .
  \end{eqnarray*}
  \item Assume that $(L^2) \geq 10$, and $x,y\in X$ are two points (possibly infinitely near) that are not separated by the linear series $|K_X+L|$. Then there must exist
  an effective divisor $D$ passing through $x$ and $y$ such that 
   \begin{eqnarray*}
   \text{either} && (D\cdot L) \equ 0 \ \ \text{ and }\ \ (D^2) \in \st{-1,-2}\ , \\ 
   \text{or} && (D\cdot L) \equ 1\ \ \text{ and }\ \ (D^2) \in \st{0,-1}\ , \\
   \text{or} && (D\cdot L) \equ 2\ \ \text{ and }\ \ (D^2) \equ 0\ .
  \end{eqnarray*}
 \end{enumerate}
\end{theorem}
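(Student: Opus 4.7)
The proof will follow the classical strategy of Reider based on Bogomolov instability of rank two vector bundles, formalized via Serre's construction and Cayley--Bacharach. The setup is as follows: suppose $x\in X$ is a base point of $|K_X+L|$ in case $(1)$, or $x,y\in X$ are points not separated by $|K_X+L|$ in case $(2)$, and set $Z=\st{x}$ or $Z=\st{x,y}$ (viewed as a length-$1$ or length-$2$ zero-dimensional subscheme, possibly non-reduced in the infinitely-near case). The failure of global generation / separation translates, via the exact sequence
\[
0 \lra \sI_Z\otimes \sO_X(K_X+L) \lra \sO_X(K_X+L) \lra \sO_Z(K_X+L) \lra 0,
\]
into the non-surjectivity of the evaluation map, equivalently into $H^1(X,\sI_Z\otimes \sO_X(K_X+L))\neq 0$. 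By Serre duality this cohomology group is dual to $\mathrm{Ext}^1(\sI_Z\otimes\sOX(L),\sOX)$, so one obtains a non-split extension
\[
0 \lra \sO_X \lra E \lra \sI_Z\otimes\sO_X(L) \lra 0.
\]

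First I would verify that $E$ is locally free. For $Z=\st{x}$ this is automatic; for $Z=\st{x,y}$ (including the infinitely-near case) it reduces to the Cayley--Bacharach property for $Z$ with respect to $|K_X+L|$, namely that no section of $\sO_X(K_X+L)$ vanishes at exactly one of the two points. This is exactly what the non-separation assumption produces, so the extension yields a rank two vector bundle $E$ with $c_1(E)=[L]$ and $c_2(E)=\length(Z)\in\st{1,2}$. Computing the discriminant,
\[
\Delta(E) \deq c_1(E)^2 - 4c_2(E) \equ (L^2) - 4\length(Z),
\]
the assumption $(L^2)\geq 5$ (respectively $(L^2)\geq 10$) gives $\Delta(E)>0$ in both cases.

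The core step is to invoke Bogomolov's instability theorem: since $\Delta(E)>0$, the bundle $E$ is unstable, so there exists a saturated sub-line bundle $M\hookrightarrow E$ such that $2M-L$ lies in the positive cone, i.e.\ $(2M-L)^2>0$ and $((2M-L)\cdot H)>0$ for any ample $H$. Pushing $M$ into the quotient $\sI_Z\otimes\sOX(L)$ produces an effective divisor $D$ with $\sOX(D)\simeq \sOX(L-M)$ whose support contains $Z$, i.e.\ passes through $x$ (and $y$). Writing $N\deq L-2M$ so that $-N$ is in the positive cone, one checks from the extension that
\[
(D\cdot L) \equ \tfrac{1}{2}\bigl((L^2)-(L\cdot N)\bigr), \qquad (D^2) \equ \tfrac{1}{4}\bigl((L^2) - 2(L\cdot N) + (N^2)\bigr) - \length(Z),
\]
and via the containment $Z\subseteq D$ one has  $(D\cdot L)\geq \length(Z)\cdot \mu$ contributions controlled by nefness of $L$.

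The hard part is the final numerical extraction: one must show that the only possibilities allowed by nefness of $L$, the Hodge index theorem applied to $D$ and $L$, and the positivity  $(2M-L)^2>0$ are exactly those listed in the statement. Concretely, from $(D\cdot L)\geq 0$ (nefness) and the inequality $4(D\cdot L)(D\cdot L - (L^2)/\length(Z)^{\!-1}\cdot\ldots) \leq $ suitable bound coming from Hodge index, one deduces $0\leq (D\cdot L)\leq \length(Z)$; then a case-by-case analysis of the integer pair $((D\cdot L),(D^2))$ using $(D\cdot L)^2\geq (L^2)(D^2)$ (Hodge index, when $D$ is not proportional to $L$) and the lower bounds $(L^2)\geq 5$ or $\geq 10$ forces $(D^2)$ into the stated range. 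The main technical obstacle will be handling the borderline cases where $D$ could be numerically proportional to $L$ (excluded by $(L^2)>\length(Z)\cdot(\text{max value of }(D\cdot L))$) and properly treating the infinitely-near case in $(2)$, where $Z$ is a length-$2$ non-reduced scheme supported at a single point.
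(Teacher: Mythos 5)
The paper does not prove Theorem~\ref{thm:reider} --- it is quoted with citations to \cite{Reider} and \cite{LazLect} --- so there is no in-text argument to compare yours against. What you have written is the standard Bogomolov-instability proof, which is indeed the approach of those references, and the overall strategy is sound: the Serre-construction extension, local freeness via Cayley--Bacharach, Bogomolov instability from the discriminant $c_1^2-4c_2 = (L^2)-4\length(Z) > 0$, and the passage to the destabilizing sub-line bundle $M$ with $D = L-M$ effective and containing $Z$. The subsequent numerical extraction does close, via the inequalities $(M\cdot D)\leq c_2(E)=\length(Z)$, $(2M-L)^2>0$ with $(2M-L)\cdot H>0$, $(D\cdot L)\geq 0$ from nefness, and the Hodge index inequalities $(D\cdot L)^2\geq (L^2)(D^2)$ and $(L\cdot(2M-L))\geq 0$; the latter gives $(D\cdot L)\leq (L^2)/2$ and a short bootstrap then forces $(D\cdot L)\leq \length(Z)$, after which a finite case check produces exactly the listed pairs.

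That said, there are concrete errors and one genuine gap. With $N=L-2M$ and $D=L-M$ one has $D=\tfrac12(L+N)$, so $(D\cdot L)=\tfrac12\bigl((L^2)+(L\cdot N)\bigr)$ and $(D^2)=\tfrac14\bigl((L^2)+2(L\cdot N)+(N^2)\bigr)$; your formulas have the wrong sign on $(L\cdot N)$, and the stray ``$-\length(Z)$'' in your expression for $(D^2)$ is spurious ($D$ is a divisor, its self-intersection is not corrected by $\length(Z)$; that quantity enters only through $(M\cdot D)\leq\length(Z)$). The assertion that $Z\subseteq D$ yields ``$(D\cdot L)\geq\length(Z)\cdot\mu$'' is not needed and not correct as stated; what you actually use is only that $D\neq 0$, hence $(D\cdot L)\geq 0$. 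Finally, the Cayley--Bacharach step in case $(2)$ is not ``exactly what the non-separation assumption produces'': failure of separation for $\{x,y\}$ does \emph{not} automatically give CB for the length-two scheme $Z$. One must argue that either CB holds for $Z$, or else some length-one subscheme $Z'\subsetneq Z$ already fails to impose independent conditions, in which case one reduces to case $(1)$ applied at that point (and then checks the resulting $D$ still satisfies the weaker conclusion of $(2)$). This reduction is the delicate point of Reider's argument and needs to be spelled out rather than asserted.
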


\begin{rmk} Among many others (see \cite{LazLect} for more examples) one can draw the following conclusions from Reider's theorem that are particularly relevant for us: Let $(X,L)$ 
be a polarized abelian surface. 
\begin{enumerate}
 \item If $(L^2)\geq 5$ then $L$ is globally generated 
if and only if there is no elliptic curve $C\subseteq X$ with $(L\cdot C)=1$ and $(C^2)=0$.
 \item  In a similar manner,  if $(L^2)\geq 10$, then $L$ is  very ample 
exactly if $X$ does not contain an elliptic curve $C$ with $(C^2)=0$ and $1\leq (L\cdot C)\leq 2$.
\end{enumerate}

It is worth pointing it out  here that not every very ample line bundle defines a projectively normal embedding, hence  the discrepancy between between Reider's theorem 
for very ample line  bundles and Theorem~\ref{thm:np} in the case of $p=0$. For a concrete example, one can take a general abelian surface $X$ with  a principal 
polarization $L$ of type $(1,6)$ and hence $(L^2)=12$. By Reider's theorem,  $L$ is very ample. In order to be projectively normal however, the multiplication map  
\[
\Sym^2(H^0(X, L))\rightarrow H^0(X, L^{\otimes 2})
\]
needs to be  surjective. It follows from the Riemann--Roch theorem that  the dimension of the first vector space is $21$, while that of the second one  
 is $24$. Thus $L$  is very ample but not projectively normal. 
\end{rmk}

\begin{exer}
 Let $(X,L)$ be a polarized abelian surface of polarization type $(1,5)$. Show that $L$ is very ample, but not projectively normal. 
\end{exer}

\begin{rmk}
Returning  to Theorem~\ref{thm:np}: while  the proofs of the theorems of  Reider (Theorem~\ref{thm:reider})   and Pareschi (Theorem~\ref{thm:Pareschi})
both rely mostly on vector bundle techniques, our approach in confirming property
$(N_p)$ relies in addition  on  multiplier ideals and  the associated vanishing theorems together with the theory of infinitesimal Newton--Okounkov polygons.  

The essential novelty of our proof is the use of  infinitesimal Newton--Okounkov polygons to construct effective $\QQ$-divisors  
whose multiplier ideal coincides with  the maximal ideal of the origin; this  replaces the straightforward genericity argument of \cite{LPP}. 

The implication $(2)\Longrightarrow (1)$ on the other hand is achieved  by a method introduced in  \cite{GLP} and developed further in  \cite{EGHP}. 
The strategy   is that  property $(N_p)$ for the line bundle $L$ implies vanishing of certain higher cohomology group of  the ideal sheaf of the scheme-theoretical 
intersection $X\cap \Lambda$, where $\Lambda$ is a plane of small dimension inside the projective space  $|L|^*$. 
This is explained in detail in \cite{KL15c}, however, since Newton--Okounkov bodies do not enter in any way, we will not discuss it here.  
\end{rmk}

\begin{exercise}
Derive Theorem~\ref{thm:LPPmain} from Theorem~\ref{thm:np} for abelian surfaces.  
\end{exercise}

Let us see how we can use  Theorem~\ref{thm:np} to recover classical results on linear series on abelian surfaces. 

\begin{corollary}\label{cor:one}
Let $X$ be an abelian surface and $L$ an ample line bundle with $L^2\geq 5$. Then
\begin{enumerate}
\item (Pareschi) The line bundle $L^{\otimes (p+3)}$ satisfies condition property $(N_p)$.
\item The line bundle $L^{\otimes (p+2)}$ satisfies property $(N_p)$ if and only if  $(X, L)\ncong (C_1\times C_2, L_1\boxtimes L_2)$, where $L_1$ is a principal polarization of the elliptic curve $C_1$ and $L_2$ is of type $(d)$ on the elliptic curve $C_2$.
\end{enumerate}
\end{corollary}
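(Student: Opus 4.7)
The plan is to deduce both assertions from Theorem~\ref{thm:np} applied to suitable powers of $L$, reducing the claim in each case to a condition on elliptic curves on $X$ and then interpreting that condition geometrically.

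For (1), I would take $M:=L^{\otimes(p+3)}$. Since $(M^2)=(p+3)^2(L^2)\geq 5(p+3)^2\geq 5(p+2)^2$, the numerical hypothesis of Theorem~\ref{thm:np} is satisfied. For any elliptic curve $C\subseteq X$ ampleness of $L$ forces $(L\cdot C)\geq 1$, hence $(M\cdot C)\geq p+3>p+2$; the numerical obstruction in part~(1) of Theorem~\ref{thm:np} is thus vacuous and we conclude that $M$ has property~$(N_p)$.

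For (2) I would take $M:=L^{\otimes(p+2)}$, which again satisfies $(M^2)\geq 5(p+2)^2$. Since $(M\cdot C)=(p+2)(L\cdot C)$ and $(L\cdot C)\geq 1$ for every elliptic curve $C$, the range $1\leq(M\cdot C)\leq p+2$ is equivalent to $(L\cdot C)=1$. Theorem~\ref{thm:np} therefore tells us that $M$ has property~$(N_p)$ if and only if no elliptic curve $C\subset X$ satisfies $(C^2)=0$ and $(L\cdot C)=1$, and it remains to identify the existence of such an elliptic curve with the product structure $(X,L)\simeq(C_1\times C_2,L_1\boxtimes L_2)$ with $\deg L_1=1$.

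The ``if'' direction is immediate: the curve $C:=C_1\times\{0\}$ satisfies $(C^2)=0$ and $L|_C=L_1$ has degree one. For the converse, given an elliptic curve $C\subset X$ with $(C^2)=0$ and $(L\cdot C)=1$, Poincar\'e's complete reducibility theorem yields an elliptic curve $C'\subset X$ and an isogeny $\mu\colon C\times C'\to X$. The main obstacle of the proof lies in upgrading this isogeny to an isomorphism, which I would handle by invoking the classical structure theorem for decomposable abelian surfaces: the existence of an elliptic subvariety meeting the polarization in degree one forces $\ker\mu$ to be trivial (see, e.g., Birkenhake--Lange, \emph{Complex Abelian Varieties}, Chapter~10). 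Granted this, the N\'eron--Severi classification on a product of elliptic curves together with the seesaw principle shows that $L\simeq L_1\boxtimes L_2$ with $L_1:=L|_C$ the principal polarization on $C_1:=C$ and $L_2$ of some type $(d)$ on $C_2:=C'$, completing the equivalence.
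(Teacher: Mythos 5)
Your reduction is the same as the paper's: in both parts you apply Theorem~\ref{thm:np} to the appropriate power of $L$, and in part (2) this yields that $L^{\otimes(p+2)}$ satisfies $(N_p)$ precisely when $X$ carries no elliptic curve $C$ with $(C^2)=0$ and $(L\cdot C)=1$. Where you diverge is in establishing that this latter condition is equivalent to the stated product structure: the paper simply cites Nakamaye's Lemma~2.6 from \cite{Nak}, whereas you sketch a direct argument via Poincar\'e complete reducibility and the theory of complementary abelian subvarieties. Your sketch points at the right machinery --- if $C'$ is taken to be the complementary abelian subvariety of $C$ \emph{with respect to $L$} (and not an arbitrary complement furnished by bare Poincar\'e reducibility), then the addition map $\mu\colon C\times C'\to X$ has kernel isomorphic to $\ker\phi_{L|_C}$, which vanishes because $(L\cdot C)=1$ makes $L|_C$ principal, and it is precisely the $L$-complementarity that forces $\mu^*L$ to split as $L|_C\boxtimes L|_{C'}$ with no correspondence term. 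So both arguments land on the same classical fact. The paper's route via Nakamaye is quicker; yours is more self-contained but should make explicit that $C'$ is the $L$-complement, since that choice is what simultaneously makes the isogeny an isomorphism and eliminates the Poincar\'e-type cross-term in $L$.
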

\begin{proof}
Part $(i)$ is immediate. For part $(ii)$, by  a result of Nakamaye \cite{Nak}*{Lemma 2.6} we know that  there exists an elliptic curve $C_1\subseteq X$ with $(L.C_1)=1$ if and only if $X$ is the product 
of $C_1$ and another elliptic curve $C_2$ and $L\simeq\sO_{C_1\times C_2}(P, D)$ where $P\in C_1$ a point and $D$ a divisor on $E_2$. Then $(ii)$ is immediate from Theorem~\ref{thm:np}. 
\end{proof}

\begin{rmk}
In connection with  their theory of $M$-regularity on abelian varieties, Pareschi and Popa \cite{PP} obtain that for an ample line bundle $L$ with no fixed components 
on an abelian variety $X$,  $L^{p+2}$  already has property $(N_p)$. Thus Corollary~\ref{cor:one}.(ii) is a numerical counterpart of \cite{PP}*{Theorem 6.2} in dimension two.
\end{rmk}

One of the applications of  \cite{EGHP} is that property $(N_p)$ implies  $(p+1)$-very ampleness. A line bundle $L$ is called $k$-very ample if the restriction map $H^0(X,L)\rightarrow H^0(L|_Z)$ is surjective for any $0$-dimensional subscheme $Z\subseteq X$ of length at most $k+1$. As a consequence of \cite{EGHP}*{Remark 3.9}, we can draw the following consequence.

\begin{corollary}\label{cor:two}
Under the assumptions of Theorem~\ref{thm:np}, the following  are equivalent:
\begin{enumerate}
\item $X$ does not contain an elliptic curve $C$ with $(C^2)=0$ and $1\leq (L\cdot C) \leq p+2$.
\item The line bundle $L$ is $(p+1)$-very ample.
\end{enumerate} 
\end{corollary}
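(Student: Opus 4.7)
The plan is to chain together Theorem~\ref{thm:np} with a standard fact from \cite{EGHP} for one direction, and to exploit the obvious restriction property of $k$-very ampleness together with a computation on elliptic curves for the other direction.

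\textbf{Direction $(1)\Rightarrow (2)$.}  Assuming $(1)$, Theorem~\ref{thm:np} yields that $L$ has property $(N_p)$. According to \cite{EGHP}*{Remark 3.9}, property $(N_p)$ for an embedded variety implies $(p+1)$-very ampleness of the embedding line bundle. Thus $L$ itself is $(p+1)$-very ample. This is the quick implication; there is nothing to do beyond quoting the two results.

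\textbf{Direction $(2)\Rightarrow (1)$.}  I would argue by contrapositive. Suppose there exists an elliptic curve $C\subseteq X$ with $(C^2)=0$ and $1\leq(L\cdot C)\leq p+2$, and set $d\deq (L\cdot C)$. Take an arbitrary $0$-dimensional subscheme $Z\subseteq C$ of length $p+2$. Since $Z\subseteq X$, the hypothesis that $L$ is $(p+1)$-very ample forces the evaluation map $H^0(X,L)\to H^0(Z,L|_Z)$ to be surjective; factoring through $H^0(C,L|_C)$, surjectivity of $H^0(C,L|_C)\to H^0(Z,L|_Z)$ follows. Hence $L|_C$ is itself $(p+1)$-very ample on $C$.

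It remains to observe that a line bundle $M$ of degree $d$ on an elliptic curve $C$ is $(p+1)$-very ample if and only if $d\geq p+3$. Indeed, $k$-very ampleness is equivalent to the vanishing of $H^1(C,M\otimes\sI_Z)=H^1(C,M(-Z))$ for every length-$(k+1)$ subscheme $Z$, and on an elliptic curve this vanishing holds precisely when $\deg(M(-Z))\geq 1$, i.e.\ $d\geq k+2$ (the borderline case $\deg M-|Z|=0$ fails since one can always choose $Z$ to be the zero locus of a section of $M$, making $M(-Z)\cong\sO_C$). Applied with $k=p+1$ this forces $d\geq p+3$, contradicting $d\leq p+2$.

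\textbf{Main obstacle.}  Conceptually there is very little obstacle: direction $(1)\Rightarrow(2)$ is a formal consequence of Theorem~\ref{thm:np} combined with \cite{EGHP}, while direction $(2)\Rightarrow(1)$ is essentially the observation that $(p+1)$-very ampleness restricts to subcurves. The only point that requires a brief independent check is the elementary numerical criterion for $k$-very ampleness of line bundles on elliptic curves, which is a one-line Riemann--Roch computation together with the remark that the boundary degree $d=k+1$ fails because $M$ admits an effective section in $|M|$ providing a length-$(k+1)$ subscheme $Z$ with $M(-Z)\cong\sO_C$.
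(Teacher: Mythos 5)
Your proof is correct. The direction $(1)\Rightarrow(2)$ is exactly the paper's route: Theorem~\ref{thm:np} delivers $(N_p)$, and \cite{EGHP}*{Remark 3.9} upgrades this to $(p+1)$-very ampleness. For $(2)\Rightarrow(1)$, however, you depart from the paper. The paper does not give a proof of this implication at all: it cites Terakawa, who deduced it from the Beltrametti--Francia--Sommese machinery, which in turn rests on Reider's theorem, and the accompanying remark only explains that Terakawa's statement is \emph{equivalent} to the one in the corollary once one invokes $(L^2)\geq 5(p+2)^2$ and the Hodge index theorem. You instead give a self-contained two-line argument: $k$-very ampleness of $L$ on $X$ restricts (by factoring the evaluation map through $H^0(C,L|_C)$) to $k$-very ampleness of $L|_C$ on any subcurve $C$, and on a smooth elliptic curve a line bundle of degree $d$ is $k$-very ample iff $d\geq k+2$, which rules out $d\leq p+2$. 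This is more elementary, bypasses Reider entirely, and in fact makes no use of the hypothesis $(L^2)\geq 5(p+2)^2$ — that bound is only needed for the hard direction $(1)\Rightarrow(2)$. One small expository nit: the sentence ``this vanishing holds precisely when $\deg(M(-Z))\geq 1$'' is inaccurate for a fixed $Z$ (on an elliptic curve $H^1$ of a degree-zero \emph{nontrivial} line bundle also vanishes); what is true, and what you really use via the parenthetical about $Z\in|M|$, is that vanishing for \emph{every} length-$(k+1)$ subscheme forces $d\geq k+2$. It would also be cleaner to phrase step~2 with ``length $\leq p+2$'' rather than ``length $p+2$'' before asserting that $L|_C$ is $(p+1)$-very ample, though the argument is identical.
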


\begin{rmk}
The implication $(2)\Longrightarrow (1)$ was first proven by Terakawa. Note that the condition $(L^2)\geq 5(p+2)^2$ and the Hodge index theorem 
imply that the result of Terakawa is equivalent to Corollary~\ref{cor:two}. Terakawa  obtained his result as a consequence of 
the work of Beltrametti-Francia-Sommese from Duke, which in turn uses  Reider's theorem. 

The main results along these lines are due to Bauer and Szemberg. In \cite{BS1} they prove for powers of an ample line bundle on an arbitrary abelian variety. In \cite{BS2}, they 
tackle the case when $X$ is an abelian surface with Picard number one and the line bundle is primitive. Thus the benefit of Collary~\ref{cor:two} is that it does not restrict itself 
to powers of line bundles, or to primitve ones.
\end{rmk}

From a technical point of view,  the essential contribution of the work \cite{LPP} can be summarized in the following statement. 

\begin{theorem}(\cite{LPP})\label{thm:LPP}
	Let $X$ be an abelian surface, $L$ an ample line bundle on $X$, and $p$ a positive integer such that there exists an effective $\QQ$-divisor $F_0$ on $X$ satisfying
	\begin{enumerate}
		\item $F_0 \equiv \frac{1-c}{p+2} L$ for some $0<c\ll 1$, and 
		\item $\sJ(X;F_0)\equ \sI_0$, the maximal ideal at the origin. 
	\end{enumerate}
	Then $L$ satisfies the property $N_p$. 
\end{theorem}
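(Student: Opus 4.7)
The plan is to verify the cohomological criterion for property $(N_p)$ on abelian varieties established by Kempf and employed by Lazarsfeld--Pareschi--Popa: the line bundle $L$ satisfies $(N_p)$ provided that for every reduced $0$-dimensional subscheme $Z\subseteq X$ of length $p+2$ in sufficiently general position one has $\HH{1}{X}{\sI_Z\otimes L}=0$. Granting this reduction, it suffices to produce, for each such $Z=\{a_0,\dots,a_{p+1}\}$, an effective $\QQ$-divisor $F$ with $F\equiv (1-c)L$ whose multiplier ideal is contained in $\sI_Z$.

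For this I would exploit the group structure on $X$: set
\[
F \deq \sum_{i=0}^{p+1} t_{a_i}^{\ast} F_0 \ ,
\]
where $t_{a_i}\colon X\to X$ denotes translation by $a_i$. Since translations are numerically trivial on an abelian variety, $t_{a_i}^{\ast} F_0 \equiv F_0 \equiv \tfrac{1-c}{p+2}L$ for each $i$, hence $F\equiv (1-c)L$; of course $F$ is effective as a sum of effective $\QQ$-divisors.

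The main computation is that of $\sJ(X;F)$. Because each $t_{a_i}$ is an automorphism of $X$, multiplier ideals transform by pullback, so $\sJ(X;t_{a_i}^{\ast} F_0)=t_{a_i}^{\ast}\sJ(X;F_0)=t_{a_i}^{\ast}\sI_0=\sI_{a_i}$. The subadditivity theorem of Demailly--Ein--Lazarsfeld then yields
\[
\sJ(X;F) \dsubseteq \prod_{i=0}^{p+1} \sJ(X;t_{a_i}^{\ast} F_0) \equ \prod_{i=0}^{p+1}\sI_{a_i} \dsubseteq \sI_Z \ .
\]
Because $L-F\equiv cL$ is ample, Nadel vanishing gives $\HH{1}{X}{\sJ(X;F)\otimes L}=0$. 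The quotient $\sI_Z/\sJ(X;F)$ is coherent and supported on the finite set $Z$, hence has vanishing $H^{1}$, so the long exact sequence associated with
\[
0\,\lra\, \sJ(X;F)\otimes L \,\lra\, \sI_Z\otimes L \,\lra\, \bigl(\sI_Z/\sJ(X;F)\bigr)\otimes L\,\lra\, 0
\]
forces $\HH{1}{X}{\sI_Z\otimes L}=0$, as required.

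The hard part is genuinely the Kempf--LPP cohomological reduction itself: translating $(N_p)$ into a vanishing of $\HH{1}{X}{\sI_Z\otimes L}$ at $p+2$ general points relies crucially on the self-duality and homogeneity of abelian varieties, the theorem of the square, and Fourier--Mukai theory on $(X,\hat X)$. Once this criterion is in hand, the remaining ingredients --- automorphism invariance of multiplier ideals, subadditivity, and Nadel vanishing --- fit together in a routine manner on the smooth projective surface $X$, and the specific coefficient $\tfrac{1-c}{p+2}$ in the hypothesis is precisely what makes the sum of $p+2$ translates land in the ample regime needed for Nadel.
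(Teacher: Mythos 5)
The multiplier-ideal part of your argument is fine as formal manipulation (translation invariance, subadditivity, Nadel vanishing, the long exact sequence argument), but it is aimed at a vanishing statement that is \emph{not} equivalent to property $(N_p)$, and the asserted ``Kempf--LPP cohomological criterion'' on which everything rests is simply false. You claim that $L$ satisfies $(N_p)$ as soon as $\HH{1}{X}{\sI_Z\otimes L}=0$ for all reduced $Z\subseteq X$ of length $p+2$ in general position. For any sufficiently positive ample $L$ this vanishing is automatic for generic $Z$ by a dimension count (a generic collection of $p+2$ points imposes independent conditions on $|L|$), yet such $L$ certainly need not satisfy $(N_p)$. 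Even the stronger condition that the vanishing hold for \emph{all} length-$(p+2)$ subschemes $Z$ is exactly $(p+1)$-very ampleness, and the paper itself points out (in the discussion after Theorem~\ref{thm:reider}, and again via Corollary~\ref{cor:two}) that $(N_p)$ implies $(p+1)$-very ampleness but not conversely --- e.g.\ a $(1,5)$- or $(1,6)$-polarization on an abelian surface is very ample but not projectively normal, so the criterion fails already at $p=0$.

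The actual criterion that LPP use is Green's, which lives on the $(p+2)$-fold product: $L$ satisfies $(N_p)$ provided $\HH{i}{X^{\times(p+2)}}{\boxtimes^{p+2}L\otimes N\otimes\sI_\Sigma}=0$ for all $i>0$ and all nef $N$, where $\Sigma=\{(x_0,\dots,x_{p+1})\mid x_0=x_i\text{ for some }1\leq i\leq p+1\}$ is the union of partial diagonals. The real work in \cite{LPP} is to realize $\Sigma$ as $\delta^{-1}(\Lambda)$ under the difference map $\delta\colon X^{\times(p+2)}\to X^{\times(p+1)}$, where $\Lambda=\bigcup_i\pr_i^{-1}(0)$, to observe that the multiplier ideal of $E_0=\sum_i\pr_i^*F_0$ on $X^{\times(p+1)}$ is exactly $\sI_\Lambda$ (this needs $\sJ(X;F_0)=\sI_0$ \emph{globally}, not just near $0$, which is why Theorem~\ref{thm:nopolygonabelian} is formulated the way it is), and then to pull back along the smooth morphism $\delta$ and apply Nadel vanishing on the product. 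Your construction of $\sum t_{a_i}^*F_0$ on $X$ is a pleasant parallel idea exploiting the same group structure, but it establishes a pointwise statement on $X$, not the vanishing on $X^{\times(p+2)}$ that $(N_p)$ actually requires. To repair the argument you would need to replace your criterion with Green's and redo the multiplier-ideal construction on the product, which is precisely what the paper (following \cite{LPP}) does.

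A minor additional remark: if $t_a(x)=x+a$, then $t_a^*\sI_0=\sI_{-a}$, not $\sI_a$; you should be pulling back by $t_{-a_i}$ or pushing forward. This is cosmetic, but the sign matters once one keeps track of the diagonal conditions.
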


For the sake of clarity we give a quick outline of the argument in \cite{LPP}; to this end,  we  quickly recall some terminology.  As above, $p$ will denote a natural number, 
one  studies sheaves on the $(p+2)$-fold self-product $X^{\times (p+2)}$. Following Green \cite{Green2}*{\S 3}, the $N_p$ property for the line bundle  $L$ holds provided 
\begin{equation}\label{eqn:Green}
H^i(X^{\times (p+2)},\boxtimes^{p+2}L\otimes N\otimes \sI_{\Sigma}) \equ 0\ \ \text{for all $i>0$}, 
\end{equation}
and for any nef line bundle $N$, where $\sI_{\Sigma}$ is the ideal sheaf of the reduced algebraic subset
\[
\Sigma \deq \st{(x_0,\dots,x_{p+1}\mid x_0=x_i\ \text{for some $1\leq i\leq p+1$}}\ .
\]
This is the vanishing condition one  intends to verify. 

Observe that $\Sigma\subseteq X^{\times (p+2)}$ can be realized in the following manner. Upon forming the self-product $Y\deq X^{\times (p+1)}$ with projection maps 
$\pr_i\colon Y\to X$, one considers  the subvariety
\[
\Lambda \deq \bigcup_{i=1}^{p+1}\pr_i^{-1}(0) \equ \st{(x_1,\dots,x_{p+1})\mid x_i=0\ \ \text{for some $1\leq i\leq p+1$}}\ .
\]
Next, look at the morphism 
\[
\delta\colon X^{\times (p+2)} \to  X^{\times (p+1)}\ ,\ (x_0,\dots,x_{p+1}) \mapsto (x_0-x_1,\dots,x_0-x_{p+1})\ ,
\]
then 
\[
\Sigma \equ \delta^{-1}(\Lambda)
\]
scheme-theoretically. Consider the divisors 
\[
E_0 \deq  \sum_{i=1}^{p+1} \pr_i^*F_0\ \ \text{ and }\ \ E\deq \delta^*E_0\ ,
\]
as forming multiplier ideals commutes with taking pullbacks by smooth morphisms (cf. \cite{PAGII}*{9.5.45}), one observes that 
\[
\sJ(X^{\times p+2};E) \equ \sJ(X^{\times p+2};\delta^*E_0) \equ \delta^*\sJ(X^{\times p+1};E_0) \equ \delta^*\sI_{\Lambda} \equ \sI_{\Sigma}\ .
\]
The divisor $(\boxtimes_{i=1}^{p+2}L)-E$ is ample by \cite{LPP}*{Proposition 1.3}, therefore 
\[
H^i(X^{\times (p+2)},\boxtimes^{p+2}L\otimes N\otimes \sI_{\Sigma}) \equ H^i(X^{\times (p+2)},\boxtimes^{p+2}L\otimes \sJ(X^{\times p+2};E)) \equ 0 
\ \text{ for all $i>0$}
\]
by Nadel vanishing, where one sets $N=0$.

The rest of the subsection is devoted to an outline of  the ideas behind the implication $(1)\Longrightarrow (2)$ in Theorem~\ref{thm:np} in the simplest case $p=0$, that is, for   
projective normality. 

Assume henceforth  that $X$ is an abelian surface, and $L$  an ample line bundle on $X$ with $(L^2)\geq 20$. 
Suppose in addition that $X$ does not contain any elliptic curve $C$ with $(C^2)=0$ and $1\leq (L\cdot C)\leq 2$. To ease the presentation we assume that there exists a 
Seshadri-exceptional  curve $F\subseteq X$  passing through the origin $o\in X$ with the property that 
$r\deq (L\cdot F)\geq q=\mult_o(F)\geq 2$ and $\epsilon\deq \epsilon(L;o)=r/q$. 

Our starting point is the method of  \cite{LPP}, which builds on the following observation of Green \cite{Green2} (see also \cite{I}): consider the diagonal $\Delta\subseteq X\times X$ 
with ideal sheaf $\sI_{\Delta}$. Projective normality of $L$ is equivalent to the vanishing condition
\begin{equation}\label{eq:intro Green}
H^1(X\times X, L\boxtimes L\otimes\sI_{\Delta}) \equ 0\ .
\end{equation}
The authors of \cite{LPP} then go on to show that in order to guarantee the vanishing in (\ref{eq:intro Green}), it suffices to verify 
the existence of an effective  $\QQ$-divisor 
\[
D\ \equiv \ \frac{1-c}{2}L\ ,\ \text{ for some } 0<c\ll 1 \ ,
\]
such that $\sJ(X,D)=\sI_{X,o}$. 

Assuming one can do so, using  the difference morphism $\delta\colon X\times X\rightarrow X$ given  by 
$\delta (x,y)=x-y$, one deduces that 
\[
\sI_{\Delta} \equ f^*(\sJ(X;D)) \equ \sJ(X\times X,f^*(D)) \ ,
\] 
which in turn leads to (\ref{eq:intro Green}) via Nadel vanishing  for multiplier ideals. 

While directly constructing divisors with a given multiplier ideal is quite difficult in general, a simple observation 
from homological algebra  ensures that at least in the case of projective normality  
 it is enough to exhibit  such a divisor $D$ with $\sJ(X,D)=\sI_{X,o}$ locally around $o\in X$. 
 
This is where the main new ingredient of the paper comes into play: it turns out that one can use  infinitesimal Newton--Okounkov polygons to show the existence of  
suitable $\QQ$-divisors $D$ with $\sJ(X,D)=\sI_{X,o}$ over an open subset containing $o$. 

Write  $\pi:X'\rightarrow X$ for the blowing-up of $X$ at the origin $o$ with exceptional divisor $E$, and let $B\deq\frac{1}{2}L$. The first step is to find a criterion
in terms of infinitesimal Newton--Okounkov polygons that guarantee the existence of divisors  $D$ as above. In Theorem~\ref{thm:NO to singular} we show that if
\[
\intt \Delta_{(E,z)}(\pi^*(B)) \ \cap \ \{(t,y)\ | \ t\geq 2, 0\leq y\leq 2t\}
\] 
is non-empty for any $z\in E$, then one always find an effective $\QQ$-divisor $D=(1-c)B$ so that $\sJ(X;D)=\sI_{X,o}$ in a neighborhood of the origin $o$.

Suppose the opposite, i.e.  that for some $z_0\in E$ the Newton--Okounkov polygon $\Delta_{(E,z_0)}(\pi^*(B))$ does not intersect the interior of the region 
\[
\Lambda \deq \{ (t,y)\in \RR^2\ | \ t\geq 2, 0\leq y\leq 2t\}\ 
\] 
(for an illustration  see Figure~\ref{fig:1} (a)).

\begin{figure}  
	\caption{An infinitesimal Newton--Okounkov body and the region $\Lambda$ \label{fig:1}}
\begin{tikzpicture}


\draw [->]  (0,0) -- (4.5,0);   
\node [below right] at (4.5,0) {$t$};
\draw [->]  (0,0) -- (0,4.5); 
\node [left] at (0,4.5) {$y$}; 
\draw (0,0) -- (4.5,4.5); 
\node [left] at (4,4.5) {$y=t$};
\draw [fill=gray, ultra thick] (0,0) -- (2,2) -- (3.5,3) -- (2,0) -- (0,0);  
\node [left] at (2.5,3) {$\Delta_{(E,z_0)}(\pi^*B)$};
\draw (2,0) -- (2,2); 
\draw [ultra thick] (3,0) -- (3,1.5);  
\draw (0,0) -- (5,2.5); 
\node [above] at (5,2.5) {$y=2t$};  
\shadedraw [dotted,shading=axis] (3,0) -- (3,1.5) -- (5,2.5) -- (4.25,0);
\node [above] at (4,1) {$\Lambda$}; 
\draw (4.24,0) -- (3,0) -- (3,1.5) -- (5,2.5);


\node [below left] at (0,0) {$O$};
\node [below] at (2,0) {$(\epsilon,0)$};
\filldraw [black]  (3,1.5) circle (3pt); 
\node [above] at (3.67,1.85) {$(2,1)$};
\end{tikzpicture}
\end{figure}
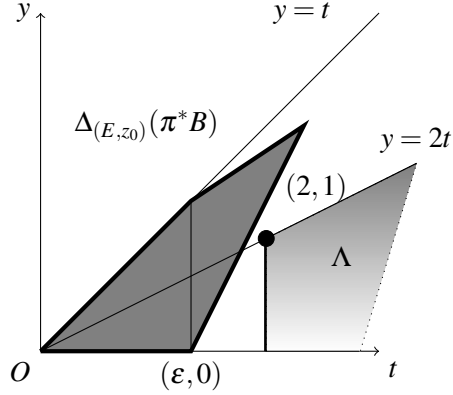 
 
This implies that the polygon $\Delta_{(E,z_0)}(\pi^*(D))$ sits above a certain line that passes through the point $(2,1)$. But since the area of 
$\Delta_{(E,z_0)}(\pi^*(B))$ is quite big, namely equal to $(B^2)/2\geq 5/2$,  the Seshadri constant $\epsilon(B;o)$ is then forced to be  small by convexity, for it is 
equal to the size of the largest inverted simplex inside $\Delta_{(E,z_0)}(\pi^*(B))$ by \cite{KL14}*{Theorem 3.11}. 
A more precise computation gives  the upper bound  $\epsilon(B;o)\leq \frac{5-\sqrt{5}}{2}$. 

In order to obtain  a contradiction, notice that $X$ carries a transitive group action, thus the origin $o\in X$ behaves like a very general point. 
By the work of Ein, K\"uchle, and Lazarsfeld (see \cite{EL} and \cite{EKL}), one knows that the Seshadri constant at a very generic point for a line bundle 
has to be quite big, especially when it is a non-integral rational number, as  assumed. 

Relying on  \cite{EKL} and \cite{NakVeryGen},  the authors show in \cite{KL14}*{Proposition 4.2}   the  inclusion  
\[
\Delta_{(E,z)}(\pi^*(B)) \ \subseteq \ \triangle OAC, \text{ for generic point }z\in E \ ,
\]
where $O=(0,0),A=(r/2q,r/2q)$ and $C=r/2(q-1)$. The area of the polygon on the right-hand side  is  $(B^2)/2\geq 5/2$, hence a simple area comparison gives  
$\epsilon(B;o)=\frac{1}{2}\epsilon(L;o)\geq \sqrt{\frac{5}{2}}$ (see Figure~\ref{fig:1b}), which immediately leads to a contradiction since    $\frac{5-\sqrt{5}}{2} < \sqrt{\frac{5}{2}}$.

\begin{figure} 
\begin{tikzpicture}
%
%
\draw [->]  (0,0) -- (4.5,0);   
\node [below right] at (4.5,0) {$t$};
\draw [->]  (0,0) -- (0,4.5); 
\node [left] at (0,4.5) {$y$}; 
\draw (0,0) -- (4.5,4.5); 
\node [left] at (4,4.5) {$y=t$};
\draw [fill=gray, ultra thick] (0,0) -- (3,3) -- (3.5,1.5) -- (3.75,0) -- (0,0);  
\node [left] at (2,3) {$\Delta(\pi^*B)$};
\draw (3,0) -- (3,3); 
\draw (3,3) -- (4,0);
%
%
\node [below left] at (0,0) {$O$};
\node [below] at (3,0) {$(\epsilon,0)$};
\node [above] at (3,3) {$A$};
\node [below] at (4,0) {$C$};
\end{tikzpicture} 
\caption{The containment $\Delta_{(E,z)}(\pi^*(B))\subseteq \triangle OAC$  \label{fig:1b}}
\end{figure}
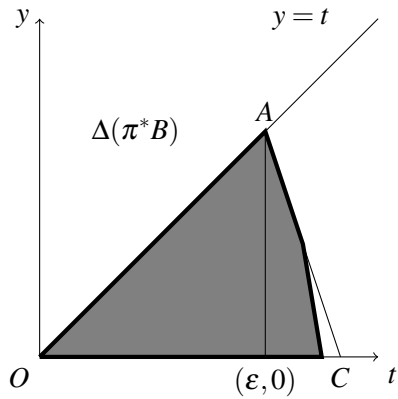

\subsection{On the existence  of singular divisors}

The construction of effective $\QQ$-divisors with prescribed singularities and numerical behaviour is a very strong tool in projective geometry. To indicate its power, it 
can be used to prove many of the classical 'big theorems' of the minimal model program, at the same time it also delivers  some of the most powerful general positivity 
theorems in existence, like the results of Angehrn--Siu and Koll\'ar--Matsusaka. A large part of its strength comes from the fact that it provides a way to extend global 
sections of adjoint line bundles from subvarieties.

Illustrated in a simple setting, the basic idea goes as follows: let $X$ be a smooth projective variety, $Z\subset X$ a subvariety, $L$ an ample line bundle on $X$. It is 
natural to consider the restriction map 
\[
\res^X_Z \,\colon\,  \HH{0}{X}{\sO_X(K_X+L)} {\lra} \HH{0}{Z}{\sO_Z(K_X+L)}\ . 
\]
One would like $\res^X_Z$ to be surjective, since that would imply that we can lift global sections of $K_X+L$ from $Z$ to $X$. 

Since $L$ is ample,  Kodaira vanishing yields that $\res^Z_X$ is surjective precisely if 
\[
 \HH{1}{X}{\sO_X(K_X+L)\otimes \sI_{Z/X}} \equ 0\ . 
\]
The upshot is that \emph{assuming} that $Z$ can be realized as the zero scheme of the multiplier ideal $\sJ(X;D)$ of an effective $\QQ$-divisor $D$, Nadel 
vanishing will often do the trick. 

\begin{thm}[Nadel vanishing]
Let $X$ be a smooth projective variety, $L$ an integral Cartier divisor, $D$ an effective $\QQ$-divisor such that $L-D$ is big and nef. Then 
\[
 \HH{i}{X}{\sO_X(K_X+L)\otimes \sJ(X;D)} \equ 0 \ \ \ \text{for all $i\geq 1$}.
\]
\end{thm}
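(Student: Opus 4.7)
The plan is to reduce the statement to Kawamata--Viehweg vanishing on a log resolution of the pair $(X,D)$, via the standard local-vanishing package for multiplier ideals. First I would take a log resolution $\mu\colon Y\to X$ of $(X,D)$, so that $Y$ is smooth, $\mu$ is a proper birational morphism which is an isomorphism outside a suitable locus, and the strict transform of $D$ together with the exceptional divisors of $\mu$ forms a simple normal crossings divisor on $Y$. By definition of the multiplier ideal we then have $\sJ(X;D)=\mu_*\sO_Y(K_{Y/X}-\lfloor \mu^*D\rfloor)$.

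The key identity, which is purely formal, is
\[
K_Y+\mu^*L-\lfloor \mu^*D\rfloor \equ \mu^*(K_X+L)+K_{Y/X}-\lfloor \mu^*D\rfloor \equ \mu^*(K_X+L)+\bigl(\mu^*(L-D)+\{\mu^*D\}\bigr)-\mu^*L+K_{Y/X},
\]
but more usefully, reading the left-hand side as an integral divisor that equals the $\QQ$-divisor $\mu^*(L-D)+\{\mu^*D\}$ shifted by $K_Y$. Since $L-D$ is big and nef, so is its pullback $\mu^*(L-D)$, and the fractional part $\{\mu^*D\}$ is supported on a simple normal crossings divisor with all coefficients in $[0,1)$. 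This is exactly the setup of Kawamata--Viehweg vanishing, which yields
\[
H^i\bigl(Y,\sO_Y(K_Y+\mu^*L-\lfloor \mu^*D\rfloor)\bigr) \equ 0 \quad\text{for all }i\geq 1.
\]

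Next I would apply the projection formula to obtain $\mu_*\sO_Y(K_Y+\mu^*L-\lfloor \mu^*D\rfloor)=\sO_X(K_X+L)\otimes \sJ(X;D)$, together with the local vanishing theorem $R^j\mu_*\sO_Y(K_Y+\mu^*L-\lfloor \mu^*D\rfloor)=0$ for $j\geq 1$, which is the relative form of Kawamata--Viehweg applied to the birational morphism $\mu$ (the relative divisor $\mu^*(L-D)+\{\mu^*D\}$ is relatively big and nef over $X$ since $\mu$ is birational, so any relative positivity requirement is automatic from the absolute one combined with the exceptional contribution). The Leray spectral sequence then degenerates at $E_2$ in the relevant range and gives an isomorphism
\[
H^i\bigl(X,\sO_X(K_X+L)\otimes \sJ(X;D)\bigr)\;\simeq\; H^i\bigl(Y,\sO_Y(K_Y+\mu^*L-\lfloor \mu^*D\rfloor)\bigr) \equ 0
\]
for every $i\geq 1$, which is the desired vanishing.

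The only nontrivial input is Kawamata--Viehweg vanishing in the $\QQ$-divisor form (and its relative counterpart for local vanishing); everything else is bookkeeping with the log resolution. The main conceptual step --- and where one must be careful --- is matching the integral divisor $\mu^*L-\lfloor \mu^*D\rfloor$ with the $\QQ$-divisor $\mu^*(L-D)+\{\mu^*D\}$ so that the hypothesis ``$L-D$ is big and nef'' translates cleanly into the big-and-nef part of a $\QQ$-divisor with SNC fractional part. Independence of the vanishing from the choice of log resolution is automatic once the argument goes through on any one resolution, since both sides of the Leray isomorphism are intrinsic to $X$.
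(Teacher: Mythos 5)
The paper states Nadel vanishing without proof, treating it as a standard black box (the proof lives in Lazarsfeld's {\it Positivity in Algebraic Geometry} II, \S 9.4). Your argument is precisely that standard proof: log resolution, the formal identity $\mu^*L-\lfloor\mu^*D\rfloor=\mu^*(L-D)+\{\mu^*D\}$ reducing to Kawamata--Viehweg with an SNC fractional boundary, local vanishing of the higher direct images via the relative form of KV, and the Leray spectral sequence. The steps are all correct. The one place worth tightening is your justification of local vanishing: rather than invoking relative bigness of $\mu^*(L-D)+\{\mu^*D\}$ (which is somewhat awkward to formulate for a birational $\mu$), the cleaner statement is that $K_{Y/X}-\lfloor\mu^*D\rfloor$ is $\mu$-numerically equivalent to $\{\mu^*D\}$, which is klt and supported on SNC divisors, so relative KV for the birational morphism $\mu$ gives $R^j\mu_*\sO_Y(K_{Y/X}-\lfloor\mu^*D\rfloor)=0$ directly, with the twist by $\sO_X(K_X+L)$ harmless by the projection formula; this makes it clear that the local vanishing is independent of the positivity of $L-D$.
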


\begin{cor}
Let $X$ be a smooth projective variety, $Z\subset X$ a subvariety, $L$ an ample  line bundle on $X$. If there exists an effective $\QQ$-divisor $D$ for which
\begin{enumerate}
 \item $L-D$ is big and nef
 \item $\sJ(X;D)=\sI_{Z/X}$
\end{enumerate}
then the restriction map $\res^X_Z \colon \HH{0}{X}{\sO_X(K_X+L)} \rightarrow \HH{0}{Z}{\sO_Z(K_X+L)}$ is surjective. 
\end{cor}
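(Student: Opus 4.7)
The plan is to run the standard ideal-sheaf/Nadel vanishing argument sketched in the paragraph immediately preceding the corollary. First I would write down the short exact sequence of sheaves
\[
0 \lra \sI_{Z/X} \lra \sO_X \lra \sO_Z \lra 0
\]
and twist it by the line bundle $\sO_X(K_X+L)$ to obtain
\[
0 \lra \sO_X(K_X+L)\otimes \sI_{Z/X} \lra \sO_X(K_X+L) \lra \sO_Z(K_X+L) \lra 0.
\]
The associated long exact sequence in cohomology contains the fragment
\[
\HH{0}{X}{\sO_X(K_X+L)} \lra \HH{0}{Z}{\sO_Z(K_X+L)} \lra \HH{1}{X}{\sO_X(K_X+L)\otimes \sI_{Z/X}},
\]
so the desired surjectivity of $\res^X_Z$ will follow once the $H^1$ term vanishes.

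Next I would use hypothesis $(2)$, namely $\sJ(X;D) = \sI_{Z/X}$, to rewrite
\[
\HH{1}{X}{\sO_X(K_X+L)\otimes \sI_{Z/X}} \equ \HH{1}{X}{\sO_X(K_X+L)\otimes \sJ(X;D)}.
\]
Hypothesis $(1)$ says that $L - D$ is big and nef, which is precisely the numerical condition required to apply Nadel vanishing to the pair $(L,D)$. This yields
\[
\HH{i}{X}{\sO_X(K_X+L)\otimes \sJ(X;D)} \equ 0 \ \ \text{for all } i\geq 1,
\]
and in particular for $i=1$. Plugging this into the long exact sequence gives surjectivity, completing the proof.

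There is no real obstacle here; the statement is essentially a direct bookkeeping of Nadel vanishing once one has the ideal-sheaf sequence in hand. The only mild point to notice is that the hypothesis of Nadel vanishing is phrased in terms of $L-D$ being big and nef (not ample), which is exactly the flexibility condition $(1)$ provides. Had $L$ itself been assumed only big and nef rather than ample, the argument would still work verbatim, since Nadel vanishing does not actually use ampleness of $L$ but only of $L-D$; this is worth flagging as a remark but is not needed for the proof as stated.
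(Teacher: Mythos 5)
Your proof is correct and is exactly the argument the paper has in mind: the long exact sequence from $0 \to \sO_X(K_X+L)\otimes\sI_{Z/X} \to \sO_X(K_X+L) \to \sO_Z(K_X+L) \to 0$ reduces surjectivity of $\res^X_Z$ to the vanishing of $H^1(X,\sO_X(K_X+L)\otimes\sI_{Z/X})$, which hypothesis $(2)$ identifies with $H^1(X,\sO_X(K_X+L)\otimes\sJ(X;D))$, killed by Nadel vanishing using hypothesis $(1)$. Your closing remark that ampleness of $L$ is not actually used is also accurate; the paper's preceding discussion only invokes ampleness (via Kodaira vanishing on $H^1(X,\sO_X(K_X+L))$) to get the \emph{equivalence} between surjectivity and $H^1$-vanishing, which is not needed for the one-way implication in the corollary.
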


\begin{rmk}
The difficulty lies in satisfying the conditions $L-D$ ample and  $\sJ(X;D)=\sI_{Z/X}$ at the same time: the more elbow room we need in the construction of $D$,  the more positive it becomes, hence the less likely it is that $L-D$ is big and nef, 
and conversely.  
\end{rmk}

Another type of question where construction of singular divisors appears as a very useful tool is to confirm vanishing results for the higher cohomology of vector bundles. 
This type of problem often occurs in connection with syzygy bundles \'a la Lazarsfeld--Mukai; here the issue is to reduce  vanishing for  twisted syzygy bundles 
to the a similar question for line bundles. As one would expect a correction term comes in, which happens to be the ideal sheaf of certain partial diagonals on 
Cartesian self-products of $X$.

For now let $X$ be a smooth projective variety, $L$ a big and nef line bundle, $Z\subseteq X$ a subvariety. We are interested in constructing 
effective $\QQ$-divisors $D$ with the properties that 
\begin{enumerate}
 \item $L-D$ is big and nef, 
 \item $\sJ(X;D) \equ \sI_{Z/X}$. 
\end{enumerate}

\begin{eg}(Points in the projective plane)
As a first instance of the construction problem, let us look at the case when $X=\PP^2$, $Z=\st{P}$, and $L=dH$ an integral divisor with $d$ a positive integer. Let $x,y,z$ be homogeneous coordinates 
on $\PP^2$, and assume without loss of generality  that $P=\st{x=y=0}$. Consider the curve 
\[
 C \deq \Zeroes(x^3-y^2z)\ ,
\]
then \cite{PAGII}*{Example 9.2.15} shows that 
\[
 \sJ(X;c\cdot C) \equ \begin{cases} \m_P \equ (x,y) \subseteq \sO_X & \text{if } c=\frac{5}{6} \\
 \sO_X & \text{if } 0<c<\frac{5}{6}\ , \\
                       \end{cases}
\]
hence $\sJ(X;D)= \m_P$ for $D=\tfrac{5}{6}C$. As $\deg \tfrac{5}{6}C \equ \tfrac{5}{2}$, the construction problem in our case can be definitely solved for $d\geq 3$. 
\end{eg}

\begin{question}
In the situation of the above example, what is the infimum of  positive rational numbers $\alpha$ such that there exists an effective $\QQ$-divisor $D$ with $\sJ(X;D)=\m_P$ and $D\equiv \alpha H$?  
What comes from the curves $x^n-y^mz^{n-m}$ with $n\geq m$? 
\end{question}

\begin{rmk}
Note that it often suffices to construct a $\QQ$-divisor $D$ such that 
\[
 \sJ(X;D)|_U \equ \sI_{Z/X}|_U
\]
for some neighbourhood $Z\subseteq U$. 
\end{rmk}

\begin{rmk}
The excellent survey article \cite{CKL} provides proofs using the construction of singular divisors to the non-vanishing, base-point free, and rationality theorems. The main extra ingredient is Kawamata's subadjunction theorem. 
\end{rmk}

\subsection{Singular divisors via Newton--Okounkov bodies on arbitrary surfaces}

Here we show how to use infinitesimal Newton--Okounkov bodies to construct singular divisors. and as such, contains the technical core of our proof of Theorem~\ref{thm:np}. 
More precisely, we will present a method that proves the existence of an effective $\QQ$-divisor $D$ from a given numerical equivalence class such that the multiplier ideal of $D$ 
coincides with the maximal ideal sheaf of a point, at least in an open neighborhood of our point of interest. Naturally, the existence of such an object is conditional;
in our case the condition will be in terms of the  relative position of convex   subsets in the plane.

For the duration of this subsection let $X$ be an arbitrary smooth projective surface, $x\in X$ an arbitrary point, $B$ an ample $\QQ$-divisor on $X$. 
Let $\pi\colon X'\to X$ stand for the blowing up of $X$ at $x$ with exceptional divisor $E$. Points on $E$ will be denoted by variants of the letter $z$. Write 
\[
\Lambda \deq \st{ (t,y)\in \RR^2\mid t\geq 2, y\geq 0, \text{ and }t\geq 2y}\ . 
\]
The promised result goes as follows. 
\begin{theorem}\label{thm:NO to singular}
Let $B$ be an ample $\QQ$-divisor on $X$. Then,  if 
\begin{equation}\label{eq:nopolygon1}
\text{interior of }\big(\Delta_{(E,z)}(\pi^*B) \ \bigcap \ \Lambda\big) \ \neq \ \varnothing, \forall z\in E\ ,
\end{equation}
then there exists an effective $\QQ$-divisor $D\equiv (1-c)B$ for any $0<c\ll 1$ such that $\sJ(X;(1-c)D)=\sI_{x}$ in a neighborhood of the point $x$.
\end{theorem}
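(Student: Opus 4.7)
The plan is to convert the convex-geometric hypothesis into an explicit effective $\QQ$-divisor $F\equiv B$ with prescribed multiplicity and tangent cone at $x$, and then to compute the multiplier ideal of $(1-c)F$ via a log resolution over $x$ and $z$.

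First, I locate a suitable rational valuative point. Since $B$ is ample, $\pi^*B$ is nef, so $\origin\in\Delta_{(E,z)}(\pi^*B)$ for every $z\in E$ by Corollary~\ref{cor:nefness}. By Proposition~\ref{prop:propinf} the polygon is independent of $z$ off a finite subset; let $\Delta_{\text{gen}}$ denote the generic polygon and pick a rational $(t_1,y_1)\in\Delta_{\text{gen}}^{\circ}\cap\Lambda^{\circ}$ supplied by the hypothesis. By convexity the half-open segment $\{(st_1,sy_1):s\in(0,1]\}$ lies in $\Delta_{\text{gen}}^{\circ}$. The strict inequality $y_1<t_1/2$ is exactly what makes the open interval $\bigl(2/t_1,\,3/(t_1+y_1)\bigr)$ nonempty, so I may pick a rational $s$ in it and obtain a rational, valuative point $(t,y)=(st_1,sy_1)\in\Delta_{\text{gen}}^{\circ}\cap\Lambda^{\circ}$ with $t\in(2,3)$, $0<y<t/2$, and crucially $t+y<3$.

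Next, I produce a section realising this valuation. Pick $m\geq 1$ with $(mt,my)\in\NN^2$ and $s\in H^0(X,\sO_X(mB))$ satisfying $\nu_{(E,z)}(s)=(mt,my)$ for a generic $z\in E$. By further generic choice inside the affine subspace cut out by this valuation condition, the leading form $s_1=(s/f_E^{mt})|_E\in H^0(E,\sO_E(mt))$ may be taken of the shape $(\alpha-\alpha_0)^{my}\,g(\alpha)$ with $g$ reduced of degree $m(t-y)$ not vanishing at $\alpha_0$ (the coordinate of $z$), and with higher-order Taylor coefficients of $s$ in sufficiently general position that the strict transform $\widetilde{\text{div}}(s)$ on $X'$ is smooth along $E$, transverse to $E$ at the $m(t-y)$ simple points cut out by $g$, and tangent to $E$ at $z$ with contact of order $my$. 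Set $F=\tfrac{1}{m}\text{div}(s)\equiv B$ and $D=(1-c)F\equiv(1-c)B$ for $c\in(0,\,1-2/t)$.

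Finally, take a log resolution $\mu\colon W\to X$ obtained from $\pi$ followed by iterated blow-ups resolving the tangency of $\widetilde{\text{div}}(s)$ and $E$ over $z$. On the two-step blow-up $Y\to X$ the key formulae are $K_{Y/X}=E+2E'$ and $\mu^*F=\tilde F_Y+tE+(t+y)E'$. For $c$ sufficiently small the conditions $t\in(2,3)$ and $t+y<3$ yield $\lfloor(1-c)t\rfloor=2$ and $\lfloor(1-c)(t+y)\rfloor=2$, while the coefficients $(1-c)/m$ of $\tilde F_Y$ stay below $1$. Hence $K_{Y/X}-\lfloor\mu^*D\rfloor=-E$, and pushing forward via $\pi_{2*}\sO_Y(-E)=\sO_{X'}(-E_0)$ and $\pi_{1*}\sO_{X'}(-E_0)=\sI_x$ gives $\sJ(X;D)=\sI_x$ locally at $x$. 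The main obstacle will be controlling the contributions of the further exceptional divisors $E'',E''',\dots$ that must be introduced when $my\geq 2$ forces additional blow-ups to achieve SNC at the iterated triple intersections; it is precisely the sharp slope inequality $y<t/2$ which governs when these higher contributions enter the floor computation with non-negative coefficient, and hence do not shrink $\sJ(X;D)$ below $\sI_x$.
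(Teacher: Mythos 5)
Your proposal takes a genuinely different route from the paper. You try to realise a specific rational valuation vector $(t,y)$ by an explicit section $s$, form $F=\tfrac1m\operatorname{div}(s)$, and then compute $\sJ(X;(1-c)F)$ by a hands-on log resolution over $x$ and $z$. The paper avoids any explicit divisor and any log-resolution bookkeeping: it passes to $B'=\pi^*((1-c)B)-2E$ on $X'$, writes its Zariski decomposition $B'=P+N$, Fujita-approximates $P$ as $A_k+\tfrac1kN'$, takes a general member $P'$ of the class of $P$, and reduces via Koll\'ar--Bertini and the birational transformation rule to showing that $\sJ(X',N)$ is trivial at every $z\in E$; this last step uses the hypothesis for \emph{every} $z$, via $\alpha(0)=\ord_z(N|_E)<1$.

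There is a genuine gap in your plan, and it is fatal rather than cosmetic. First, the displayed formula $\mu^*F=\tilde F_Y+tE+(t+y)E'$ is inconsistent with the stated geometry: if $\widetilde{\operatorname{div}(s)}$ is \emph{smooth} at $z$ and has \emph{contact} of order $my$ with $E$, then $\operatorname{mult}_z\widetilde{\operatorname{div}(s)}=1$ and the coefficient of $E'$ is $t+\tfrac1m$, not $t+y$; the coefficient $t+y$ would require $\operatorname{mult}_z\widetilde{\operatorname{div}(s)}=my$, contradicting smoothness. Second, the obstacle you flag at the end is real: in the smooth-tangent picture one needs $my$ point blow-ups to separate $\tilde F$ from $\tilde E$, the $j$-th exceptional divisor $E_j$ acquires coefficient $j\bigl(t+\tfrac1m\bigr)$ in $\mu^*F$ while its discrepancy is $2j$, and since $(1-c)(t+\tfrac1m)>2$ for $c$ small, the coefficient of $E_j$ in $K_{W/X}-\lfloor(1-c)\mu^*F\rfloor$ is $-\lfloor j\bigl((1-c)(t+\tfrac1m)-2\bigr)\rfloor$, which becomes $\leq-1$ well before $j=my$ in general. (For instance $t=21/10$, $y=8/10$, $m=10$ satisfies $t+y<3$ yet already gives coefficient $-1$ at $E_{my}$.) So the slope bound $y<t/2$ and the inequality $t+y<3$ do not by themselves control the iterated floors, and as written the argument produces a multiplier ideal strictly smaller than $\sI_x$. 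Finally, you only invoke the generic polygon, whereas the conclusion is a statement in a full neighbourhood of $x$ and the hypothesis is quantified over all $z\in E$; this is precisely where the paper needs $\ord_z(N)<1$ \emph{for every} $z\in E$, including the finitely many special ones where the polygon jumps.
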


\begin{proof} 
Let us fix a point $z\in E$, hence an infinitesimal flag $(E,z)$. We  will  first work on the blowing up  $X'$ of $X$, and show there  for any $0<c\ll 1$  condition $(\ref{eq:nopolygon1})$ 
implies  the existence of a $\QQ$-effective divisor $D'\equiv (1-c)\pi^*B$ with $\sJ(X',D')|_U=\sO_U(-2E)$ for some open neighborhood $U$ of the exceptional divisor. 

Note that  $(\ref{eq:nopolygon1})$ is an open condition, hence it is also satisfied  for the divisor class $\pi^*((1-c)B)$ whenever $0<c\ll 1$. In what follows,  fix a rational number $c>0$
such that the above property holds, and set  $B' \deq \pi^*((1-c)B)-2E$. 

Condition $(\ref{eq:nopolygon1})$ yields that $\Delta_{(E,z)}(\pi^*B)$ contains an interior point $(t,y)\in\Lambda$ with $2\leq t<\mu_E(\pi^*B)$, therefore $\pi^*B-2E$ is 
a big $\QQ$-divisor on $X'$. By \cite{KL14}*{Remark 1.7}  we know that 
\[
\Delta_{(E,z)}(B') \equ \Delta_{(E,z)}(\pi^*((1-c)B)) \ \cap \ \{(t,y) \ | \ t\geq 2\} \ +\ 2\eone\ .
\]
Write $B'=P+N$ for  the   Zariski decomposition of $B'$; we will look for  the divisor $D'$ in the form
\[
D' \equ  P' + N +2E \ (\,\equiv \pi^*((1-c)B)\, )\ ,
\]
with $P'\equiv P$  an effective divisor\footnote{Note that the above expression is in general not the Zariski decomposition
of $D'$}. 

Since $P$ is big and nef, \cite{PAGI}*{Theorem 2.3.9} shows that 
one can find an effective divisor $N'$ and a sequence of ample $\QQ$-divisors $A_k$ with the property that  
\[
P   \equ A_k+\tfrac{1}{k}N' \ \ \text{for  $k\gg 0$.}
\]
Choose $P'\equiv P$ to be an effective $\QQ$-divisor such that $A_k=P'-\tfrac{1}{k}N$ is general and effective. Indeed, a 
very general element in its linear series will do. This yields 
\[
\sJ(X',D') \equ  \sJ(X',A_k+\frac{1}{k}N'+N+2E) \equ  \sJ(X', \frac{1}{k}N'+N+2E) \equ  \sJ(X',N)\otimes \sO_{X'}(-2E)\ ,
\]
for $k\gg 0$. The second equality is an application of the Koll\'ar--Bertini theorem   \cite{PAGII}*{Example~9.2.29}, the third one 
comes from invariance under small perturbations (see \cite{PAGII}*{Example~9.2.30}) and  \cite{PAGII}*{Proposition 9.2.31}. 

Thus, it remains to check that  $\sJ(X',N)$ is trivial  at any point $z\in E$.  Since 
\[
\Delta_{(E,z)}(B') \ \bigcap \ \{0\}\times [0,1)\ \neq \ \varnothing,\ \forall z\in E\ ,
\]
\cite{LM}*{Theorem 6.4} implies 
\[
1 \ > \ \alpha(0) \equ  \ord_z(N|_E), \forall z\in E\ .
\]
On the other hand,  $\ord_z(N|E)\geq \ord_z(N)$  yields 
\[
\ord_z(N) \ < \ 1, \textup{ for all } z\in E\ .
\]
In the light of  \cite{PAGII}*{Proposition 9.5.13} this implies that $\sJ(X',N)$ is trivial at any $z\in E$, as  needed.

Back on $X$, by  Lemma~\ref{lem:going down} there exists an effective $\QQ$-divisor $D\equiv (1-c)B$ on $X$ with $D'=\pi^*D$.  
Hence the birational transformation rule for muliplier ideals (see \cite{PAGII}*{Theorem~9.2.33}) we have the sequence of 
equalities 
\[
\sJ(X,D) \equ \pi_*\big(\sO_{X'}(K_{X'/X})\otimes \sJ(X',D')\big) \equ \pi_*(\sJ(X',N))\otimes \sI_x \ ,
\]
where $K_{X'/X}=E$. Since $\sJ(X',N)$ is trivial at any $z\in E$, this means that  $\sJ(X,D)=\sI_x$ in a neighborhood of the point $x\in X$. 
\end{proof}

\begin{thm}\label{thm:NO to small Seshadri}
With notation as above,  assume that  $B^2\geq 5$. If 
\begin{equation}\label{eq:nopolygon2}
\textup{interior of }\big(\Delta_{(E,z_0)}(\pi^*B)\ \bigcap \ \Lambda\big) \ =\ \varnothing\ ,
\end{equation}
for some point $z_0\in E$, then the Seshadri constant $\epsilon(B;x)\leq \frac{5-\sqrt{5}}{2}$. 
\end{thm}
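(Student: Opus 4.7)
The plan is to derive a contradiction from the assumption $\epsilon := \epsilon(B;x) > (5-\sqrt{5})/2$ via an area analysis of the infinitesimal Newton--Okounkov polygon $\Delta := \Delta_{(E,z_0)}(\pi^*B)$. Three facts drive the argument: (i) $\Delta \subseteq \iss{\mu'}$, where $\mu' := \mu(\pi^*B;E)$, by Proposition~\ref{prop:inverted}; (ii) $\iss{\epsilon} \subseteq \Delta$, by Theorem~\ref{thm: lis is mov Sesh} together with Remark~\ref{rmk:xi continuous} (which makes $\xi_{\ybul}(B;x)$ independent of the flag and equal to $\epsilon(B;x)$ for ample $B$); and (iii) $\Area(\Delta) = (B^2)/2 \geq 5/2$, which already forces $\mu' \geq \sqrt{5} > 2$. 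The hypothesis $\intr(\Delta \cap \Lambda) = \emptyset$ rewrites, by convexity of both sets, as $\intr(\Delta) \cap \intr(\Lambda) = \emptyset$.

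First I would rule out $\epsilon > 2$ directly: otherwise $(\epsilon,\delta) \in \iss{\epsilon} \subseteq \Delta$ would lie in $\intr(\Lambda)$ for all small $\delta > 0$. To push the bound down from $2$ to $(5-\sqrt{5})/2$, apply the separating hyperplane theorem to the two convex sets with disjoint interiors: there is an affine line $\ell\colon y = mt + k$ with $\Delta$ and $\Lambda$ in opposite closed half-planes. A vertical separation $t=c$ would force $\mu' \leq 2$, contradicting $\mu' \geq \sqrt{5}$. Moreover, since $\Lambda$ contains the rays $\{(t,0)\}_{t\geq 2}$ and $\{(t,t/2)\}_{t\geq 2}$, which must lie on a common side of $\ell$, the slope $m$ is forced into either $m > 1/2$ (with $\Delta$ above, $\Lambda$ below) or $m < 0$ (with the roles reversed).

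The case $m < 0$ can be killed by area alone. Here $(0,0)\in\Delta$ gives $k \geq 0$, and $(2,0)\in\Lambda$ gives $k \leq -2m = 2|m|$. Integrating the maximal convex subregion of $\iss{\mu'}$ lying below $\ell$ (namely $\{0\leq y \leq \min(t, mt+k)\}$) yields area exactly
\[
\frac{k^2}{2|m|(1+|m|)}.
\]
Demanding this to be $\geq 5/2$ together with $k^2 \leq 4|m|^2$ produces $-|m|^2 \geq 5|m|$, absurd.

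The case $m > 1/2$ is the heart of the proof. The extremal points $(2,1)\in\Lambda$ and $\{(0,0),(\epsilon,0)\}\subseteq\Delta$ translate to $k \geq 1-2m$ and $k \leq -m\epsilon$; setting $u := -k/m \geq \epsilon$ and $v := \mu' - u \geq 0$, these become $m(2-u) \geq 1$ and $u < 2$. Computing the area of $\iss{\mu'} \cap \{y \geq mt+k\}$ and substituting the smallest admissible $m = 1/(2-u)$ (which maximizes this area), the bound $\Area(\Delta) \geq 5/2$ reduces to
\[
(1-u)v^2 + 2u(2-u)v + (u^2 - 5)(2-u) \ \geq\ 0
\]
for some $v \geq 0$. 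The discriminant of this quadratic in $v$ factors, using the identity
\[
u^2(2-u) - (1-u)(u^2 - 5) \ =\ u^2 - 5u + 5,
\]
as $4(2-u)(u^2 - 5u + 5)$; so solvability (for the downward-opening parabola in $v$, with vertex at positive $v$) forces $u^2 - 5u + 5 \geq 0$, hence $u \leq (5-\sqrt{5})/2$ since $u < 2 < (5+\sqrt{5})/2$. Therefore $\epsilon \leq u \leq (5-\sqrt{5})/2$, the desired contradiction. The main obstacle will be the bookkeeping across two geometric sub-cases -- depending on whether $\ell$ exits $\iss{\mu'}$ through its vertical right edge or (when $m > 1$) through the hypotenuse $y=t$ -- but a parallel computation shows the exit-on-hypotenuse case yields the area constraint $m(u^2 - 5) + 5 \geq 0$, which together with $m \geq 1/(2-u)$ amounts to the same inequality $u^2 - 5u + 5 \geq 0$, so both branches converge on the critical threshold $(5-\sqrt{5})/2$.
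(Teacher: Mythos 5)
Your proposal is correct and rests on the same two ideas as the paper's proof: apply the separating-hyperplane theorem to the disjoint interiors of $\Delta_{(E,z_0)}(\pi^*B)$ and $\Lambda$, then compare areas against the floor $\Area(\Delta) = (B^2)/2 \geq 5/2$, arriving at the Pell-type inequality $u^2 - 5u + 5 \geq 0$ which pins down the threshold $(5-\sqrt{5})/2$. The difference is almost entirely bookkeeping. The paper normalizes the separating line at the outset so that it passes through $(2,1)$ (any separating line with $\Delta$ on the upper side can be translated down until it hits that corner of $\Lambda$), discards the constraint $t \leq \mu'$, and bounds $\Delta$ simply by the triangle cut out by $\ell$, the diagonal $y=t$, and the $t$-axis; its area $\delta^2/(2(\delta-1)) \geq 5/2$ gives $\delta^2 - 5\delta + 5 \geq 0$ in one line. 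You instead keep the full containment $\Delta \subseteq \iss{\mu'}$, which drags along the extra parameter $\mu'$ (and $v = \mu' - u$), so you have to optimize over slopes and split into cases ($m<0$ versus $m>1/2$, and exit through the right edge of $\iss{\mu'}$ versus the hypotenuse); your substitution $m = 1/(2-u)$ is exactly the paper's ``line through $(2,1)$'' step in disguise. All of this is correct and in fact handles the $m<0$ possibility more explicitly than the paper does, but it buys nothing extra here since both branches collapse to the same quadratic. One small slip: in ruling out $\epsilon > 2$, the point $(\epsilon,\delta)$ lies on the boundary, not necessarily the interior, of $\iss{\epsilon} \subseteq \Delta$; take instead a point $(t_0,\delta)$ with $2 < t_0 < \epsilon$ and $0 < \delta < t_0/2$, which lies in $\intt(\iss{\epsilon}) \cap \intt(\Lambda) \subseteq \intt(\Delta) \cap \intt(\Lambda)$.
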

\begin{proof}
Set $\epsilon\deq \epsilon(B;x)$ and  observe that  \cite{KL14}*{Theorem D} yields the containement
\[
\triangle OAA'  \dsubseteq \Delta_{(E,z_0)}(\pi^*B), \text{ where } O\equ (0,0), A\equ (\epsilon,0), \text{ and } A'\equ (\epsilon,\epsilon)\ .
\]
If $\epsilon>2$, then 
\[
\intt \big(\Delta_{(E,z_0)}(\pi^*B)\ \bigcap \ \Lambda\big) \dsupseteq \intt \big(\triangle OAA'\cap \Lambda\big) \neq \varnothing
\]
contradicting condition (\ref{eq:nopolygon2}). The case $\epsilon=2$ is equally impossible since that would imply via  (\ref{eq:nopolygon2}) that 
$\Delta_{(E,z_0)}(\pi^*B)$ lies to the left of the  line $t=2$;   as it lies underneath the diagonal anyway, it would have volume less than $2$, 
contradicting $(B^2)\geq 5$. Therefore we can safely assume $\epsilon<2$. 

Now, condition~$(\ref{eq:nopolygon2})$ implies that the segment $\{2\}\times [0,1)$ does not intersect  $\Delta_{(E,z_0)}(\pi^*B)$. Since the latter is convex, it must lie above some  line $\ell$ that passes through the point $(2,1)$ and below the diagonal. 
Let $(\delta,0)$ be the point of intersection of  $\ell$ and  the $t$-axis. It follows  that $\delta\geq \epsilon$, since we know that the inverted simplex $\triangle OAA'$ is contained in $\Delta_{(E,z_0)}(\pi^*B)$. So, our  goal is now to find 
an upper bound on $\delta$. 

We can assume that both $\epsilon,\delta>1$, since $\epsilon\leq 1$ already implies our statement. In this case we have $\tfrac{1}{2-\delta}>1$ for the slope of the line $\ell$, therefore the diagonal and $\ell$ intersect at the  point  
$(\tfrac{\delta}{\delta-1},\tfrac{\delta}{\delta-1})$ in the first quadrant. The triangle formed by $\ell$, the diagonal,  and the $t$-axis includes our 
Newton--Okounkov polygon, therefore its area is no smaller than the area of $\Delta_{(E,z_0)}(\pi^*B)\geq 5/2$. 
Hence, we obtain $\epsilon(B;x)\leq\delta\leq \frac{5-\sqrt{5}}{2}$. 
\end{proof}

\begin{exer}
In the situation of Theorem~\ref{thm:NO to small Seshadri}, give a lower bound for the Seshadri constant $\epsilon(B;x)$ if we know that $(B^2)\geq d (\geq 5)$. 
\end{exer}

\begin{remark}
In his seminal work \cite{D} Demailly introduced  Seshadri constants with the aim of controlling the asymptotic growth of separation of jets by an ample line bundle 
at the given point. Our  Theorem~\ref{thm:NO to singular} can be viewed  as a more effective version of Demailly's  idea as explained in 
\cite{PAGI}*{Theorem~5.1.17} and\cite{PAGI}*{Proposition~5.1.19}.

To see this  note that  $\epsilon(B;x)$ equals  the largest inverted simplex $\Delta_{\lambda}^{-1}$ embedded in $\Delta_{(E,z)}(\pi^*B)$  for any $z\in E$. 
\end{remark}

Coupled with Nadel vanishing we obtain the following Reider-tpye effective  global generation result, in the spirit of  Demailly's original train of thought.

\begin{corollary}\label{cor:eff glob gen}
Let $X$ be a smooth projective surface, $x\in X$, $L$ an ample line bundle on $X$ with $(L^2)\geq 5$. 
If  $\epsilon(L;x)\geq \tfrac{5-\sqrt{5}}{2}$, then 
$x$ is not a base point of the adjoint linear series $|K_X+L|$.
\end{corollary}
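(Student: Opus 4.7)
The strategy is to combine the two previous theorems with Nadel vanishing: the Seshadri hypothesis forces the infinitesimal Newton--Okounkov polygon of $L$ to meet the region $\Lambda$ in its interior at every point $z\in E$, whereupon Theorem~\ref{thm:NO to singular} produces a $\mathbb{Q}$-divisor whose multiplier ideal locally cuts out $x$, and a standard short exact sequence argument then finishes the job.

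To handle the non-strict inequality in the hypothesis $\epsilon(L;x)\geq(5-\sqrt{5})/2$, I would first replace $L$ by a slight thickening $B\deq(1+\delta)L$ with $\delta>0$ a small rational number. Then $\epsilon(B;x)=(1+\delta)\epsilon(L;x)>(5-\sqrt{5})/2$ and $(B^2)\geq 5$, so the contrapositive of Theorem~\ref{thm:NO to small Seshadri} guarantees that $\intt\bigl(\Delta_{(E,z)}(\pi^*B)\cap\Lambda\bigr)\neq\varnothing$ for every $z\in E$. Theorem~\ref{thm:NO to singular} then yields, for any sufficiently small rational $c>0$, an effective $\mathbb{Q}$-divisor $D\equiv(1-c)B=(1-c)(1+\delta)L$ with $\mathcal{J}(X;D)=\mathcal{I}_x$ on some open neighborhood $U$ of $x$. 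The plan is to choose $c$ small enough for Theorem~\ref{thm:NO to singular} to apply but also $c>\delta/(1+\delta)$, so that $L-D\equiv\bigl(1-(1-c)(1+\delta)\bigr)L$ is a positive rational multiple of $L$, hence ample; for $\delta>0$ small enough the two constraints on $c$ are compatible.

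With $L-D$ big and nef, Nadel vanishing yields $H^1\bigl(X,\mathcal{O}_X(K_X+L)\otimes\mathcal{J}(X;D)\bigr)=0$. Feeding this into the short exact sequence
\[
0\lra\mathcal{O}_X(K_X+L)\otimes\mathcal{J}(X;D)\lra\mathcal{O}_X(K_X+L)\lra\mathcal{O}_Z(K_X+L)\lra 0,
\]
where $Z$ denotes the zero subscheme of $\mathcal{J}(X;D)$, I obtain surjectivity of the restriction $H^0(X,K_X+L)\to H^0(Z,(K_X+L)|_Z)$. Since $\mathcal{J}(X;D)$ agrees with $\mathcal{I}_x$ on $U$, the point $x$ is an isolated reduced component of $Z$, so $\mathcal{O}_{Z,x}=\mathbb{C}$ splits off as a direct summand of $\mathcal{O}_Z$; consequently one finds a global section of $K_X+L$ whose value at $x$ is nonzero, which proves $x\notin\Bs(K_X+L)$.

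The main obstacle is the borderline case of the Seshadri inequality, which is why the perturbation $B=(1+\delta)L$ is introduced; one has to keep the range of allowable parameters small enough that Theorem~\ref{thm:NO to singular} applies while still having $L-D$ ample. Once the hypothesis of Theorem~\ref{thm:NO to singular} has been verified via the Seshadri assumption, the remainder is a textbook Nadel vanishing argument, suitably tailored to the local nature of the resulting multiplier ideal.
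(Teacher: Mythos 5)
Your main line of argument --- deduce the polygon condition from the Seshadri bound via the contrapositive of Theorem~\ref{thm:NO to small Seshadri}, invoke Theorem~\ref{thm:NO to singular} to produce an effective $\QQ$-divisor $D$ with $\sJ(X;D)=\sI_x$ near $x$ and $L-D$ ample, and finish with Nadel vanishing together with the observation that $x$ is an isolated reduced point of the zero scheme of $\sJ(X;D)$ --- is exactly what the text has in mind, and the Nadel vanishing step is carried out correctly.

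However, the perturbation $B=(1+\delta)L$ introduces an unjustified step that turns out to be circular. After passing to $B$ you need a rational $c$ satisfying simultaneously $c>\delta/(1+\delta)$ (so that $L-D$ is ample) and $c<c_0(\delta)$, the threshold below which Theorem~\ref{thm:NO to singular} applies to $B$. That threshold is governed by the scaling at which the polygon condition is lost: if $\sigma$ denotes the infimal $\lambda$ for which $\lambda L$ satisfies the polygon condition, then $c_0(\delta)=1-\sigma/(1+\delta)$, and the two constraints are simultaneously satisfiable for small $\delta$ exactly when $\sigma<1$, i.e.\ when the polygon condition already holds for $L$ itself. But that is precisely what the contrapositive of Theorem~\ref{thm:NO to small Seshadri} gives you from the \emph{strict} inequality $\epsilon(L;x)>\tfrac{5-\sqrt 5}{2}$, which is the assumption you set out to avoid. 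The assertion ``for $\delta>0$ small enough the two constraints on $c$ are compatible'' is therefore left unproved. The clean repair is to notice that the borderline equality $\epsilon(L;x)=\tfrac{5-\sqrt 5}{2}$ never occurs: on a surface, $\epsilon(L;x)$ is either $\sqrt{(L^2)}\geq\sqrt 5>\tfrac{5-\sqrt 5}{2}$, or the rational number $(L\cdot C)/\mult_x C$ for a Seshadri-exceptional curve $C$, whereas $\tfrac{5-\sqrt 5}{2}$ is irrational. Hence the hypothesis is automatically strict, the perturbation is superfluous, and your argument works directly with $B=L$, giving $D\equiv(1-c)L$ and $L-D\equiv cL$ ample.
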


\begin{remark}
 Note that  conditions  (\ref{eq:nopolygon1}) and (\ref{eq:nopolygon2}) are complementary whenever  $(L^2)\geq 5$. 
\end{remark}

\begin{lemma}\label{lem:going down}
 Let $Y$ be a smooth projective variety, $Z\subseteq X$ a smooth subvariety, $\pi\colon Y'\to Y$ the blowing-up of $Y$ along $Z$. Furthermore, let 
 $B$ a Cartier divisor on $Y$, $D'$ a Cartier divisor on $Y'$. 
 If $D'\equiv \pi^*B$, then there exists a divisor $D\equiv B$ on $Y$ such that $D'\equ \pi^*D$. 
\end{lemma}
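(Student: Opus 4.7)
The key algebraic input I would use is the structure of divisors under a smooth blow-up: assuming $Z$ has codimension $r\geq 2$ in $Y$ (otherwise $\pi$ is an isomorphism and there is nothing to prove), every Cartier divisor class on $Y'$ decomposes uniquely as the pullback of a Cartier divisor class on $Y$ plus an integer multiple of $[E]$, with the analogous statement for $\QQ$- and $\RR$-coefficients.

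The natural candidate for the divisor $D$ is the cycle-theoretic push-forward $D \deq \pi_* D'$. Since $\pi_*$ preserves effectivity and descends to a left inverse of $\pi^*$ on Néron--Severi, applying $\pi_*$ to the hypothesis $D' \equiv \pi^*B$ would immediately give $D \equiv B$. The remaining task, and the nontrivial one, is to verify that $D' \equ \pi^* D$ as honest divisors, not merely up to numerical equivalence.

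To handle this, I would write $D' \equ \tilde{D}' + bE$ with $\tilde{D}'$ containing no component along $E$; then $D \equ \pi_* \tilde{D}'$ and $\pi^* D \equ \tilde{D}' + mE$ with $m \equ \mult_Z(D)$, so $D' - \pi^*D \equ (b-m)E$, and the problem reduces to showing $b \equ m$. For this, I would pick a line $\ell \subseteq E$ inside a fiber of the projective bundle $E \simeq \PP(\mathcal{N}_{Z/Y}) \to Z$. Since $\pi(\ell)$ is a single point, the projection formula forces $\pi^*B \cdot \ell \equ 0$ and $\pi^*D \cdot \ell \equ 0$, while $E \cdot \ell \equ -1$. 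Intersecting the identity $D' - \pi^*D \equ (b-m)E$ with $\ell$ and using $D' \cdot \ell \equ 0$ from the hypothesis $D' \equiv \pi^*B$ yields $-(b-m) \equ 0$, as required.

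The expected sticking point is keeping straight the distinction between numerical equivalence and the literal equality of divisors needed in the application: the downstream multiplier ideal calculation in Theorem~\ref{thm:NO to singular} really does use $D' \equ \pi^*D$ on the nose (so that $\sJ(X',D') \equ \pi^{-1}\sJ(X,D)$ can be combined with the birational transformation rule), so the argument must produce the actual vanishing of the exceptional coefficient $b-m$, not merely of its class. The one-line intersection-theoretic computation above is precisely what delivers this.
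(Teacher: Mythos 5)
Your proof is correct, and it takes a genuinely different route from the one the paper hints at. The exercise immediately after the lemma suggests deducing it from the Negativity Lemma: write $D' - \pi^*(\pi_*D') = cE$, observe that this difference is $\pi$-exceptional and $\pi$-numerically trivial (so both $cE$ and $-cE$ are $\pi$-nef), and apply the Negativity Lemma twice to force $c=0$. You instead exploit the concrete structure of a smooth blow-up: the decomposition $\Pic(Y') \cong \pi^*\Pic(Y)\oplus\ZZ[E]$, the identity $\sO_{Y'}(E)|_E\cong\sO_E(-1)$, and hence $(E\cdot\ell)=-1$ for a line $\ell$ in a fibre of $E\to Z$; pairing $D'-\pi^*D=(b-m)E$ against $\ell$ kills the exceptional coefficient outright. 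Both arguments isolate the same mechanism --- a $\pi$-exceptional, $\pi$-numerically trivial divisor must vanish --- but yours proves it by hand for this particular $\pi$ using one intersection number, whereas the Negativity Lemma is the general-purpose tool one reaches for when $\Exc(\pi)$ lacks such an explicit description. Your version is shorter and more self-contained in the present setting. One small point of care: your remark that applying $\pi_*$ to $D'\equiv\pi^*B$ ``immediately gives'' $D\equiv B$ quietly assumes that proper push-forward of divisors preserves numerical equivalence; this is true but not entirely formal, and once you have established $D'=\pi^*D$ it is cleaner to deduce $D\equiv B$ directly from the injectivity of $\pi^*$ on $N^1(Y)$.
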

  
\begin{exer}
 Prove Lemma~\ref{lem:going down} with the help of the Negativity Lemma. 
\end{exer}

\subsection{Singular divisors at very general points.}
It is an important guiding principle that local positivity of a line bundle is considerably easier to control
at a general or a very general point. This observation is manifest in the work of Ein--K\"uchle--Lazarsfeld \cite{EKL} (see also \cite{EL}), where the authors  give a lower bound on Seshadri constants at very general points depending only on the dimension of the ambient space. 

Later, Nakamaye \cite{NakVeryGen} elaborated some of the ideas of \cite{EKL}, while translating them to the language of multiplicities. This thread was in turn
picked up in \cite{KL14}, and further developed in the framework of infinitesimal Newton--Okounkov bodies of surfaces, as seen in Proposition~\ref{prop:genericinf}. It is hence not surprising that 
one can expect stronger-than-usual results on singular divisors at very general points.

\begin{theorem}\label{thm:verygeneric}
Let $p$ be a positive integer, $X$ a smooth projective surface, $L$ an ample line bundle on $X$ with $(L^2)\geq 5(p+2)^2$. 
Let  $x\in X$ be  a very general point, and assume  that there is no irreducible curve $C\subseteq X$ smooth at $x$ with $1\leq (L\cdot C)\leq p+2$. 

Then, for every  some (or,  equivalently, every) point $z\in E$
\begin{equation}\label{eq:verygeneric}
\length\Big(\Delta_{(E,z)}(\pi^*B)\cap \{2\}\times \RR\Big) \ > \ 1\ , 
\end{equation}
where as usual we write  $B\deq\frac{1}{p+2}L$.
\end{theorem}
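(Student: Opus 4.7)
Set $B\deq L/(p+2)$, so that $(B^2)\geq 5$. Let $\pi\colon X'\to X$ denote the blow-up of $X$ at $x$ with exceptional divisor $E$; by very generality of $x$, Proposition~\ref{prop:propinf} gives that the generic infinitesimal polygon $\inob{\ybul}{B}$ has $\alpha\equiv 0$ on $[0,\mu]$, where $\mu\deq\mu(B;x)$. It therefore suffices to prove $\beta(2)>1$ for the concave upper boundary $\beta$. We apply Proposition~\ref{prop:genericinf} to $B$ at $x$ and split according to the three cases it provides.

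If $\mu=\epsilon(B;x)$, then $\inob{\ybul}{B}=\iss{\epsilon}$ and its area $\epsilon^2/2=(B^2)/2\geq 5/2$ forces $\epsilon\geq\sqrt 5>2$, so $\beta(2)=2>1$. Otherwise $\mu>\epsilon$; let $F$ be a Seshadri-computing curve, $q\deq\mult_x F$, $r\deq (B\cdot F)$, $\epsilon=r/q$. When $q\geq 2$, the curve $F$ is singular at $x$ (so the hypothesis does not restrict $F$ directly), but $\inob{\ybul}{B}\subseteq\triangle ODR$ with $R=(r/(q-1),0)$ and the area bound give $r^2\geq 5q(q-1)$. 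Combining the concavity of $\beta$, the identity $\beta(t)=t$ on $[0,\epsilon]$ from the inverted-simplex containment of \cite{KL14}*{Theorem~D}, the slope bound $\beta(t)\leq r-(q-1)t$ on $[\epsilon,\mu]$, and $\int_0^\mu\beta = (B^2)/2$, a direct area/chord computation then yields
\[
\beta(2)\;\geq\;\sqrt{5q(q-1)}-2(q-1)\;>\;1 \qquad\text{for every }q\geq 2,
\]
since $5q(q-1)>(2q-1)^2$ is equivalent to $q^2-q-1>0$.

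The substantive subcase is $q=1$: now $F$ is smooth at $x$, so the hypothesis forces $(L\cdot F)\geq p+3$, and hence $\epsilon\geq (p+3)/(p+2)>1$. Nakamaye's Lemma~\ref{lem:nakamaye} applied to $\overline F$ gives $\mult_{\overline F}\|\pi^*B-tE\|\geq t-\epsilon$, whence $(N_t\cdot E)\geq t-\epsilon$ and $\beta(t)\leq\epsilon$ on $[\epsilon,\mu]$; together with $\beta(t)=t$ on $[0,\epsilon]$, the area constraint delivers $\mu\geq ((B^2)+\epsilon^2)/(2\epsilon)\geq\sqrt{(B^2)}>2$, placing $2$ strictly inside the domain of $\beta$. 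To secure $\beta(2)=\epsilon$, write the Zariski decomposition $\pi^*B-2E=P_2+N_2$: then $\beta(2)=2-(N_2\cdot E)\leq\epsilon$, and for the matching lower bound one argues that at a very general $x$ the only irreducible curve through $x$ appearing in $N_2$ is $\overline F$, so that $N_2=(2-\epsilon)\overline F$ up to components disjoint from $E$. Indeed, any competitor $C$ through $x$ entering $N_2$ must satisfy $(B\cdot C)/\mult_x C<2$: for $\mult_x C=1$ the hypothesis excludes $(L\cdot C)\leq p+2$, pushing the entry time past $2$; and curves with $\mult_x C\geq 2$ move in countably many bounded-dimensional families that may be avoided by very general $x$. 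This gives $(N_2\cdot E)=2-\epsilon$ and $\beta(2)=\epsilon>1$.

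The principal technical obstacle is the last step of the $q=1$ subcase: rigorously pinning down the Zariski decomposition of $\pi^*B-tE$ on $t\in[\epsilon,2]$ at a very general $x$ so as to force $N_t=(t-\epsilon)\overline F$ modulo components disjoint from $E$. Both the no-low-degree-smooth-curve hypothesis and the very generality of $x$ enter in an essential way here; once this control is in place, the desired inequality $\beta(2)>1$ follows from the numerical bounds assembled above.
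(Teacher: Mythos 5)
Your case division following Proposition~\ref{prop:genericinf} matches the paper's (Cases~1, 2, 3), and your handling of the easy cases is on track, but the key $q=1$ subcase has a genuine gap. You try to pin down the Zariski decomposition exactly, claiming $N_2=(2-\epsilon)\overline F$ up to components disjoint from $E$, by arguing that ``curves with $\mult_x C\geq 2$ move in countably many bounded-dimensional families that may be avoided by very general $x$.'' This is false as stated: families of singular curves can cover $X$, so a very general point will typically still lie on singular curves, and the theorem's hypothesis only excludes small-degree curves \emph{smooth at $x$}. A curve $C$ with $\mult_x C=2$ and $(B\cdot C)=3$, say, is compatible with all hypotheses and enters the negative part at time $3/2<2$, so your claimed equality $\beta(2)=\epsilon$ is not forced, and the conclusion cannot be rescued this way. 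The paper does not attempt to prove this equality; in its Case~3(b) it explicitly allows other curves $\overline\Gamma_2,\dots,\overline\Gamma_r$ to appear in the negative part, uses Lemma~\ref{lem:estimate for many curves} (the Nakamaye slope bound $\beta(t)\leq(1-\sum m_j)t+\sum\epsilon_jm_j$, with $m_1=1$ and $m_j\geq 1$ for $j\geq 2$) to conclude that every edge of the upper boundary beyond $[A_1A_2]$ has slope $\leq -1$, and then derives a contradiction with $\beta(2)\leq 1$ by comparing areas. That quantitative slope control at very general points is what replaces the (unavailable) exclusion of competitors. Separately, in your $q\geq 2$ subcase the inequality $\beta(2)\geq\sqrt{5q(q-1)}-2(q-1)$ is asserted from ``a direct area/chord computation'' without the computation; the paper's version is a contradiction argument using the two triangles $\triangle ST C$ and $\triangle ADC$ with a similarity relation and the lower bound $\epsilon_1\geq\sqrt{5/2}$, and you would need to supply a comparably careful estimate to make that step rigorous.
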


\begin{corollary}\label{cor:very general}
Under the assumptions of  Theorem~\ref{thm:verygeneric}, there always exists an effective $\QQ$-divisor $D\equiv (1-c)B$ for some $0<c\ll 1$
such  that $\sJ(X;D)=\sI_{X,x}$ in a neighborhood of the point $x$.
\end{corollary}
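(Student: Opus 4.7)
The plan is to deduce the corollary by verifying that the hypotheses of Theorem~\ref{thm:verygeneric} imply condition~(\ref{eq:nopolygon1}) of Theorem~\ref{thm:NO to singular}; once this is done, the conclusion follows immediately from Theorem~\ref{thm:NO to singular} applied to $B = \tfrac{1}{p+2}L$.

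Fix an arbitrary point $z \in E$, write $\Delta \deq \Delta_{(E,z)}(\pi^* B)$, and let $\alpha, \beta \colon [\nu,\mu'] \to \RR_{\geq 0}$ denote the boundary functions from Theorem~\ref{thm:LM surfaces}, where $\mu' = \mu_E(\pi^*B)$. Theorem~\ref{thm:verygeneric} yields
\[
\beta(2) - \alpha(2) \ = \ \length\bigl(\Delta \cap (\{2\}\times\RR)\bigr) \ > \ 1 .
\]
On the other hand, Proposition~\ref{prop:inverted} places $\Delta$ inside the inverted simplex $\iss{\mu'}$, so $y \leq t$ on $\Delta$; at $t=2$ this forces $0 \leq \alpha(2) \leq \beta(2) \leq 2$, and combined with the length estimate gives $\alpha(2) < 1$.

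Now pick a very small $\epsilon > 0$ and set $t_0 \deq 2 + \epsilon$. Because the slice at $t=2$ has positive length, necessarily $\nu \leq 2 < \mu'$, hence for $\epsilon$ sufficiently small $t_0 \in (\nu, \mu')$. By the continuity of $\alpha$ and $\beta$, for $\epsilon$ small enough we also have $\alpha(t_0) < 1$ and $\alpha(t_0) < \beta(t_0)$. Any $y_0$ chosen in the (non-empty) open interval
\[
\bigl(\, \max(0, \alpha(t_0))\, ,\ \min(1, \beta(t_0))\, \bigr)
\]
then produces a point $(t_0, y_0)$ in the interior of $\Delta$, while simultaneously satisfying $t_0 > 2$, $y_0 > 0$, and $2 y_0 < 2 < t_0$. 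In other words $(t_0, y_0)$ lies in the interior of $\Delta \cap \Lambda$.

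Since this reasoning is independent of the choice of $z \in E$, condition~(\ref{eq:nopolygon1}) holds for every infinitesimal flag over $x$. Theorem~\ref{thm:NO to singular} then supplies, for any sufficiently small rational $c > 0$, an effective $\QQ$-divisor $D \equiv (1-c)B$ with $\sJ(X;D) = \sI_{X,x}$ in a neighborhood of $x$, which is exactly the desired conclusion. The single non-formal ingredient in the argument is the use of Proposition~\ref{prop:inverted} to convert the one-dimensional length estimate at the slice $t=2$ into the two-dimensional interior statement on $\Delta \cap \Lambda$; beyond this, the proof is an essentially mechanical combination of Theorems~\ref{thm:verygeneric} and~\ref{thm:NO to singular}.
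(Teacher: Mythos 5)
Your overall strategy is the same as the paper's: combine the length estimate~(\ref{eq:verygeneric}) with the constraint that the infinitesimal Newton--Okounkov polygon sits under the diagonal, and feed the resulting interior point into Theorem~\ref{thm:NO to singular}. The paper quotes Proposition~\ref{prop:genericinf} (valid at very general points) for the ``under the diagonal'' inclusion, whereas you use the more general Proposition~\ref{prop:inverted}; both supply exactly the bound $\beta(2)\leq 2$ that you need, so this substitution is fine.

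There is, however, one step that does not follow as stated: ``Because the slice at $t=2$ has positive length, necessarily $\nu\leq 2<\mu'$.'' Positive slice length at a value $t$ does \emph{not} force $t<\mu'$; the slice of $\Delta_{(E,z)}(\pi^*B)$ at the endpoint $t=\mu'$ can itself have positive length. For instance on $\Bl_x\PP^2$ with $B=dH$, one has $\mu_E(\pi^*B)=d$ and the terminal slice is $\{d\}\times[0,d]$. You need a separate argument for $\mu'>2$, and the hypothesis $(L^2)\geq 5(p+2)^2$ (i.e. $(B^2)\geq 5$) from Theorem~\ref{thm:verygeneric}, which you never actually invoke, is what supplies it: by Proposition~\ref{prop:inverted}, $\Delta_{(E,z)}(\pi^*B)\subseteq\iss{\mu'}$, whose area is $(\mu')^2/2$, while $\Area\big(\Delta_{(E,z)}(\pi^*B)\big)=(B^2)/2\geq 5/2$, so $\mu'\geq\sqrt{5}>2$. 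Once this is inserted, the rest of your argument (perturbing to $t_0=2+\epsilon$ and exhibiting a point in the interior of $\Delta\cap\Lambda$) goes through.
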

\begin{proof}
By Proposition~\ref{prop:genericinf} any infinitesimal Newton--Okounkov polygon sits under the diagonal in $\RR^2$. Therefore $(\ref{eq:verygeneric})$ implies that condition $(\ref{eq:nopolygon1})$ from Theorem~\ref{thm:NO to singular} is satisfied, hence the claim. 
\end{proof}

\begin{remark}
As abelian surfaces are homogeneous, Theorem~\ref{thm:verygeneric} holds for all points on them. 
\end{remark}

Before proceeding with the proof, we make some preparations. Let $\epsilon=\epsilon(L;x)$, and write  $\epsilon_1 \deq \e(B;x)=\frac{1}{p+2}\epsilon$. 
As  Zariski decompositions of the divisors $\pi^*B-tE$ along the line segment $t\in [\epsilon_1,2)$ will play a decisive role, we will fix notation
for them as well. 

By \cite{KLM1}*{Proposition 2.1}  there exist only finitely many curves $\og_1,\dots,\og_r$ that occur in the negative part of $\pi^*B-tE$ 
for any $t\in [\epsilon_1,2)$. Write  $\e_i$ for  the value of $t$ where $\og_i$ first appears. We can obviously assume
that $\e_1\leq\e_2\leq \ldots \leq \e_r$, in addition we put $\e_{r+1}=2$. Denote also by $m_i\deq (\og_i\cdot E) \equ\mult_x(\Gamma_i)$ for $1\leq i\leq r$, where $\Gamma_i=\pi(\og_i)$. 
For any $t\in[\epsilon_1 ,2)$ let $\pi^*B-tE=P_t+N_{\pi^*(B)-tE}$ be the Zariski decomposition of the divisor.

\begin{lemma}\label{lem:Nakamaye and Zariski}
	With notation as above, we have 
	\[
	N_{\pi^*B-tE} \equ \sum_{i=1}^{r}\mult_{\og_i}(\| \pi^*B-tE\|) \og_i
	\]
	for all $t\in [\e_1,2)$. 
\end{lemma}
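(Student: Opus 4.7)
My plan is to reduce the claim to the classical fact that, on a surface, the coefficients of the negative part of the Zariski decomposition of a big $\QQ$-divisor coincide with the asymptotic multiplicities along the corresponding curves. First I would fix $t\in [\e_1,2)$ and observe that $\pi^*B-tE$ is a big $\QQ$-divisor (this follows from the hypotheses of Theorem~\ref{thm:verygeneric}, in particular from condition (\ref{eq:verygeneric}), which forces $\pi^*B-2E$ itself to be big). Since both sides of the proposed equality are sums indexed by $i=1,\dots,r$, it suffices to verify, for every $i$, that the coefficient of $\og_i$ in $N_{\pi^*B-tE}$ equals $\mult_{\og_i}(\|\pi^*B-tE\|)$.

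The easy case is $t\in [\e_1,\e_i)$, i.e.\ when $\og_i\not\subseteq\Supp(N_{\pi^*B-tE})$. Here the left-hand coefficient is zero by definition, and I would show the right-hand one is zero by invoking the fact that on a smooth surface $\Bminus(D)=\Supp(N_D)$ for any pseudo-effective $D$ (this is immediate from the definition of $\Bminus$ combined with Zariski decomposition and \cite{ELMNP1}*{Proposition~1.19}). Then $\og_i\not\subseteq \Bminus(\pi^*B-tE)$, and Proposition~\ref{prop:ELMNP} delivers $\mult_{\og_i}(\|\pi^*B-tE\|)=0$.

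The substantial case is $t\in [\e_i,2)$, where $\og_i\subseteq \Supp(N_{\pi^*B-tE})$. Set $D\deq \pi^*B-tE$ and $D=P+N$ its Zariski decomposition. The key input is Zariski's classical statement (see e.g.\ \cite{PAGI}*{Section~2.3.C}) that for sufficiently divisible $m$ one has $H^0(X',mD)=H^0(X',mP)$ via multiplication by the effective divisor $mN$; equivalently every global section of $\sO_{X'}(mD)$ vanishes along $mN$, while conversely every section of $\sO_{X'}(mP)$ pulls back to a section of $\sO_{X'}(mD)$ of the same support augmented by $mN$. This translates to the identity
\[
\mult_{\og_i}(|mD|) \equ \mult_{\og_i}(|mP|) + m\cdot \text{coeff}_{\og_i}(N)\ ,
\]
and dividing by $m$ and passing to the limit yields
\[
\mult_{\og_i}(\|D\|) \equ \mult_{\og_i}(\|P\|) + \text{coeff}_{\og_i}(N)\ .
\]

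The proof will be complete once I show $\mult_{\og_i}(\|P\|)=0$. Since $P$ is big and nef, $\Bminus(P)=\emptyset$ by Lemma~\ref{lem:props of restr base loci}, so in particular $\og_i\not\subseteq \Bminus(P)$, and one more application of Proposition~\ref{prop:ELMNP} finishes the argument. The only genuine obstacle is the classical identity $H^0(mD)=H^0(mP)$ at the heart of paragraph three, but this is exactly the content of Zariski's original theorem and requires no new work.
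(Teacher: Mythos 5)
Your proof is correct and takes essentially the same route as the paper, which dispatches the lemma in one sentence by asserting that ``$\mult_C\|D\|$ picks up the coefficient of $C$ in the negative part of $D$.'' You supply the details: Zariski's identity $H^0(mD)=H^0(mP_D)$ gives $\mult_{\og_i}(\|D\|)=\mult_{\og_i}(\|P_D\|)+\mathrm{coeff}_{\og_i}(N_D)$, and the nefness of $P_D$ kills the first term via $\Bminus(P_D)=\emptyset$ together with Proposition~\ref{prop:ELMNP}.

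Two cosmetic remarks. First, Proposition~\ref{prop:ELMNP} as stated concerns a closed \emph{point} $x$, whereas you invoke it for the curve $\og_i$; one should either cite the curve/valuation version of \cite{ELMNP1}*{\S 2}, or observe that $\og_i\not\subseteq\Bminus(P_D)$ gives a point $y\in\og_i$ with $\mult_y(\|P_D\|)=0$, and then use $\mult_{\og_i}(\|P_D\|)\leq\mult_y(\|P_D\|)$. Second, the linear-series computation with $|pD|$ requires $t$ rational; the irrational case then follows by continuity in $t$, which you should note for completeness.
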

\begin{proof}
	It follows from the definition of asymptotic multiplicity and the existence and uniqueness of Zariski decomposition that for an arbitrary big and 
	nef $\RR$-divisor $D$ and an irreducible curve $C$, the expression $\mult_C \|D\|$ picks up the coefficient of $C$ in the negative part of $D$.    
\end{proof}

\begin{lemma}\label{lem:estimate for many curves}
	With notation as above, if $x\in X$ is a very general point, then
	\[
	\length\Big(\Delta_{(E,z)}(\pi^*B)\cap\{t\}\times\RR\Big) \equ  (P_t\cdot E) \dleq l_i(t) \,\deq\, (1-\sum_{j=1}^{i}m_j)t + \sum_{j=1}^{i}\e_jm_j \ ,
	\]
	for all  $t\in [\epsilon_i,\epsilon_{i+1}]$ and all  $z\in E$.
\end{lemma}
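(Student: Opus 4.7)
The plan is to compute $(P_t\cdot E)$ directly from the Zariski decomposition of $\pi^*B-tE$ and then control the contribution $(N_{\pi^*B-tE}\cdot E)$ from below via Nakamaye's lemma. First, for the equality $\length\big(\Delta_{(E,z)}(\pi^*B)\cap\{t\}\times\RR\big)=(P_t\cdot E)$, I would invoke Theorem~\ref{thm:LM surfaces} applied to the infinitesimal flag $(E,z)$: the length of the vertical slice at height $t$ is $\beta(t)-\alpha(t)$, which by construction equals $(P_t\cdot E)$ independently of $z\in E$. This reduces the statement to the purely numerical inequality $(P_t\cdot E)\leq l_i(t)$ for $t\in[\e_i,\e_{i+1}]$.

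Using $(\pi^*B\cdot E)=0$ and $(E^2)=-1$, expanding $(P_t\cdot E)=(\pi^*B-tE-N_{\pi^*B-tE})\cdot E$ gives $(P_t\cdot E)=t-(N_{\pi^*B-tE}\cdot E)$, so the claim becomes equivalent to the lower bound $(N_{\pi^*B-tE}\cdot E)\geq \sum_{j=1}^i m_j(t-\e_j)$. By the definition of the thresholds $\e_j$ together with Lemma~\ref{lem:Nakamaye and Zariski}, for $t\in[\e_i,\e_{i+1}]$ only $\og_1,\ldots,\og_i$ contribute with positive coefficient to $N_{\pi^*B-tE}$, so we may write $N_{\pi^*B-tE}=\sum_{j=1}^i a_j(t)\og_j$ with $a_j(t)=\mult_{\og_j}(\|\pi^*B-tE\|)$.

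The heart of the argument is then a term-by-term lower bound $a_j(t)\geq t-\e_j$ coming from Nakamaye's Lemma~\ref{lem:nakamaye} applied to the curve $\Gamma_j=\pi(\og_j)$, which passes through $x$ because $m_j>0$; the very generality of $x$ on $X$ is exactly what licenses Nakamaye's input simultaneously for all finitely many $\Gamma_j$. The key bookkeeping step is the identification $\alpha(\Gamma_j)=\e_j$ between Nakamaye's invariant and the Zariski-chamber threshold: the inequality $\alpha(\Gamma_j)\leq\e_j$ follows from $(P_t\cdot\og_j)=0$ whenever $t>\e_j$ (so $\og_j\subseteq\Null(\pi^*B-tE-N_{\pi^*B-tE})$), while $\alpha(\Gamma_j)\geq\e_j$ is forced by contradiction from $a_j(t)=0$ for $t<\e_j$ combined with Nakamaye's inequality $a_j(t)\geq t-\alpha(\Gamma_j)$. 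Once this identification is in place, summing yields $(N_{\pi^*B-tE}\cdot E)=\sum_{j=1}^i a_j(t)m_j\geq \sum_{j=1}^i(t-\e_j)m_j$, which rearranges into $l_i(t)$.

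I expect the only subtle point to be the identification $\alpha(\Gamma_j)=\e_j$, which is really just an unwinding of definitions but needs to be done carefully because Nakamaye's formulation is phrased in terms of the null locus while the $\e_j$'s come from the support of the negative part. Once this is settled, the rest is linear algebra and the hypothesis on the numerics of $L$ plays no role in this lemma (it enters only in the applications, where one compares $l_i(t)$ at $t=2$ with $1$).
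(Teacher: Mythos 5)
Your proof is correct and follows essentially the same route as the paper's (the paper short-circuits the Nakamaye step by citing \cite{KL14}*{Lemma 4.1}, which already packages the inequality $\mult_{\og_j}\|\pi^*B-tE\|\geq t-\e_j$ in the form needed here, rather than re-deriving it from Lemma~\ref{lem:nakamaye} via the identification $\alpha(\Gamma_j)=\e_j$ as you do; note that only the one-sided bound $\alpha(\Gamma_j)\leq\e_j$ is actually required). One small imprecision: curves $\og_j$ with $m_j=0$ have $\Gamma_j$ not passing through $x$, so Lemma~\ref{lem:nakamaye} does not apply to them; this is harmless because such terms contribute zero to $(N_t\cdot E)$, which is implicitly how the paper handles it --- it simply bounds the full sum $\sum_{k=1}^r \mult_{\og_k}\|\pi^*B-tE\|\, m_k$ from below by the partial sum over $j\leq i$, without needing to claim that the remaining coefficients vanish.
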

\begin{remark}
	Lemma~\ref{lem:estimate for many curves} is essentially a restatement of \cite{NakVeryGen}*{Lemma 1.3} in the context of Newton--Okounkov bodies. Note that Nakamaye's 
	claim is strongly based on \cite{EKL}*{\S 2} (see also  \cite{PAGI}*{Proposition~5.2.13}), providing a way 
	of  ``smoothing divisors in affine families'' using differential operators. This observation will play  an important role in the proof of Theorem~\ref{thm:verygeneric} 
	and consequently in that of  Theorem~\ref{thm:np}.
\end{remark}

Returning to the verification of  Lemma~\ref{lem:estimate for many curves}, let $A_i$ stand for  the point of intersection of the lines $t=\e_i$ and 
$l_i$ (here we identify the function $l_i$ with its graph). Note that the function $\ell\colon [\e_1,2]\rightarrow \RR_+$ given by $\ell(t) \deq l_i(t)$ on each interval $[\e_i,\e_{i+1}]$ is continuous, 
and satisfies $(\e_i,\ell(\e_i))=A_i$. Let $T_i$ be the polygon spanned by the origin, the points $A_1,\dots,A_i$, and the intersection point $F_i$  of the 
horizontal axis with the line $l_i$. 

\begin{remark}\label{rmk:estimate}
	Since the upper boundary of the polygon $\Delta_{x}(B)$ is concave, we have 
	\[
	\Delta_x(B) \ \dsubseteq \ T_i, \text{ for all } 1\leq i\leq r \ . 
	\]
	An explicit computation using the definition will convince that the subsequent slopes of the functions $l_i$  are decreasing, therefore 
	one  has $T_i \dsupseteq T_{i+1}$ for all $1\leq i\leq r$ as well.
\end{remark}

\begin{proof}[Proof of Lemma~\ref{lem:estimate for many curves}]
	It follows from \cite{KL14}*{Remark 1.9} that 
	\[
	\length\Big(\Delta_{(E,z)}(\pi^*B)\cap\{t\}\times\RR\Big) \equ  (P_t\cdot E), \textup{ for all points } z\in E \ .
	\]
	The point  $x\in X$ was chosen to be very general, therefore one has  $\mult_{\overline{\Gamma}_i}(\|\pi^*(B)-t\og_i\|) \ \geq \ t-\epsilon_i$ 
	for all $1\leq i\leq r$ and all $t\geq \e_i$ by Nakamaye's Lemma \cite{KL14}*{Lemma 4.1} (see also \cite{NakVeryGen}*{Lemma 1.3}). As a consequence, 
	we obtain via Lemma~\ref{lem:Nakamaye and Zariski} that 
	\begin{eqnarray*}
		(P_t\cdot E) & = & \big( \big( (\pi^*B-tE) - \sum_{i=1}^{r}\mult_{\og_i}(\| \pi^*B-tE\|) \og_i \big) \cdot E\big) \\ 
		& \leq &  t - \sum_{j=1}^{i}(t-\e_j)m_j \equ (1-\sum_{j=1}^{i}m_j)t + \sum_{j=1}^{i}\e_jm_j
	\end{eqnarray*}
	for all $\e_i\leq t< 2$, as required.
\end{proof}

\begin{proof}[Proof of Theorem~\ref{thm:verygeneric}]
	We will subdivide the proof  into  several cases   depending on the size and nature  of the Seshadri constant $\epsilon\deq \epsilon(L;x)$.
	
	\smallskip\noindent
	\textit{Case 1:} $\epsilon(L;x)\geq 2(p+2)$. Observe that \cite{KL14}*{Theorem D} yields
	\[
	\length \big(\Delta_{(E,z)}(\pi^*B)\cap \{2\}\times\RR\big) \ > \ 1 \ ,
	\]
	since $\epsilon_1=\epsilon(B;x)=\frac{1}{p+2}\epsilon\geq 2$,  and  $\Delta_{(E,z)}(\pi^*B) \supseteq \Delta^{-1}_{\epsilon(B;x)} \supseteq \Delta^{-1}_2$.
	
	\smallskip\noindent
	\textit{Case 2:} $\epsilon(L;x)=\frac{(L.F)}{\textup{mult}_x(F)}$ for some irreducible curve $F\subseteq X$ with 
	$r\deq (L.F)\geq q\deq\mult_x(F)\geq 2$ (note that $r\geq q$ follows from the main result of \cite{EL}).
	
	The point  $x\in X$ was chosen to be  very general, therefore  Proposition~\ref{prop:genericinf} implies  $\Delta_x(B)\subseteq \triangle OMN$, where 
	$O=(0,0),M=(\frac{r}{(p+2)q},\frac{r}{(p+2)q)})$ and $N=(\frac{r}{(p+2)(q-1)},0)$. Since the area of the first polygon is $B^2/2\geq 5/2$, we obtain the inequality 
	\[
	\frac{1}{(p+2)^2}\cdot \frac{r^2}{q(q-1)} \dgeq 5\ .
	\]
	Remembering that $q\geq 2$, we arrive at  the following lower bound on the Seshadri constant
	\begin{equation}\label{eqn:6}
	\epsilon(B;x) \equ \frac{r}{(p+2)q} \dgeq \sqrt{5\zj{1-\frac{1}{q}}} \dgeq \sqrt{\frac{5}{2}}\ ,
	\end{equation}
	Next, fix a point $z_0\in E$, and  let $S=(\epsilon_1,\epsilon_1)$, $T=(\epsilon_1,0)$, $S'=(2,2)$ and $T'=(2,0)$. The triangle $\triangle OST$ is the largest inverted standard simplex  inside  $\Delta_{(E,z_0)}(\pi^*B)$. Let 
	\[
	[AD] \deq \Delta_{(E,z_0)}(\pi^*B)\cap \{2\}\times\RR  \ ,
	\]
	and aiming at a contradiction, suppose  $||AD||\leq 1$ (for a visual guide see Figure~\ref{fig:2}).
	
	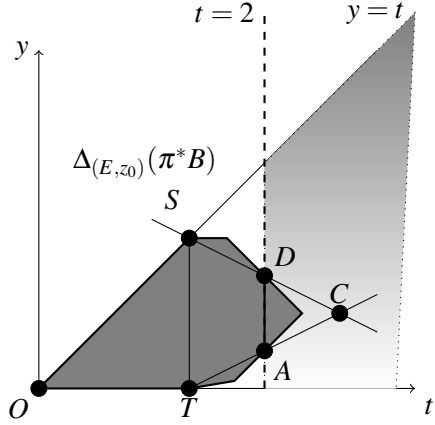
\begin{figure}
		\begin{tikzpicture}
		%
		%
		\draw [->]  (0,0) -- (5,0);   
		\node [below right] at (5,0) {$t$};
		\draw [->]  (0,0) -- (0,4.5); 
		\node [left] at (0,4.5) {$y$}; 
		\draw (0,0) -- (4.5,4.5); 
		\node [left] at (5,5) {$y=t$};
		\shadedraw [dotted,shading=axis] (3,0) -- (3,3) -- (5,5) -- (4.75,0);
		\draw [fill=gray, thick] (0,0) -- (2,2) -- (2.5,2) -- (3,1.5) -- (3.5,1) -- (3,0.5) -- (2.6,0.1) -- (2,0) -- (0,0);  
		\node [left] at (2.5,3) {$\Delta_{(E,z_0)}(\pi^*B)$}; 
		\draw [thick, dashed] (3,0) -- (3,5);  
		\node [left] at (3,5) {$t=2$}; 
		\draw (2,0) -- (2,2); 
		\draw [thick] (3,0.5) -- (3,1.5);  
		\draw (1.5,2.25) -- (4.5,0.75); 
		\draw (2,0) -- (4.5, 1.25); 
		%
		%
		\node [below left] at (0,0) {$O$}; 
		\filldraw[black]  (0,0) circle (3pt);
		\node [below] at (2,0) {$T$}; 
		\filldraw[black] (2,0) circle (3pt);
		\node [above left] at (2,2.25)  {$S$}; 
		\filldraw[black] (2,2) circle (3pt); 
		\node [above right] at (3,1.5) {$D$}; 
		\filldraw [black]  (3,1.5) circle (3pt); 
		\node [below  right] at (3,0.5) {$A$}; 
		\filldraw[black]  (3,0.5) circle (3pt);
		\node [above] at (4,1) {$C$}; 
		\filldraw[black] (4,1) circle (3pt); 
		\end{tikzpicture}
		\caption{$\Delta_{(E,z_0)}(\pi^*B)$ and the triangles $\triangle STC$ and $\triangle ADC$ \label{fig:2}} 
	\end{figure}  
	
	Note first that by $(\ref{eqn:6})$, one has $\epsilon_1>1$ and thus $||ST||>1$. Together with the assumption  $||AD||\leq 1$,  this implies that the lines $TA$ and $SD$ 
	intersect to the right of  the vertical line $t=2$, let us call the point of intersection  $C$. Denote by $x=\dist(C, t=2)$. Since both line segments $[TA]$ and $[SD]$ 
	are contained in $\Delta_{(E,z_0)}(\pi^*B)$, convexity yields the inclusion
	\[
	\triangle ADC \ \supseteq \ \Delta_{(E,z_0)}(\pi^*B)_{t\geq 2} \deq  \Delta_{(E,z_0)}(\pi^*B) \ \cap \ \{t\geq 2\}\times\RR \ .
	\]
	An area comparison yields the  string of inequalities
	\begin{align*}
	\Area(\triangle ADC) \ &\geq \ \Area(\Delta_{(E,z_0)}(\pi^*(B))_{t\geq 2} \equ   \Area(\Delta_{(E,z_0)}(\pi^*(B)) \ - \ \Area(\Delta_{(E,z_0)}(\pi^*(B))_{t\leq  2} \\
	& \geq \frac{\vol_X(B)}{2} -\Area(\triangle OS'T') \dgeq  \frac{5}{2}-\frac{4}{2} \equ  \frac{1}{2} \ .
	\end{align*}
	By  the similarity between $\triangle ADC$ and $\triangle TSC$, we see that  
	\[
	||AD|| \equ  \frac{\epsilon_1 x}{x+2-\epsilon_1}\ .
	\]
	Along  with the condition  $||AD||\leq 1$ this  implies
	\begin{equation}\label{eqn:7}
	\frac{x+2}{x+1} \ \geq \ \epsilon_1 \ .
	\end{equation}
	On the other hand, by the above we also have 
	\[
	\Area(\triangle ADC) \equ  \frac{\epsilon_1 x^2}{2(x+2-\epsilon_1)} \ \geq \ \frac{1}{2} \ ,
	\]
	which gives 
	\[
	\epsilon_1  \dgeq  \frac{x+2}{x^2+1} \ .
	\]
	Combining this inequality with $(\ref{eqn:7})$, we arrive at 
	\[
	\frac{x+2}{x+1} \dgeq  \frac{x+2}{x^2+1} \ ,
	\]
	forcing $x\geq 1$. Since the function $f(x)=\frac{x+2}{x+1}$ is decreasing for $x\geq 1$, the inequality $(\ref{eqn:7})$ implies that $\epsilon_1 \leq f(1)=3/2$, 
	contradicting   $\epsilon_1\geq \sqrt{5/2}$ from  $(\ref{eqn:6})$. Thus $||AD||>1$, as required. 
	
	\smallskip\noindent
	\textit{Case 3:} Here we assume that $\epsilon=\epsilon(L;x)\in \NN$ with $1\leq \epsilon\leq 2p+3$, and that there exists an irreducible curve $\Gamma_1\subseteq X$ with $\mult_x(\Gamma_1)=1$ and $\epsilon(L;x)=(L\cdot \Gamma_1)=\epsilon$. Denote by $\og_1$ the proper transform via $\pi$ of $\Gamma_1$.
	
	We point out that when  $1\leq \epsilon\leq p+2$, our assumptions  reduces to the condition in the statement of our theorem with  $\Gamma_1$ playing the role of  $C$.
	Whence, in what follows  we can assume that in addition $p+3\leq \epsilon\leq 2p+3$. Secondly, since $\mult_x(\Gamma_1)=1$, $(L\cdot \Gamma_1)=\epsilon\leq 2p+3$,
	and $L^2\geq 5(p+2)^2$, the Hodge Index Theorem yields  $(\Gamma_1^2)\leq 0$.
	
	Since the point $x\in X$ is  very general, Proposition~\ref{prop:genericinf} (2.b) shows  that the slope of the linear function $t\longmapsto (P_t.E)$ is non-positive. 
	By $(\ref{eq:positivepart})$,  the only way  this can happen is  when $m_1=1$ and $(\Gamma_1^2)=0$ (since  $(\Gamma_1^2)\leq 0$).
	
	We consider two sub-cases.
	
	\smallskip\noindent
	\textit{Case 3(a):} $\og_1$  is the only curve appearing in the negative part of the divisor $\pi^*B-tE$ for any $t\in [\epsilon_1,2)$. 
	
	Note first that by above we have that $\mult_x(\Gamma_1)=1$ and thus $\og_1=\pi^*\Gamma_1-E$. Since the curve $\og_1$ is 
	the only curve appearing in the negative part of $\pi^*B-tE$ for any $t\in [1,2)$,  we can apply the algorithm for finding the negative part of the Zariski decomposition 
	for each divisor $\pi^*B-tE$ and deduce that
	\[
	\pi^*B-tE \equ  P_t \ + \ (t-\epsilon_1)\overline{\Gamma}_1
	\]
	is indeed the appropriate Zariski decomposition for  any $t\in [\epsilon_1,2)$. In particular, we obtain that  
	\begin{equation}\label{eq:positivepart}
	(P_t.E) \equ  \epsilon_1\ \ \ \text{for all $t\in [\epsilon_1,2)$.}
	\end{equation}
	
	Having positive self-intersection  $\pi^*B-tE$ is big for all $t\in [\epsilon_1,2]$, therefore the Zariski decomposition along this  line segment 
	is continuous by \cite{BKS04}*{Proposition 1.16}.  Accordingly,  $(\ref{eq:positivepart})$ yields 
	\[
	\length\big(\Delta_{(E,z)}(\pi^*B)\cap \{2\}\times\RR\big) \ \equ \ (P_2.E) \ \equ \ \epsilon_1 \ \equ \ \frac{\epsilon}{p+2} \ > \ 1 
	\]
	for  we assumed that $\epsilon\in\st{p+3,\ldots ,2p+3}$. 
	
	\smallskip\noindent
	\textit{Case: 3(b)} Assume  that the negative part of $\pi^*B-tE$ contains other  curve(s)  beside  $\overline{\Gamma}_1$ for some  $t\in [\epsilon_1,2)$. 
	\\
	
	Denote these other curves that appear in the negative part of the segment line $\pi^*B-tE$ for $t\in [\epsilon, 2)$ by $\og_2,\ldots ,\og_r$ for some $r\geq 2$. Our main tool is going to be  the generic infinitesimal Newton--Okounkov polygon $\Delta_{x}(B)$. An important property of $\Delta_x(B)$  is that the vertical line segment $\Delta_{x}(B)\cap \{t\}\times\RR$ starts on the $t$-axis for any $t\geq 0$ (see \cite{KL14}*{Theorem 3.1}).
	
	Aiming at a contradiction, suppose  that $\length(\Delta_{x}(B)\cap\{2\}\times\RR)\leq 1$. Note first that  this is equivalent to having the point $(2,1)$ outside the interior of the polygon $\Delta_{x}(B)$, as its lower boundary  sits on the $t$-axis.
	
	As  $x\in X$ was chosen to be a very general point, by Remark~\ref{rmk:estimate} we have $\Delta_{x}(B) \subseteq T_2$, where the latter polygon denotes the convex hull  
	of  the vertices  $O,A_1,A_2,F_2$, where   $O=(0,0)$ and $A_1=(\epsilon_1,\epsilon_1)$. We will focus on  the slope of the line $A_2F_2$. 
	If  $A_2=A_1$  then by Lemma~\ref{lem:estimate for many curves}, we know that 
	\[
	\text{slope of}\ A_2F_2 \dleq \text{slope of }\ \ell (t) \equ (1-m_1-m_2)t+\epsilon_1m_1+\epsilon_2m_2\ .
	\]
	In particular, the  slope of $A_2F_2$ is at most  $-1$, since $m_1=1$ and $m_2\geq 1$.
	
	In the non-degenerate case $A_1\neq A_2$, since we have $m_1=1$, then 
	\[
	\text{slope of }\ A_2F_2 \dleq \text{slope of }\ \ell (t) \equ -m_2t+\epsilon_1+\epsilon_2m_2 \dleq -1
	\]
	again. Note also that in this case, based on the proof of $Case~3(a)$, we know for sure that the segment $[A_1A_2]$ is actually an edge of the convex polygon $\Delta_{x}(B)$.
	
	Since  the upper boundary of the polygon $\Delta_x(B)$ is concave by \cite{KLM1}*{Theorem B}, then  all the supporting lines of the edges on this boundary, besides $[OA_1]$ and 
	$[A_1A_2]$, have slope at most  $-1$. However, we initially assumed  that $(2,1)\notin\intt\Delta_x(B)$, thus convexity yields  that the first edge of the polygon 
	$\Delta_{x}(B)$ intersecting  the region $[2,\infty)\times\RR$  will  do so at a point on the line segment
	$\{2\}\times[0,1]$. Furthermore, this edge and all the other edges of the upper boundary in the region $[2,\infty)\times \RR$ will have slope at most  $-1$. 
	
	Now, set  $A=(2,1), C=(3,0)$ and $D=(2,0)$. Since the line $AC$ has slope $-1$, then by what we said just above, we have the following inclusions due to convexity reasons
	\[
	\Delta_x(B)\cap [2,\infty)\times\RR  \dsubseteq  \triangle ADC\ .
	\]
	Write  $R=(2,2)$; since  $\Area(\Delta_x(B))\geq 5/2$, the above  inclusion implies 
	\[
	\Area(\triangle ADC) \equ  \frac{1}{2} \ > \ \Area(\Delta_x(B)) - \Area(\triangle ODR)  \geq \frac{5}{2}-2 \equ  \frac{1}{2}
	\]
	contradicting $(2,1)\notin\intt \Delta_{x}(B)$.  In particular, $\length (\Delta_x(B)\cap\{2\}\times\RR)>1$,  and we are done. 
\end{proof}

\subsection{Proof of Theorem~\ref{thm:np}}

In this subsection we  prove the direct implication of Theorem~\ref{thm:np}. As opposed to the case of projective normality  it is no longer clear whether 
finding an effective divisor $D$ with $\sJ(X;D)=\sI_o$ will suffice to verify $(N_p)$. We show however, that with a bit more work one can in fact control the multiplier 
ideal of the divisor found in Theorem~\ref{thm:NO to singular} over the whole abelian  surface $X$. Again, $\pi\colon X'\to X$ denotes the blow-up of $o$ with exceptional  divisor $E$. 

The main goal of this subsection is to prove the following theorem.

\begin{theorem}\label{thm:nopolygonabelian}
Let $X$ be an abelian surface and $B$ an ample $\QQ$-divisor on $X$. Suppose that
\begin{equation}\label{eq:nopolygonabelian}
\textup{length}\big(\Delta_{(E,z_0)}(\pi^*(B)) \cap \{2\}\times\RR \big) \ > \ 1 \ ,
\end{equation}
for some point $z_0\in E$. Then there exists an effective $\QQ$-divisor $D\equiv (1-c)B$ for some $0<c<1$ such that $\sJ(X,D)=\sI_{X,o}$ over the whole of $X$.
\end{theorem}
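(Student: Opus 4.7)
The plan is to combine Theorem~\ref{thm:NO to singular}, which furnishes a divisor with the correct multiplier ideal only locally around $o$, with the homogeneity of the abelian surface to upgrade the local statement to a global one. The condition~(\ref{eq:nopolygonabelian}) says that the slice of $\Delta_{(E,z_0)}(\pi^*B)$ at $t=2$ has length greater than $1$ for some point $z_0\in E$; since $X$ carries a transitive group action, $o$ behaves like a very general point, and so by the second part of Proposition~\ref{prop:propinf} the same slice condition holds for every $z \in E$. In particular, condition~(\ref{eq:nopolygon1}) of Theorem~\ref{thm:NO to singular} is satisfied, producing an effective $\QQ$-divisor $D_1 \equiv (1-c_1)B$ with $\sJ(X;D_1) = \sI_{X,o}$ on an open neighborhood $U \ni o$.

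The remaining task is to modify $D_1$, within its numerical class, so that the multiplier ideal becomes $\sI_{X,o}$ on all of $X$. Let $W \deq V(\sJ(X;D_1)) \setminus \{o\}$ be the (possibly empty) locus of extra co-support; it is a proper closed subset of $X$ with $o \notin W$. The key is that for every $x \in X$ the translation $t_x \colon X \to X$ is an automorphism, satisfies $t_x^*D_1 \equiv D_1$ numerically, and multiplier ideals commute with pullback by smooth (in particular, \'etale) morphisms, so $\sJ(X; t_x^*D_1) = t_x^*\sJ(X;D_1)$, whose co-support is the translate $-x + (W \cup \{o\})$. This lets us move the nuisance locus $W$ off of any prescribed closed subset of $X$.

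The construction itself proceeds by perturbing $D_1$ while exploiting the slack in the hypothesis. Because condition~(\ref{eq:nopolygonabelian}) is open in $B$, we may replace $B$ by $(1-\eta)B$ for small $\eta>0$ and rerun Theorem~\ref{thm:NO to singular} to obtain $D_1'$ with class $(1-c_1')(1-\eta)B$ and the same local property at $o$. We then set
\[
D \equ D_1' \ + \ \eta \cdot M\ ,
\]
where $M$ is a general effective $\QQ$-divisor in a class like $(1-c_1)B$ whose linear series is free enough to invoke Koll\'ar--Bertini (\cite{PAGII}*{Example 9.2.29}): a general choice of $M$ ensures that $\sJ(X; D_1' + \eta M)$ is not worsened at $o$ and, away from $o$, becomes trivial because $M$ can be selected so that $M$ avoids $W$ and moreover a sufficiently general translate configuration kills each component of $W$. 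Adjusting $c$ so that $D \equiv (1-c)B$ with $0<c<1$ concludes the construction.

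The main obstacle is the step of passing from a purely local statement about $\sJ(X;D_1)$ around $o$ to a global statement, since multiplier ideals behave nonlinearly under addition of divisors. The essential leverage is twofold: first, the openness of~(\ref{eq:nopolygonabelian}), which provides an $\eta$-worth of freedom to absorb an auxiliary divisor; second, the transitive group action on $X$, which, via Koll\'ar--Bertini applied to translated general members of $|mB|$, allows us to make the extra singularities of $D_1$ disappear without disturbing the delicate jumping behavior that yields $\sI_{X,o}$ at the origin.
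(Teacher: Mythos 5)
There is a genuine gap in your proposal, and it comes from a misremembering of how multiplier ideals behave under addition of effective divisors. For effective $\QQ$-divisors $F$ and $G$ one always has
\[
\sJ\bigl(X; F + G\bigr) \ \subseteq \ \sJ(X;F)\ ,
\]
so adding $\eta M$ to $D_1'$ can only \emph{enlarge} the co-support of the multiplier ideal, never shrink it. If $\sJ(X;D_1')$ is already nontrivial along a closed subset $W\not\ni o$, then no matter how general $M$ is chosen (and in particular even if $M$ avoids $W$), the ideal $\sJ(X; D_1'+\eta M)$ is still contained in $\sJ(X;D_1')$ and thus still nontrivial along $W$. Koll\'ar--Bertini (\cite{PAGII}*{Example 9.2.29}) is useful precisely for the opposite direction: it tells you that a \emph{generic} choice of a member of a free linear series contributes nothing \emph{new} to the multiplier ideal; it does not delete singularities that are already present. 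Likewise, translating $D_1$ by an automorphism moves the bad locus $W$, but it also moves the jumping locus at $o$ to the translated point, so this gives no control.

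The paper's proof is quite different, and the difference is essential. Rather than trying to repair a local output $D_1$ after the fact, one revisits the construction inside the proof of Theorem~\ref{thm:NO to singular}: the divisor is produced in the form $D' = P' + \sum a_i E_i' + 2E$, where $\pi^*B - 2E = P + \sum a_i E_i'$ is a Zariski decomposition and $P'$ is a very general member of the numerical class of $P$. One then argues by contradiction: if $\sJ(X;D)_y \neq \sO_{X,y}$ for some $y\neq o$, the hypothesis~(\ref{eq:nopolygonabelian}) forces $\sum_i a_i\mult_o(E_i) < 1 \leq \sum_i a_i\mult_y(E_i)$, hence some $E_i$ has $\mult_o(E_i) < \mult_y(E_i)$. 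This $E_i$ stays negative after blowing up at $y$ as well, and Lemma~\ref{lem:two points} (which is specific to abelian surfaces and uses the invariance of negative curves under translations) forces $E_i$ to be a smooth elliptic curve with $\mult_o E_i = \mult_y E_i = 1$, a contradiction. Your observation that the transitive group action should help is correct in spirit, but the actual leverage it gives is Lemma~\ref{lem:two points}, not the translate-and-kill strategy you describe.
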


\begin{proof}[Proof of Theorem~\ref{thm:np}, $(1)\Rightarrow (2)$]
By Theorem~\ref{thm:LPP} it suffices  to find a divisor as produced by Theorem~\ref{thm:nopolygonabelian}. Since $X$ is abelian, it is enough to treat the case 
when the origin $o$ behaves like a very general point. By Theorem~\ref{thm:verygeneric} the  condition $(\ref{eq:nopolygonabelian})$ is automatically satisfied whenever 
$X$ does not contain an elliptic curve $C$ with $(C^2)=0$ and $1\leq (L\cdot C) \leq p+2$.

It remains to show that the exceptions in Theorem~\ref{thm:verygeneric} correspond to  the exceptions 
in the statement in Theorem~\ref{thm:np}. For a curve $C\subseteq X$ which is smooth at the point $x$ and satisfies $1\leq (L\cdot C)\leq p+2$ one has  $(C^2)\leq 0$ by the Hodge index theorem  since $(L^2)\geq 5(p+2)^2$. Since we are on an abelian surface,  then automatically we have that $(C^2)=0$ and by adjunction this indeed forces $C$ to be an elliptic curve. 
\end{proof}

\begin{proof}[Proof of Theorem~\ref{thm:nopolygonabelian}]
To start with, \cite{KL14}*{Proposition 3.1} gives the inclusion
\[
\Delta_{(E,z)}(\pi^*(B)) \cap \{2\}\times\RR  \dsubseteq  \{2\}\times [0,2], \textup{ for any } z\in E \ .
\]
Hence, according to  \cite{KL14}*{Remark~1.9}, condition $(\ref{eq:nopolygonabelian})$ implies   $(\ref{eq:nopolygon1})$ in  Theorem~\ref{thm:NO to singular} for any $z\in E$. 

By Theorem~\ref{thm:NO to singular}  we know how to find a divisor $D\equiv B$ so that $\sJ(X,D)=\sI_{X,0}$ locally around a point. It remains to show that this equality 
in fact holds over the whole of $X$. 

Recall that $D$ is the image of a divisor $D'$ on $X'$, where (revisiting the proof of Theorem~\ref{thm:NO to singular}) 
\[
D' \ \equiv \ P+ \sum_{i=1}^{i=r}a_iE_i'+ 2E \ , 
\]
and  $\pi^*(B)-2E = P+\sum a_iE_i'$ is the appropriate Zariski  decomposition. 

Writing  $E_i$ for the image of $E_i'$ under  $\pi$,  the first step in the proof is to show the following claim:\\

\noindent
\textit{Claim:} Assume that  $o\neq y\in X$ such that  $\sJ(X,D)_y\neq \sO_{X,y}$. Then 
\begin{equation}\label{eq:claim}
\sum_{i=1}^{i=r}a_i\mult_o(E_i) \ < \ \sum_{i=1}^{i=r} a_i\mult_y(E_i)\ .
\end{equation}
\textit{Proof of Claim.} 
First observe that by \cite{PAGII}*{Proposition~9.5.13}, the condition $\sJ(X,D)_y\neq \sO_{X,y}$ implies  $\mult_y(D)\geq 1$. Since the morphism $\pi$ is an isomorphism 
around the point $y$,  considering $y$ as a point on  $X'$, we actually have  $\mult_y(D')\geq 1$. In the proof of Theorem~\ref{thm:NO to singular} we were able to write 
$P=A_k+\frac{1}{k}N$, where $A_k$ is ample and $N'$ is an effective $\QQ$-divisor  for any $k\gg 0$. 

Thus, we chose $D'=P'+\sum a_iE_i'+2E$, where $P'=A+\frac{1}{k}N'$, $A\equiv A_k$ is a generic choice, and $k\gg 0$. Since $A_k$ is ample, 
a generic choice of $A$ does not pass through the point $y$, in particular $\mult_y(P')\rightarrow 0$ as  $k\rightarrow \infty$. Since  $\mult_y(D')\geq 1$, 
this implies that
\[
\mult_y(\sum_ia_iE_i) \equ  \sum_ia_i\mult_y(E_i) \dgeq  1 \ . 
\]
As a consequence, it suffices to check that 
\[
\sum_i a_i\mult_o(E_i) \ < \ 1 \ .
\]
To this  end recall  that $E_i'=\pi^*E_i-\mult_o(E_i)E$, and therefore
\[
2 \equ  ((\pi^*B-2E)\cdot E) \equ  (P\cdot E) + \sum_ia_i(E_i'\cdot E) \equ  (P\cdot E)+\sum_ia_i\mult_o(E_i) \ .
\]
Because $(P\cdot E)$ is equal to the length of the vertical segment $\Delta_{(E,z)}(\pi^*B)\cap \{2\}\times\RR$ for any $z\in E$, 
$(\ref{eq:nopolygonabelian})$  implies that $\sum_ia_i\mult_o(E_i)<1$ as we wanted.\\
\textit{End of Proof of the Claim} \\

\noindent
We return to the proof of the main statement. We  will argue by contradiction and suppose that  there exists a point $y\in Y$ for which $\sJ(X;D)_x\neq \sO_{X,y}$. 
In other words, by the claim above, we have the inequality
\[
 \sum_{i}a_i\mult_o (E_i) < \sum_{i}a_i\mult_y (E_i) \ .
\]
This yields the existence of a curve $E_i$ with  $\mult_o(E_i) < \mult_y(E_i)$. 

By our assumptions $E_i'$ is a negative curve on $X'$; let $E_i^y$ denote  the proper transform  of $E_i$ with respect to the blow-up $\pi_y\colon X_y\to X$. 
Since $\mult_o(E_i)<\mult_y(E_i)$, we obtain 
\[
(E_i^y)^2 \equ  E_i^2-(\mult_y(E_y))^2 \ < \ E_i^2-(\mult_o(E_i))^2 \equ  E_i'^2\ <\ 0\ ,
\]
in particular, we deduce that $E_i^y$ remains a negative curve  on $X_y$, just  as $E_i'$ on $X'$. However, Lemma~\ref{lem:two points} shows  that $E_i$ must then be a 
smooth elliptic curve passing through $o$  and $y$. Therefore, $\mult_oE_i=\mult_yE_i=1$, which contradicts the  inequality above. 
\end{proof}

\begin{lemma}\label{lem:two points}
Let $X$ be an abelian surface,  $C\subseteq X$ a curve passing through two distinct points $x_1, x_2\in X$. If the proper transforms of $C$ for the respective  blow-ups 
of $X$ at the $x_i$ both become  negative curves,  then $C$ must be the smooth elliptic curve that is invariant under the translation maps $T_{x_1-x_2}$ or  $T_{x_2-x_1}$. 
\end{lemma}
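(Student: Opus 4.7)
The plan is to derive strong numerical constraints on $C$ from the two hypotheses on negative proper transforms, and to conclude that $C$ must be a smooth elliptic curve with $(C^2) = 0$, which on an abelian surface is necessarily a translate of an elliptic subgroup.

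Two basic facts about an irreducible curve $C$ on an abelian surface $X$ underlie the argument. First, $(C^2) \geq 0$: for generic $a \in X$ the translate $C + a$ is distinct from $C$ as an effective divisor, hence $(C^2) = (C \cdot (C+a)) \geq 0$ by numerical equivalence. Second, the geometric genus satisfies $p_g(C) \geq 1$: the normalization $\widetilde{C} \to C \hookrightarrow X$ is nonconstant and factors through the Albanese of $\widetilde{C}$, so $\widetilde{C}$ cannot be rational.

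Set $m_i \deq \mult_{x_i}(C)$. The hypothesis that the proper transform of $C$ at $x_i$ is a negative curve reads $(C^2) - m_i^2 < 0$. Combined with the adjunction formula $(C^2) = 2p_a(C) - 2$ on $X$ (using $K_X \equiv 0$) and the standard genus inequality $p_a(C) \geq p_g(C) + \binom{m_1}{2} + \binom{m_2}{2}$, one obtains
\[
m_i \, > \, 2p_g(C) + m_j(m_j - 1) - 2 \, \geq \, m_j(m_j - 1) \quad \text{for } \{i,j\} = \{1,2\}\ .
\]
A brief case analysis rules out $m_j \geq 2$ for any $j$: if both $m_1, m_2 \geq 2$, then $m_1 > m_2$ and $m_2 > m_1$, absurd; if $m_j = 1$ while $m_i \geq 2$ for $i \neq j$, then $1 > m_i(m_i - 1) \geq 2$, also absurd. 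Hence $m_1 = m_2 = 1$. Substituting back yields $p_g(C) = 1$, $(C^2) = 0$, and $p_a(C) = p_g(C)$, so $C$ is a smooth elliptic curve of self-intersection zero.

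To conclude, after translating so that a chosen point of $C$ maps to the origin of $X$, the inclusion $C \hookrightarrow X$ becomes a morphism of pointed varieties with $\widetilde{C}$ of genus one, hence a homomorphism of abelian varieties by the universal property of the Albanese. Thus the translated $C$ is an elliptic subgroup $C_0 \subseteq X$, and since both $x_1$ and $x_2$ lie on the same translate of $C_0$, the difference $x_1 - x_2$ lies in $C_0$, so $T_{x_1 - x_2}$ (and equivalently $T_{x_2 - x_1}$) preserves $C$. The main subtlety is the tightness of the case analysis for the $m_i$; the identification via the Albanese is essentially classical.
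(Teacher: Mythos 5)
Your proof is correct, but it takes a genuinely different route from the paper's, and the comparison is instructive. The paper argues by contradiction: assume $T_{x_2-x_1}(C)\neq C$; since both $C$ and its translate $T_{x_2-x_1}(C)$ pass through $x_2$ (the translate sends $x_1\mapsto x_2$, so it has multiplicity $m_1$ there while $C$ has multiplicity $m_2$), one gets $(C^2)=(C\cdot T_{x_2-x_1}(C))\geq m_1 m_2$; meanwhile the two negative-curve hypotheses force $(C^2)<\min(m_1^2,m_2^2)\leq m_1 m_2$, a contradiction, so $C$ is translation-invariant. You, by contrast, extract strong numerical information up front -- adjunction $(C^2)=2p_a(C)-2$ with $K_X\equiv 0$, the genus drop $p_a(C)\geq p_g(C)+\binom{m_1}{2}+\binom{m_2}{2}$, and $p_g(C)\geq 1$, $(C^2)\geq 0$ on an abelian surface -- and use the case analysis on the $m_i$ to pin down $m_1=m_2=1$, $(C^2)=0$, $p_g=p_a=1$, hence $C$ is a smooth elliptic curve, and then pass to the Albanese to see it is a coset of an elliptic subgroup, which yields the desired invariance. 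The paper's argument is shorter and more elegant, but it leaves the final clause ("and is indeed an elliptic smooth curve") slightly under-justified -- translation-invariance by a torsion vector alone does not force ellipticity, and one still needs essentially your adjunction/genus computation (or the observation that translation-invariance forces $m_1=m_2$, after which the self-intersection bound and $p_g\geq 1$ force $m_i=1$) to close that gap. Your version fills it in cleanly, at the cost of being somewhat longer.
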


\begin{proof}
Denote by $C_1$ and $C_2$  the proper transforms of $C$ with respect to  the blow-up of $X$ at $x_1$ and $x_2$, respectively. Aiming at a contradiction suppose that 
$T_{x_2-x_1}(C)\neq C$. Since  both proper transforms $C_1$ and $C_2$ are negative curves, one has 
\[
 0 > (C_1^2) \equ (C^2) - (\mult_{x_1}(C))^2\ \ \text{ and }\ \ 0 > (C_2^2) \equ(C^2)- (\mult_{x_2}(C))^2\ ,
\]
or,  equivalently $(C^2) <\min\{ \mult_{x_1}(C)^2 , \mult_{x_2}(C)^2\}$. On the other hand observe that 
\[
 (C^2) \equ  (C\cdot T_{x_2-x_1}(C)) \dgeq \mult_{x_2}(C)\cdot \mult_{x_2}(T_{x_2-x_1}(C))  \equ  \mult_{x_2}(C)\cdot \mult_{x_1}(C)\ ,
\]
for  $C$ is algebraically equivalent to its  translate $T_{x_2-x_1}(C)$. This is a contradiction, so  we conclude that $C$ is invariant under both of the translation maps $T_{x_1-x_2}$ or $T_{x_2-x_1}$, and is indeed an elliptic smooth curve. 
\end{proof}

  \newpage


\end{document}